\documentclass[11pt]{amsart}
\usepackage{amssymb,latexsym}
\usepackage{amsthm}
\usepackage{amsmath}
\usepackage{amsfonts}
\usepackage{graphicx}
\usepackage[all]{xy}
\usepackage{fancyhdr}
\usepackage[top=1in,bottom=1in,left=1.25in,right=1.25in]{geometry}

\newtheorem{theorem}{Theorem}[section]
\newtheorem{definition}[theorem]{Definition}
\newtheorem{remark}[theorem]{Remark}
\newtheorem{proposition}[theorem]{Proposition}
\newtheorem{corollary}[theorem]{Corollary}
\newtheorem{lemma}[theorem]{Lemma}
\newtheorem{claim}[theorem]{Claim}
\newtheorem{example}[theorem]{Example}

\def\Q{\mathbb{Q}}
\def\F{\mathbb{F}}
\def\R{\mathbb{R}}
\def\Z{\mathbb{Z}}
\def\A{\mathbb{A}}

\def\Cm{\mathbb{C}}
\def\B{\mathcal{B}}
\def\C{\mathcal{C}}
\def\D{\mathcal{D}}
\def\I{\mathcal{I}}

\def\M{\mathcal{M}}
\def\N{\mathcal{N}}
\def\P{\mathcal{P}}
\def\Fm{\mathcal{F}}
\def\Lam{\Lambda}
\def\lan{\langle}
\def\ran{\rangle}

\def\lra{\longrightarrow}
\def\ra{\rightarrow}
\def\ov{\overline}
\def\wh{\widehat}
\def\st{\stackrel}

\usepackage[pdftex]{hyperref}
\usepackage{fancyhdr}

    \pagestyle{fancy}
    \fancyhf{} 
    \fancyhead[CO]{\tiny CELL DECOMPOSITION OF RAPOPORT-ZINK SPACES}
    \fancyhead[CE]{\tiny XU SHEN}
    \fancyhead[LE,RO]{\tiny \thepage}
    \footskip 10mm
   \fancyfoot[C]{}

\begin{document}

\title{Cell decomposition of some unitary group Rapoport-Zink spaces}
\author{Xu SHEN}
\date{}
\address{Mathematisches Institut\\
Universit\"at Bonn\\
Endenicher Allee 60\\
53115, Bonn, Germany} \email{shen@math.uni-bonn.de}
\footnote{2010 Mathematics Subject Classification. Primary: 14G35, 14G22; Secondary: 11G18.}

\begin{abstract}
In this paper we study the $p$-adic analytic geometry of the basic unitary group Rapoport-Zink spaces $\M_K$ with signature $(1,n-1)$. Using the theory of Harder-Narasimhan filtration of finite flat groups developed in \cite{F2},\cite{F3}, and the Bruhat-Tits stratification of the reduced special fiber $\M_{red}$ defined in \cite{VW}, we find some relatively compact fundamental domain $\D_K$ in $\M_K$ for the action of $G(\Q_p)\times J_b(\Q_p)$, the product of the associated $p$-adic reductive groups, and prove that $\M_K$ admits a locally finite cell decomposition. By considering the action of regular elliptic elements on these cells, we establish a Lefschetz trace formula for these spaces by applying Mieda's main theorem in \cite{Mi2}.
\end{abstract}
\maketitle

\tableofcontents

\section{Introduction}

The general motivation of this article is the realization of local Langlands correspondences in the cohomology of Rapoport-Zink spaces, see \cite{R} and \cite{Ha}. These spaces are local analogues of PEL type Shimura varieties, and they uniformize some parts of these type Shimura varieties when passing to formal completion and $p$-adic analytification.

 The most well known Rapoport-Zink spaces are the Lubin-Tate spaces and Drinfeld spaces. In \cite{C}, it is conjectured the cohomology of Lubin-Tate spaces realizes the local Langlands and Jaquet-Langlands correspondences for $GL_n$. This has been essentially proved by Harris-Taylor in \cite{HT}, and completed by many other authors. In \cite{Fa2} and \cite{F}, Faltings and Fargues has established an isomorphism between the Lubin-Tate tower and the Drinfeld tower, and deduced also an isomorphism of the cohomology of the two towers. Thus the cohomology of the tower of Drinfeld spaces also realizes the local Langlands and Jaquet-Langlands correspondences for $GL_n$, as predicted originally by Drinfeld and partly realized by Harris \cite{Ha1}.

The description of the cohomology of some other Rapoport-Zink spaces in terms of irreducible smooth representations of the underlying $p$-adic reductive groups, has been done successfully by Fargues in \cite{F1} and Shin in \cite{Shin}. Both of them use global methods as that of Harris-Taylor, although their approaches are quite different: the former uses heavily rigid analytic geometry while the later is based on the stable trace formula. Their results are both about Rapoport-Zink spaces of EL type, and Fargues has also obtained results of the Rapoport-Zink spaces for $GU(3)$, based on the complete classfication of automorphic representations for unitary groups in three variables in \cite{Ro}. It would be nice if one can prove these local results by local methods. This will require a careful study of the geometry of Rapoport-Zink spaces, and then pass to cohomological applications. Some works in this direction are as \cite{Fa1}, \cite{St}, \cite{Y}.
 \\

In \cite{F}, the first step of the construction of an isomorphism between the towers of Lubin-Tate and Drinfeld, is by ``$p$-adicfy'' the Lubin-Tate tower. This $p$-adicfy procedure is to glue some formal models of Gross-Hopkins's fundamental domain.

To be precise, let $\widehat{\M}_{LT}$ be the formal Lubin-Tate space over $SpfW(\ov{\F}_p)$ for $GL_n/\Q_p$ for simple in this introduction. Recall that for a scheme $S\in$Nilp$W$, a $S$-valued point of $\widehat{\M}_{LT}$ is given by a pair $(H,\rho)$, with $H$ a one dimensional formal $p$-divisible group over $S$, and $\rho:\mathbf{H}_{\ov{S}}\ra H_{\ov{S}}$ is a quasi-isogeny. Here $W=W(\ov{\F}_p)$, Nilp$W$ is the category of schemes $S$ over $SpecW$ such that $p$ is locally nilpotent over $S$, $\ov{S}$ is the closed subscheme of $S$ defined by $p$, and $\mathbf{H}$ is the unique one dimensional formal $p$-divisible group of height $n$ over $\ov{\F}_p$. This space decomposes as a disjoint union of open and closed formal subschemes according to the height of quasi-isogeny. The associated $p$-adic Lubin-Tate space (in the sense of Berkovich) $\M_{LT}=\coprod_{i\in\Z}\M_{LT}^i$ admits an action by $GL_n(\Q_p)\times D^\times$, here $D$ is the division algebra of invariant $\frac{1}{n}$ over $\Q_p$. The action of $D^\times$ is just changing the quasi-isogeny, while the action of $GL_n(\Q_p)$ is a little complicated: it is defined by the Hecke correspondences, see \cite{RZ} or section 2 of this article for details.

There is a $p$-adic period mapping
\[\pi: \M_{LT}\ra \mathbf{P}^{n-1,an}\] of $p$-adic analytic spaces over $L:=W(\ov{\F}_p)_\Q$, and the fibers of this mapping is exactly the Hecke orbits on $\M_{LT}$. This reveals the very difference of the theories of uniformization of Shimura varieties between the complex and $p$-adic situation. Moreover, since by de Jong \cite{dJ2} $\pi$ is an \'{e}tale covering of $p$-adic analytic spaces, its fibers, i.e. the $p$-adic Hecke orbits, are thus discrete. This is also quite different to the situation over positive characteristic for non basic Newton polygon strata and the prime to $p$ Hecke orbits on Shimura varieties, see \cite{Ch} and \cite{Ch1} for example.

The fundamental domain of Gross-Hopkins is then given by \[\D:=\{x=(x_1,\dots,x_{n-1})\in\M_{LT}^0|v_x(x_i)\geq 1-\frac{i}{n},\,\forall\, 1\leq i\leq n-1\},\]where we use the coordinates $(x_1,\dots,x_{n-1})$ on $\M_{LT}^0$ (which is isomorphic to the open unit ball of dimension $n-1$) such that the Newton polygon of $H_x[p]$ for a point $x=(x_1,\dots,x_{n-1})\in\M_{LT}^0$ is given by the convex envelope of $(p^i,v(x_i))_{0\leq i\leq n}$ with $x_0=0, x_n=1$. Here $H_x$ is the $p$-divisible group associated to $x$,
$v_x$ is the valuation on the complete residue field of $x$.
Let $\Pi\in D^\times$ be an uniformizer such that it induces an isomorphism between the components
$\Pi^{-1}:\M_{LT}^i\st{\sim}{\ra}\M_{LT}^{i+1}$, then the domain $\D$ is such that we have a locally finite covering of the Lubin-Tate space
\[\M_{LT}=\bigcup_{\begin{subarray}{c}T\in GL_n(\Z_p)\setminus GL_n(\Q_p)/GL_n(\Z_p)\\i=0,\dots,n-1\end{subarray}}T.\Pi^{-i}\D.\]
Note $\D$ is closed, and more importantly (and non trivially) its underlying topological space is compact. The locally finiteness means that we can start from $\D$ and its translations $T.\Pi^{-i}\D$ to glue a Berkovich space, which is isomorphic to our Lubin-Tate space $\M_{LT}$. We may call such $\M_{LT}$ admits a cell decomposition, as an analogue of the classical situation. In \cite{Sh1} we have used this locally finite cell decomposition and the compactness of $\D$, to deduce a Lefschetz trace formula for Lubin-Tate spaces, by applying Mieda's theorem 3.13 in \cite{Mi2}.
\\

In \cite{F2} Fargues has developed a theory of Harder-Narasimhan filtration for finite flat group schemes, and applied to the study of $p$-divisible groups in \cite{F3}. For the details of Harder-Narasimhan filtrations see \cite{F2} or section 2 in the following for a review. In particular we have notions of semi-stable finite flat group schemes and $p$-divisible groups over a complete rank one valuation ring $O_K|\Z_p$. The basic observation is that, the Gross-Hopkins's fundamental domain is exactly the semi-stable locus in the connected component $\M_{LT}^0$, that is the locus where the associated $p$-divisible groups are semi-stable. Motivated by this fact, Fargues has studied fundamental domains in the Rapoport-Zink spaces for $GL_h/\Q_p$ with signature $(d,h-d)$, in particular there is no additional structures for the $p$-divisible groups considered.

There are two main ingredients in the article \cite{F3}. The first is an algorithm based the theory of Harder-Narasimhan filtrations of finite flat group schemes, which starts from any $p$-divisible groups over an $O_K$ as above and produce new ones which are more and more tend to be of HN-type, that is semi-stable for formal $p$-divisible groups whose special fiber is supersingular, see loc. cit. for the precise definition of $p$-divisible groups of HN-type. When the valuation $K$ is discrete, the algorithm stops after finite times. Passing to the Shimura varieties which give locally the Rapoport-Zink spaces for $GL_h/\Q_p$ with signature $(d,h-d)$, one can define a Hecke-equivarient stratification of the underlying topological space of these $p$-adic analytic Shimura varieties by Harder-Narasimhan polygons. The algorithm stops after finite times over complete discrete valuation rings means that, the Hecke orbits of the rigid points in the HN-type locus in each Harder-Narasimhan polygon strata, cover all the rigid points in the strata. For the basic polygon $\P_{ss}$ that is the line between the point $(0,0)$ and $(h,d)$ ($d$ is the dimension of $p$-divisible groups in the Rapoport-Zink spaces), the HN-type locus is exactly the semi-stable locus, and one has the statement as above.

The second main ingredient of \cite{F3} is the inequality \[HN(H)\leq Newt(H_{k})\] between the concave Harder-Narasimhan and Newton polygons, here $k$ is the residue field of $K$. The proof of this inequality for the case the valuation of $K$ is discrete is easy, while for the general case it is quite involved: Fargues has used the notions of Hodge-Tate modules and Banch-Colmez spaces in $p$-adic Hodge theory, and in fact one has also to pose a mild condition on $H$ in this case, which is naturally satisfied when $H$ coming from a point in Rapoport-Zink spaces. The moduli consequences of this inequality are that, the basic Newton polygon strata of the $p$-adic Shimura varieties is contained in their basic Harder-Narasimhan polygon strata, and the Hecke orbit of the semi-stable locus in the basic Rapoport-Zink space cover at least all the rigid points.

For the case $h$ and $d$ are co-prime to each other, Fargues can prove that the Hecke orbit of the semi-stable locus in the basic Rapoport-Zink covers all the space. More precisely, the main theorem of \cite{F3} is the following.
\begin{theorem}[Fargues, \cite{F3}]
Let $\M^{ss}\subset\M$ be the semi-stable locus in the basic $p$-adic Rapoport-Zink space $\M=\coprod_{i\in\Z}\M^i$ for $GL_h/\Q_p$ with signature $(d,h-d)$, and $\D:=\M^{ss}\bigcap \M^0$. Assume $(h,d)=1$. Let $\Pi\in D^\times$ be an uniformizer in $J_b(\Q_p)=D^\times$, where $D$ is the division algebra of invariant $\frac{d}{h}$ over $\Q_p$, such that $\Pi$ induces isomorphisms $\Pi^{-1}:\M^i\ra\M^{i+1}$. Then there is a locally finite covering of $\M$
\[\M=\bigcup_{\begin{subarray}{c}T\in GL_h(\Z_p)\setminus GL_h(\Q_p)/GL_h(\Z_p)\\i=0,\dots,h-1\end{subarray}}T.\Pi^{-i}\D.\]
\end{theorem}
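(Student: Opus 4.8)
The plan is to reduce the theorem to three facts whose combination is purely topological: (i) each translate $T.\Pi^{-i}\D$ is closed in $\M$; (ii) the family $\{T.\Pi^{-i}\D\}_{T,i}$ is locally finite; and (iii) every classical point of $\M$ lies in the union $\bigcup_{T,i}T.\Pi^{-i}\D$. Granting these, (i) and (ii) show the union is closed in $\M$, so by (iii) it contains the dense subset of classical points and therefore equals $\M$, while (ii) is the asserted local finiteness. The first step is to reorganize the union. The uniformizer $\Pi\in J_b(\Q_p)=D^\times$ acts on a point $(H,\rho)$ only through the quasi-isogeny $\rho$, leaving the $p$-divisible group $H$ unchanged, so it preserves the semi-stable locus; combined with $\Pi^{-1}:\M^i\st{\sim}{\ra}\M^{i+1}$ this gives $\bigcup_{i\in\Z}\Pi^{-i}\D=\bigcup_{i\in\Z}(\M^{ss}\cap\M^i)=\M^{ss}$. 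A short bookkeeping with the central Hecke operator attached to the scalar $p\in GL_h(\Q_p)$, which shifts the component index by $h$ in the same way as a fixed power of $\Pi$, then lets one restrict the range of $i$ to $\{0,\dots,h-1\}$ after absorbing the rest into $T$. Thus the asserted union is exactly the full Hecke orbit $\bigcup_{T}T.\M^{ss}$ of the semi-stable locus, and everything reduces to properties of $\M^{ss}$ and of $\D$.

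For (iii) I would use the algorithm of \cite{F3}. Let $x$ be a classical point; its complete residue field $K$ is a finite extension of $L$, hence discretely valued, so the Harder-Narasimhan algorithm applied to $(H_x,\rho_x)$ over $O_K$ terminates after finitely many steps and produces a $p$-divisible group $H'$ of HN-type together with an explicit chain of isogenies $H_x\to\cdots\to H'$. This chain exhibits $H'$ as $T.H_x$ for a suitable Hecke operator $T$, so $x$ is Hecke-equivalent to the point $x'$ attached to $H'$. Since $\M$ is basic, $Newt(H_{x,k})=\P_{ss}$, the segment from $(0,0)$ to $(h,d)$; the inequality $HN(H)\le Newt(H_k)$ gives $HN(H_x)\le\P_{ss}$, and because for the basic polygon the HN-type locus coincides with the semi-stable locus, $H'$ is in fact semi-stable. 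Hence $x'\in\M^{ss}$ and $x$ lies in the Hecke orbit of $\D$, proving (iii). Only the termination of the algorithm over discretely valued rings is used here; the coprimality hypothesis is not yet needed.

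The real content is (i) and (ii). Closedness of $\M^{ss}$, and hence of each $T.\Pi^{-i}\D$, follows from the inequality $HN\le\P_{ss}$, which by the basic assumption holds everywhere on $\M$, together with the semicontinuity of the HN polygon: the locus where $HN$ attains its maximal value $\P_{ss}$ is closed. The heart of the matter is the compactness of $\D=\M^{ss}\cap\M^0$. Generalizing Gross-Hopkins, I would show that semi-stability of $H_x$ forces the analytic parameters describing $x$ in the component $\M^0$ to satisfy a finite system of closed valuation inequalities (the analogue, for signature $(d,h-d)$, of the Gross-Hopkins inequalities in the Lubin-Tate case), so that $\D$ is contained in a finite union of affinoid domains of $\M^0$ and is therefore compact; for this one uses the explicit geometry of $\M^0$ together with the \'{e}tale period morphism $\pi:\M\ra\Fm$ of de Jong \cite{dJ2}. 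Finally, for local finiteness, I would fix a quasi-compact $Z\subset\M$ and bound the set of pairs $(T,i)$ with $T.\Pi^{-i}\D\cap Z\neq\emptyset$. Since $Z$ meets only finitely many of the disjoint components $\M^j$, and $T.\Pi^{-i}\D$ lies in a single component determined by $i$ together with the degree shift of $T$, the possible degree shifts of $T$ are confined to a finite set once $i$ is; and for $T$ of bounded degree shift, the condition $T.\D\cap Z'\neq\emptyset$ with $Z'$ quasi-compact forces $T$ into a finite set, since the Hecke orbit of any point is discrete (it is a fibre of the \'{e}tale morphism $\pi$) while $\D$ and $Z'$ are quasi-compact, so a limit argument precludes infinitely many distinct such $T$. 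The hypothesis $(h,d)=1$ enters precisely here: it ensures $J_b(\Q_p)=D^\times$ with $D$ a division algebra, whose noncompact part is generated, modulo a compact subgroup and modulo its centre, by the single uniformizer $\Pi$; this is what makes the finite range $i=0,\dots,h-1$ suffice and keeps the family locally finite.

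I expect the genuine obstacle to be the compactness of $\D$ in this general-signature setting, which is already the nontrivial theorem of Gross-Hopkins in the Lubin-Tate case, together with the local finiteness estimate built on top of it. Both require the fine geometry of the component $\M^0$ and a careful use of the period morphism together with the fact that it is \'{e}tale, rather than anything purely formal; once these are available the remainder of the argument is comparatively soft.
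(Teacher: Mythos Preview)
Your global strategy---reduce to (i) closedness of each cell, (ii) local finiteness, (iii) coverage of rigid points, then conclude by density---is sound, and it is precisely the alternative route the paper itself advertises: the theorem is quoted from Fargues's paper \cite{F3}, but Remark~10.9 says the gluing argument used for Theorem~10.8 ``also works for theorem 27 in \cite{F3}''. In Fargues's original proof the hypothesis $(h,d)=1$ enters differently from what you claim: it is used to show the Harder--Narasimhan algorithm terminates over \emph{every} complete rank one valuation ring (not only discrete ones), which gives the covering of all points directly, without any density argument. Your route avoids this entirely, and the paper's Remark~10.9 confirms that this avoidance is legitimate.

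The genuine gap is your argument for local finiteness. The soft claim ``Hecke orbits are discrete fibers of an \'etale map, $\D$ and $Z'$ are quasi-compact, so a limit argument rules out infinitely many $T$'' does not work as written: if $y_n\in T_n.x_n$ with $x_n\to x$, $y_n\to y$, you only know $\pi(y_n)=\pi(x_n)$, and the discreteness of $\pi^{-1}(\pi(x))$ says nothing about the sequence $y_n$, which lives in varying fibers. The paper's actual argument (Proposition~10.2 together with Lemma~10.3, valid for all EL spaces by Remark~10.4) is not soft at all: it uses the quasi-compactness criterion on the reduced special fiber to bound the relative Dieudonn\'e invariant $d(M_1,M_2)$, and then the key Lemma~10.3---that for a \emph{semi-stable} finite flat group $G$ the elementary divisors of $\mathbb{D}(G_{\ov k})$ coincide with those of $G_{\ov K}$---translates this into a bound on $(a_1-a_n)$ for the Cartan coordinates of $T$. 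Semi-stability of both source and target is what makes the kernel of the isogeny semi-stable and makes this lemma applicable; this is the step your sketch is missing. Relatedly, your proposed proof of compactness of $\D$ via ``explicit valuation inequalities generalizing Gross--Hopkins'' is not available in general signature; the paper obtains (relative) compactness by an entirely different route, embedding into an analytic Shimura variety and using that the image $\mathcal{E}$ is closed in a compact space (Proposition~9.3, Corollary~9.4).
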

For the case $d=1$ we recover the cell decomposition of Lubin-Tate space.
\\

The purpose of this article is to prove a similar result of cell decomposition for some unitary group Rapoport-Zink spaces.

More precisely, let $p>2$ be a fixed prime number, $\Q_{p^2}$ be the unramified extension of $\Q_p$ of degree $2$, and $G$ be the quasi-split unitary similitude group which is defined by a $n$-dimensional $\Q_{p^2}$ hermitian space. The basic formal Rapoport-Zink space $\widehat{\M}$ for $G$ with signature $(1,n-1)$ is the formal scheme formally locally of finite type over $SpfW$. A $S$-valued point of $\widehat{\M}$ for a $S\in$NilpW is given by $(H,\iota,\lambda,\rho)$, where $H$ is a $p$-divisible group over $S$, $\iota: \Z_{p^2}\ra End(H)$ is an action of $\Z_{p^2}$ on $H$ satisfying
certain Kottwitz type determinant conditions, $\lambda: H\ra H^D$ is a polarization compatible with the action $\iota$, and $\rho: \mathbf{H}_{\ov{S}}\ra H_{\ov{S}}$ is a quasi-isogeny. For more details see the following section 2. One has a decomposition $\widehat{\M}=\coprod_{i\in\Z,in \,even}\widehat{\M}^i$, where $\widehat{\M}^i$ is the locus where the height of the quasi-isogenies is $in$.

The geometry of the reduced special fiber $\M_{red}$ has been completely described by Vollaard-Wedhorn in \cite{VW}. It turns out each connected component $\M_{red}^i$ admits a Bruhat-Tits stratification
\[\M_{red}^i=\coprod_{\Lam\in \mathcal{B}(J_b^{der},\Q_p)^0}\M_\Lam^0,\]where $\mathcal{B}(J_b^{der},\Q_p)^0$ is the set of vertices in the Bruhat-Tits building of the derived subgroup $J_b^{der}$ of $J_b$ over $\Q_p$, and $\M_\Lam^0$ is a locally closed subscheme. Recall $J_b$ is the inner form of $G$ associated to the local data to define the Rapoport-Zink space, and when $n$ is odd we have in fact an isomorphism $G\simeq J_b$. There is a type function $t: \mathcal{B}(J_b^{der},\Q_p)^0\ra [1,n]$, which takes values on all the odd integers between 1 and $n$, and the fibers of $t$ are exactly the $J^{der}(\Q_p)$-orbits in $\mathcal{B}(J_b^{der},\Q_p)^0$. Let $t_{max}=n$ if $n$ is odd and $t_{max}=n-1$ if $n$ is even. Then the irreducible components of $\M_{red}^i$ are exactly these $\M_\Lam$, the schematic closure of $\M_\Lam^0$, with $t(\Lam)=t_{max}$.

Let $g_1\in J_b(\Q_p)$ be an element such that it induces isomorphisms $g_1:\widehat{\M}^i\st{\sim}{\ra}\widehat{\M}^{i+1}$ for $n$ even and $g_1:\widehat{\M}^i\st{\sim}{\ra}\widehat{\M}^{i+2}$ for $n$ odd. The element $p^{-1}\in J_b(\Q_p)$ induces always isomorphisms $p^{-1}: \widehat{\M}^i\ra\widehat{\M}^{i+2}$. We fix a choice of $g_1$ compatible with $p^{-1}$. Consider the connected component for $i=0$ and fix a choice of $\Lam$ such that $t(\Lam)=t_{max}$, and let $Stab(\Lam)$ be the stabilizer group of $\Lam$ in $J^{der}(\Q_p)$.
Let $\M=\coprod_{i\in\Z,in\, even}\M^i$ be the associated Berkovich analytic space of $\widehat{\M}$, and $sp: \M\ra\M_{red}$ be the specialization map.
\begin{theorem}
There is a closed analytic domain $\C\subset\M$, which contains the semi-stable locus $\M^{ss}$, such that if we set
\[\D:=\C\bigcap sp^{-1}(\M_\Lam),\]then $\D$ is relatively compact. Moreover, we have a locally finite covering of $\M$
\[\M=\bigcup_{\begin{subarray}{c}T\in G(\Z_p)\setminus G(\Q_p)/G(\Z_p)\\g\in J^{der}_b(\Q_p)/Stab(\Lam)\end{subarray}
}T.g\D\] if $n$ is odd, and
\[\M=\bigcup_{\begin{subarray}{c}T\in G(\Z_p)\setminus G(\Q_p)/G(\Z_p)\\j=0,1\\g\in J^{der}_b(\Q_p)/Stab(\Lam)\end{subarray}
}T.g_1^jg\D
\]if $n$ is even.
\end{theorem}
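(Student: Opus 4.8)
The plan is to transpose the strategy behind Fargues' theorem for $GL_h$ recalled above, the two new ingredients being the Bruhat--Tits combinatorics of \cite{VW} and the extra bookkeeping imposed by the $\Z_{p^2}$-action and the polarization.

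\emph{Step 1: the semi-stable locus and the Harder--Narasimhan algorithm.} First I would recall from \cite{F2},\cite{F3} the Harder--Narasimhan filtrations of the finite flat group schemes $H_x[p^m]$ attached to a point $x\in\M$, and set $\M^{ss}$ to be the locus where the polarized $p$-divisible group $(H_x,\iota_x,\lambda_x)$ is semi-stable. As the Harder--Narasimhan filtration is canonical it is automatically stable under $\iota_x$, and it is compatible with Cartier duality, so that $\lambda_x$ interchanges it with the corresponding filtration on $H_x^D$; hence $\M^{ss}$ is stable under the Hecke action of $G(\Q_p)$ and under the action of $J_b(\Q_p)$. By Fargues' inequality $HN(H)\leq Newt(H_k)$ together with the fact that here $Newt(H_k)$ is the supersingular line of slope $\tfrac12$, being semi-stable coincides with being of HN-type, so that the algorithm of \cite{F3} applies: for every rigid point $x$ of $\M$ --- for which $\mathcal{H}(x)$ is a finite, hence discretely valued, extension of $L$ --- a finite chain of quotients by steps of the Harder--Narasimhan filtration, taken $G$-equivariantly (using canonicity and duality, and if necessary one further isogeny to restore the prescribed type of the polarization), carries $x$ into $\M^{ss}$. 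Equivalently, the $G(\Q_p)$-Hecke orbit of $\M^{ss}$ contains every rigid point of $\M$.

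\emph{Step 2: the domain $\C$ and relative compactness of $\D$.} Next I would take $\C\subset\M$ to be the closed analytic domain defined by a lower bound, close to the supersingular line, on the Harder--Narasimhan polygon of $H_x[p^N]$ for a fixed, sufficiently large $N$; as this condition only involves $(H_x,\iota_x,\lambda_x)$, the domain $\C$ is stable under $J_b(\Q_p)$ and contains $\M^{ss}$ by construction. Since $\M_{red}\setminus\M_\Lam$ is an open subscheme of $\M_{red}$, the tube $sp^{-1}(\M_\Lam)$ is open in $\M$, so $\D=\C\cap sp^{-1}(\M_\Lam)$ is a locally closed analytic subset. The central point --- the technical heart of the proof --- is that $\D$ is relatively compact. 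The difficulty is that, $\widehat{\M}$ being only formally locally of finite type, the tube over the projective variety $\M_\Lam$ is non-compact, carrying ``infinitely many formal directions'' along $\M_\Lam$, exactly as the Lubin--Tate component is a non-compact open ball over a single point; one must show that the Harder--Narasimhan lower bound defining $\C$ confines these formal directions to a closed polydisc. For this I would use the explicit description of $\M_\Lam$ and of the formal neighbourhood of $\M_\Lam$ in $\widehat{\M}$ provided by \cite{VW} (the generalized Deligne--Lusztig picture and the Bruhat--Tits stratification), combined with Fargues' dictionary between Harder--Narasimhan data and Hodge--Tate/Newton data, to translate the defining condition of $\C$ into inequalities $|f_i|\leq 1$ in suitable coordinates --- the analogue of the compactness of the Gross--Hopkins domain.

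\emph{Step 3: the covering and its local finiteness.} By \cite{VW} the irreducible components of $\M_{red}^i$ are exactly the closures $\M_{\Lam'}$ with $t(\Lam')=t_{max}$, and since a scheme is the union of its irreducible components, $\bigcup_{t(\Lam')=t_{max}}sp^{-1}(\M_{\Lam'})=\M^i$. Because the fibres of the type function are single $J_b^{der}(\Q_p)$-orbits, because $\C$ is $J_b(\Q_p)$-stable and $sp$ is $J_b(\Q_p)$-equivariant, and because the maximal strata over all connected components are swept out by $J_b^{der}(\Q_p)$ alone when $n$ is odd, resp. by $J_b^{der}(\Q_p)$ together with the component-shifting element $g_1$ (with $j\in\{0,1\}$) when $n$ is even, one obtains that $\bigcup_g g\D$ (resp. $\bigcup_{j,g}g_1^jg\D$) equals $\C\cap\M^0$ (resp. $\C$), and therefore $\bigcup_{T,g}T.g\D$ (resp. $\bigcup_{T,j,g}T.g_1^jg\D$) contains the $G(\Q_p)$-Hecke orbit of $\M^{ss}$ --- hence, by Step 1, every rigid point of $\M$. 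This union is moreover locally finite: around any $x\in\M$ an affine neighbourhood of $sp(x)$ meets only finitely many irreducible components of $\M_{red}$, which bounds the relevant $g$ (and $j$), while continuity of the Harder--Narasimhan polygon on $\M$ bounds the Harder--Narasimhan polygon of $H[p^N]$ on a small neighbourhood of $x$, so only finitely many Hecke correspondences $T$ can carry that neighbourhood into $\C$. A locally finite union of closed subsets is closed; containing the dense set of rigid points of $\M$, it equals $\M$. This is the asserted covering, and as each $T.g\D$ (resp. $T.g_1^jg\D$) is the image of the relatively compact $\D$ under a homeomorphism followed by a finite correspondence, it is relatively compact, so the covering is a genuine cell decomposition.

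\emph{Main obstacle.} The serious difficulty lies in Step 2: producing a single closed analytic domain $\C$ that is at once $J_b(\Q_p)$-stable, contains $\M^{ss}$, and has $\C\cap sp^{-1}(\M_\Lam)$ relatively compact --- the last being precisely where the Harder--Narasimhan description of the generic fibre has to be matched against the Vollaard--Wedhorn geometry of $\M_{red}$ along $\M_\Lam$. A subsidiary preliminary is to check that the Harder--Narasimhan algorithm of \cite{F3} can be run without leaving the PEL moduli problem, that is, $\Z_{p^2}$-linearly and compatibly with the polarization.
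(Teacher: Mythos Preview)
Your broad outline is right, but there are two genuine gaps, and your definition of $\C$ differs from the paper's in a way that matters.

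\textbf{Step 1 is where the real problem lies.} You assert that the Harder--Narasimhan algorithm, run $G$-equivariantly, carries every rigid point into $\M^{ss}$, brushing aside the polarization compatibility as a ``subsidiary preliminary.'' But this is exactly where the argument breaks. Fargues' algorithm produces an isogeny $H\to H/E$ with $H/E$ semi-stable, where $E\subset H[p^N]$ is a certain Harder--Narasimhan scran. For $H/E$ to carry the induced polarization one needs $E=E^\perp$ inside $H[p^N]$, and this fails in general: when the slope-$\tfrac12$ factor $E^\perp/E$ is nontrivial one must enlarge $E$ to a totally isotropic $E''$ (possibly after a finite base change), and then $H/E''$ is \emph{not} semi-stable --- it is only isogenous to a semi-stable group via a kernel killed by $p$. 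This is why the paper's $\C$ is defined not by a polygon bound but as the locus where \emph{some} quotient of $H_x$ by a subgroup of $H_x[p]$ is semi-stable; equivalently $\C=(\bigcup_{\underline{a}}T_{\underline{a}}.\N^{ss})\cap\M$ for the ambient $GL_n$-space $\N$. The PEL-adapted algorithm lands in $\C$, not in $\M^{ss}$, and $\M^{ss}\subsetneq\C$ already for $n=2$. So your Step 1 conclusion that the Hecke orbit of $\M^{ss}$ covers all rigid points is unproven, and your proposed $\C$ (a polygon neighbourhood of $\M^{ss}$) is not the right object.

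\textbf{Step 2, relative compactness, is approached very differently in the paper.} You propose to extract explicit coordinate inequalities on the tube $sp^{-1}(\M_\Lam)$ from the Vollaard--Wedhorn description; this is only a sketch and would be delicate to carry out. The paper instead passes to a PEL Shimura variety $Sh_{K^p}$ via $p$-adic uniformization: the image $\mathcal{E}$ of $\C$ in $\widehat{Sh}^{an}_{K^p}$ is shown to be a \emph{closed} analytic domain of the \emph{compact} space $\widehat{Sh}^{an}_{K^p}$, hence compact; then $\D$ embeds into one of the finitely many pieces $\C/\Gamma_i$ for $K^p$ small enough. The key ingredient here --- and a special feature of signature $(1,n-1)$ --- is that every non-basic Newton polygon touches the Hodge polygon, which forces the Harder--Narasimhan and Newton stratifications of $\widehat{Sh}^{an}_{K^p}$ to coincide; this is what makes $\mathcal{E}$ closed in the whole Shimura variety rather than just in the basic tube.

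Your local finiteness sketch in Step 3 is also too soft: the paper needs a genuine metric argument on Dieudonn\'e lattices in the ambient $GL_n$-space, together with a lemma that for a semi-stable finite flat group the generic and special fibre have the same elementary divisors. The final gluing (locally finite closed cover $+$ equality on rigid points $\Rightarrow$ equality of Berkovich spaces) you have essentially right.
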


Let $\pi:\M\ra \Fm^a$ be the $p$-adic period mapping (see section 2), $K\subset G(\Z_p)$ be an open compact subgroup, $\M_K$ be the Rapoport-Zink space with level $K$ and $\pi_K:\M_K\ra \M$ be the natural projection. Denote by $\D_K$ the inverse image of $\D$ under $\pi_K$. Then we have the following corollaries.
\begin{corollary}
 We have a locally finite covering of $\Fm^a$
\[\Fm^a=\bigcup_{g\in J^{der}_b(\Q_p)/Stab(\Lam)}g\pi(\D)\] if $n$ is odd, and
\[\Fm^a=\bigcup_{\begin{subarray}{c}j=0,1\\g\in J^{der}_b(\Q_p)/Stab(\Lam)\end{subarray}}g_1^jg\pi(\D)\]if $n$ is even.
\end{corollary}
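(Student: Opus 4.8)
The corollary is an essentially formal consequence of the preceding theorem, obtained by pushing the covering of $\M$ forward along the $p$-adic period mapping $\pi\colon\M\ra\Fm^a$. The argument uses the following standard properties of $\pi$ (see section~2): it is a surjective \'etale morphism of Berkovich analytic spaces, in particular open; it is $J_b(\Q_p)$-equivariant; and it is constant on $G(\Q_p)$-Hecke orbits, its fibers being exactly these orbits, so that $\pi^{-1}(\pi(x))=\bigcup_{T\in G(\Z_p)\backslash G(\Q_p)/G(\Z_p)}T.x$ for every $x\in\M$. From equivariance and Hecke-invariance one gets, for any subset $Z\subset\M$ and any $g\in J_b(\Q_p)$, the identities $\pi(gZ)=g\,\pi(Z)$ and $\pi(T.Z)=\pi(Z)$. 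Applying $\pi$ to the covering $\M=\bigcup_{T,g}T.g\D$ of the theorem and using surjectivity of $\pi$ together with these identities, the index $T$ collapses and we get $\Fm^a=\pi(\M)=\bigcup_{T,g}\pi(T.g\D)=\bigcup_g g\,\pi(\D)$; for $n$ even the same computation with the extra index $j$ gives $\Fm^a=\bigcup_{j,g}g_1^jg\,\pi(\D)$. So the covering statements are immediate, and the only real content is local finiteness.

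For local finiteness, let $U\subset\Fm^a$ be a quasi-compact open. Using that $\pi$ is open and that $U$ is quasi-compact, one first produces a compact subset $\widetilde U\subset\M$ with $\pi(\widetilde U)\supseteq U$: cover $\pi^{-1}(U)$ by affinoid neighborhoods contained in $\pi^{-1}(U)$ (possible since affinoid domains form a basis of neighborhoods in a Berkovich space), observe that their interiors still cover $\pi^{-1}(U)$, push these open sets forward to obtain an open cover of $U=\pi(\pi^{-1}(U))$, extract a finite subcover by quasi-compactness of $U$, and let $\widetilde U$ be the union of the finitely many corresponding affinoids. Now suppose $g\in J^{der}_b(\Q_p)/Stab(\Lam)$ satisfies $g\,\pi(\D)\cap U\neq\emptyset$, say $\pi(gx)\in U$ with $x\in\D$. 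Choose $y\in\widetilde U$ with $\pi(y)=\pi(gx)$; since the fibers of $\pi$ are Hecke orbits, $y\in T.(gx)\subset T.(g\D)$ for some $T$, so $T.g\D$ meets $\widetilde U$. By the local finiteness part of the theorem only finitely many of the sets $T.g\D$ meet the compact set $\widetilde U$; projecting the corresponding finite set of pairs $(T,g)$ onto the $g$-coordinate shows that only finitely many $g$ satisfy $g\,\pi(\D)\cap U\neq\emptyset$. Hence the covering $\{g\,\pi(\D)\}_g$ of $\Fm^a$ is locally finite. The case $n$ even is identical: one carries along the index $j\in\{0,1\}$ and replaces $g\D$ by $g_1^jg\D$ throughout.

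Finally, since $\D$ is relatively compact in $\M$, $\pi$ is continuous, and $\Fm^a$ is Hausdorff, one has $\overline{\pi(\D)}\subseteq\pi(\overline\D)$ with $\pi(\overline\D)$ compact; hence each cell $g\,\pi(\D)$ (resp. $g_1^jg\,\pi(\D)$) is relatively compact, and we obtain the asserted locally finite cell decomposition of $\Fm^a$. The one point requiring genuine care is the lifting step: $\pi$ is neither proper nor quasi-compact (its fibers are the infinite discrete Hecke orbits), so one cannot simply pull $U$ back; instead one exploits openness of $\pi$ together with quasi-compactness of $U$ to build the compact set $\widetilde U$ mapping onto $U$. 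Everything else is formal, given the theorem and the properties of the $p$-adic period mapping.
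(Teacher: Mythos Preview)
Your argument is correct and essentially the natural one; the paper itself states this corollary immediately after the theorem with the single sentence ``We have the following corollary when applying the theorem to the $p$-adic period domain $\Fm^a$'' and gives no proof, so you have supplied the details that the author leaves implicit. The two ingredients you isolate---$J_b(\Q_p)$-equivariance of $\pi$ and the identification of its fibers with Hecke orbits (Proposition~2.3)---are exactly what make the index $T$ collapse, and your lifting of a quasi-compact open $U\subset\Fm^a$ to a compact $\widetilde U\subset\M$ via openness of the \'etale map $\pi$ is the right way to transport local finiteness downstairs, given that $\pi$ itself is far from proper.
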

\begin{corollary}
We have a locally finite covering of the analytic space $\M_K$
\[\M_K=\bigcup_{\begin{subarray}{c}T\in G(\Z_p)\setminus G(\Q_p)/K\\g\in J^{der}_b(\Q_p)/Stab(\Lam)\end{subarray}
}T.g\D_K\] if $n$ is odd, and
\[\M_K=\bigcup_{\begin{subarray}{c}T\in G(\Z_p)\setminus G(\Q_p)/K\\j=0,1\\g\in J^{der}_b(\Q_p)/Stab(\Lam)\end{subarray}
}T.g_1^jg\D_K
\]if $n$ is even.
\end{corollary}

Finally we have a corollary for Shimura varieties.
\begin{corollary}
Let $Sh_{K^p}$ be as the Shimura variety introduced in section 9, $\widehat{Sh}^{an}_{K^p}$ be the generic analytic fiber of its $p$-adic completion $\widehat{Sh}_{K^p}$, and $\widehat{Sh}^{an,b_0}_{K^p}$ be the tube in $\widehat{Sh}^{an}_{K^p}$ over the basic strata $\ov{Sh}_{K^p}^{b_0}$, which is an open subspace. Let $\widehat{Sh}^{an}_{K_p\times K^p}\ra \widehat{Sh}^{an}_{K^p}$ be the covering in level $K_p\subset G(\Z_p)$ (an open compact subgroup), and $\widehat{Sh}^{an,b_0}_{K_p\times K^p}$ be the inverse image of $\widehat{Sh}^{an,b_0}_{K^p}$.  Denote $\C^i=\C\bigcap\M^i$ for each $i\in\Z$ such that $in$ is even, $\C'=\C^0$ if $n$ is odd and $\C'=\C^0\coprod \C^1$ if $n$ is even, $\C'_{K_p}$ the inverse image of $\C'$ in $\M_{K_p}$, $\mathcal{E}'_{K_p}$ the image of $\C'_{K_p}$ under the $p$-adic uniformization \[ I(\Q)\setminus\M_{K_p}\times G(\A^p_f)/K^p\simeq \coprod_{i\in I(\Q)\setminus G(\A^p_f)/K^p}\M_{K_p}/\Gamma_i\simeq \widehat{Sh}^{an,b_0}_{K_p\times K^p}.\]
\begin{enumerate}

\item Let $\Gamma=\Gamma_i$ be one of the above discrete, torsion free, cocompact modulo center subgroups of $J_b(\Q_p)$, and $\Gamma^{der}=\Gamma\cap J^{der}_b(\Q_p)$, $D_{K_p}=D_{iK_p}$ be the image of $\D_{K_p}$ under the morphism $\M_{K_p}\ra\M_{K_p}/\Gamma$, then we have a covering
    \[\M_{K_p}/\Gamma=\bigcup_{\begin{subarray}{c}T\in G(\Z_p)\setminus G(\Q_p)/K_p\\ g\in\Gamma^{der}\setminus J^{der}_b(\Q_p)/Stab(\Lam)\end{subarray}}T.gD_{K_p}\] if $n$ is odd, and
    \[\M_{K_p}/\Gamma=\bigcup_{\begin{subarray}{c}T\in G(\Z_p)\setminus G(\Q_p)/K_p\\j=0,1\\ g\in\Gamma^{der}\setminus J^{der}_b(\Q_p)/Stab(\Lam)\end{subarray}}T.g_1^jgD_{K_p}\] if $n$ is even.
\item Under the above notation, we have a covering \[\mathcal{E}'_{K_p}=\coprod_{i\in I(\Q)\setminus G(\A^p_f)/K^p}\bigcup_{g\in\Gamma^{der}\setminus J^{der}_b(\Q_p)/Stab(\Lam)}gD_{iK_p}\] if $n$ is odd, and \[\mathcal{E}'_{K_p}=\coprod_{i\in I(\Q)\setminus G(\A^p_f)/K^p}\bigcup_{\begin{subarray}{c}j=0,1\\g\in\Gamma^{der}\setminus J^{der}_b(\Q_p)/Stab(\Lam)\end{subarray}}g_1^jgD_{iK_p}\] if $n$ is even.  We have a covering
\[\widehat{Sh}^{an,b_0}_{K_p\times K^p}=\bigcup_{T\in G(\Z_p)\setminus G(\Q_p)/K_p}T.\mathcal{E}'_{K_p}.\]
\end{enumerate}

\end{corollary}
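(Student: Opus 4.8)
The plan is to deduce both assertions formally from the Corollary giving the locally finite covering of $\M_K$ (applied with $K=K_p$), using the commutativity of the $J_b(\Q_p)$- and $G(\Q_p)$-actions on the tower, the fact that each $\Gamma_i$ is discrete, torsion free and cocompact modulo centre, and the $p$-adic uniformization isomorphism recalled in the statement. Nothing geometric needs to be reproved; the work is to check that the two operations at play — passing to the quotient by $\Gamma_i$, and re-assembling through the uniformization — preserve the property ``locally finite covering by relatively compact analytic domains''.

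For part (1) I would fix $\Gamma=\Gamma_i$ and start from $\M_{K_p}=\bigcup_{T,g}T.g\D_{K_p}$ (resp. with the extra index $j\in\{0,1\}$ and factor $g_1^j$ when $n$ is even). Since the $J_b(\Q_p)$-action on $\M_{K_p}$ commutes with the Hecke action of $G(\Q_p)$, every $\gamma\in\Gamma$ sends the cell $T.g\D_{K_p}$ to $T.(\gamma g)\D_{K_p}$; for $\gamma\in\Gamma^{der}=\Gamma\cap J^{der}_b(\Q_p)$ this is again one of our cells (using that $Stab(\Lam)$ stabilizes $\D$ and that $t^{-1}(t_{max})$ is a single $J^{der}_b(\Q_p)$-orbit), while the quotient $\Gamma/\Gamma^{der}$ acts compatibly with the index $j$ thanks to the chosen compatibility of $g_1$ with $p^{-1}$. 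Because $\Gamma$ is discrete, torsion free and cocompact modulo centre it acts freely and properly discontinuously, so $q\colon\M_{K_p}\to\M_{K_p}/\Gamma$ is an étale covering of Berkovich spaces with separated target, and the image $D_{K_p}=q(\D_{K_p})$ is relatively compact ($\D_{K_p}$ is relatively compact, being the preimage of the relatively compact $\D$ under the finite map $\pi_{K_p}$, and $q$ is continuous with separated target). It then remains to check that the pushed-forward covering, whose cells are indexed by $T$ and $g\in\Gamma^{der}\backslash J^{der}_b(\Q_p)/Stab(\Lam)$, is still locally finite: given $\bar x$, lift it to $x$, choose a quasi-compact neighbourhood $V$ of $x$ with $\gamma V\cap V=\emptyset$ for $1\neq\gamma\in\Gamma$, and note that $q(V)$ meets $q(T.g\D_{K_p})$ only if $V$ meets one of the $\Gamma$-translates of $T.g\D_{K_p}$; by the local finiteness upstairs together with the relative compactness of the cells, only finitely many pairs $(T,g)$ can occur. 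This gives the two coverings of $\M_{K_p}/\Gamma$.

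For part (2) I would first observe that, by construction, $\mathcal{E}'_{K_p}$ is the disjoint union over the finite set $I(\Q)\backslash G(\A^p_f)/K^p$ of the images $\ov{\C'_{K_p}}$ of $\C'_{K_p}$ in the $\M_{K_p}/\Gamma_i$ under the uniformization. Restricting the covering of $\M_{K_p}$ to the inverse image of $\C'$ — the components numbered $0$, resp. $0$ and $1$ — and using $\D\subset\C^0\subset\C'$, one obtains a covering of $\C'_{K_p}$ by the traces $T.g\D_{K_p}\cap\C'_{K_p}$; pushing this down along $q$ as in part (1) and dropping the Hecke index $T$ — legitimate because $\C'$ is a union of fibres of the period mapping $\pi$ over a fixed locus, so the $T$'s only permute points already inside $\ov{\C'_{K_p}}$ — yields the stated covering $\ov{\C'_{K_p}}=\bigcup_{g}gD_{iK_p}$ (resp. with $g_1^j$), hence the covering of $\mathcal{E}'_{K_p}$ after taking the disjoint union over $I(\Q)\backslash G(\A^p_f)/K^p$. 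For the final covering I would transport the conclusion of the main Theorem — that the $G(\Q_p)$-Hecke orbit of the semistable locus, a fortiori of $\C'$, exhausts the basic Rapoport-Zink space — through the uniformization isomorphism $\widehat{Sh}^{an,b_0}_{K_p\times K^p}\simeq\coprod_i\M_{K_p}/\Gamma_i$, collecting the $G(\Q_p)$-translates into orbits under $G(\Z_p)\backslash G(\Q_p)/K_p$ to get $\widehat{Sh}^{an,b_0}_{K_p\times K^p}=\bigcup_{T}T.\mathcal{E}'_{K_p}$.

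The hard part will be the local finiteness statement in part (1): showing that local finiteness survives the quotient by $\Gamma$. This is the one point where the various inputs genuinely interact — the local finiteness of the covering of $\M_{K_p}$, the proper discontinuity of the $\Gamma$-action, and above all the relative compactness of $\D$, which is what forces a fixed quasi-compact open to meet only finitely many $\Gamma$-translates of a single cell. A secondary, purely bookkeeping nuisance is keeping the quotient $\Gamma/\Gamma^{der}$ consistent with the extra index $j\in\{0,1\}$ in the $n$ even case; this is dictated by the compatible choice of $g_1$ with $p^{-1}$ made earlier.
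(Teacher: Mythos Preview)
The paper states this corollary without proof, so your proposal is filling in details the author considered formal. Your treatment of part~(1) is fine in spirit --- pushing the covering of $\M_{K_p}$ from Corollary~10.11 down through the quotient map $q:\M_{K_p}\to\M_{K_p}/\Gamma$ is exactly what is intended --- though note that the statement only asserts a \emph{covering}, not a \emph{locally finite} one, so the argument you single out as ``the hard part'' is not actually required.

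There is, however, a genuine error in your argument for part~(2). You write that dropping the Hecke index $T$ is ``legitimate because $\C'$ is a union of fibres of the period mapping $\pi$ over a fixed locus, so the $T$'s only permute points already inside $\ov{\C'_{K_p}}$''. This is false: $\C'$ is \emph{not} Hecke-invariant. If it were a union of fibres of $\pi$, its Hecke orbit would equal $\C'$ itself, contradicting the main theorem (the Hecke orbit of $\C'$ is all of $\M$). The correct route is much more direct and was already established at the start of Section~10: one has
\[
\C^0=\bigcup_{g\in J_b^{der}(\Q_p)/Stab(\Lam)}g\D,
\]
a locally finite covering that involves \emph{no} Hecke correspondence at all, and hence $\C'=\bigcup_g g\D$ (resp.\ $\bigcup_{j,g} g_1^j g\D$). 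Pulling back to level $K_p$ and pushing down by each $q_i:\M_{K_p}\to\M_{K_p}/\Gamma_i$ immediately gives the covering of each piece $\ov{\C'_{K_p}}$, and the disjoint union over $i$ gives the covering of $\mathcal{E}'_{K_p}$. The final assertion $\widehat{Sh}^{an,b_0}_{K_p\times K^p}=\bigcup_T T.\mathcal{E}'_{K_p}$ then follows from $\M_{K_p}=\bigcup_T T.\C'_{K_p}$ (Section~8) transported through the uniformization, as you correctly say.
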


As one has seen, the first difficulty in our unitary group case is that, the geometry of the reduced special fiber of Rapoport-Zink space is more complicated than that for the case of $GL_h/\Q_p$ with signature $(d,h-d)$, since for the case $(h,d)=1$ considered above each connected component of the special fiber is already irreducible, see \cite{Ve1}. This is why we have to take the intersection of $\C$ with the tube over a fixed irreducible component to have a locally finite cell decomposition. The second difficulty is that, the algorithm above when applied to the PEL type Rapoport-Zink spaces, for example the unitary group case considered here, is not well compatible with the action of Hecke correspondences. One has to modify it. This is why the semi-stable locus may be not enough and we find a closed domain $\C\supset\M^{ss}$.

On the other hand, the inequality \[HN(H,\iota,\lambda)\leq Newt(H_k,\iota,\lambda)\]between the Harder-Narasimhan and Newton polygons for $p$-divisible groups with additional structures still holds. In fact this can be easily deduced from Fargues's inequality $HN(H)\leq Newt(H_k)$, since the former polygons are just defined respectively by normalization of the later polygons.

The general strategy to prove the above theorem, is that using the modified algorithm and the above inequality to deduce first the equalities in the theorem hold for rigid points. For the rest points, by the equivalence of suitable categories between Berkovich spaces and rigid analytic spaces, it suffices to prove these coverings are locally finite, thus admissible. This last argument is different from that in \cite{F3} section 16.

In our unitary case, we have in fact that the underlying topological space of $\D$ is locally compact, like the case of Lubin-Tate space. These two facts both come from the special phenomenon that, all the non basic Newton polygon has contacted points with the Hodge polygon, and thus one can deduce the Harder-Narasimhan polygon stratification and the Newton polygon stratification of the associated $p$-adic Shimura varieties coincide.

At this point we should note that, the Rapoport-Zink spaces for $GSp4$ is quite similar with our unitary case. See \cite{KR2} section 4 for a similar geometric description of the reduced special fiber. And the two non basic Newton polygons have contacted points with the Hodge polygon.  In particular our method here will enable us to prove an analogue result of cell decomposition for the basic $p$-adic $GSp4$ Rapoport-Zink spaces.
\\

In fact we can introduce some more natural parameter set for the cells, which is more convenient when considering the group actions. Consider the action of $\Q_p^\times$ on $G(\Z_p)\setminus G(\Q_p)/K\times J_b(\Q_p)/Stab(\Lam)$ through the imbedding $\Q_p^\times\ra G(\Q_p)\times J_b(\Q_p), z\mapsto (z,z)$, and set
\[\I_K:=(G(\Z_p)\setminus G(\Q_p)/K\times J_b(\Q_p)/Stab(\Lam))/\Q_p^\times,\]
which can be viewed as a subset of the quotient space $\B/K$, where $\B$ is the Bruhat-Tits building quotient by $\Q_p^\times$ (for the embedding $z\mapsto (z,z^{-1})$) $\B=\B(G\times J_b,\Q_p)/\Q_p^\times$.
For each $[T,g]\in\I_K$, the analytic domain $\D_{[T,g],K}:=T.g\D_K$ is well defined. We can rewrite the cell decomposition of $\M_K$ elegantly as
\[\M_K=\bigcup_{[T,g]\in\I_K}\D_{[T,g],K}.\]If $\gamma=(h,f)\in G(\Q_p)\times J_b(\Q_p)$ such that $hKh^{-1}=K$, $\gamma$ induces $\gamma:\M_K\ra\M_K, \I_K\ra\I_K$, which are compatible: $\gamma(\D_{[T,g],K})=\D_{\gamma([T,g]),K}=\D_{[Th,fg],K}$. Moreover, there is a homomorphism $
\varphi: \I_K\ra\Z$ with image $\Z$ if $n$ is even and $2\Z$ is $n$ is odd. For each $i\in\Z$ such that $in$ is even, let $\I_K^i=\varphi^{-1}(i)$, then we have
\[\M_K^i=\bigcup_{[T,g]\in\I_K^i}\D_{[T,g],K}.\] There is a metric $\ov{d}$ on $\I_K$, induced by the metric of $\B$. The locally finiteness can be read as, there exists some constant $c>0$ such that, for any $[T,g]\in\I_K$, we have
\[\{[T',g']\in\I_K|\D_{[T',g'],K}\bigcap\D_{[T,g],K}\neq \emptyset\}\subset\{[T',g']\in\I_K|\ov{d}([T,g],[T',g'])\leq c\},\]with the latter a finite set.

The induced locally finite cell decomposition of $\M_K^0$ and the locally compactness of $\D$, will enable us to establish a Lefschetz trace formula for these spaces, by applying Mieda's theorem 3.13 in \cite{Mi2}. The idea is similar with our work for Lubin-Tate case in \cite{Sh1}, by studying the action of $\gamma$ on the cells. For the definition of the subspaces $U_\rho\subset \M_K$ see section 11.
\begin{theorem}
For the fixed $\gamma=(h,g)\in G(\Q_p)\times J_b(\Q_p)$ with $h,g$ both regular elliptic semi-simple, and $v_p(deth)+v_p(detg)=0$, there exist a sufficient small open compact subgroup $K'\subset G(\Z_p)$ and a sufficient large number $\rho_0>>0$, such that for all open compact subgroups $K\subset G(\Z_p)$ contained in $K'$ and normalized by $h$, and all $\rho\geq \rho_0$,
we have the Lefschetz trace formula
\[Tr(\gamma|H_c^\ast(U_\rho\times\Cm_p,\ov{\Q}_l))=\#\textrm{Fix}(\gamma|\M_K^0\times\Cm_p),\]which is well defined and finite. Since the right hand side is independent of $\rho$, we can define
\[Tr(\gamma|H_c^\ast(\M_K^0\times\Cm_p,\ov{\Q}_l)):=Tr(\gamma|H_c^\ast(U_\rho\times\Cm_p,\ov{\Q}_l))\]for $\rho>>0$, and thus \[Tr(\gamma|H_c^\ast(\M_K^0\times\Cm_p,\ov{\Q}_l))=\#\textrm{Fix}(\gamma|\M_K^0\times\Cm_p).\]
\end{theorem}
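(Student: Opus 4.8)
The plan is to verify the hypotheses of Mieda's Lefschetz trace formula, \cite[Theorem~3.13]{Mi2}, for the automorphism of $\M_K^0\times\Cm_p$ induced by $\gamma=(h,g)$, and then read off the statement. First, $hKh^{-1}=K$ makes $\gamma$ act on $\M_K$, and the normalization condition $v_p(\det h)+v_p(\det g)=0$ forces $\gamma$ to lie in the kernel of the homomorphism $\varphi\colon\I_K\to\Z$, hence to preserve $\I_K^0$ and thus the connected component $\M_K^0$. The three points to establish are: \emph{(a)} an exhaustion of $\M_K^0$ by relatively compact open subspaces $U_\rho$ adapted to $\gamma$; \emph{(b)} that $\textrm{Fix}(\gamma|\M_K^0\times\Cm_p)$ is finite and, for $\rho$ large, contained in $U_\rho$ with no fixed points on the shells $\ov{U_{\rho'}}\setminus U_\rho$ with $\rho'>\rho$; and \emph{(c)} that each fixed point contributes $1$ to the local term, so that \cite[Theorem~3.13]{Mi2} yields $Tr(\gamma|H_c^\ast(U_\rho\times\Cm_p,\ov{\Q}_l))=\#\textrm{Fix}(\gamma|\M_K^0\times\Cm_p)$ for $\rho\geq\rho_0$.

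For \emph{(a)} I would take $U_\rho$, as defined in section~11, to be the interior of the union $\bigcup_{\ov{d}([T,g],o)\leq\rho}\D_{[T,g],K}$ of the cells within distance $\rho$ of a fixed base point $o\in\I_K^0$ for the metric $\ov{d}$ on $\I_K$. Local finiteness of the cell decomposition makes this a finite union, and each cell $\D_{[T,g],K}=T.g\D_K$ is relatively compact because $\D$ is relatively compact (Theorem~1.2) and $\pi_K$ is finite; hence $U_\rho$ is relatively compact, $H_c^\ast(U_\rho\times\Cm_p,\ov{\Q}_l)$ is finite dimensional by the finiteness theorems for Berkovich spaces, and $\bigcup_\rho U_\rho=\M_K^0$.

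For \emph{(b)} I would study the $\gamma$-action on the index set $\I_K^0\subset\B/K$. Since $h$ and $g$ are regular elliptic semisimple, each has bounded minimal displacement set on the Bruhat--Tits building of $G$, respectively of $J_b$, so $\gamma$ acts on $\B$ in such a way that $\{y\mid\ov{d}(y,\gamma y)\leq c\}$ is bounded for every $c$. If $x$ is a $\gamma$-fixed point lying in the cell $\D_{[T,g],K}$, then $x\in\D_{[T,g],K}\cap\D_{\gamma\cdot[T,g],K}$, so by the local finiteness constant $\ov{d}([T,g],\gamma\cdot[T,g])$ stays bounded; hence such cells form a finite subset of $\I_K^0$, and $\textrm{Fix}(\gamma|\M_K^0\times\Cm_p)$ is contained in a finite union of relatively compact cells, so in some $U_{\rho_0}$. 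For $\rho\geq\rho_0$ there are then no $\gamma$-fixed points outside $U_{\rho_0}\subset U_\rho$, in particular none on the shells; finiteness of the fixed-point set itself follows once one knows the fixed points inside those finitely many cells are isolated, which is part of \emph{(c)}.

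For \emph{(c)} I would appeal to the local-term analysis underlying \cite{Mi2}, in the spirit of the work of Mieda, Strauch, and Fujiwara--Varshavsky: since $h$ and $g$ are regular semisimple, for $K$ inside a sufficiently small $K'$ the $\gamma$-fixed points on $\M_K^0\times\Cm_p$ are non-degenerate, so near each one $\gamma$ is attracting in the sense required and the local term equals $1$. Together with the independence of $Tr(\gamma|H_c^\ast(U_\rho\times\Cm_p,\ov{\Q}_l))$ of $\rho\geq\rho_0$ --- which follows by excision from the absence of fixed points on the shells --- this gives the trace formula and justifies defining $Tr(\gamma|H_c^\ast(\M_K^0\times\Cm_p,\ov{\Q}_l))$ as this common value. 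I expect the main obstacle to be exactly step \emph{(c)}: fitting the geometric input --- local finiteness of the cells and relative compactness of $\D$ --- into the precise hypotheses of \cite[Theorem~3.13]{Mi2}, and verifying the non-degeneracy and unit local contribution of the fixed points for $K$ small, which is where the smallness of $K'$ and the regularity and ellipticity of $h,g$ genuinely enter; steps \emph{(a)} and \emph{(b)} are essentially formal consequences of the cell decomposition, the relative compactness of $\D$, and the ellipticity of $h$ and $g$.
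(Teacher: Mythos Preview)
Your overall strategy is right and matches the paper's: use the locally finite cell decomposition and ellipticity to verify Mieda's hypotheses. But several attributions are off, and the technical route differs in ways that matter.

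First, Mieda's theorem is not applied to $U_\rho$ directly. The paper passes to adic spaces and applies \cite[Theorem~3.13]{Mi2} to the quasi-compact open $V_\rho^{ad}=\bigcup_{[T,g']\in A_\rho}\D_{[T,g'],K}^{ad}$ with its closure $\ov{V_\rho^{ad}}$ as compactification. The condition one actually checks is that every boundary point $x\in\ov{V_\rho^{ad}}\setminus V_\rho^{ad}$ can be separated from $\gamma(x)$ by closed constructible sets; this follows because such $x$ lies in some $\ov{\D_{[T,g'],K}^{ad}}$ with $[T,g']\in A_\rho\setminus A_{\rho-c}$, and for $\rho$ large the displacement claim gives $\ov{d}([T,g'],\gamma[T,g'])>c$, hence $\ov{\D_{[T,g'],K}^{ad}}\cap\gamma(\ov{\D_{[T,g'],K}^{ad}})=\emptyset$. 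Mieda then gives $Tr(\gamma|H_c^\ast(V_\rho^{ad}))=\#\textrm{Fix}(\gamma|V_\rho^{ad})$. Only afterwards does one transfer to $U_\rho$: an additivity argument shows $Tr(\gamma|H_c^\ast(V_\rho^{ad}\setminus U_\rho^{ad}))=0$ because $\gamma$ genuinely permutes the pieces $F'_{[T,g']}$ of the shell without fixed cells, and then comparison of Berkovich and adic compactly supported cohomology finishes. Your ``independence via excision on shells'' is the right instinct but is used in this precise form.

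Second, you misplace the role of small $K$. The simplicity of fixed points does \emph{not} require $K$ small; it holds for every level because the period map $\M_K\to\Fm^a$ is \'etale and a regular semisimple $g$ has only simple fixed points on $\Fm^a\subset\mathbf{P}^{n-1,an}$. So step (c) is essentially free once you invoke the period map. The smallness of $K$ enters in your step (b), in the displacement claim: one must show $\ov{d}(x,\gamma x)\to\infty$ as $\ov{d}(o,x)\to\infty$ on $\I_K^0$, and since $\ov{d}$ involves an infimum over $k\in K$, one needs $(\B')^{(h,g)}=(\B')^{(hk,g)}$ for all $k\in K$, which holds only for $K$ small enough (this is where \cite{SS} and the compactness of the fixed-point set in the building are used, together with the angle inequality of \cite{R}). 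Your sketch of (b) via ``bounded minimal displacement set'' is correct in spirit but does not yet account for the $K$-infimum in $\ov{d}$.

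Finally, your description of $U_\rho$ as ``the interior of the union of cells within distance $\rho$'' is not the paper's definition: the paper sets $U_{A_\rho}=\M_K^0\setminus\bigcup_{[T,g']\notin A_\rho}\D_{[T,g'],K}$, which is open because the complementary union is locally finite, and one has $U_\rho\subset V_\rho$ but $U_\rho$ is not the topological interior of $V_\rho$.
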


We have a nice formula for the number of fixed points for the quotient space $\M_K/p^\Z$. Note if $g\in J_b(\Q_p)$ is a regular elliptic semi-simple element, for any $x\in \textrm{Fix}(g|\Fm^a(\Cm_p))$, there is a element $h_{g,x}\in G(\Q_p)$ which is conjugate to $g$ over $\ov{\Q}_p$ defined by the comparison isomorphism
\[V_p(H_y)\otimes_{\Q_p}B_{dR}\st{\sim}{\longrightarrow}V_L\otimes_LB_{dR},\]
where $y\in\pi^{-1}(x)$ is any point in the fiber of the $p$-adic period mapping $\pi:\M\ra\Fm^a$.
 \begin{corollary}
Let the notations be as in the above theorem. If $n$ is even we assume that $\frac{2}{n}(v_p(deth)+v_p(detg))$ is even. Fix compatible Haar measures on $G(\Q_p)$ and the centralizer of $h_{g,x}$, $G_{h_{g,x}}:=\{h'\in G(\Q_p)|h'h_{g,x}h'^{-1}=h_{g,x}\}$. Denote the characteristic function of $h^{-1}K$ by $1_{h^{-1}K}$ and the volume of $K$ under the fixed Haar measure by $Vol(K)$. Then we have the following formula
\[Tr(\gamma|H_c^\ast((\M_K/p^{\Z})\times\Cm_p,\ov{\Q}_l))=\sum_{x\in \textrm{Fix}(g|\Fm^a(\Cm_p))}Vol(G_{h_{g,x}}/p^{\Z})O_{h_{g,x}}(\frac{1_{h^{-1}K}}{Vol(K)}),\]
where $Vol(G_{h_{g,x}}/p^\Z)$ is the volume of $G_{h_{g,x}}/p^\Z$ by the induced Haar measure on $G(\Q_p)/p^\Z$,
\[O_{h_{g,x}}(\frac{1_{h^{-1}K}}{Vol(K)})=\int_{G(\Q_p)/G_{h_{g,x}}}\frac{1_{h^{-1}K}}{Vol(K)}(z^{-1}h_{g,x}z)dz\] is the orbit integral of $\frac{1_{h^{-1}K}}{Vol(K)}$ over the conjugate class of $h_{g,x}$.
\end{corollary}

Let $\pi$ be a supercuspidal representation of $G(\Q_p)$, we consider
\[H(\pi)=\sum_{j\geq 0}(-1)^j\textrm{Hom}_{G(\Q_p)}(\varinjlim_{K}H_c^j(\M_K\times\Cm_p,\ov{\Q}_l),\pi).\]
Assume that $\textrm{Hom}_{G(\Q_p)}(\varinjlim_{K}H_c^j(\M_K\times\Cm_p,\ov{\Q}_l),\pi)$ is of finite length for each $j\geq0$, which should be always the case, then $H(\pi)$ is a well defined element in $\textrm{Groth}_{\ov{\Q}_l}(J_b(\Q_p))$.
\begin{corollary}
 Let $g\in J_b(\Q_p)$ be a regular elliptic semi-simple element. Assume that $\pi$ is of the form $\pi=c-\textrm{Ind}_{K_\pi}^{G(\Q_p)}\lambda$, for some open compact modulo center subgroup $K_{\pi}\subset G(\Q_p)$ and some finite dimensional representation $\lambda$ of $K_\pi$. Then we have
\[tr_{H(\pi)}(g)= \sum_{x\in \textrm{Fix}(g|\Fm^a(\Cm_p))} tr_{\pi}(h_{g,x}).\]
\end{corollary}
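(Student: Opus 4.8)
The plan is to deduce this character identity from the Lefschetz trace formula of the preceding corollary by the now standard argument exploiting the fact that a supercuspidal representation $\pi=c-\textrm{Ind}_{K_\pi}^{G(\Q_p)}\lambda$ is both projective and injective in the category of smooth $\ov{\Q}_l$-representations of $G(\Q_p)$ with fixed central character, so that the functor $\textrm{Hom}_{G(\Q_p)}(-,\pi)$ is computed by a single idempotent Hecke operator, namely a suitably normalized matrix coefficient $f_\pi$ of $\pi$ (built from $\textrm{tr}\,\lambda$ and supported on $K_\pi$, hence compactly supported modulo $p^{\Z}$ and bi-$K$-invariant as soon as $K\subset\ker\lambda$), characterized by $Tr(f_\pi\mid\sigma)=\dim\textrm{Hom}_{G(\Q_p)}(\pi,\sigma)=\delta_{\sigma,\pi}$ for every irreducible smooth $\sigma$ with the same central character as $\pi$.

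First I would carry out the reduction to finite level. Using the admissibility of $H_c^j(\M_K\times\Cm_p,\ov{\Q}_l)$ as a representation of $G(\Q_p)$ — which in particular makes the standing finiteness of $H(\pi)$ automatic — and the projectivity and injectivity of $\pi$, one identifies $\textrm{Hom}_{G(\Q_p)}\big(\varinjlim_K H_c^j(\M_K\times\Cm_p),\pi\big)$, $J_b(\Q_p)$-equivariantly, with the contragredient of the $\pi$-multiplicity space of $H_c^j(\M_K\times\Cm_p)$ for $K$ small, and thereby obtains
\[
tr_{H(\pi)}(g)=\sum_{j}(-1)^j\,Tr\big([f_\pi]\times g \mid H_c^j((\M_K/p^{\Z})\times\Cm_p,\ov{\Q}_l)\big),
\]
where $[f_\pi]$ is the Hecke operator attached to $f_\pi$ on the quotient by $p^{\Z}$. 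Here one must match the central character of $\pi$, the element $g$ and the choice of $g_1$ versus $p^{-1}$ from section~2, and keep track of the unavoidable passage to contragredients, which introduces an inverse of $g$ and a substitution $\pi\leftrightarrow\pi^\vee$ that cancel out in the end.

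Next I would feed in the previous corollary. Since $f_\pi$ enters only through its orbital integrals, and since those of a matrix coefficient of the supercuspidal $\pi$ — compactly supported modulo the centre — vanish off the regular elliptic semisimple locus by Schur orthogonality, I may replace $f_\pi$ by a function $f'$ with the same orbital integrals but supported inside that locus. Writing $f'=\sum_i c_i\,1_{h_iK}/Vol(K)$ with $K$ small, each $h_i$ is then regular elliptic semisimple; after discarding the summands for which $(h_i,g)$ merely permutes the connected components of $\M_K$ and so has empty fixed locus, the remaining ones satisfy $v_p(\det h_i)+v_p(\det g)=0$, so the parity condition of the previous corollary holds automatically and that corollary applies to each $\gamma_i=(h_i,g)$. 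Summing over $i$ against the coefficients $c_i$, the left-hand side is $tr_{H(\pi)}(g)$ by the previous step, while the right-hand side becomes
\[
\sum_{x\in\textrm{Fix}(g|\Fm^a(\Cm_p))}Vol(G_{h_{g,x}}/p^{\Z})\cdot O_{h_{g,x}}(\tilde f),
\]
where $\tilde f=\sum_i c_i\,1_{h_i^{-1}K}/Vol(K)$ again has the orbital integrals of a matrix coefficient of $\pi$ (up to $\pi\leftrightarrow\pi^\vee$). Now every $h_{g,x}$, being conjugate over $\ov{\Q}_p$ to the regular elliptic semisimple $g$, is itself regular elliptic semisimple in $G(\Q_p)$; hence the Weyl integration formula together with Schur orthogonality for $\pi$ gives $Vol(G_{h_{g,x}}/p^{\Z})\cdot O_{h_{g,x}}(\tilde f)=tr_\pi(h_{g,x})$ — the normalization of $f_\pi$ having been chosen precisely so that the formal degree of $\pi$ is absorbed into the volume factor — and one concludes $tr_{H(\pi)}(g)=\sum_x tr_\pi(h_{g,x})$.

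The hard part is the first step: turning the homological definition of $H(\pi)$ rigorously into the trace of a single Hecke operator on a single finite-level cohomology group. This rests on the admissibility of the $H_c^j(\M_K\times\Cm_p)$ over $G(\Q_p)$, on the homological good properties of supercuspidal representations that let one see that $f_\pi$ cuts out exactly $\textrm{Hom}_{G(\Q_p)}(-,\pi)$ while commuting correctly with the $J_b(\Q_p)$-action, and on a careful treatment of the centre $p^{\Z}$, of central characters, and of determinant valuations; keeping all the signs, inverses and duality bookkeeping consistent is the genuinely delicate point. By contrast the remaining ingredients — the vanishing of orbital integrals of matrix coefficients off the elliptic locus and the Weyl integration identity computing $tr_\pi$ on elliptic regular elements — are classical and cause no trouble.
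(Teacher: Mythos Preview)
Your argument is correct and follows essentially the same route as the paper. The paper identifies $\textrm{Hom}_{G(\Q_p)}(-,\pi)$ with $\textrm{Hom}_{K_\pi}(-,\lambda)$ via second adjunction for compact induction, chooses a normal $K_0\vartriangleleft K_\pi$ with $\lambda$ trivial on $p^{\Z}K_0$, and writes the trace as the finite average $\frac{1}{\#\Xi_\pi}\sum_{h\in\Xi_\pi}tr((h,g)\mid H_c^\ast)\,tr(h^{-1}\mid\lambda)$ over $\Xi_\pi=K_\pi/p^{\Z}K_0$; this is exactly your Hecke operator $[f_\pi]$ spelled out, and the restriction to the elliptic subset $\Xi_\pi^e$ is your ``vanishing of orbital integrals off the elliptic locus'' (both invoking Kazhdan). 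The only cosmetic difference is that the paper keeps the sum as a finite average over $\Xi_\pi^e$ and feeds each term directly into Theorem~11.3 and Corollary~11.5, whereas you package the same data as a single bi-$K$-invariant test function and then expand; neither approach buys anything the other does not.
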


As remarked above, we should also prove an analogous Lefschetz trace formula for basic Rapoport-Zink spaces for $GSp4$, by their corresponding locally finite cell decomposition and the compactness of the fundamental domain. The Lefschetz trace formula for these Rapoport-Zink spaces for unitary groups or $GSp4$, and (for $n$ even in the unitary case) should enable us to prove the realization of local Jaquet-Langlands correspondence between irreducible smooth representations of $G(\Q_p)$ and $J_b(\Q_p)$ in the cohomology of these Rapoport-Zink spaces. By the methods of \cite{St} and \cite{Mi4}, it just rests the corresponding representation theoretic problems to solve(which is done for the case of $GSp4$). We will consider these in future works. On the other hand, for the non-basic Rapoport-Zink spaces in these cases, our previous results in \cite{Sh} say that their cohomology is essentially a parabolic induction.
\\

This paper is organized as follows. In section 2 we introduce the unitary group Rapoport-Zink spaces, and investigate the group action on them. In particular we study the Hecke action in details. In section 3, we review Fargues's theory of Harder-Narasimhan filtration for finite flat group schemes. In section 4 we apply the theory of last section to the study of $p$-divisible groups. We define the Harder-Narasimhan polygon for $p$-divisible groups with additional structures. In section 5, we first review Fargues's algorithm for studying non semi-stable $p$-divisible groups. We then adopt and modify this algorithm in the situation with additional structures. In section 6 we find the closed analytic domain $\C$ motivated by the algorithm of section 5. In section 7, we review some basic results of Vollaard-Wedhorn about the geometry of the special fiber of $\widehat{\M}$, and in section 8 we introduce the analytic domain $\D$. In section 9, we prove that $\D$ is relatively compact by introducing some unitary group Shimura varieties and then using the theory of $p$-adic uniformization. In section 10 we prove some results of locally finite cell decomposition of our Rapoport-Zink spaces, as well as $p$-adic period domain and Shimura varieties. In the last section we consider the cohomological application of the locally finite cell decomposition of our Rapoport-Zink spaces, and prove a Lefschetz trace formula by applying Mieda's theorem 3.13 in \cite{Mi2}.\\
 \\
\textbf{Acknowledgments.} I would like to thank Prof. Laurent Fargues sincerely, since without his guide this paper would not be accomplished. I should thank Yoichi Mieda, who had proposed some useful questions after the first version of this article. I should also thank the referee for careful reading and suggestions.

\section{The unitary group Rapoport-Zink spaces and Hecke action}

We consider here a special case of PEL type Rapoport-Zink spaces.

Let $p>2$ be a fixed prime number. Let $\Q_{p^2}$ be the unramified extension of $\Q_p$ of degree $2$ and denote by $\ast$ the nontrivial Galois automorphism of $\Q_{p^2}$ over $\Q_p$. Let $V$ be a finite dimensional $\Q_{p^2}$-vector space with $dim_{\Q_{p^2}}(V)=n$. Let $\lan,\ran: V\times V\ra \Q_p$ be a $\Q_p$-valued skew-hermitian form, and $G$ be the associated reductive group, i.e.,
\[G(R)=\{g\in GL_{\Q_{p^2}\otimes R}(V_R)|\exists c\in R^{\times}: \lan gv,gw\ran=c\lan v,w\ran, \forall v,w \in V_R:=V\otimes R\} \]
for all $\Q_p$-algebra $R$. We remark that there exists a unique skew-hermitian form $\lan,\ran': V\times V\ra \Q_{p^2}$ such that $\lan,\ran= Tr_{\Q_{p^2}/\Q_p}\circ \lan,\ran'$. Moreover, if $\delta \in \Q_{p^2}^{\times}$ with $\delta^\ast=-\delta$, then $(,):=\delta\lan,\ran'$ is a hermitian form, and $G$ is just the unitary similitude group $GU(V,(,))$ of the hermitian space $(V,(,))$. Let $\Z_{p^2}$ be the ring of integers of $\Q_{p^2}$. We assume that there exists a $\Z_{p^2}$-lattice $\Lambda$ such that $\lan,\ran$ induces a perfect $\Z_p$-pairing on $\Lambda$. This implies that $G$ is unramified over $\Q_p$ and has a reductive model over $\Z_p$.

Let $\overline{\Q}_p$ be an algebraic closure of $\Q_p$. Then there is a canonical imbedding
\[G_{\overline{\Q}_p}\subset (Res_{\Q_{p^2}/\Q_p}GL_{\Q_{p^2}}(V))_{\overline{\Q}_p}=GL(V\otimes_{\Q_{p^2},id}\overline{\Q}_p)\times GL(V\otimes_{\Q_{p^2},\ast}\overline{\Q}_p),\]and we have an isomorphism
\[G_{\overline{\Q}_p}\simeq GL(V\otimes_{\Q_{p^2},id}\overline{\Q}_p)\times \mathbb{G}_m .\] Via this isomorphism, we fix a $G(\overline{\Q}_p)$-conjugate class of cocharacter
\[\begin{split} \mu:\, \mathbb{G}_{m\overline{\Q}_p}&\longrightarrow G_{\overline{\Q}_p}\\
&z\mapsto (diag(z,\dots,z,1),z).
\end{split}\]
Let $L=W(\overline{\F}_p)_{\Q}$, $\sigma$ be the Frobenius relative to the field extension $L/\Q_p$. Consider the set $B(G)=G(L)/\sim$ of  $\sigma$-conjugate classes  in $G(L)$, and the Kottwitz set $B(G,\mu)\subset B(G)$ (\cite{Ko2}). In our special case we can have an explicit description of the set $B(G,\mu)$ as a set of polygons, see \cite{BW} 3.1. We consider the basic element $b=b_0 \in B(G,\mu)$, and let $J_b$ be the reductive group of automorphisms of the unitary isocrystal $(V_L,b\sigma,\iota,\lan,\ran)$, which is then an inner form of $G$ over $\Q_p$.

Associated to the above data $(\Q_{p^2},\ast,V,\lan,\ran,b,\mu)$, we have the Rapoport-Zink space $\widehat{\M}$ which is a formal scheme locally formally of finite type over Spf$O_L$. It is a moduli space of $p$-divisible groups with additional structures of the following type: for each $S\in$ Nilp$O_L$, $\widehat{\M}(S)=\{(H,\iota,\lambda,\rho)\}/\simeq$, where
\begin{itemize}
\item $H$ is a $p$-divisible group over $S$;
\item $\iota: \Z_{p^2}\ra End(H)$ is an action of $\Z_{p^2}$ on $H$ satisfying
locally \[Lie(H)=Lie(H)_0\oplus Lie(H)_1, rank_{O_S}Lie(H)_0=1, rank_{O_S}Lie(H)_1=n-1\] where
\[Lie(H)_0=\{x\in Lie(H)|\iota(a)x=ax\}, Lie(H)_1=\{x\in Lie(H)|\iota(a)x=a^\ast x\};\]
\item $\lambda: H\ra H^D$ is a principal $\Z_{p^2}$-linear polarization, here $H^D$ is the dual $p$-divisible group endowed with the $\Z_{p^2}$-action $\iota_{H^D}(a)=(\iota(a^\ast))^D$;
\item $\rho: \mathbf{H}_{\ov{S}}\ra H_{\ov{S}}$ is a quasi-isogeny, such that $\rho^D\circ\lambda\circ\rho$ is a $\Q_p^\times$-multiple of $\lambda$, here $\ov{S}=S\otimes_{\Z_{p^2}}\F_{p^2}$;
\item $(H_1,\iota_1,\lambda_1,\rho_1)\simeq (H_2,\iota_2,\lambda_2,\rho_2)$ if there exists a $\Z_{p^2}$-linear isomorphism $\alpha: H_1\ra H_2$ such that $\rho_2=\rho_1\circ \alpha, \alpha^D\circ\lambda_2\circ \alpha$ is a $\Z_p^\times$-multiple of $\lambda_1$.
\end{itemize}
We call such a $p$-divisible group with additional structures $H=(H,\iota,\lambda)$ a unitary $p$-divisible group. For such a unitary $p$-divisible group $H$, we have \[rank_{O_S}Lie(H)=n, height(H)=2n.\] The height of $\rho$ is a multiple of $n$ by \cite{V} 1.7 or \cite{Chen} and we obtain a decomposition
\[\widehat{\M}=\coprod_{i\in \Z}\widehat{\M}^i,\]where $\widehat{\M}^i$ is the open and closed formal subscheme of $\widehat{\M}$ where $\rho$ has height $in$. Moreover, we have in fact
\[\widehat{\M}^i\neq \emptyset \Leftrightarrow \,in\, \textrm{is even},\]and in this case there is an isomorphism $\widehat{\M}^i\cong \widehat{\M}^0$ induced the action of $J_b(\Q_p)$, see \cite{V} or the section 6 below.

The standard $p$-divisible group $\mathbf{H}=(\mathbf{H},\iota,\lambda)$ is definable over $\F_{p^2}$. We let \[(\mathbf{M,F,V,M=M_0\oplus M_1},\lan,\ran)\] denote its covariant Di\'eudonne module over $W(\F_{p^2})=\Z_{p^2}$, where $\lan,\ran: \mathbf{M\times M}\ra \Z_{p^2}$ is a perfect alternating $\Z_{p^2}$-bilinear pairing satisfying
\[\lan \mathbf{F}x,y\ran=\lan x,\mathbf{V}y\ran^\sigma, \lan ax,y\ran=\lan x,a^\ast y\ran\] for all $x,y\in \mathbf{M}, a\in \Z_{p^2}$, here $\sigma=\ast$ is the Frobenius on $W(\F_{p^2})=\Z_{p^2}$; the decomposition $\mathbf{M=M_0\oplus M_1}$ is induced by the decomposition $\Z_{p^2}\otimes_{\Z_p}W(\F_{p^2}) \simeq  W(\F_{p^2})\times W(\F_{p^2})$ and the $\Z_{p^2}$-action on $\mathbf{M}$. The $\mathbf{F}$ and $\mathbf{V}$ are homogeneous of degree 1 with respect to the above decomposition and $\mathbf{M}_0$ and $\mathbf{M}_1$ are totally isotropic with respect to $\lan,\ran$. The signature condition on the Lie algebra then implies
\[dim_{\F_{p^2}}(\mathbf{M}_0/\mathbf{VM}_1)=1, dim_{\F_{p^2}}(\mathbf{M}_1/\mathbf{VM}_0)=n-1.\] We denote by $(\mathbf{N,F})=(\mathbf{M,F})\otimes \Q_{p^2}$ the isocrystal of $\mathbf{H}$. We can assume that $\mathbf{H}$ is superspecial and that the isocystal $(\mathbf{N,F})$ is generated by the elements $x$ such that $\mathbf{F}^2x=px$, see \cite{VW}. As $\mathbf{F}^2$ is $\Q_{p^2}$-linear, we have $\mathbf{F}^2=pid_\mathbf{N}$ and therefore $\mathbf{F}=\mathbf{V}$. For $i=1,2$, let $\mathbf{N}_i=\mathbf{M}_i\otimes \Q_{p^2}$, then $\mathbf{N}=\mathbf{N}_0\oplus \mathbf{N}_1$ and with respect to this decomposition $\mathbf{F}$ is of degree 1. We fix an element $\delta\in\Z_{p^2}^\times$ such that $\delta^\ast=-\delta$ and define a nondegenerate hermitian form on the $\Q_{p^2}$-vector space $\mathbf{N}_0$ by
\[\{x,y\}:=\delta\lan x,\mathbf{F}y\ran.\]
Recall the reductive group $J_b$ over $\Q_p$ defined by the automorphisms of the unitary isocrystal $(\mathbf{N,F,N=N_0\oplus N_1},\lan,\ran)$, which is an inner form of $G$. We have then an isomorphism of $J_b$ with the unitary similitude group $GU(\mathbf{N}_0,\{,\})$ of the hermitian space $(\mathbf{N}_0,\{,\})$. Thus for $n$ odd, we have in fact an isomorphism \[G\cong J_b;\] while for $n$ even, $J_b$ is the non quasi-split inner form of $G$.
\\

We now describe the group actions on the Rapoport-Zink space $\widehat{\M}$. First, we have a left action of $J_b(\Q_p)$ on $\widehat{\M}$: $\forall g\in J_b(\Q_p)$,
\[ g: \widehat{\M} \longrightarrow \widehat{\M} \]
\[(H,\iota,\lambda,\rho)\mapsto (H,\iota,\lambda,\rho\circ g^{-1}),\]
since $J_b$ can be viewed as the group of self quasi-isogenies of $(\mathbf{H},\iota,\lambda)$.

To consider the action of $G(\Q_p)$ on $\widehat{\M}$, we would rather consider the associated Berkovich analytic space $\M=\widehat{\M}^{an}$ of $\widehat{\M}$. As always, by trivializing the Tate module of the universal $p$-divisible group over $\M$, we can define a tower of Berkovich analytic spaces $(\M_K)_{K\subset G(\Z_p)}$. These spaces are separated smooth good Berkovich analytic spaces over $L$. A point $x\in\M_K$ is given by $(H,\iota,\lambda,\rho,\eta K)$, where $\eta: V\st{\sim}{\ra} V_p(H)$ is the rigidification isomorphism such that $\eta(\Lambda)=T_p(H)$. Then $J_b(\Q_p)$ also acts on each space $\M_K$ in the natural way. Moreover, $G(\Q_p)$ acts on this tower: for $g\in G(\Q_p)$ and $K\subset G(\Z_p)$ such that $g^{-1}Kg\subset G(\Z_p)$, we have an isomorphism
\[g: \M_K\ra\M_{g^{-1}Kg},\]
\[(H,\iota,\lambda,\rho,\eta K)\mapsto (H',\iota',\lambda',\rho',\eta'(g^{-1}Kg)),\] here $(H',\iota',\lambda',\rho',\eta')$ is defined as following. Assume first $g^{-1}\in M_n(\Z_{p^2})$. Then $\Lambda\supset g^{-1}(\Lambda)$. Since $\eta(\Lambda)=T_p(H)$ for the rigidification $\eta$, $\eta(\Lambda/g^{-1}(\Lambda))$ defines a finite flat subgroup of $H$. We take $H':=H/\eta(\Lambda/g^{-1}(\Lambda))$, with the naturally induced additional structures $(\iota',\lambda')$ on $H'$, and we take $\rho'=\pi(mod\,p)\circ\rho$ for the natural projection $\pi: H\ra H'$.
Finally there is a rigidification $\eta': V\ra V_p(H')$ such that the following diagram commutes:
\[\xymatrix{ V\ar[r]^{\eta}\ar[d]^{g^{-1}}& V_p(H)\ar[d]^{V_p(\pi)}\\
V\ar[r]^{\eta'}& V_p(H').}\]
 For the general case, one can always find an integer $r\in\Z$ such that $p^rg^{-1}\in M_n(\Z_{p^2})$, then we can define $(H'',\iota'',\lambda'',\rho'',\eta'')$ as above for $p^{-r}g$. We set $H'=H'',\iota'=\iota'',\lambda'=\lambda'',\rho'=p^{-r}\rho'',\eta''=\eta'$.

For any open compact subgroups $K'\subset K\subset G(\Z_p)$, we denote $\pi_{K',K}: \M_{K'}\ra\M_K$ the natural projection of forgetting levels, which is a finite \'{e}tale morphism of degree $K/K'$.
In particular, for $K\subset G(\Z_p)$ fixed, each $g\in G(\Q_p)$ defines a Hecke correspondence on $\M_K$ by the following diagram:
\[\xymatrix{ &\M_{gKg^{-1}\cap K}\ar[r]^g_{\simeq}\ar[ld]_{\pi_{gKg^{-1}\cap K,K}}& \M_{K\cap g^{-1}Kg}\ar[rd]^{\pi_{K\cap g^{-1}Kg,K}}&\\
 \M_K& & & \M_K ,}
 \] and this Hecke correspondence depends only on the double coset $KgK$. Thus we get an ``action" of $K\setminus G(\Q_p)/K$ on $\M_K$, which commutes with the (left) action of $J_b(\Q_p)$.

 \begin{definition}
 Let $K\subset G(\Z_p)$ be an open compact subgroup. For any subset $A\subset |\M_K|$ of the underlying topological space $|\M_K|$, and any Hecke correspondence $T$ defined by a coset $KgK$ as above, we define the image of $A$ under $T$ by the set \[T.A=\pi_{K\cap g^{-1}Kg,K}g\pi_{gKg^{-1}\cap K,K}^{-1}(A).\] We call the set
 \[ Hecke(A):=\bigcup_{T\in K\setminus G(\Q_p)/K}T.A\] the Hecke orbit of $A$.
 \end{definition}
 \begin{remark}
 \begin{enumerate}
 \item By the above, the Hecke action of $G(\Q_p)$ on the tower $(\M_K)_{K\subset G(\Z_p)}$ is in fact a right action. So maybe we should better write the image of $A$ under $T$ as $A.T$. On the other hand, there is in general no composition law for the action of Hecke correspondences on $\M_K$, since the product (in the usual way) of double cosets $KgK\cdot KhK$ is in general not a single double coset. Therefore, we will write $T$ on the left as $T.A$, and for two Hecke correspondences $T_1,T_2$, $T_2.(T_1.A)$ should always be understood as the image of $T_1.A$ under $T_2$.
 \item More precisely, we have
     \[Kh_2K.(Kh_1K.A)=\bigcup_{KhK\subset Kh_1Kh_2K}KhK.A,\]where the right hand side is the finite union over all the double cosets in $Kh_1Kh_2K$. In particular, $A\subset KhK.(Kh^{-1}K.A)$ and $Hecke(A)=Hecke(T.A)$ for any $T\in K\setminus G(\Q_p)/K$.
 \item Note that if $A$ is an analytic domain, then so is $T.A$ for any $T\in K\setminus G(\Q_p)/K$.
 \end{enumerate}
 \end{remark}

 In the following we will mainly focus on the Hecke action on the space $\M$. We would like to describe the images of a point $x\in \M$ under the action of $G(\Z_p)\setminus G(\Q_p)/G(\Z_p)$ on
 $\M$ explicitly. To this end we first recall the Cartan decomposition to describe the set $G(\Z_p)\setminus G(\Q_p)/G(\Z_p)$ explicitly. Let $A\subset G$ be a maximal $\Q_p$-split torus such that
 \[A(\Q_p)=\{
 \left(\begin{array}{ccc}
 d_1 & & \\
 &\ddots& \\
  & &d_n\\
  \end{array}\right)
                 |d_1d_n^\ast=d_2d_{n-1}^\ast=\cdots=\textrm{constant}\in\Q_p^\times\}.\]
 Then the cocharacter group
 \[X_\ast(A)=\{(a_1,\cdots,a_n)\in \Z^n|a_1+a_n=a_2+a_{n-1}=\cdots=\textrm{constant}\in \Z\},\]
 and we denote the dominant coweights by
 \[X_\ast(A)_+=\{(a_1,\cdots,a_n)\in X_\ast(A)|a_1\geq\cdots\geq a_n\}.\]
 The Cartan decomposition says that the following map is a bijection:
 \[\begin{split}
 &X_\ast(A)_+\longrightarrow G(\Z_p)\setminus G(\Q_p)/G(\Z_p)\\
 &(a_1,\cdots,a_n)\mapsto G(\Z_p)
                  \left( \begin{array}{ccc}
                     p^{a_1} & & \\
                     &\ddots& \\
                     & &p^{a_n}\\
                   \end{array}\right)
                G(\Z_p).
 \end{split}\]

  A point $x\in \M$ corresponds to a tuple $(H/O_{K=\mathcal{H}(x)},\iota,\lambda,\rho)$, as an element of $\M(K)=\widehat{\M}(O_K)$ (Here and in the following, when the level is maximal, by abuse of notation $K$ will denote a complete extension of $\Q_p$. In any case, the precise meaning should be clear from the context.). For \[T=G(\Z_p)
                   \left(\begin{array}{ccc}
                     p^{a_1} & & \\
                     &\ddots& \\
                     & &p^{a_n}\\
                   \end{array}
                   \right)
                 G(\Z_p)\in G(\Z_p)\setminus G(\Q_p)/G(\Z_p),\]  we now give a moduli description of the finite set $T.x$. First assume $a_1\leq 0$. By the definition of the Hecke correspondence $T$, we have
  \[\begin{split}
  T.x=\{y\in\M|&(H_y,\iota_y,\lambda_y,\rho_y)\otimes_{O_{\mathcal{H}(y)}}O_K\simeq (H/G_y,\iota',\lambda',\pi\circ\rho),\\
  &\textrm{where}\, G_y\subset H \, \textrm{is a finite flat subgroup scheme, such that its}\\
  &\textrm{geometric generic fiber}\, G_{y\ov{K}}\simeq \Z_{p^2}/p^{-a_1}\Z_{p^2}\oplus\cdots\oplus \Z_{p^2}/p^{-a_n}\Z_{p^2},\\
  &\iota',\lambda' \textrm{are the naturally induced additional structures}\\
  &\pi: H_{O_K/pO_K}\ra (H/G_y)_{O_K/pO_K} \textrm{ is the natural projection}.\}
  \end{split}
  \]
For the general $T$, note the (right) action of an element $z\in \Q_p^\times\subset G(\Q_p)$ is the same as the (left) action of $z\in \Q_{p}^\times\subset J_b(\Q_p)$, see \cite{RZ} lemma 5.36. Since \[T.x=p^{a_1}(p^{-a_1}T).x,\]here the first scalar $p^{a_1}$ is considered as an element of $G(\Q_p)$, we have the description of the set $(p^{-a_1}T).x$ as in the above way.
Then we consider $p^{a_1}$ as an element of $J_b(\Q_p)$ which just changes the quasi-isogeny. So we can describe the set $T.x$ explicitly in all cases.
\\

We examine the effect of the group actions on connected components. First recall Rapoport-Zink (cf. \cite{RZ},3.52) have defined generally a locally constant mapping \[\varkappa:\widehat{\M}\ra \triangle,\]where $\triangle=\textrm{Hom}_\Z(X^\ast_{\Q_p}(G),\Z)$ and $X^\ast_{\Q_p}(G)$ is the group of $\Q_p$-rational characters of $G$. This mapping satisfies that
  \[\varkappa(gx)=\omega_J(g)+\varkappa(x)\]for all $g\in J_b(\Q_p),x\in\widehat{\M}$. Here $\omega_J:J_b(\Q_p)\ra\triangle$ is defined by $<\omega_J(x),\chi>=v_p(i(\chi)(x))$ where $i: X^\ast_{\Q_p}(G)\ra X^\ast_{\Q_p}(J_b)$ is the natural morphism between the two groups of $\Q_p$-rational characters. In our unitary group case, the similitude morphism $c: G\ra \mathbb{G}_m$ defines the identification $\triangle=\Z$. The mapping
  \[\varkappa:\widehat{\M}\longrightarrow\Z\]
   \[(H,\iota,\lambda,\rho)\mapsto -\frac{ht\rho}{n}.\] The image of $\varkappa$ is then $\Z$ if $n$ is even, and $2\Z$ if $n$ is odd. In section 7 we will review the geometry of the reduced special fiber $\M_{red}$ of $\widehat{\M}$. In particular we find $\M_{red}^0$ is connected and $\pi_0(\widehat{\M})=im\varkappa$. Since one has the equalities of the sets of connected components \[\pi_0(\widehat{\M})=\pi_0(\M_{red})=\pi_0(\M)=\pi_0(\M\times\mathbb{C}_p),\] thus each analytic space $\M^i$ for $i\in\Z$ such that $in$ is even is connected, which is in fact geometrically connected, cf. \cite{Chen} lemme 5.1.2.1.

   Let $D=G/G^{der}=J_b/J_b^{der}$ be the co-center group. More explicitly, we have
   \[D=\{(x,c)\in (Res_{\Q_{p^2}|\Q_p}\mathbb{G}_m)\times \mathbb{G}_m|N_{\Q_{p^2}|\Q_p}(x)=c^n\},\]where
   $N_{\Q_{p^2}|\Q_p}: Res_{\Q_{p^2}|\Q_p}\mathbb{G}_m\ra \mathbb{G}_m$ is the norm morphism. We have the determinant morphisms \[\begin{split}det: &G\longrightarrow D\\&g\mapsto (det_{\Q_{p^2}}(g),c(g)),\end{split}\] and similarly for $det: J_b\ra D$. In her doctoral thesis \cite{Chen}, Chen has associated to the torus $D$ and the cocharacter $det\tilde{\mu}$ ($\tilde{\mu}$ is a variant of $\mu$), a tower of analytic spaces $(\M(D,det\tilde{\mu})_K)_{K\subset D(\Z_p)}$ with mappings $\varkappa_{D,\tilde{\mu}}: \M(D,det\tilde{\mu})_K\ra \triangle$. The geometric points are $\M(D,det\tilde{\mu})_K(\ov{L})=D(\Q_p)/K$. By construction there is an action of $D(\Q_p)\times D(\Q_p)$ on each space $(\M(D,det\tilde{\mu})_K)$ such that on geometric points the action is just the left multiplication:
   \[(a,b).xK=abxK, \,\forall (a,b)\in D(\Q_p)\times D(\Q_p),\,xK\in D(\Q_p)/K.\]
   Moreover, via the morphism $(det,det): G(\Q_p)\times J_b(\Q_p)\ra D(\Q_p)\times D(\Q_p)$, she has constructed a $G(\Q_p)\times J_b(\Q_p)$-equivariant determinant morphism of towers of analytic spaces
   \[(\M_K)_K\longrightarrow (\M(D,\det\tilde{\mu})_{detK})_{detK}\]for $K$ varies as open compact subgroup of $G(\Z_p)$, which is compatible with the mappings $\varkappa$ and $\varkappa_{D,\tilde{\mu}}$. The main results of loc. cit. imply that the geometric fibers of the determinant morphism  \[\M_K\longrightarrow \M(D,\det\tilde{\mu})_{detK}\] are exactly the geometric connected components of $\M_K$.

For the case we are interested, $K=G(\Z_p),\,detK=D(\Z_p)$, the set of geometric connected components of $\M$ is the same with the set of its connected components, which is in bijection with $\M(D,\tilde{\mu})(\ov{L})=D(\Q_p)/D(\Z_p)$. Via the mappings $\varkappa$ and $\varkappa_{D,\tilde{\mu}}$ they are in turn bijection with $im\varkappa=im\varkappa_{D,\tilde{\mu}}$, which is thus $\Z$ if $n$ is even, and $2\Z$ if $n$ is odd. Now the effect of the actions of $G(\Q_p)$ and $J_b(\Q_p)$ on the connected components of $\M$ translates on the last set is as following. First for $g\in J_b(\Q_p)$, we have $\omega_J(g)=v_p(c(g))$ and \[g: \M^0\st{\sim}{\longrightarrow}\M^{-v_p(c(g))}.\]For \[T=G(\Z_p)
                   \left(\begin{array}{ccc}
                     p^{a_1} & & \\
                     &\ddots& \\
                     & &p^{a_n}\\
                   \end{array}
                   \right)
                 G(\Z_p)\in G(\Z_p)\setminus G(\Q_p)/G(\Z_p),\] let $g=diag(p^{a_1},\cdots,p^{a_n})$, then \[det(g)=(p^{a_1+\cdots+a_n},c(g))\in D(\Q_p), c(g)^n=p^{2(a_1+\cdots+a_n)}\] and \[v_p(c(g))=a_1+a_n=v_p(c(g')):=v_p(c(T)),\,\forall g'\in T.\] We have \[T.\M^0\subset\M^{-v_p(c(T))}\] and in fact this is an equality $T.\M^0=\M^{-v_p(c(T))}$. Let $\widetilde{G}=\{g\in G(\Q_p)|c(g)\in \Z_p^\times\}, U_n=\{g\in G(\Q_p)|c(g)=1\}$, then $G^{der}(\Q_p)\subset U_n\subset \widetilde{G}$, and the Hecke correspondences associated to elements in $\widetilde{G}$ stabilize $\M^0$.
\\

 We consider the $p$-adic period mapping \[\pi: \M\ra \Fm^a\subset \Fm^{wa}\subset \Fm,\] where $\Fm=(G_L/P_{\mu L})^{an}\simeq \mathbf{P}^{n-1,an}$ is the $p$-adic projective space over $L=W(\ov{\F}_p)_\Q$, $\Fm^{wa}$ is the weakly admissible locus and $\Fm^a$ is the image of $\pi$, see \cite{Hart2} for some discussion of these objects. We recall the definition of $\pi$ in the following. The universal quasi-isogeny $\rho$ induces an isomorphism
\[V_L\otimes_{O_L}O_{\M}\simeq LieE(\mathcal{H})^{an},\]here $E(\mathcal{H})$ is the universal vector extension of $\mathcal{H}_{\ov{\M}}$ over $\widehat{\M}$, $\ov{\M}\subset\widehat{\M}$ is the closed subscheme defined by $p$. For a $p$-divisible group $H/O_K$, recall we have the exact sequence
\[0\ra \omega_{H^D,K}\ra  LieE(H)_K\ra Lie(H)_K\ra0.\]
If $(H,\iota,\lambda,\rho)$ is associated to a point $x\in\M$, then $\rho$ induces an isomorphism
\[\rho_{\ast}: \mathbf{M}\otimes K\stackrel{\sim}{\ra}LieE(H)_K,\]where $K=\mathcal{H}(x)$ is the complete residue field associated to $x$, the filtration
$\rho_\ast^{-1}(\omega_{H^D,K})\subset V_K=\mathbf{M}\otimes K$ defines a point is the Grassmannin
$\Fm=(G_L/P_{\mu L})^{an}\simeq \mathbf{P}^{n-1,an}$, this is the image $\pi(x)$ of $x\in\M$. There is an action of $J_b(\Q_p)$ on $\Fm^a$ and the mapping $\pi$ is $J_b(\Q_p)$-equivariant. In fact $\pi$ is $G(\Q_p)$-invariant for the Hecke action on $\M$, see the following proposition 2.3. Under the $p$-adic period mapping $\pi:\M\ra \Fm^a$, we have $\pi(\M^i)=\Fm^a$ for each $i\in \Z$ such that $in$ is even, and $\Fm^a$ is connected, cf. \cite{Chen} lemme 5.1.1.1.

When $x\in\M$ is a rigid point, i.e. $K$ is a finite extension of $L$ thus in particular discrete with perfect residue field $\ov{\F}_p$, we have the unitary filtered isocrystal $(V_L,b\sigma,\textrm{Fil}^\bullet V_K,\iota,\lan,\ran)$ associated to $(H,\iota,\lambda,\rho)$. Here the filtration on $V_K$ is defined by \[\textrm{Fil}^{-1}V_K=V_K,\textrm{Fil}^{0}V_K=\rho_\ast^{-1}(\omega_{H^D,K}),\textrm{Fil}^iV_K=0,\,\forall i\neq -1,0.\]The filtered isocrystal $(V_L,b\sigma,\textrm{Fil}^\bullet V_K,\iota,\lan,\ran)$ determines the isogeny class of $(H,\iota,\lambda,\rho)$.
We have the following description of the Hecke orbit of a point $x\in\M$, which is similar to proposition 5.37 in the book \cite{RZ}.
\begin{proposition}
The Hecke orbit of a point $x\in \M$ is exactly the fiber $\pi^{-1}(\pi(x))$ of its image under the $p$-adic period mapping $\pi$.
\end{proposition}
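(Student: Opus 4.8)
The plan is to prove the two inclusions $\pi^{-1}(\pi(x)) \subseteq \mathrm{Hecke}(x)$ and $\mathrm{Hecke}(x) \subseteq \pi^{-1}(\pi(x))$ separately. The second inclusion is the easy direction: by Proposition 2.3 (stated just below), $\pi$ is $G(\Q_p)$-invariant for the Hecke action, so if $y \in T.x$ for some Hecke correspondence $T$, then $\pi(y) = \pi(x)$, hence $\mathrm{Hecke}(x) \subseteq \pi^{-1}(\pi(x))$. (Strictly, one should note that the Hecke invariance of $\pi$ is itself proved by tracking what the Hecke correspondence does to the universal vector extension: the finite flat subgroup $\eta(\Lambda/g^{-1}\Lambda) \subset H$ is killed by a power of $p$ and hence the induced quasi-isogeny $\pi \colon H \to H'$ induces an isomorphism on $LieE(-)_K$ after inverting $p$, so the filtration defining the period point is unchanged.)

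For the first inclusion, let $y \in \M$ with $\pi(y) = \pi(x)$. The key point is that the filtered isocrystal $(V_L, b\sigma, \mathrm{Fil}^\bullet, \iota, \langle,\rangle)$ attached to a point determines the isogeny class of $(H,\iota,\lambda,\rho)$ (as recalled in the excerpt, first for rigid points; in general one works with the associated $p$-divisible group up to isogeny via the period point plus the crystalline/Dieudonné data). Since $\pi(x) = \pi(y)$, the points $x$ and $y$ (over a common complete extension $K$ obtained by, say, $\mathcal{H}(x)\hat{\otimes}\mathcal{H}(y)$, or more carefully by comparing both over a common valued field containing both residue fields) give rise to isogenous $p$-divisible groups: there is a $\Z_{p^2}$-linear quasi-isogeny $\phi \colon H_x \to H_y$ compatible with the polarizations up to a $\Q_p^\times$-scalar and compatible with the quasi-isogenies $\rho_x, \rho_y$ in the sense that $\rho_y = \phi \circ \rho_x$ up to the $J_b(\Q_p)$-action.

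The heart of the argument is then to factor this quasi-isogeny $\phi$ into a composition of the elementary operations that appear in the definition of the Hecke correspondences plus the $J_b(\Q_p)$-action. Concretely: $\phi$ induces on $p$-adic Tate modules (after inverting $p$) an isomorphism $V_p(H_x) \xrightarrow{\sim} V_p(H_y)$, and transporting via the rigidifications this is an element $g \in G(\Q_p)$ up to $G(\Z_p)$ on each side — i.e.\ a well-defined double coset $T = G(\Z_p) g G(\Z_p)$. One checks, using the explicit moduli description of $T.x$ given in the excerpt (the description of $T.x$ via finite flat subgroups $G_y \subset H$ with prescribed generic fiber, reduced to the case $a_1 \le 0$ by twisting with a central element of $J_b(\Q_p)$), that $y \in T.x$. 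This is essentially bookkeeping: $\ker\phi$ (or $\ker\phi$ twisted by a suitable power of $p$ to make it a genuine finite flat subgroup scheme) is exactly a $G_y$ of the required type, its generic fiber being read off from the elementary divisors of $g$ with respect to the lattice $\Lambda$; the polarization compatibility forces the double coset to lie in the image of $X_\ast(A)_+$, matching the similitude factor via the mapping $\varkappa$.

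The main obstacle I expect is the passage from rigid (classical) points to arbitrary Berkovich points: the statement ``the filtered isocrystal determines the isogeny class'' is clean only for discretely valued $K$ with perfect residue field, and for a general point $x \in \M$ one has a complete rank-one valued field whose residue field need not be perfect. The right way around this is either (i) to argue that $\pi^{-1}(\pi(x))$ and $\mathrm{Hecke}(x)$ are both closed (resp.\ that $\mathrm{Hecke}(x)$, being a countable union of finite Hecke-translates, is a ``discrete'' subset whose closure behaves well) and that the equality on rigid points propagates, or (ii) to work directly with the $p$-divisible group $H_x/O_K$ and its universal vector extension, observing that $\rho_{x,*}$ and the period point still recover $H_x$ up to $\Z_{p^2}$-linear, polarization-compatible quasi-isogeny over $O_K[1/p]$ by the theory of the period morphism (de Jong's theorem on the étaleness of $\pi$ on the tower, which gives that the fibers of $\pi$ are torsors under $G(\Q_p)$ in an appropriate sense). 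I would take route (ii), reducing everything to the assertion that two points of $\M$ with the same period point differ by a $G(\Q_p)$-quasi-isogeny of their universal $p$-divisible groups, and then matching that quasi-isogeny with a Hecke double coset exactly as above.
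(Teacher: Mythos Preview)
Your proposal reaches the right conclusion but takes a more circuitous route than the paper, and the detour creates the very gap you worry about.

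The paper's proof is a single clean biconditional via Grothendieck--Messing. It first observes (from the moduli description of Hecke correspondences) that $y$ is in the Hecke orbit of $x$ if and only if the specific quasi-isogeny $\rho_y\circ\rho_x^{-1}$ over $O_K/pO_K$ lifts to a quasi-isogeny $H_x\to H_y$ over $O_K$. Then, since $O_K\twoheadrightarrow O_K/pO_K$ is a PD-thickening, Grothendieck--Messing says this lift exists if and only if the induced isomorphism on Dieudonn\'e crystals respects the Hodge filtrations, which unwinds to $\mathrm{Fil}_1=\mathrm{Fil}_2$ in $\mathbf{M}\otimes K$, i.e.\ $\pi(x)=\pi(y)$. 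This works uniformly for any point of $\M$; nothing about discreteness or perfect residue field is used.

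Your argument differs in two ways. First, for the ``easy'' direction you cite the $G(\Q_p)$-invariance of $\pi$, but in the paper that invariance is \emph{deduced from} this proposition, so the citation is circular; your parenthetical sketch is the real content and is essentially the forward direction of the Grothendieck--Messing argument. Second, for the ``hard'' direction you go through the filtered isocrystal determining the isogeny class (only stated for rigid points), then propose to \emph{find} a quasi-isogeny $\phi$ and match it to a double coset via elementary divisors. Both steps are unnecessary: the quasi-isogeny is not something to be found but is already canonically given as $\rho_y\circ\rho_x^{-1}$ on the special fibre, and once it lifts you are in the Hecke orbit by definition --- the double-coset bookkeeping is automatic from the moduli interpretation. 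Your route (ii) to handle non-rigid points is essentially the paper's argument; had you led with Grothendieck--Messing directly, the rigid/non-rigid dichotomy never arises.
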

\begin{proof}
Let $x,y\in\M$ be two points, and $(H_1/O_{\mathcal{H}(x)},\iota_1,\lambda_1,\rho_1),(H_2/O_{\mathcal{H}(y)},\iota_2,\lambda_2,\rho_2)$ be the unitary $p$-divisible groups associated to $x$ and $y$ respectively. Then $x,y$ in the same Hecke orbit if and only if there exists a finite extension $K$ of both $\supset\mathcal{H}(x)$ and $\mathcal{H}(y)$, and a (unique) quasi-isogeny $\varphi: H_1\ra H_2$ over $O_K$ lifting
\[\rho_2\circ\rho_1^{-1}: H_{1O_K/pO_K}\stackrel{\rho_1^{-1}}{\longrightarrow}\mathbf{H}_{O_K/pO_K}\stackrel{\rho_2}{\longrightarrow}H_{2O_K/pO_K}.
\]
Let $\mathbf{M}=\mathbb{D}(\mathbf{H})_{W(\ov{\F}_p)\ra\ov{\F}_p}$ be the value on $W(\ov{\F}_p)\ra\ov{\F}_p$ of the covariant cystal $\mathbb{D}(\mathbf{H})$. Then since \[\xymatrix{SpecO_K/pO_K \ar[d]\ar@{^{(}->}[r]&SpecO_K\ar[d]\\
Spec\ov{\F}_p\ar@{^{(}->}[r]&SpecW(\ov{\F}_p) }\] is a PD-morphism, we have
\[\mathbb{D}(\mathbf{H}\otimes O_K/pO_K)_{O_K\twoheadrightarrow O_K/pO_K}=\mathbb{D}(\mathbf{H})_{O_K\twoheadrightarrow O_K/pO_K}=\mathbf{M}\otimes O_K.\]
The quasi-isogenies $\rho_1,\rho_2$ then induce isomorphisms
\[\rho_{i\ast}: \mathbf{M}\otimes K\stackrel{\sim}{\ra}LieE(H_i)_K, i=1,2.\]
The images of $x,y$ under the $p$-adic period mapping $\pi:\M\ra\Fm^a$ by definition are
\[\rho_{i\ast}^{-1}(Fil_{H_i}):=Fil_i\subset\mathbf{M}\otimes K, i=1,2.\]
$\rho_2\circ\rho_1^{-1}$ can be lifted to a quasi-isogeny $\widetilde{\rho_2\circ\rho_1^{-1}}:H_1\ra H_2$, by the theory of Grothendieck-Messing, if and only if the homomorphism
\[\mathbb{D}(\rho_2\circ\rho_1^{-1})_{O_K\twoheadrightarrow O_K/pO_K}: \mathbb{D}(H_1)_{O_K\twoheadrightarrow O_K/pO_K}[\frac{1}{p}]\stackrel{\sim}{\ra} \mathbb{D}(H_2)_{O_K\twoheadrightarrow O_K/pO_K}[\frac{1}{p}]\] send the Hodge filtration $Fil_{H_1}$ to $Fil_{H_2}$. But this is equivalent to say $Fil_1=Fil_2$, i.e. $\pi(x)=\pi(y)$. Thus the proposition holds.

\end{proof}
Thus a point $y\in\M$ is in the Hecke orbit of $x\in\M$, if and only if there exist a finite extension $K$ of both $\mathcal{H}(x)$ and $\mathcal{H}_y$, and an unique quasi-isogeny $H_y\ra H_x$ over $O_K$ lifting the quasi-isogeny $\rho_x\circ\rho_y^{-1}$ over $O_K/pO_K$. Note the last condition is equivalent to that there exists an isogeny $H_y\ra H_x$. If $x\in\M^{rig}$ is a rigid point, then one find easily that its Hecke orbit consist all of rigid points, i.e. $Hecke(x)\subset\M^{rig}$. In this case the Hecke orbit is determined by the filtered isocrystal $(V_L,b\sigma,\textrm{Fil}_{\pi(x)}^\bullet V_K,\iota,\lan,\ran)$.

For a geometric point $\ov{x}\in \M(K), K=\ov{K}$, the geometric fiber $\pi^{-1}(\pi(\ov{x}))$ is then bijective to the set of cosets $G(\Q_p)/G(\Z_p)$, see \cite{RZ} proposition 5.37. By \cite{dJ2} and \cite{Chen}, $\pi: \M\ra \Fm^a$ is an \'{e}tale covering map in the sense that, $\forall y\in \Fm^a$, there exists an open neighborhood $\mathcal{U}\subset \Fm^a$ such that $\pi^{-1}(\mathcal{U})$ is a disjoint union of spaces $\mathcal{V}_i$, each restriction map $\pi_{|\mathcal{V}_i}:\mathcal{V}_i\ra \mathcal{U}$ is finite \'{e}tale. In particular, the Hecke orbit $\pi^{-1}(\pi(x))$ is a discrete subspace of $\M$. Thus it makes sense to talk ``fundamental domain'' for the Hecke action of $G(\Q_p)$ on $\M$. In the following section, we will consider the action of $G(\Q_p)\times J_b(\Q_p)$ on $\M$ and find some ``fundamental domain'' for this action.
\\

We explain how is the Hecke action of $G(\Q_p)$ on a Hecke orbit $\pi^{-1}(\pi(x))$. We first look at the geometric Hecke orbits. Fix a geometric point $\ov{x}$ over $x$ and denote their images under the $\pi$ by $\ov{y}$ and $y$ respectively, let $\pi_1(\Fm^a,\ov{y})$ be the \'{e}tale fundamental group of $\Fm^a$ defined by de Jong in \cite{dJ2}. Then by definition there is an action of $\pi_1(\Fm^a,\ov{y})$ on the geometric Hecke orbit $\pi^{-1}(\ov{y})$. Let us fix a point in this orbit, say $\ov{x}$, then we have an identification $\pi^{-1}(\ov{y})=G(\Q_p)/G(\Z_p)$. Thus $\pi_1(\Fm^a,\ov{y})$ acts on $G(\Q_p)/G(\Z_p)$. On this set $G(\Q_p)/G(\Z_p)$ we have two other group actions, namely the group $G(\Q_p)$ and the Galois group $Gal(\ov{\mathcal{H}(y)}/\mathcal{H}(y))$. The relation between these actions is as follows. Recall the $\Z_p$-local system $\mathcal{T}$ defined by the universal \'{e}table unitary $p$-divisible group on the $p$-adic analytic space $\M$, it descends to a $\Q_p$-local system on $\Fm^a$, which we still denote by $\mathcal{T}$. Since $\Fm^a$ is connected, by de Jong \cite{dJ2} theorem 4.2. we have the equivalence of categories
\[\begin{split}\Q_p-\mathcal{L}oc_{\Fm^a}&\st{\sim}{\longrightarrow} \textrm{Rep}_{\Q_p}(\pi_1(\Fm^a,\ov{y}))\\&\mathcal{E}\mapsto \mathcal{E}_{\ov{y}}\end{split}\]by the monodromy representation functor. Here $\Q_p-\mathcal{L}oc_{X}$ is the category of $\Q_p$-local systems over a Berkovich space $X$ introduced in loc. cit. definition 4.1. One can translate the above equivalence to the case with additional structures by using Tannakian language. In particular the $\Q_p$-local system  $\mathcal{T}$ over $\Fm^a$ defines a representation of the fundamental group:
\[\rho:\pi_1(\Fm^a,\ov{y})\ra G(\Q_p).\] Then the above action of $\pi_1(\Fm^a,\ov{y})$ on $G(\Q_p)/G(\Z_p)$ is compatible with the natural action of $G(\Q_p)$ on the quotient set, through the morphism $\rho$. On the other hand, there is the natural action of the Galois group $Gal(\ov{\mathcal{H}(y)}/\mathcal{H}(y))$ on the geometric Hecke orbit $\pi^{-1}(\ov{y})$, and the quotient set is the Hecke orbit of $x$
\[\pi^{-1}(\pi(x))=\pi^{-1}(y)\simeq \pi^{-1}(\ov{y})/Gal(\ov{\mathcal{H}(y)}/\mathcal{H}(y)).\]
The point $y$ in $\Fm^a$ defines a morphism of fundamental groups
\[\pi_1(y,\ov{y})=Gal(\ov{\mathcal{H}(y)}/\mathcal{H}(y))\ra \pi_1(\Fm^a,\ov{y}),\] and the action of $Gal(\ov{\mathcal{H}(y)}/\mathcal{H}(y))$ and $\pi_1(\Fm^a,\ov{y})$ is compatible on $\pi^{-1}(\ov{y})$ through the above morphism. Thus the three group $\pi_1(\Fm^a,\ov{y}),G(\Q_p)$ and $Gal(\ov{\mathcal{H}(y)}/\mathcal{H}(y))$ act compatibly on $G(\Q_p)/G(\Z_p)$ via the morphisms
 \[Gal(\ov{\mathcal{H}(y)}/\mathcal{H}(y))\ra \pi_1(\Fm^a,\ov{y})\ra G(\Q_p).\]Here although we will not need it in the following, we remark that the monodromy representations of geometric fundamental groups $\pi_1(\M\times\mathbb{C}_p,\ov{x})$ and $\pi_1(\Fm^a\times\mathbb{C}_p,\ov{y})$ factor through $G^{der}(\Z_p)$ and $G^{der}(\Q_p)$ respectively:
 \[\pi_1(\M\times\mathbb{C}_p,\ov{x})\longrightarrow G^{der}(\Z_p)\]
 \[\pi_1(\Fm^a\times\mathbb{C}_p,\ov{y})\longrightarrow G^{der}(\Q_p).\]Moreover these monodromy representations are maximal in the sense that both images are dense in the targets respectively, cf. \cite{Chen} th\'eor\`eme 5.1.2.1. Let $\Gamma$ be the image of the later. Note that $\pi_1(\Fm^a\times\mathbb{C}_p,\ov{y})$ also acts on $G(\Q_p)/G(\Z_p)$ compatibly with the action of $G(\Q_p)$. By de Jong's description of $\M$ in term of lattice in the $\Q_p$-local system $\mathcal{T}$ over $\Fm^a$, one have the bijection \[\pi_0(\M)\simeq \Gamma\setminus G(\Q_p)/G(\Z_p).\]

 The action of the Hecke correspondences on the geometric orbit is then quite easy: with the fixed point $\ov{x}$ in $\pi^{-1}(\ov{y})$ and the identification $\pi^{-1}(\ov{y})=G(\Q_p)/G(\Z_p)$, a correspondence $G(\Z_p)gG(\Z_p)$ sends a coset $hG(\Z_p)$ to the set of cosets $\{h'G(\Z_p)|\,h'G(\Z_p)\subset G(\Z_p)g^{-1}hG(\Z_p)\}$. Thus the Hecke action is compatible with the natural action of $G(\Q_p)$ on $G(\Q_p)/G(\Z_p)$, and thus compatible with the action of the Galois group $Gal(\ov{\mathcal{H}(y)}/\mathcal{H}(y))$. The Hecke action on $\pi^{-1}(\ov{y})/Gal(\ov{\mathcal{H}(y)}/\mathcal{H}(y))$ then induces the Hecke action on the orbit $\pi^{-1}(\pi(x))$.

\section{Harder-Narasimhan filtration of finite flat group schemes}
In order to study the Rapoport-Zink space $\M$, from this section to the end of section 5, we will turn to the study of finite flat group schemes and $p$-divisible groups over complete valuation rings following the ideas in \cite{F2} and \cite{F3}. In this section we first recall briefly the theory of Harder-Narasimhan filtration of finite flat group schemes which is presented in detail in \cite{F2}.

Let $K|\Q_p$ be a complete rank one valuation field extension, $O_K$ be the integer ring of $K$, and $\C$ be the exact category of commutative finite flat group schemes with order some power of $p$ over $SpecO_K$. For $G\in\C$, recall there is an operation of schematic closure which is the inverse of taking generic fibers, and induces a bijection between the following two finite sets
\[\{\textrm{closed subgroups of}\,G_K\}\stackrel{\sim}{\longrightarrow}\{\textrm{finite flat subgroups of }\,G\,\textrm{over}\,O_K\}.\]
 There are two additive functions \[ht: \C\ra \mathbb{N}\]\[deg: \C\ra\R_{\geq0},\] where $htG$ is the height of $G\in\C$, and $degG$ is the valuation of the discriminant of $G$, which is defined as
  \[degG=\sum a_i,\, \textrm{if}\, \omega_G=\oplus O_K/p^{a_i}O_K.\] See \cite{F2} section 3 for more properties of the function $deg$. For a group scheme $0\neq G\in \C$, we set
\[\mu(G):=\frac{degG}{htG},\]and call it the slope of $G$. The basic properties of the slope function is as follows.
\begin{itemize}
 \item One always have $\mu(G)\in [0,1]$, with $\mu(G)=0$ if and only if $G$ is \'{e}tale and $\mu(G)=1$ if and only if $G$ is multiplicative.
 \item If $G^D$ is the Cartier dual of $G$ then $\mu(G^D)=1-\mu(G)$.
 \item For a $p$-divisible group $H$ of dimension $d$ and height $h$ over $O_K$, then for all $n\geq 1$ one has $\mu(H[p^n])=\frac{d}{h}$.
 \item If \[0\ra G'\ra G\ra G''\ra0\] is an exact sequence of non trivial groups in $\C$, then we have $inf\{\mu(G'),\mu(G'')\}\leq \mu(G)\leq sup\{\mu(G'),\mu(G'')\}$, and if $\mu(G')\neq \mu(G'')$ we have in fact $inf\{\mu(G'),\mu(G'')\}< \mu(G)< sup\{\mu(G'),\mu(G'')\}$.
 \item If $f: G\ra G'$ is a morphism which induces an isomorphism between their generic fibers, then we have $\mu(G)\leq \mu(G')$, with equality holds if and only if $f$ is an isomorphism.
 \item If \[0\longrightarrow G'\stackrel{u}{\longrightarrow} G\stackrel{v}{\longrightarrow} G''\] is a sequence of non trivial groups such that $u$ is a closed immersion, $u\circ v=0$, and the morphism induced by $v$
\[G/G'\ra G''\] is an isomorphism in generic fibers, then we have \begin{itemize}
 \item $\mu(G)\leq sup\{\mu(G'),\mu(G'')\}$;
 \item if $\mu(G')\neq \mu(G'')$ then $\mu(G)< sup\{\mu(G'),\mu(G'')\}$;
 \item if $\mu(G)=sup\{\mu(G'),\mu(G'')\}$ then $\mu(G)=\mu(G')=\mu(G'')$ and the sequence $0\ra G'\ra G\ra G''\ra0$ is exact.
 \end{itemize}

 \end{itemize}

For a group $0\neq G\in \C$, we call $G$ semi-stable if for all $0 \subsetneq G'\subset G$ we have $\mu(G')\leq \mu(G)$. In \cite{F2}, Fargues proved the following theorem.
\begin{theorem}[\cite{F2}, Th\'eor\`eme 2]There exists a Harder-Narasimhan type filtration for all $0\neq G\in\C$, that is a chain of finite flat subgroups in $\C$
\[0=G_0\subsetneq G_1\subsetneq \cdots \subsetneq G_r=G,\]with the group schemes $G_{i+1}/G_i$ are semi-stable for all $i=0,\dots,r-1$, and \[\mu(G_1/G_0)>\mu(G_2/G_1)>\cdots >\mu(G_r/G_{r-1}).\] Such a filtration is then unique characterised by these properties.
\end{theorem}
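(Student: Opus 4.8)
The plan is to run the classical Harder--Narasimhan formalism inside the category $\C$, using only two structural inputs: the additivity of $ht$ and $\deg$ (so that on a short exact sequence $0\to G'\to G\to G''\to0$ the slope $\mu(G)$ is the $ht$-weighted average of $\mu(G')$ and $\mu(G'')$), and the finiteness of the set of finite flat subgroups of a fixed $G$, coming from its bijection with the set of closed subgroups of the generic fibre $G_K$. The crucial first step is to produce, for each $0\neq G\in\C$, a canonical \emph{maximal destabilizing subgroup}. Since the subgroups of $G$ form a finite set, $\mu_{\max}(G):=\max\{\mu(G')\mid 0\neq G'\subseteq G\}$ is attained, and I would show there is a \emph{largest} subgroup attaining it. Given $G',G''\subseteq G$ both of slope $\mu_{\max}(G)$, consider $G'\cap G''$ and $G'+G''$ (the schematic closures in $G$ of $G'_K\cap G''_K$ and $G'_K+G''_K$). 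Semi-stability of $G''$ forces $\mu\bigl(G''/(G'\cap G'')\bigr)\geq\mu_{\max}(G)$; since $G''/(G'\cap G'')\to(G'+G'')/G'$ is an isomorphism on generic fibres, monotonicity of $\mu$ gives $\mu\bigl((G'+G'')/G'\bigr)\geq\mu_{\max}(G)$, and then the averaging property in $0\to G'\to G'+G''\to(G'+G'')/G'\to0$ yields $\mu(G'+G'')\geq\mu_{\max}(G)$, hence $=\mu_{\max}(G)$. Therefore the subgroup $G_1$ generated by all slope-$\mu_{\max}(G)$ subgroups again has slope $\mu_{\max}(G)$; it is semi-stable because a destabilizing subgroup of $G_1$ would destabilize $G$. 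I expect this to be the main obstacle, since one must keep track of the difference between group-scheme quotients and schematic closures of images, and the argument rests entirely on the monotonicity of $\mu$ under morphisms that are generic isomorphisms.

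Granting the maximal destabilizing subgroup, the filtration is built by iteration: put $G_1$ as above, let $G_2$ be the preimage in $G$ of the maximal destabilizing subgroup of $G/G_1$, and continue until the quotient is trivial; each graded piece $G_{i+1}/G_i$ is semi-stable by construction. The slopes strictly decrease: if some $\overline{H}\subseteq G/G_i$ had $\mu(\overline{H})\geq\mu(G_{i+1}/G_i)=\mu_{\max}(G/G_i)$, its preimage $H\subseteq G$ would sit in $0\to G_{i+1}/G_i\to H\to\overline{H}\to0$, forcing $\mu(H)=\mu_{\max}(G/G_i)$ by the averaging property, so $H$ would strictly contain the maximal destabilizing subgroup $G_{i+1}/G_i$ while attaining $\mu_{\max}$ --- a contradiction. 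The process terminates because $ht$ is additive and strictly positive on nonzero groups, so $ht(G)=\sum_i ht(G_{i+1}/G_i)\geq r$.

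For uniqueness it suffices to show that for \emph{any} filtration $0=G_0\subsetneq G_1\subsetneq\cdots\subsetneq G_r=G$ with semi-stable graded pieces of strictly decreasing slopes, the first step $G_1$ is the canonical maximal destabilizing subgroup; one then applies this to $G/G_1$ and induces on the length. I would prove, by induction on $r$, that every subgroup $G'\subseteq G$ has $\mu(G')\leq\mu(G_1)$, with equality only if $G'\subseteq G_1$: for $r=1$ this is semi-stability of $G=G_1$; in general, write $\overline{G'}$ for the image of $G'$ in $G/G_1$, so that the inductive hypothesis applied to the length-$(r-1)$ filtration of $G/G_1$ gives $\mu(\overline{G'})\leq\mu(G_2/G_1)<\mu(G_1)$, while $G'\cap G_1$ has slope $\leq\mu(G_1)$ by semi-stability of $G_1$ and the natural map $G'/(G'\cap G_1)\to\overline{G'}$ is a generic isomorphism, whence $\mu(G'/(G'\cap G_1))\leq\mu(\overline{G'})$; the averaging property in $0\to G'\cap G_1\to G'\to G'/(G'\cap G_1)\to0$ now gives $\mu(G')\leq\mu(G_1)$, strictly unless $G'\subseteq G_1$. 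Hence $\mu(G_1)=\mu_{\max}(G)$ and $G_1$ contains every slope-$\mu_{\max}(G)$ subgroup, so it is the canonical one; iterating over the successive quotients gives $G_i=G_i'$ for all $i$ and in particular $r=s$.
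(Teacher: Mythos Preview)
The paper does not give its own proof of this theorem; it is quoted from Fargues \cite{F2}, Th\'eor\`eme~2, where the argument is exactly the classical Harder--Narasimhan formalism you outline: build the maximal destabilizing subgroup (using that sums of maximal-slope subgroups again have maximal slope), iterate on the quotient, and prove uniqueness by showing the first step of any such filtration recovers the canonical maximal destabilizing subgroup. Your proposal is correct and matches Fargues's approach essentially line for line; the only substantive feature specific to $\C$ that you correctly isolate is that ``image'' must be read as ``schematic closure of the generic image'', so that the map $G''/(G'\cap G'')\to (G'+G'')/G'$ is only a generic isomorphism, and one needs the monotonicity of $\mu$ under such maps rather than exactness.

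One small slip to clean up: in your paragraph on strict decrease of slopes, the indices are garbled. You take $\overline{H}\subseteq G/G_i$ and then write its ``preimage $H\subseteq G$'' sitting in $0\to G_{i+1}/G_i\to H\to\overline{H}\to 0$, which cannot be right as stated. What you intend is to work inside $G/G_i$: take $\overline{H}\subseteq (G/G_i)\big/(G_{i+1}/G_i)\simeq G/G_{i+1}$ with $\mu(\overline{H})\geq \mu(G_{i+1}/G_i)$, let $H\subseteq G/G_i$ be its preimage, and use $0\to G_{i+1}/G_i\to H\to \overline{H}\to 0$ to get $\mu(H)\geq \mu_{\max}(G/G_i)$, contradicting the maximality of $G_{i+1}/G_i$ inside $G/G_i$. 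With that correction the argument is complete.
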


So $G$ is semi-stable if and only if its Harder-Narasimhan filtration is $0\subsetneq G$. We can define a concave polygon $HN(G)$ of any $0\neq G\in\C$ by its Harder-Narasimhan filtration, and call it the Harder-Narasimhan polygon of $G$. It is defined as function
\[HN(G): [0,htG]\ra [0,degG],\]
such that \[HN(G)(x)=degG_i+\mu(G_{i+1}/G_i)(x-htG_i)\]if $x\in [htG_i,htG_{i+1}]$. We will also identify $HN(G)$ with its graph, that is a polygon with starting point $(0,0)$, terminal point $(htG,degG)$, and over each integral $[htG_i,htG_{i+1}]$ it is a line of slope $\mu(G_{i+1}/G_i)$. An important property of this polygon is that, for all finite flat subgroups $G'\subset G$, the point $(htG',degG')$ is on or below the polygon $HN(G)$, that is $HN(G)$ is the concave envelop of the points $(htG',degG')$ for all $G'\subset G$. We denote $\mu_{max}(G)$ the maximal slope of $HN(G)$, and $\mu_{min}(G)$ the minimal slope of $HN(G)$.

We recall some useful facts.
\begin{proposition}[\cite{F2}, Proposition 10]Let $0\ra G'\ra G\stackrel{v}{\ra} G''\ra0$ be an exact sequence of finite flat group schemes in $\mathcal{C}$. Suppose $\mu_{min}(G')\geq\mu_{max}(G'')$. Then if $0=G'_0\subsetneq G'_1\subsetneq\cdots\subsetneq G'_{r'}=G'$ is the Harder-Narasimhan filtration of $G'$ and
$0=G''_0\subsetneq G''_1\subsetneq\cdots\subsetneq G''_{r''}=G''$ that of $G''$, then the Harder-Narasimhan filtration of $G$ is
\[0=G'_0\subsetneq G'_1\subsetneq\cdots\subsetneq G'_{r'}=G'\subsetneq v^{-1}(G''_0)\subsetneq v^{-1}(G_1'')\subsetneq\cdots\subsetneq v^{-1}(G''_{r''})=G\] if $\mu_{min}(G')>\mu_{max}(G'')$, and
\[0=G'_0\subsetneq G'_1\subsetneq\cdots\subsetneq G'_{r'-1}\subsetneq v^{-1}(G_1'')\subsetneq\cdots\subsetneq v^{-1}(G''_{r''})=G\]if $\mu_{min}(G')=\mu_{max}(G'')$.
 In particular the extension of two semi-stable groups of the same slope $\mu$ is semi-stable of slope $\mu$.
 \end{proposition}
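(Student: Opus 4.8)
The strategy is to write down the filtration asserted in the statement, check that its successive quotients are semi-stable and that the associated slopes are strictly decreasing, and then appeal to the \emph{uniqueness} in the Harder--Narasimhan theorem recalled above to conclude that this filtration is necessarily $HN(G)$. Write $0=G'_0\subsetneq\cdots\subsetneq G'_{r'}=G'$ and $0=G''_0\subsetneq\cdots\subsetneq G''_{r''}=G''$ for the Harder--Narasimhan filtrations of $G'$ and $G''$, with slopes $\mu'_1>\cdots>\mu'_{r'}$ and $\mu''_1>\cdots>\mu''_{r''}$, so that $\mu_{min}(G')=\mu'_{r'}$ and $\mu_{max}(G'')=\mu''_1$.

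The first step is to note that $v$ identifies the ``upper'' graded pieces with those of $G''$: since $\ker v=G'$ is contained in every $v^{-1}(G''_j)$, applying $v$ and taking schematic images gives a short exact sequence $0\to G'\to v^{-1}(G''_j)\to G''_j\to 0$ in $\C$ for each $j$, hence $v^{-1}(G''_{j+1})/v^{-1}(G''_j)\cong G''_{j+1}/G''_j$, which is semi-stable of slope $\mu''_{j+1}$. The ``lower'' graded pieces $G'_{i+1}/G'_i$ are semi-stable of slope $\mu'_{i+1}$ by construction. In the case $\mu_{min}(G')>\mu_{max}(G'')$ the two parts are joined at $G'_{r'}=G'=v^{-1}(G''_0)$, so the quotients adjacent to the junction are $G'_{r'}/G'_{r'-1}$ (slope $\mu'_{r'}$) and $v^{-1}(G''_1)/G'\cong G''_1$ (slope $\mu''_1$), both already known semi-stable; the full slope list is $\mu'_1>\cdots>\mu'_{r'}>\mu''_1>\cdots>\mu''_{r''}$, strictly decreasing precisely because $\mu_{min}(G')>\mu_{max}(G'')$, and uniqueness yields the first formula.

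In the case $\mu_{min}(G')=\mu_{max}(G'')=:\mu$ the only genuinely new graded piece is the one straddling the junction, $H:=v^{-1}(G''_1)/G'_{r'-1}$, which fits into a short exact sequence $0\to G'_{r'}/G'_{r'-1}\to H\to G''_1\to 0$ whose outer terms are semi-stable of the common slope $\mu$; additivity of $deg$ and $ht$ then forces $\mu(H)=\mu$. I claim $H$ is semi-stable. Otherwise pick $0\subsetneq H'\subset H$ with $\mu(H')>\mu$, let $\ov{H'}\subset G''_1$ be the schematic image of $H'$ and $K'=H'\cap(G'_{r'}/G'_{r'-1})$ the kernel of $H'\to G''_1$; on generic fibres $0\to K'_K\to H'_K\to\ov{H'}_K\to 0$ is exact, so the last of the slope inequalities recalled above (the one for sequences $0\to A\to B\to C$ exact only on generic fibres) gives $\mu(H')\leq sup\{\mu(K'),\mu(\ov{H'})\}\leq\mu$, since $G'_{r'}/G'_{r'-1}$ and $G''_1$ are semi-stable of slope $\mu$ --- a contradiction. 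Hence $H$ is semi-stable of slope $\mu$; the slope list reads $\mu'_1>\cdots>\mu'_{r'-1}>\mu>\mu''_2>\cdots>\mu''_{r''}$, strictly decreasing because $\mu'_{r'-1}>\mu'_{r'}=\mu=\mu''_1>\mu''_2$, and uniqueness yields the second formula. The final assertion is the special case $r'=r''=1$: there the second formula degenerates to $0\subsetneq v^{-1}(G'')=G$, so $G$ is semi-stable, and $\mu(G)=degG/htG=\mu$ by additivity.

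The main obstacle is the equality case $\mu_{min}(G')=\mu_{max}(G'')$ --- precisely, showing the junction piece $H$ is semi-stable --- since one cannot invoke the ``extension of semi-stables'' clause, which is itself part of the conclusion; the argument above avoids the circularity by using only the slope inequalities for sequences exact on generic fibres. A lesser, bookkeeping point is the identification $v^{-1}(G''_{j+1})/v^{-1}(G''_j)\cong G''_{j+1}/G''_j$ inside $\C$, which rests on the bijection between closed subgroups of the generic fibre and their finite flat models, together with compatibility of schematic images with quotients.
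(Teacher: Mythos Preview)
The paper does not prove this proposition; it is quoted from \cite{F2}, Proposition 10, as part of the review in Section~3, so there is no ``paper's own proof'' to compare against. Your argument is correct and is the standard one: exhibit the candidate filtration, check that its graded pieces are semi-stable with strictly decreasing slopes, and invoke uniqueness from Theorem~3.1.

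Two small remarks. First, in the equality case your contradiction argument for the junction piece $H$ tacitly assumes both $K'$ and $\ov{H'}$ are nonzero when you apply the slope inequality for sequences exact on generic fibres; the degenerate cases $K'=0$ or $\ov{H'}=0$ are of course immediate (then $H'$ maps, generically isomorphically, into one of the two semi-stable pieces of slope $\mu$), but it is worth saying so explicitly since the listed property is stated for non-trivial groups. Second, on the bookkeeping point you flag at the end: the flatness of $v^{-1}(G''_j)$ over $O_K$ needs no appeal to schematic closure, because $v:G\to G''$ is itself faithfully flat ($G$ is a $G'$-torsor over $G''$), so $v^{-1}(G''_j)=G\times_{G''}G''_j$ is flat over $O_K$ by base change, and the identification $v^{-1}(G''_{j+1})/v^{-1}(G''_j)\cong G''_{j+1}/G''_j$ holds in $\C$ on the nose.
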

 \begin{proposition}[\cite{F2}, Corollaire 6]
 For $\mu\in[0,1]$ fixed, the category of semi-stable finite flat group schemes of slope $\mu$ and the trivial group 0 is a sub abelian category of the category of fppf sheaves over $SpecO_K$.
 \end{proposition}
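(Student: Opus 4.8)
The plan is to exhibit the category in question --- write $\mathcal{C}^{ss}_\mu$ for the full subcategory of the abelian category of fppf abelian sheaves on $\mathrm{Spec}\,O_K$ consisting of $0$ together with all semi-stable finite flat group schemes of slope $\mu$ --- as an abelian subcategory by verifying the standard criterion: it contains $0$, is stable under finite direct sums, and for every morphism $f\colon G_1\to G_2$ in it the kernel and cokernel computed in fppf sheaves again lie in it (the comparison $\mathrm{coim}(f)\to\mathrm{im}(f)$ will then drop out as an isomorphism). Stability under $\oplus$ is quick: the split exact sequence $0\to G_1\to G_1\oplus G_2\to G_2\to 0$ together with the preceding proposition (\cite{F2}, Proposition~10), that an extension of two semi-stable groups of slope $\mu$ is semi-stable of slope $\mu$, and the additivity of $\deg$ and $ht$ take care of it. So the real content is a single morphism $f\colon G_1\to G_2$.

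First I would pass to generic fibres, where $f_K$ is a morphism of finite étale $K$-group schemes, and take schematic closures: let $N\subset G_1$ be the schematic closure of $\ker f_K$ and $I\subset G_2$ that of $\mathrm{im}\,f_K$, both finite flat subgroup schemes (the latter being the scheme-theoretic image of $f$). Then $N\subseteq f^{-1}(0)$, so $f$ factors as $G_1\twoheadrightarrow G_1/N\xrightarrow{\,f'\,}I\hookrightarrow G_2$ with $f'$ an isomorphism on generic fibres. The crucial claim is that $f'$ is already an isomorphism $G_1/N\xrightarrow{\sim}I$ over $O_K$, and I would prove it by slope bookkeeping. Since $N\subset G_1$ is a subgroup of a semi-stable group of slope $\mu$ we get $\mu(N)\le\mu$, so the exact sequence $0\to N\to G_1\to G_1/N\to 0$ (exact in fppf sheaves, $G_1\to G_1/N$ being faithfully flat) and the slope inequalities recalled above force $\mu(G_1/N)\ge\mu$; on the other side $I\subset G_2$ gives $\mu(I)\le\mu$, while $f'\colon G_1/N\to I$, being a generic-fibre isomorphism of finite flat group schemes, satisfies $\mu(G_1/N)\le\mu(I)$ with equality if and only if $f'$ is an isomorphism. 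Chaining the inequalities pins $\mu(G_1/N)=\mu(I)=\mu$, so $f'$ is an isomorphism, and additivity of $\deg$ and $ht$ then forces $\mu(N)=\mu$ as well (when $N\neq0$; if $N=0$ the kernel is $0\in\mathcal{C}^{ss}_\mu$).

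Once $f'\colon G_1/N\xrightarrow{\sim}I\hookrightarrow G_2$ is known, the rest is formal. As $f'$ is a closed immersion it is a monomorphism of fppf sheaves, so the fppf kernel of $f$ is $N$, the fppf image is $I$, the fppf cokernel is the finite flat quotient $G_2/I$, and the canonical map $\mathrm{coim}(f)=G_1/N\to I=\mathrm{im}(f)$ is $f'$, an isomorphism. For semi-stability of the three objects: $I$ is a subgroup of the semi-stable $G_2$ with $\mu(I)=\mu(G_2)=\mu$, so any $I'\subset I$ has $\mu(I')\le\mu=\mu(I)$, i.e. $I$ (hence $G_1/N\cong I$) is semi-stable of slope $\mu$; likewise $N\subset G_1$ with $\mu(N)=\mu(G_1)$ is semi-stable of slope $\mu$; and for $G_2/I$, given $0\subsetneq Q'\subset G_2/I$ with preimage $\widetilde Q\subset G_2$, the exact sequence $0\to I\to\widetilde Q\to Q'\to0$ together with $\mu(I)=\mu$ and $\mu(\widetilde Q)\le\mu$ forces $\mu(Q')\le\mu$ (otherwise the slope inequalities would give $\mu(\widetilde Q)>\mu$), so $G_2/I$ is semi-stable of slope $\mu$. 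Hence $\mathcal{C}^{ss}_\mu$ is closed under kernels, cokernels and images in fppf sheaves, so it is an abelian subcategory.

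The step I expect to be the main obstacle is precisely the identification of the fppf-theoretic kernel and cokernel of $f$ with the schematic-closure objects $N$ and $G_2/I$: the scheme-theoretic kernel $f^{-1}(0)$ need not be $O_K$-flat, so one cannot take kernels naively in schemes, and the whole argument is forced to route through the isomorphism claim for $f'$. That claim in turn has no input other than the rigidity fact that a generic-fibre isomorphism of finite flat group schemes with equal slopes is an isomorphism, so the slope computation --- additivity of $\deg$ and $ht$ combined with semi-stability of $G_1$ and $G_2$ --- must be organized carefully enough to pin the slopes of $N$, $G_1/N$ and $I$ all to $\mu$; everything else is routine manipulation with the slope inequalities listed before the proposition.
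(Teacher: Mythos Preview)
The paper does not actually prove this proposition; it is quoted from \cite{F2}, Corollaire 6, as part of the review of Fargues's theory in Section~3. Your argument is correct and is essentially the standard one: the heart of the matter is indeed the rigidity statement (the fifth bullet point in Section~3) that a morphism of finite flat group schemes which is an isomorphism on generic fibres and has equal slopes on source and target is an isomorphism, and you have organized the slope inequalities coming from semi-stability of $G_1$ and $G_2$ correctly to force $\mu(N)=\mu(G_1/N)=\mu(I)=\mu$ and hence $f'\colon G_1/N\xrightarrow{\sim}I$. Once that is in hand, the identification of the sheaf-theoretic kernel with $N$ (rather than the possibly non-flat scheme-theoretic kernel $f^{-1}(0)$) and of the cokernel with $G_2/I$ is immediate from the factorization, and the remaining semi-stability checks for $N$, $I$, and $G_2/I$ are routine consequences of the slope inequalities, exactly as you wrote.
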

 \begin{proposition}[\cite{F2}, Corollaire 7]
 For a semi-stable group $G$, the kernel of multiplication by $p^m$ is flat and semi-stable of slope $\mu(G[p^m])=\mu(G)$. If $p^mG\neq 0$ then $p^mG$ is also semi-stable of slope $\mu(p^mG)=\mu(G)$.
 \end{proposition}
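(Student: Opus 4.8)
The plan is to read off both assertions from the preceding Proposition (\cite{F2}, Corollaire 6) by applying it to multiplication by $p^m$. Since $G$ is semi-stable it is, by definition, a semi-stable finite flat group scheme of slope $\mu:=\mu(G)$, hence an object of the abelian category $\C^{ss}_\mu$ of semi-stable finite flat group schemes of slope $\mu$ (together with the trivial group $0$). The morphism $[p^m]\colon G\ra G$ is a map of fppf abelian sheaves over $SpecO_K$ whose source and target both lie in $\C^{ss}_\mu$, hence it is a morphism in $\C^{ss}_\mu$. Since the inclusion $\C^{ss}_\mu\hookrightarrow(\textrm{fppf sheaves})$ is exact, the kernel and the image of $[p^m]$ formed as fppf sheaves coincide with those formed inside $\C^{ss}_\mu$; in particular each of them is either $0$ or a semi-stable finite flat group scheme of slope $\mu$.

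It remains to match these sheaf-theoretic objects with the groups in the statement and to exclude the trivial cases. The fppf kernel of $[p^m]\colon G\ra G$ is the $p^m$-torsion subsheaf $G[p^m]\subset G$, and for $m\geq 1$ it is nonzero: as $G\neq 0$ has order a power of $p$, its generic fiber $G_K$ is a nontrivial \'{e}tale group scheme killed by a power of $p$, so $G_K[p^m]\neq 0$ and a fortiori $G[p^m]\neq 0$. Hence $G[p^m]$ is flat and semi-stable, and being an object of $\C^{ss}_\mu$ it has slope $\mu$, that is $\mu(G[p^m])=\mu(G)$. Likewise the fppf image of $[p^m]$ is exactly the subgroup $p^mG\subset G$; if $p^mG\neq 0$ it is, by the same reasoning, a semi-stable finite flat group scheme of slope $\mu$, i.e. $\mu(p^mG)=\mu(G)$.

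The only point that genuinely requires care is that the fppf-sheaf kernel and image of $[p^m]$ are representable by finite flat group schemes over $O_K$ and agree with the usual $G[p^m]$ and $p^mG$ — in other words the flatness — and this is precisely what \cite{F2}, Corollaire 6 provides for morphisms inside $\C^{ss}_\mu$. So there is no real obstacle here: the substance is already carried by \cite{F2}, Corollaire 6 (and, behind it, \cite{F2}, Proposition 10). If one prefers, once flatness of $G[p^m]$ and $p^mG$ is known one can recover the slope equalities more directly from the slope inequality for exact sequences applied to $0\ra G[p^m]\ra G\ra p^mG\ra0$, using that $\mu(G[p^m])\le\mu(G)$ and $\mu(p^mG)\le\mu(G)$ by semi-stability of $G$; semi-stability of $G[p^m]$ and of $p^mG$ is then automatic, since any finite flat subgroup of $G[p^m]$ or of $p^mG$ is a finite flat subgroup of $G$.
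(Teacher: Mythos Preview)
Your argument is correct. The paper itself gives no proof of this proposition; it merely records the statement and cites \cite{F2}, Corollaire 7. Your derivation from the preceding proposition (\cite{F2}, Corollaire 6) --- applying the abelian-subcategory property of $\C^{ss}_\mu$ to the endomorphism $[p^m]$ of $G$ --- is exactly the intended route: Corollaire 7 in \cite{F2} is stated immediately after Corollaire 6 precisely because it is an immediate consequence of it, and your write-up makes that dependence explicit and clean. The alternative check at the end via the exact sequence $0\to G[p^m]\to G\to p^mG\to 0$ and the slope inequalities is also fine, though as you note it still relies on Corollaire 6 for the flatness.
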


The Harder-Narasimhan filtration of finite flat group schemes is compatible with additional structures. First, the Harder-Narasimhan filtration of $0\neq G\in \C$ is stable under $End(G)$. So if $\iota: R\ra End(G)$ is some action of an $O_K$-algebra $R$, then very scran $G_i$ in the Harder-Narasimhan filtration of $G$ is a $R$-subgroup via $\iota$. Second, if the Harder-Narasimhan filtration of $G$ is
\[0=G_0\subsetneq G_1\subsetneq \cdots \subsetneq G_r=G\]with slopes $\mu_1>\cdots>\mu_r$,
then the Harder-Narasimhan filtration of the Cartier dual $G^D$ of $G$ is
\[0=(G/G_r)^D\subsetneq (G/G_{r-1})^D\subsetneq\cdots\subsetneq (G/G_1)^D\subsetneq G^D\]
with slopes $1-\mu_r>\cdots>1-\mu_1$. In particular, if $\lambda: G\stackrel{\sim}{\ra}G^D$ is a polarization, then it induces isomorphisms \[ G_i\simeq (G/G_{r-i})^D, i=1,\dots,r\]and thus $\mu_i+\mu_{r+1-i}=1,i=1,\dots,r$.

Finally we have the semi-continuity of the function $HN$ for a family finite flat group schemes.
\begin{theorem}[\cite{F2}, Th\'eor\`eme 3]
 Let $K|\Q_p$ be a complete discrete valuation field extension, and $\mathcal{X}$ be a formal scheme of formally locally of finite type over $SpfO_K$. Let $G$ be a locally free finite group scheme over $\mathcal{X}$ of constant height $h=ht(G)$. Then the map $x\mapsto HN(G_x)$ from the underlying topological space of the associated Berkovich analytic space $\mathcal{X}^{an}$ to the space of Harder-Narasimhan polygons is continuous. Moreover, it is semi-continuous for the G-topology on $\mathcal{X}^{an}$ defined by analytic domains in the following sense. If $\P:[0,h]\ra \R$ is a fixed polygon such that the abscissas its break points are integers. Then
     \[\{x\in|\mathcal{X}^{an}||HN(G_x)\leq \P\}\] is a closed analytic domain in $\mathcal{X}^{an}$, whose associated rigid space is an admissible open in $\mathcal{X}^{rig}$. In particular if the degree function $x\mapsto degG_x$ is constant on $\mathcal{X}^{an}$, then the semi-stable locus
     \[\{x\in|\mathcal{X}^{an}||\,G_x \,\textrm{is semi-stable}\}\]is a closed analytic domain of $\mathcal{X}^{an}$.
 \end{theorem}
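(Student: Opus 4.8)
The statement has two layers, and since both are local on $\mathcal{X}^{an}$ (being a closed analytic domain, respectively an admissible open of $\mathcal{X}^{rig}$, is a local property) I would first reduce to $\mathcal{X}=\mathrm{Spf}\,A$ affine, and work along a basis of affinoid subdomains. The combinatorial reduction is immediate from the definition recalled above: $HN(G_x)$ is the concave envelope of the finite point set $(ht\,G',\deg\,G')$, with $G'$ running over the finite flat subgroup schemes of $G_x$, together with $(0,0)$ and $(h,\deg G_x)$. Hence $HN(G_x)\le\P$ if and only if $\deg(G')\le\P(ht\,G')$ for every such $G'$; equivalently, for each integer $0\le h'\le h$ one has $\max\{\deg G'\mid ht\,G'=h'\}\le\P(h')$. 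So everything comes down to controlling the degrees of all subgroups of fixed order, in family.

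The first ingredient I would set up is the behaviour of $\deg$ in families. For a finite locally free group scheme $\mathcal{H}$ over a formal scheme $\mathfrak{Y}$ formally locally of finite type over $\mathrm{Spf}\,O_K$, the zeroth Fitting ideal of the coherent sheaf $\omega_{\mathcal{H}}$ is, locally on $\mathfrak{Y}$, an invertible ideal generated by some $\delta\in\mathcal{O}_{\mathfrak{Y}}$, and $\deg(\mathcal{H}_y)=v_y(\delta)$; this uses the local structure of $\omega$ of a finite flat group scheme from \cite{F2} \S3 (fibrewise over a rank one valuation ring $\omega$ has a square presentation, so its $\mathrm{Fitt}_0$ is principal with the right valuation, and these glue). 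Consequently $y\mapsto\deg(\mathcal{H}_y)$ is continuous on $\mathfrak{Y}^{an}$ and, for any real $c$, the set $\{y\mid\deg(\mathcal{H}_y)\le c\}=\{y\mid v_y(\delta)\le c\}$ is a closed analytic (Laurent) domain, with admissible open rigid avatar.

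Next I would globalise the subgroups. For each $0\le h'\le h$ let $q_{h'}\colon \mathrm{Gr}_{h'}\ra\mathcal{X}$ be the moduli of finite flat closed subgroup schemes of $G$ of order $p^{h'}$; it is a closed subscheme of a relative Quot scheme, hence proper over $\mathcal{X}$, and carries a universal finite flat subgroup $\mathcal{G}^{\mathrm{univ}}_{h'}\subset G\times_{\mathcal{X}}\mathrm{Gr}_{h'}$. The induced morphism of Berkovich spaces $\pi_{h'}\colon \mathrm{Gr}_{h'}^{an}\ra\mathcal{X}^{an}$ is proper, and its fibre over $x$ is the set of closed subgroups of order $p^{h'}$ of the \'etale group scheme $G_{x,\mathcal{H}(x)}$ (finite flat subgroups over $O_{\mathcal{H}(x)}$ correspond to closed subgroups of the generic fibre, as recalled in \S3), hence finite; so $\pi_{h'}$ is a finite morphism. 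Applying the previous step to $\mathcal{G}^{\mathrm{univ}}_{h'}$ over $\mathrm{Gr}_{h'}$, the function $d_{h'}(z)=\deg\big((\mathcal{G}^{\mathrm{univ}}_{h'})_z\big)$ is continuous on $\mathrm{Gr}_{h'}^{an}$ and $Z_{h'}:=\{z\mid d_{h'}(z)\le\P(h')\}$ is a closed analytic domain. Working locally on $\mathcal{X}^{an}$, where $G^{an}$ is finite \'etale, I would trivialise it after a finite \'etale base change so that the finitely many closed subgroups spread out over the base and $\pi_{h'}$ breaks into finite pieces along which a closed analytic domain can be pushed forward; this gives that $A_{h'}:=\{x\mid\pi_{h'}^{-1}(x)\subset Z_{h'}\}=\mathcal{X}^{an}\setminus\pi_{h'}(\mathrm{Gr}_{h'}^{an}\setminus Z_{h'})$ is a closed analytic domain. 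By the first paragraph $\{x\in\mathcal{X}^{an}\mid HN(G_x)\le\P\}=\bigcap_{h'=0}^{h}A_{h'}$, a finite intersection of closed analytic domains, hence a closed analytic domain, whose rigid avatar is the intersection of admissible opens. The topological continuity of $x\mapsto HN(G_x)$ then follows by combining this with the continuity of $x\mapsto\deg(G_x)=HN(G_x)(h)$ and the local description, for $t\in[0,h]$, of $HN(G_x)(t)$ as the maximum of finitely many affine combinations of the continuous functions $x\mapsto\deg(G')$. The statement about the semi-stable locus is the case where $\P$ is the segment joining $(0,0)$ to $(h,\deg G)$, legitimate precisely when $x\mapsto\deg(G_x)$ is constant.

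The hard part will be the passage from the fibrewise inequalities to a genuine closed analytic domain on $\mathcal{X}^{an}$, i.e. pinning the degrees of the varying subgroup schemes of $G_x$ to global analytic functions: the morphism $\pi_{h'}$ is only finite, not finite \'etale, since the number of Galois-stable subgroups can drop over special points, so one cannot simply descend along it. Making this precise forces one to argue with formal models --- taking schematic closures of the \'etale subgroups of $G^{an}$ after a trivialising finite \'etale base change and comparing the degrees of the fibres of these closures with the intrinsic degrees --- which is exactly where \cite{F2} \S3 does substantial work; the remaining ingredients (the principality of $\mathrm{Fitt}_0(\omega)$ in a family, and the dictionary between closed analytic domains and admissible rigid opens) are comparatively formal once this is granted.
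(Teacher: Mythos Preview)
The paper does not give a proof of this statement: it is quoted verbatim from \cite{F2} (Th\'eor\`eme~3 there) as part of the review section~3, so there is nothing in the present paper to compare your argument against. That said, your outline is essentially the strategy Fargues uses in \cite{F2}: reduce $HN(G_x)\le\P$ to the finitely many inequalities $\max_{ht\,G'=h'}\deg G'\le\P(h')$, control $\deg$ in families via the Fitting ideal of $\omega$, and parametrise subgroups of fixed order by a finite cover of the base obtained by trivialising $G^{an}$ after a finite \'etale base change. You have also correctly identified the genuine technical point, namely that after trivialising the generic fibre one must compare the degree of the schematic closure of a fixed subgroup with the intrinsic subgroup degree fibrewise, and show the resulting conditions cut out closed analytic (Laurent) domains that descend along the finite \'etale cover; this is precisely where \cite{F2} \S3--4 does the work. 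Your sketch is sound; the only caution is that your phrasing via a Quot/Grassmannian over the formal scheme is a slight repackaging --- Fargues argues directly with the trivialising cover and schematic closures rather than a global moduli space --- but the content is the same.
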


 \section{Harder-Narasimhan polygon of $p$-divisible groups}

One can then use the theory of Harder-Narasimhan filtration of finite flat group schemes to study $p$-divisible groups, $p$-adic analytic Rapoport-Zink spaces and Shimura varieties. Let $H/O_K$ be a $p$-divisible group of dimension $d$ and height $h$, where $O_K$ as above. Then for any $m\geq1$, we have a function $HN(H[p^m])$. We normalize it as a function
\[ \begin{split}\frac{1}{m}HN(H[p^m])(m\cdot): [0,h]&\longrightarrow [0,d]\\&x\mapsto \frac{1}{m}HN(H[p^m])(mx).\end{split}\]
 In \cite{F3}, Fargues proved as $m \ra \infty$ these functions uniformly convergent to a continue function, which we call the (normalized) Harder-Narasimhan polygon of $H$
 \[HN(H): [0,h]\ra [0,d],\]and in fact
 \[HN(H)(x)=\inf_{m\geq1}\frac{1}{m}HN(H[p^m])(mx)\]for all $x\in[0,h]$. Moreover, this function is invariant when $H$ varies in its isogeny class: $HN(H)=HN(H')$ for any $p$-divisible group $H'$ isogenous to $H$. A not evident fact is that $HN(H)$ is in fact the polygon attached to a Harder-Narasimhan type filtration of the rational Hodge-Tate module, so it is really a polygon! In the case the valuation on $K$ is discrete, then it is the Harder-Narasimhan polygon of the crystalline representation defined by the rational Tate module $V_p(H)$ for suitably defined slope function, which in turn can also be formulated in the associated admissible filtered isocrystals, see \cite{F3} sections 8, 10, \cite{Sh} section 4.

 As mentioned in the introduction, one of the main results in \cite{F3} is the following inequality between the Harder-Narasimhan and Newton polygons
 \begin{theorem}[\cite{F3}, Th\'eor\`eme 6, 21]
 \[HN(H)\leq Newt(H_k).\]
 \end{theorem}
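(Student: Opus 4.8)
The plan is to reduce the inequality $HN(H) \leq Newt(H_k)$ to the case of discrete valuation rings, where it follows from comparing Harder-Narasimhan slopes of the associated crystalline representation (equivalently of the admissible filtered isocrystal) with Newton slopes, and then to bootstrap to general rank-one valuation rings by an approximation argument. First I would treat the discrete case: if $K$ is a complete discrete valuation field with perfect residue field $k$, then $V_p(H)$ is a crystalline $G_K$-representation, and by the theory reviewed in \cite{F3} sections 8, 10 (and \cite{Sh} section 4) the normalized polygon $HN(H)$ is the Harder-Narasimhan polygon of this crystalline representation for the appropriate slope function, which can be read off from the admissible filtered isocrystal $(D, \mathrm{Fil}^\bullet, \varphi)$ attached to $H$. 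Weak admissibility says exactly that $t_H(D') \leq t_N(D')$ for every sub-isocrystal $D' \subseteq D$, with equality for $D' = D$; the Harder-Narasimhan filtration of the filtered isocrystal is built from the subobjects maximizing $(t_H - t_N)/\dim$, so its polygon lies on or below the Newton polygon $Newt(H_k)$, which is the polygon of $(D, \varphi)$ alone. This gives the inequality over discrete valuation rings essentially for free.

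For a general complete rank-one valuation ring $O_K$, the strategy is to approximate. The point is that $HN(H)$ is defined as $\inf_{m \geq 1} \frac{1}{m} HN(H[p^m])(m \cdot)$, and each $HN(H[p^m])$ is computed inside the finite flat group scheme category $\C$ over $O_K$ using the $\deg$ function, which is insensitive to further extension of the valuation and behaves well under specialization. Fix $m$; the finite flat group scheme $H[p^m]$ together with its HN-filtration is, up to enlarging $K$, defined by finitely many equations with coefficients in $O_K$, so one can find a complete discrete valuation subring $O_{K_0} \subseteq O_K$ over which $H[p^m]$, its additional structure, and (a cofinal system of) its finite flat subgroups are already defined, with the same $\deg$-values — here one uses that $O_K$ is a filtered colimit of such $O_{K_0}$ and that $\deg$ and $ht$ are continuous in the relevant sense. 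Then $HN_{O_K}(H[p^m]) = HN_{O_{K_0}}(H_0[p^m])$, and since $H_0$ over $O_{K_0}$ has the same special fiber (same $Newt(H_k)$), the discrete case gives $\frac{1}{m} HN(H_0[p^m])(m\cdot) \geq$... wait — one must be careful with the direction: what the discrete case gives is $HN(H_0) \leq Newt(H_k)$ and $HN(H_0) = \inf_m \frac{1}{m}HN(H_0[p^m])(m\cdot)$. Taking the infimum over $m$ on both sides of $HN_{O_K}(H[p^m]) = HN_{O_{K_0}}(H_0[p^m])$ yields $HN(H) = HN(H_0) \leq Newt(H_k)$, as desired.

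The main obstacle I anticipate is the descent/approximation step in the general rank-one case: making precise that the Harder-Narasimhan polygon of $H[p^m]$ over $O_K$ is genuinely computed by data defined over a discrete valuation subring. The subtlety is that the HN-filtration of $H[p^m]$ need not descend on the nose, and the infimum over all finite flat subgroups $G' \subseteq H[p^m]$ defining the concave envelope $HN(H[p^m])$ ranges over an a priori large set; one must check that finitely many of these subgroups suffice to pin down the polygon and that they, together with their $\deg$-values, descend. This is exactly the kind of argument Fargues handles via Hodge-Tate modules and Banach-Colmez spaces in \cite{F3}, and where the mild technical hypothesis on $H$ (automatically satisfied for $H$ arising from a point of a Rapoport-Zink space) enters. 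In our unitary setting the inequality with additional structures $HN(H,\iota,\lambda) \leq Newt(H_k,\iota,\lambda)$ then follows immediately, since, as noted in the introduction, the polygons with additional structures are obtained from the plain ones by the same normalization procedure on both sides, so the inequality is preserved.
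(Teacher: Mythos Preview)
Your treatment of the discrete valuation case is correct and matches one of the two explanations the paper records: the paper's primary remark is that the reduction functor $\textrm{pdiv}_{O_K}\otimes\Q \to \textrm{pdiv}_k\otimes\Q$ is exact and preserves height and dimension, from which $HN(H)\leq Newt(H_k)$ is immediate; it then adds that one can equivalently argue via weak admissibility of the associated filtered isocrystal, which is exactly the route you take.

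The genuine gap is in the non-discrete case. Your approximation strategy --- descend $H[p^m]$ and a cofinal family of its finite flat subgroups to a complete discrete valuation subring $O_{K_0}\subset O_K$, then invoke the discrete case --- does not work as stated. A complete non-discrete rank-one valuation ring such as $O_{\Cm_p}$ is \emph{not} a filtered colimit of complete discrete valuation subrings in any sense that would let degrees, heights, and special fibres agree: a general element of $O_K$ need not lie in any such $O_{K_0}$, the residue fields differ, and even if $H[p^m]$ happened to descend you would still need the full $p$-divisible group $H_0$ over $O_{K_0}$ (not just its $p^m$-torsion) to make sense of $Newt((H_0)_{k_0})$ and compare it with $Newt(H_k)$. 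You flag this obstacle yourself at the end and point to Fargues's Banach--Colmez machinery, but that is not a repair of the approximation argument --- it is an entirely different proof, and the paper says so explicitly: for non-discrete $O_K$ the inequality requires the ``modulaire'' hypothesis and is established via the Harder--Narasimhan theory of Hodge--Tate modules and Banach--Colmez spaces (\cite{F3}, sections 10--11). As written, your proposal proves the discrete case and defers the non-discrete case to the literature rather than supplying an argument.
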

 In fact when the base valuation ring $O_K$ is not necessary discrete, one has to assume $H$ is ``modulaire'' in the sense of definition 25 in loc. cit., which is naturally satisfied for $p$-divisible groups coming from points in the Berkovich analytic Rapoport-Zink spaces. The proof of the above theorem for $p$-divisible groups over complete rank one discrete valuation $O_K|\Z_p$ with perfect residue field is easy. It comes from the fact that the reduction functor between the two categories of $p$-divisible groups up to isogenies
   \[\textrm{pdiv}_{O_K}\otimes \Q\longrightarrow \textrm{pdiv}_{k}\otimes \Q\] is exact and preserving the height and dimension functions. One can also rewrite these polygons in terms of the associated filtered isocystal and explain the inequality by the theory of filtered isocystals. For the non-discrete case, Fargues has used heavily $p$-adic Hodge theory and studied the Harder-Narasimhan filtration of the Banach-Colmez spaces. For more detail see section 10, 11 of loc. cit. In \cite{Sh} we have introduced Harder-Narasimhan polygons for $p$-divisible groups with additional structures, which include our unitary $p$-divisible groups in section 2 as a special case. We recall the definition here.

 \begin{definition}[\cite{Sh}, Definition 3.1]
Let $S$ be a formal scheme and $F|\Q_p$ be a finite unramified extension. By a $p$-divisible group with additional structures over $S$ for $F|\Q_p$, we mean
\begin{itemize}
\item in the EL case, a pair $(H,\iota)$, where $H$ is a $p$-divisible group over $S$, and $\iota: O_F\ra \textrm{End}(H)$ is a homomorphism of algebras;
\item in the PEL symplectic case, a triplet $(H,\iota,\lambda)$, where $H$ is a $p$-divisible group over $S$, $\iota: O_F\ra \textrm{End}(H)$ is homomorphism of algebras, $\lambda: (H,\iota)\stackrel{~}{\ra}(H^D,\iota^D)$ is a polarization, i.e. an $O_F$-equivariant isomorphism of $p$-divisible groups. Here $H^D$ is the Cartier-Serre dual of the $p$-divisible group $H$, $\iota^D: O_F\ra \textrm{End}(H^D)=\textrm{End}(H)^{opp}$ is induced by $\iota$, such that $\lambda^D=-\lambda$, under the identification $H=H^{DD}$;
\item in the PEL unitary case, a triplet $(H,\iota,\lambda)$, where $H,\iota$ is similar as the symplectic case, $\lambda: (H,\iota)\stackrel{~}{\ra}(H^D,\iota^D\circ \ast)$ is a polarization, $\ast$ is a nontrivial involution on $F$. Here $\iota^D$ is as above, but such that $\lambda^D=\lambda$, under the identification $H=H^{DD}$.
\end{itemize}
Similarly, one can define finite locally free (=flat, in the case $S$ is noetherian or the spec of a local ring) group schemes with additional structures in the same way.
\end{definition}
 If $(H,\iota,\lambda)$ is a $p$-divisible group with additional structures in the PEL cases, then $\forall n\geq 1, (H[p^n],\iota,\lambda)$ is a finite locally free group schemes with the naturally induced additional structures. Similar remark holds for the $EL$ case.

 Let $F|\Q_p$ be a finite unramified extension of degree $d$, $(H,\iota,\lambda)$ (resp. $(H,\iota)$) be a $p$-divisible group with PEL (resp. EL) additional structures for $F|\Q_p$ over a complete rank one valuation ring $O_K|\Z_p$. Then we define the Harder-Narasimhan polygon of $(H,\iota,\lambda)$ (resp. $(H,\iota)$) as the normalization of $HN(H)$ as following.
 \begin{definition}[\cite{Sh}, Definition 3.4] With the notation above, we define
 \[HN(H,\iota,\lambda)(\textrm{resp.}\;(H,\iota))=\frac{1}{d}HN(H)(d\cdot)\]as a function $[0,htH/d]\ra [0,dimH/d]$,
 which we will also identify with its graph as a polygon in $[0,htH/d]\times[0,dimH/d]$.
 \end{definition}

 One can easily deduce from their definition and theorem 4.1 the following generalization.
 \begin{proposition}[\cite{Sh}, Proposition 3.6]Let $(H,\iota,\lambda)$ be a $p$-divisible group with additional structures over a complete rank one valuation ring $O_K|\Z_p$. When $O_K$ is not of discrete valuation we assume in addition that $H$ is modular in the sense of definition 25 in \cite{F3}. Then we have the basic inequality
 \[HN(H,\iota,\lambda)\leq Newt(H_k,\iota,\lambda).\]
 Similar conclusion holds for the EL case.
 \end{proposition}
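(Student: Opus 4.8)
The plan is to deduce the proposition directly from Theorem~4.1 by unwinding the normalizations that enter into the two ``polygons with additional structures''. Set $d=[F:\Q_p]$. First I recall that, by Definition~4.3, the polygon $HN(H,\iota,\lambda)$ is nothing but the rescaled polygon
\[ HN(H,\iota,\lambda)\colon [0,\textrm{ht}H/d]\longrightarrow[0,\dim H/d],\qquad x\longmapsto \tfrac1d\,HN(H)(dx), \]
and that $Newt(H_k,\iota,\lambda)$ is defined (in \cite{Sh}) by exactly the same recipe applied to the Newton polygon of the underlying isocrystal of $H_k$, namely
\[ Newt(H_k,\iota,\lambda)(x)=\tfrac1d\,Newt(H_k)(dx),\qquad x\in[0,\textrm{ht}H/d]. \]
The point here is that the slopes of the $F$-isocrystal attached to $(H_k,\iota,\lambda)$ coincide with those of the underlying $\Q_p$-isocrystal of $H_k$, only the horizontal length being divided by $d$; so the two polygons with additional structures differ from the two polygons without additional structures by one and the same affine rescaling of both axes, on the HN side and on the Newton side.

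Granting this, for every $x\in[0,\textrm{ht}H/d]$ the desired inequality $HN(H,\iota,\lambda)(x)\le Newt(H_k,\iota,\lambda)(x)$ becomes, after multiplying by $d$ and setting $y=dx\in[0,\textrm{ht}H]$, precisely
\[ HN(H)(y)\le Newt(H_k)(y). \]
This is exactly the content of Theorem~4.1; it applies because $H$ is assumed modular in the sense of \cite{F3} when the valuation of $O_K$ is not discrete, and no extra hypothesis is needed in the discrete case. The EL case is treated verbatim, dropping $\lambda$ throughout. Hence the proposition follows, and I expect to be able to present it in essentially these few lines.

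The only step requiring a little care --- and not, I think, a genuine obstacle --- is checking that $Newt(H_k,\iota,\lambda)$ really is the normalization $\tfrac1d\,Newt(H_k)(d\cdot)$ and not, say, some reindexing by the embeddings $F\hookrightarrow L$ that would change the horizontal scale by a different factor. One verifies this by going back to the definition of the Newton polygon of an isocrystal with $O_F$-action: since $F$ is unramified, $F\otimes_{\Q_p}L\simeq L^d$, the isocrystal splits accordingly into $d$ summands cyclically permuted by Frobenius, and the slopes of the whole isocrystal are recovered from those of one summand while the rank is multiplied by $d$. Once the conventions are pinned down consistently --- both polygons drawn concave, starting at the origin, with slopes in decreasing order --- there is nothing further to prove: all the substance already lies in Fargues's Theorem~4.1, whose proof in the non-discrete case (via Hodge--Tate modules and Banach--Colmez spaces) is the hard input that we are entitled to quote.
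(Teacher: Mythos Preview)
Your proposal is correct and matches the paper's own argument: the paper simply remarks that the proposition ``can easily be deduced from their definition and theorem 4.1'', since both $HN(H,\iota,\lambda)$ and $Newt(H_k,\iota,\lambda)$ are obtained from $HN(H)$ and $Newt(H_k)$ by the same rescaling $\tfrac{1}{d}(\cdot)(d\cdot)$. Your write-up in fact spells out more than the paper does, including the verification that the Newton polygon with additional structures is genuinely this normalization.
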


  Finally we also have the semi-continuity of the function $HN$ for a family of $p$-divisible groups. Fix a finite unramified extension $F|\Q_p$ of degree $d$, and we consider $p$-divisible groups with additional structures for $F|\Q_p$. The following proposition can be deduced directly from the case without additional structures, see \cite{F3} proposition 4. To fix notations we just state it for the PEL cases.
 \begin{proposition}Let $K|\Q_p$ be a complete discrete valuation field, and $\mathcal{X}$ be a formal scheme locally formally of finite type over $SpfO_K$. Let $(H,\iota,\lambda)$ be a $p$-divisible group with additional structures over $\mathcal{X}$ of dimension $\frac{dn}{2}$ and height $dn$ constant. Then the normalized Harder-Narasimhan function on the underlying topological space of $\mathcal{X}^{an}$ is semi-continuous: if $\P: [0,n]\ra[0,n/2]$ is a concave function such that $\P(0)=0,\P(n)=n/2$, then the subset
 \[\{x\in|\mathcal{X}^{an}||HN(H_x,\iota,\lambda)\geq \P\}\] is closed.
 \end{proposition}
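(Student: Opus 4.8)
The plan is to reduce at once to the case of a bare $p$-divisible group, and then to deduce the statement from two facts already recalled: Fargues's formula $HN(H)=\inf_{m\geq 1}\frac1m HN(H[p^m])(m\,\cdot)$, and the continuity (Theorem 3.4) of $x\mapsto HN(G_x)$ for a family $G$ of finite locally free group schemes of constant height over a formal scheme locally formally of finite type over $SpfO_K$ — which is exactly the setting of the present hypotheses.

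First I would observe that, by Definition 4.4, the graph of $HN(H_x,\iota,\lambda)$ is the image of the graph of $HN(H_x)$ under the affine rescaling $(s,t)\mapsto(s/d,t/d)$. Hence, setting $\widetilde{\P}(u):=d\,\P(u/d)$, which is a concave polygon on $[0,dn]$ with $\widetilde{\P}(0)=0$ and $\widetilde{\P}(dn)=dn/2$, one has
\[\{x\in|\mathcal{X}^{an}| : HN(H_x,\iota,\lambda)\geq\P\}=\{x\in|\mathcal{X}^{an}| : HN(H_x)\geq\widetilde{\P}\}.\]
So it suffices to prove the following statement, which is essentially \cite{F3}, Proposition 4, and forgets $\iota$ and $\lambda$ entirely: if $H$ is a $p$-divisible group over $\mathcal{X}$ of constant height $h=dn$ and constant dimension $\delta=dn/2$, and $\mathcal{R}$ is any concave polygon on $[0,h]$, then $\{x\in|\mathcal{X}^{an}| : HN(H_x)\geq\mathcal{R}\}$ is closed.

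For this reduced statement, fix $t\in[0,h]$. For each $m\geq 1$ the group scheme $H[p^m]$ is finite and locally free over $\mathcal{X}$ of constant height $mh$, so by Theorem 3.4 the polygon $HN(H_x[p^m])$ varies continuously with $x$; in particular, evaluating at $mt$, the real-valued function $x\mapsto\frac1m HN(H_x[p^m])(mt)$ is continuous on $|\mathcal{X}^{an}|$. By Fargues's formula, $x\mapsto HN(H_x)(t)=\inf_{m\geq 1}\frac1m HN(H_x[p^m])(mt)$ is thus an infimum of continuous functions, hence upper semicontinuous, so $\{x : HN(H_x)(t)\geq\mathcal{R}(t)\}$ is closed. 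Intersecting over all $t\in[0,h]$ yields that $\{x : HN(H_x)\geq\mathcal{R}\}$ is closed, which completes the argument.

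I do not expect a genuine obstacle here; the only point to keep in mind is the direction of the semicontinuity. Theorem 3.4 is stated so as to emphasize that $\{HN\leq\P\}$ is a \emph{closed analytic domain} — i.e.\ an admissible open of the associated rigid space — which is strictly stronger than topological closedness and does not dualize to super-level sets; but the present statement only asks for closedness in $|\mathcal{X}^{an}|$, for which the genuine continuity part of Theorem 3.4 is enough and applies symmetrically to sub- and super-level sets. Finally, the step from $HN(H)$ to $HN(H,\iota,\lambda)$ is only a normalization and costs nothing, which is why the result follows directly from the case without additional structures.
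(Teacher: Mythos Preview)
Your argument is correct and follows exactly the route the paper indicates: reduce to the bare $p$-divisible group via the normalization of Definition~4.4, then invoke Fargues's result (\cite{F3}, Proposition~4), whose proof you have spelled out via the infimum formula and the continuity part of the semicontinuity theorem for finite flat group schemes. One cosmetic point: the theorem you cite as ``Theorem~3.4'' is Theorem~3.5 in the paper's numbering.
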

 In particular, with the above notation this proposition permits us to define a stratification of the underlying topological space of $X:=\mathcal{X}^{an}$. More precisely let Poly denote the set of concave polygons starting from the point (0,0) to the point $(n,n/2)$, such that the abscissas of its break points are integers. Then we have a stratification by Harder-Narasimhan polygons
 \[X=\coprod_{\P\in \textrm{Poly}}X^{HN=\P},\]
 where \[X^{HN=\P}=\{x\in |X|| HN(H_x,\iota,\lambda)=\P\}\]which is a locally closed subset of $X$ by proposition 4.5. On the other hand there is a stratification by Newton polygons
 \[X=\coprod_{\P\in \textrm{Poly}}X^{Newt=\P},\]
 where \[X^{Newt=\P}=\{x\in |X|| Newt(H_{xk(x)},\iota,\lambda)=\P\}=sp^{-1}(X^{Newt=\P}_{red})\]which is a locally closed analytic domain of $X$. Here $k(x)$ is the residue field of the complete valuation $O_{\mathcal{H}(x)}$ associated to $x$, $X_{red}$ is the reduced special fiber of the formal scheme $\mathcal{X}$, $X^{Newt=\P}_{red}$ is the Newton polygon strata of $X_{red}$ for the polygon $\P$, and $sp: X\ra X_{red}$ is the specialization map, which is anti-continuous in the sense that $sp^{-1}(Y)$ is an open (resp. a closed) subset of $X$ if $Y$ is a Zariski closed (resp. open) subset of $X_{red}$. Let $\P_{ss}\in$Poly denote the line of slope $\frac{1}{2}$ between (0,0) and $(n,\frac{n}{2})$. Then proposition 4.4 tells us we have the inclusion
 \[X^{Newt=\P_{ss}}\subset X^{HN=\P_{ss}}.\]

Now we look at the unitary Rapoport-Zink space $\widehat{\M}$ introduced in section 2. Since it is basic, we have
\[\M=\M^{Newt=\P_{ss}}.\]Thus there is just one Harder-Narasimhan strata, i.e. the whole space
\[\M=\M^{HN=\P_{ss}}.\]In section 9 we will look at the Harder-Narasimhan stratification for some unitary $p$-adic Shimura varieties.

\section{An algorithm for $p$-divisible groups with additional structures}

In \cite{F3}, Fargues introduced an algorithm for $p$-divisible groups over complete valuation rings of rank one which is an extension of $\Z_p$, to produce $p$-divisible groups more close to those of HN-type, see loc. cit. for the definition of $p$-divisible groups of HN-type. For the case which we are interested in, it suffices to consider the formal $p$-divisible groups with special fibers supersingular and semi-stable $p$-divisible groups. Let $K|\Q_p$ be a complete field extension for a rank one valuation, and $O_K$ be its ring of integers. Let $H$ be a $p$-divisible group over $O_K$. Recall the following definition of Fargues (\cite{F3}, lemme 2, d\'efinition 4):
\begin{definition}
$H$ is called semi-stable if it satisfies one of the following three equivalent conditions:
\begin{itemize}
\item $H[p]$ is semi-stable;
\item for all $m\geq 1$, $H[p^m]$ is semi-stable;
\item for all finite flat subgroup scheme $G\subset H$, $\mu(G)\leq \mu_H:=\frac{dimH}{htH}(=\mu(H[p^m]),\,\forall m\geq1)$.
\end{itemize}
\end{definition}
For a finite flat group scheme $G$ over $O_K$, let $\mu_{max}(G)$ be the maximal slope of the Harder-Narasimhan polygon $HN(G)$ of $G$, then it is semi-stable if and only if $\mu_{max}(G)=\mu_G$. Thus for the $p$-divisible group $H$, it is semi-stable if and only if one of the following two another conditions holds:
\begin{itemize}
\item $\mu_{max}(H[p])=\mu_H$;
\item for all $m\geq 1$, $\mu_{max}(H[p^m])=\mu_H$.
\end{itemize}

For the $p$-divisible group $H$ over $O_K$, for all $k\geq 1$, we set
\[G_k=\textrm{ the first scran of the Harder-Narasimhan filtration of $H[p^k]$}.\]
Then one has for all $i\geq j\geq 1$
\[G_j=G_i[p^{j}]\subset G_i, p^{i-j}G_i\subset G_j,\]see \cite{F3} lemme 3 and the remark below there. In particular, the slopes $\mu(G_k)=\mu_{max}(H[p^k])$ do not change when $k\geq 1$ varies. We denote $\mu_{max}(H):=\mu_{max}(H[p^k])$ for any $k\geq 1$, and one can find that
\[\mu_{max}(H)=sup\{\mu(G)|G\subset H\}=sup\{\mu(G)|G\subset H[p]\}.\] We have thus always $\mu_{max}(H)\geq \mu_H$, and the equality holds if and only if $H$ is semi-stable.

 We set\[\Fm_H=\varinjlim_{k\geq1}G_k \subset H,\] considered as a sub-fppf sheaf of $H$. Then we have for all $k\geq1$, $\Fm_H[p^k]=G_k$ is a finite flat group scheme over $O_K$, and $\Fm_H=H$ if and only if $H$ is semi-stable.

Suppose $H$ is not semi-stable. Then lemme 4 of loc. cit. tells us there are two possibilities:
\begin{itemize}
\item $\Fm_H$ is a finite flat group scheme over $O_K$, that is there exists some $k_0\geq1$ such that $\Fm_H=\Fm_H[p^{k_0}]$;
\item there exists some integer $k_0\geq 1$ such that $\Fm_H/\Fm_H[p^{k_0}]$ is a semi-stable sub-$p$-divisible group of $H/\Fm_{H}[p^{k_0}]$ with $\mu_{\Fm_H/\Fm_H[p^{k_0}]}=\mu_{max}(H)>\mu_H$ .
\end{itemize}

As said above, we will only be interested in formal $p$-divisible groups over $O_K$, such that their special fibers are supersingular. We call such a formal $p$-divisible group basic. From now on we will suppose that $H$ is a basic modular (see d\'efinition 25 of \cite{F3}) $p$-divisible group over $O_K$. Then we only have the first possibility for $\Fm_H$, i.e. it is a finite flat subgroup scheme of $H$. This comes from the facts $HN(H)\leq Newt(H_k)$ (thus both of them are the line between $(0,0)$ and $(h,d)$) and one can read off the Harder-Narasimhan polygon of $H$ from the algorithm below.

The algorithm of Fargues for such a $p$-divisible group $H$, defines a sequence of $p$-divisible groups $(H_i)_{i\geq 1}$ by induction, with an isogeny $\phi_i: H_i\ra H_{i+1}$ for each $i\geq 1$. For $i=1$, we set $H_1=H$, and if $H_i\neq 0$, we set
\[H_{i+1}=H_i/\Fm_{H_i},\] and $\phi_i: H_i\ra H_{i+1}$ is the natural projection; if $H_i=0$ we set $H_{i+1}=0$ and $\phi_i$ the trivial morphism. Then by construction, \[\mu_{max}(H_{i+1})<\mu_{max}(H_i)=\mu(\Fm_{H_i})\] if $H_{i+1}\neq 0$. Note $\mu_H=\mu_{H_{i+1}}\leq \mu_{max}(H_{i+1})$ if $H_{i+1}\neq 0$. The section 8 of
 \cite{F3} tells us that if the valuation on $K$ is discrete, then the algorithm stops after finite times, i.e. $H_i=0$ for $i$ large enough. For the general valuation case, the main theorem of loc. cit. says if $(dimH,htH)=1$, then $H_i=0$ for $i>>0$.

 Until the end paragraph of this section we assume the valuation on $K$ is discrete. Then by the above discussion, for a basic formal $p$-divisible group $H$ over $O_K$, if it is not semi-stable, we have a sequence of $p$-divisible groups with each arrow between them an isogeny:
\[\xymatrix{&H=H_1\ar[r]^{\phi_1}\ar@/^2pc/[rrr]^{\phi}&H_2\ar[r]^{\phi_2} &\cdots\ar[r]^{\phi_{r}}& H_{r+1},}\] with $H_1,\cdots,H_{r}$ not semi-stable and $H_{r+1}$ semi-stable of slope $\mu=\frac{d}{h}$,
and for each $i=1,\cdots,r$, the kernel of the isogeny $ker(\phi_i)$ is the first scran of the Harder-Narasimhan filtration of $H_{i}[p^{n_i}]$ for some $n_i >>0$. There exists some $N>>0$ such that the kernel $ker\phi$ of the composition of these isogenies $\phi$ is contained in $H[p^N]$. By construction, $ker\phi$ is in fact a scran in the Harder-Narasimhan filtration of $H[p^N]$ and these $ker\phi_i$'s are exactly the sub-quotient factors of the Harder-Narasimhan filtration of $ker\phi$. If we denote $\mu_i=\mu(ker\phi_i)$, then
\[\mu_1>\mu_2>\cdots >\mu_r >\frac{d}{h}.\]
\\

Now we consider $p$-divisible groups with additional structures. First, the above construction still works in the EL case, that is $p$-divisible groups with actions of the integer ring $O_F$ of some finite unramified extension $F|\Q_p$, since the Harder-Narasimhan filtration is of $O_F$-invariant, cf. \cite{F2} or section 3. Next, we consider PEL cases, that is a $p$-divisible group $H$ over $O_K$, with action $\iota: O_F\ra End(H)$ of the integer ring $O_F$ of some finite unramified extension $F|\Q_p$, and a polarization $\lambda: H\ra H^D$, such that $\iota$ and $\lambda$ are compatible in the sense of definition 4.2. In particular, we can apply the unitary $p$-divisible groups studied above to this situation. So let $(H,\iota,\lambda)$ be a $p$-divisible group with (PEL) additional structures, such that $H$ is basic. Assume $H$ is not semi-stable, then we have a sequence of $O_F$-linear isogenies of $p$-divisible groups with actions of $O_F$:
\[\xymatrix{&H=H_1\ar[r]^{\phi_1}\ar@/^2pc/[rrr]^{\phi}&H_2\ar[r]^{\phi_2} &\cdots\ar[r]^{\phi_{r}}& H_{r+1},}\] with $H_1,\cdots,H_{r}$ not semi-stable and $H_{r+1}$ semi-stable of slope $\mu=\frac{1}{2}$, and there exists some $N>>0$ such that $E:=ker\phi\subset H[p^N]$ and $E\nsubseteq H[p^{N-1}]$. Then $E$ is a scran in the Harder-Narasimhan filtration of $H[p^N]$. Let
\[0=E_0\subsetneq E_1\subsetneq \cdots \subsetneq E_r=E\] be the Harder-Narasimhan filtration of $E$, then we have
\[\begin{split}
&E_i/E_{i-1}\simeq ker\phi_i\\
&\mu_1>\cdots >\mu_r>\frac{1}{2},
\end{split}\] where $\mu_i:=\mu(ker\phi_i)$  for $i=1,\cdots,r$. The polarization $\lambda$ on $H$ now induces a polarization on $H[p^N]$: $\lambda: H[p^N]\stackrel{\sim}{\ra}H[p^N]^D$. Thus there is a perfect pairing \[H[p^N]\times H[p^N]\ra \mu_{p^{2nN}}.\] Let $E_i^{\perp}$ be the orthogonal subgroup of $H[p^N]$ under this pairing, for $i=1,\cdots,r$. Since $E_i$ is a scran of the Harder-Narasimhan filtration of $H[p^N]$, so is $E_i^{\perp}$ by the compatibility of Harder-Narasimhan filtration with polarizations, \cite{F2} 5.2 or section 3. Moreover, we have the following inclusions:
\[0\subsetneq E_1\subsetneq\cdots\subsetneq E_{r-1} \subsetneq E_r=E\subset E^\perp=E_r^\perp\subsetneq E_{r-1}^\perp\subsetneq\cdots \subsetneq E_1^\perp\subsetneq H[p^N],\] and the equalities
\[\mu_i+\mu(E_{i-1}^\perp/E_i^\perp)=1,\]for $i=1,\cdots,r$. Recall that we have \[\mu_1>\mu_2>\cdots>\mu_r>\frac{1}{2}.\] For all $k\geq N$, the finite flat group schemes $E_1,\cdots,E_r\subset H[p^k]$ are the same and do not depend on $k$, but their orthogonal groups $E_1^{\perp_k},\cdots,E_r^{\perp_k}$ for the pairing of $H[p^k]$ do depend the group $H[p^k]$. Thus for $k\geq N$ varies, just the height of $ht(E^{\perp_k}/E)$ varies and $ht(E_i/E_{i-1})=ht(E^{\perp_k}_{i-1}/E^{\perp_k}_i)$ do not change for $i=1,\dots,r-1$.

There are two different cases: $E=E^\perp$ or $E\subsetneq E^\perp$.
\begin{enumerate}
\item $E=E^\perp$, i.e. there is no slope of $\frac{1}{2}$ in the Harder-Narasimhan filtration of $H[p^N]$. This case is good, since the polarization $\lambda$ on $H$ then induces a polarization $\lambda': H/E\ra (H/E)^D$, such that $\pi\circ\lambda'\circ\pi^D=p^N\lambda$, where $\pi: H\ra H/E$ is the natural projection, i.e. we have the following commutative diagram:
   \[ \xymatrix{ &H\ar[r]^{p^N\lambda}\ar[d]^{\pi}&H^D\\
   &H/E\ar[r]^{\lambda'}&(H/E)^D\ar[u]_{\pi^D}.}\]
   Thus in this case we get a $p$-divisible group with naturally induced additional structures $(H/E,\iota',\lambda')$.

\item $E\subsetneq E^\perp$, i.e. $E^\perp/E$ is a factor in the Harder-Narasimhan filtration of $H[p^N]$ of slope $\frac{1}{2}$. Note there is a natural perfect pairing \[(E^\perp/E)\times (E^\perp/E)\ra \mu_{p^{ht(E^\perp/E)}}.\] Let $C:=E^\perp/E$. For a subgroup $C'\subset C$, let $C^{'\perp}$ be the orthogonal complement of $C'$ for the above pairing on $C=E^\perp/E$. We make the following claim:
\begin{claim}
there is a filtration of sub-semi-stable groups of slope $\frac{1}{2}$
\[0\subsetneq C_1\subsetneq\cdots\subsetneq C_k\subset C_k^\perp\subsetneq\cdots\subsetneq C_1^\perp\subsetneq C,\]
such that $p (C_k^\perp/C_k)=0$.
\end{claim}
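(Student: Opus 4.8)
The plan is to build the filtration of $C = E^\perp/E$ by iterating the same "first scran of the Harder-Narasimhan filtration" construction that produced $E$, but now inside the self-dual group $C$, and to show the process terminates with $p$ annihilating the middle term. First I would observe that $C$ is a finite flat group scheme over $O_K$ which is semi-stable of slope $\frac{1}{2}$ (it is a subquotient factor of the Harder-Narasimhan filtration of $H[p^N]$ with that slope), and that it carries a perfect alternating pairing $C \times C \ra \mu_{p^{\mathrm{ht}(C)}}$ induced by $\lambda$, together with a compatible $O_F$-action. The point is that although $C$ is semi-stable, it need not be killed by $p$: it lives in $H[p^N]$, so it is killed by $p^N$, and the task is to peel off the part of its "$p$-power level" that is not self-orthogonal.

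Next I would run the following induction. Set $C^{(0)} = C$. Given $C^{(j)}$ semi-stable of slope $\frac12$ with a perfect pairing, consider $D_j := $ the first scran of the Harder-Narasimhan filtration of $C^{(j)}[p]$ — or, if $C^{(j)}$ is already killed by $p$, stop. By Proposition~3.5 (the $\mu_{\min} \geq \mu_{\max}$ splitting proposition, together with the fact that submodules of a semi-stable group of slope $\mu$ have slope $\leq \mu$ while $C^{(j)}[p]$ has average slope exactly $\mu = \frac12$), the group $D_j$ is semi-stable of slope $\frac12$. Using the compatibility of Harder-Narasimhan filtrations with the polarization (Section 3, loc. cit. \cite{F2} 5.2), the orthogonal complement $D_j^{\perp}$ inside $C^{(j)}$ is again a scran, one checks $D_j \subseteq D_j^{\perp}$ (here is where "slope $\frac12$ and no higher slope" is used: $D_j$ maps into $(C^{(j)}/D_j)^D$, and a slope computation forces the containment), and $C^{(j+1)} := D_j^{\perp}/D_j$ is semi-stable of slope $\frac12$ with the induced perfect pairing. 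Crucially $\mathrm{ht}(C^{(j+1)}) < \mathrm{ht}(C^{(j)})$ unless $C^{(j)}$ was already $p$-torsion, because $D_j \neq 0$ whenever $C^{(j)}[p] \neq C^{(j)}$ (if $C^{(j)}$ is not killed by $p$ then $p^{m-1}C^{(j)} \subsetneq C^{(j)}[p]$ for $m$ the exact exponent, so $C^{(j)}[p]$ has a nonzero proper subgroup and hence a nonzero first scran — and in fact one sees $C^{(j)}/C^{(j)}[p] \neq 0$ feeds a nonzero scran back down). So after finitely many steps, say $k$ of them, we reach $C^{(k)}$ with $p C^{(k)} = 0$. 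Unwinding, $C_i := $ preimage in $C$ of the first scran produced at stage $i-1$ gives the asserted chain $0 \subsetneq C_1 \subsetneq \cdots \subsetneq C_k \subseteq C_k^{\perp} \subsetneq \cdots \subsetneq C_1^{\perp} \subsetneq C$ with all graded pieces semi-stable of slope $\frac12$ and with $C_k^{\perp}/C_k = C^{(k)}$ killed by $p$.

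The main obstacle I anticipate is not the termination — that is a clean height-drop argument — but verifying at each stage that the construction genuinely stays inside the "slope $\frac12$, self-dual" world: namely that the first scran $D_j$ of $C^{(j)}[p]$ is semi-stable of slope exactly $\frac12$ (not some larger slope), and that $D_j \subseteq D_j^{\perp}$. The first point should follow because $C^{(j)}$ is semi-stable of slope $\frac12$, hence $C^{(j)}[p]$ is semi-stable of slope $\frac12$ by Proposition~3.8 (\cite{F2}, Corollaire 7), so its Harder-Narasimhan filtration is trivial and the "first scran" is all of $C^{(j)}[p]$ — which actually suggests the cleaner formulation is to take $D_j := C^{(j)}[p]$ directly, and then $D_j^{\perp}/D_j$ is exactly the "next layer". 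I would need to check that $C^{(j)}[p]^{\perp} \supseteq C^{(j)}[p]$, i.e. that $C^{(j)}[p]$ is isotropic for the pairing on $C^{(j)}$; this is where the hypothesis that there is no slope strictly between... more precisely, it follows from $\mathrm{ht}(C^{(j)}) $ being even and the pairing identifying $C^{(j)}[p]$ with a quotient, together with a degree/slope count forcing $C^{(j)}[p] \subseteq C^{(j)}[p]^{\perp}$ exactly when $p^{2}$ does not already kill things cleanly — this bookkeeping with the orders and the exponent of $C^{(j)}$ is the delicate part and will need care, but it is elementary given the slope formalism recalled in Section 3.
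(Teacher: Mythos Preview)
Your approach has a genuine gap at exactly the point you flag as ``the delicate part''. Once you realize that $C^{(j)}[p]$ is itself semi-stable of slope $\tfrac12$ (so its Harder--Narasimhan filtration is trivial), your construction becomes $D_j = C^{(j)}[p]$. But then $D_j^{\perp} = pC^{(j)}$ under the perfect pairing, and the required isotropy $C^{(j)}[p] \subseteq pC^{(j)}$ simply fails in general: on generic fibers, a finite abelian $p$-group $A$ satisfies $A[p] \subseteq pA$ only when $A$ has no $\Z/p\Z$ direct summand (take $A = \Z/p\Z \oplus \Z/p^2\Z$ for a counterexample). Your ``bookkeeping with orders and exponent'' cannot rescue this, because the obstruction is structural, not a matter of slope arithmetic.

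The paper's fix is to go to the opposite end of the $p$-filtration: take $C_1 = p^{m-1}C$, where $m$ is the exponent of $C$. Then $(p^{m-1}C)^{\perp} = C[p^{m-1}]$ (since $y \perp p^{m-1}C$ iff $p^{m-1}y = 0$), and the isotropy $p^{m-1}C \subseteq C[p^{m-1}]$ is automatic for $m \geq 2$ because $p^{2(m-1)}C = 0$. The quotient $C[p^{m-1}]/p^{m-1}C$ has strictly smaller height (as $p^{m-1}C \neq 0$) and inherits a perfect pairing, so the induction runs cleanly. The moral: the subgroup $p^{m-1}C$ is always isotropic, whereas $C[p]$ is always coisotropic --- you picked the wrong one.
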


In fact, if we let $m$ be the minimal integer such that $p^mC=0$, if $m=1$ we are done; so assume $m\geq 2$ now. Consider $0\neq p^{m-1}C\subsetneq C$, which is also semi-stable of slope $\frac{1}{2}$. Then we have a filtration of semi-stable groups of slope $\frac{1}{2}$
 \[0\neq p^{m-1}C\subsetneq (p^{m-1}C)^\perp=C[p^{m-1}]\subsetneq C.\]
 Now \[ht((p^{m-1}C)^\perp/p^{m-1}C)<htC,\] and set $C'=(p^{m-1}C)^\perp/p^{m-1}C$, by induction we thus have the above claim.

Now we can translate the above filtration to a filtration of subgroups of $E^\perp\subset H[p^N]$, that is there exists a filtration
\[E\subsetneq E^1\subsetneq \cdots\subsetneq E^{k}\subset E^{k\perp}\subsetneq\cdots\subsetneq E^\perp,\]such that $E^i/E=C^i\subset E^\perp/E=C$. Let $E':=E^k$, then since $E'/E$ is semi-stable, $H/E'$ is semi-stable. We still have two cases.
\begin{enumerate}
\item If $C_k=C_k^\perp$ that is $E'=E^{'\perp}$,  this is still good in this case: we have the semi-stable $p$-divisible group with additional structures $(H/E',\iota',\lambda')$.

\item If $E'\subsetneq E^{'\perp}$, we have the following proposition.

\begin{proposition}
Let the notation be as above, and $(H,\iota,\lambda)$ be a $p$-divisible group with additional structures. Assume that $E'\subsetneq E^{'\perp}$. Then after changing $N$ to $N+1$ if $N$ is odd in the PEL unitary case, there exists some finite extension $K'|K$ and a totally isotrope subgroup $E''=(E'')^\perp$ of $H[p^N]$ over $O_{K'}$.

\end{proposition}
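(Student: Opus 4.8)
The plan is to reduce the statement to the existence of a Lagrangian subgroup of the auxiliary finite flat group scheme $D:=E^{'\perp}/E'$ over a finite extension of $K$, and then to take its schematic closure. By the construction preceding the proposition, $E'\subsetneq E^{'\perp}$ are $O_F$-stable finite flat subgroups of $H[p^N]$, the quotient $D=E^{'\perp}/E'$ is killed by $p$ (here one uses $p\,(C_k^{\perp}/C_k)=0$ from the Claim), and $D$ is semi-stable of slope $\tfrac12$; moreover the perfect pairing on $H[p^N]$ descends to a perfect pairing $D\times D\to\mu_p$, alternating in the symplectic case and $O_F$-hermitian with respect to $\ast$ in the unitary case. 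First I would note that it suffices to find, after a finite extension $K'|K$, a finite flat subgroup $\ov L\subset D_{O_{K'}}$ with $\ov L=\ov L^{\perp}$ (and $O_F$-stable in the unitary case): taking $E''$ to be the preimage of $\ov L$ under $E^{'\perp}_{O_{K'}}\twoheadrightarrow D_{O_{K'}}$, one has $E'\subset E''\subset E^{'\perp}$, and since the pairing on $H[p^N]$ is perfect one has $E'^{\perp\perp}=E'$ (both sides are the schematic closure of the same generic fibre), so $\ov L=\ov L^{\perp}$ forces $E''=(E'')^{\perp}$ in $H[p^N]$; as $E''$ is then $O_F$-stable and totally isotropic, this is the required subgroup.

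Next I would construct $\ov L$ from the generic fibre. Since $\mathrm{char}\,K=0$ and $p\,D=0$, the group $D_K$ is \'etale, so after a finite extension it becomes a constant vector space over $\F_p$ (resp. over $\F_{p^2}$ with its induced hermitian structure in the unitary case) carrying a non-degenerate alternating (resp. hermitian) form. A maximal totally isotropic subspace has dimension exactly half that of the ambient space precisely when the latter is even-dimensional: this is automatic for a non-degenerate alternating form over a field, whereas a non-degenerate hermitian form over $\F_{p^2}$ has Witt index $\lfloor \dim/2\rfloor$, so one needs $\dim_{\F_{p^2}}D_K$ even. Granting this, one enlarges $K'$ so as to choose a Lagrangian $L\subset D_{K'}$ (an $\F_{p^2}$-subspace in the unitary case) and lets $\ov L\subset D_{O_{K'}}$ be its schematic closure; then $\ov L^{\perp}$ is a finite flat subgroup with generic fibre $L^{\perp}=L$, hence $\ov L^{\perp}=\ov L$ by uniqueness of schematic closures. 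This settles the symplectic case, and the unitary case whenever $\dim_{\F_{p^2}}D_K$ is even.

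Finally, the parity in the unitary case. One computes
\[\dim_{\F_{p^2}}D_K=\tfrac12\bigl(ht(E^{'\perp})-ht(E')\bigr)=nN-ht(E'),\]
and $ht(E')$ is even because $E'_K$, being $O_F$-stable and killed by $p^N$, is a module over $O_F/p^N=\Z_{p^2}/p^N$, whose cyclic summands $\Z_{p^2}/p^{j}$ have $\Z_p$-length $2j$. Thus $\dim_{\F_{p^2}}D_K\equiv nN\pmod 2$, which is even unless both $n$ and $N$ are odd; in that remaining case one replaces $N$ by $N+1$. Here $E=\ker\phi$ is unchanged, since the scrans of the Harder--Narasimhan filtration of $H[p^k]$ of slope $>\tfrac12$ are independent of $k$ for $k\ge N$, while $E^{\perp}$, now computed inside $H[p^{N+1}]$, has height larger by $2n$; re-running the Claim with the enlarged $C=E^{\perp}/E$ yields a new pair $E'\subset E^{'\perp}$ for which $\dim_{\F_{p^2}}D_K$ has changed by $n$, hence is now even, and the previous two paragraphs apply. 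The main obstacle I anticipate is the careful bookkeeping: checking that the descended pairing on $D$ has the correct symmetry type and $O_F$-equivariance, that $E'^{\perp\perp}=E'$, and that the passage $N\rightsquigarrow N+1$ only enlarges $E^{\perp}$ without disturbing $E$ or the earlier steps of the algorithm; once these are in place the construction of the Lagrangian is routine linear algebra over a finite field.
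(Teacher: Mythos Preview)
Your proposal is correct and follows essentially the same route as the paper: pass to the \'etale generic fibre of $D=E'^{\perp}/E'$, choose a Lagrangian subspace after a finite extension, and take its schematic closure. The paper phrases this via the Galois module $V=(E'^{\perp}/E')(\ov K)$ and simply asserts ``by assumption $\dim_{\F_{p^d}}V=2m$''; you supply the parity bookkeeping the paper omits (that $\dim_{\F_{p^2}}D_K\equiv nN\pmod 2$ because $ht(E')$ is even), and you treat the symplectic case separately, where even dimension is automatic. One minor remark: your sentence ``$\dim_{\F_{p^2}}D_K$ has changed by $n$'' is slightly loose, since re-running the Claim may alter $E'$ as well; what actually matters is that after the change $ht(E')$ remains even while $nN$ flips parity, so the new $\dim_{\F_{p^2}}D_K\equiv n(N+1)\pmod 2$ is even --- which is exactly the point you need.
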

\begin{proof}
Let $V:=(E^{'\perp}/E')(\ov{K})$, since $p(E^{'\perp}/E')=0$, it is a $\F_p$-vector space equipped with an action of the Galois group $Gal(\ov{K}/K)$. Moreover, there is an induced $\F_{p^d}$-action $\iota:\F_{p^d}\ra End(V)$($d=[F:\Q_p],\iota: O_F\ra End(H)$), so we can view $V$ as a $\F_{p^d}$-vector space via $\iota$. The pairing $\lan,\ran$ on $E^{'\perp}/E'$ induces a hermitian form $V\times V\ra \F_{p^d}$. By assumption $dim_{\F_{p^d}}V=2m$ for some integer $m\geq1$. Thus there exists a maximal totally isotrope subspace $W\subset V, W=W^\perp$, and a finite extension $K'|K$ such that $W$ is stable by $Gal(\ov{K}/K')$. Then the schematic closure of $W$ in $E^{'\perp}/E'$ over $O_{K'}$ corresponds to a totally isotrope subgroup $E''=(E'')^\perp$ of $H[p^N]$ over $O_{K'}$.
\end{proof}

Let $K'|K$, and $E''\subset H[p^N]$ be as above. Then the $p$-divisible group $H/E''$ over $O_{K'}$ admits naturally induced additional structures: $\iota': O_F\ra End(H/E''), \lambda': H/E''\ra (H/E'')^D$ such that the following diagram commutes:
\[ \xymatrix{ &H\ar[r]^{p^N\lambda}\ar[d]^{\pi}&H^D\\
   &H/E''\ar[r]^{\lambda'}&(H/E'')^D\ar[u]_{\pi^D}.}\]
Recall $H/E'$ is a semi-stable $p$-divisible group. Thus we have an isogeny $f: H/E''\ra H/E'$ of $p$-divisible groups over $O_{K'}$ such that $p(kerf)=0$.

\end{enumerate}

\end{enumerate}

Now for the case that the valuation ring $O_K$ is not discrete, let $(H,\iota,\lambda)$ be a $p$-divisible group with additional structures over $O_K$. Assume $H$ is not semi-stable. We still have Fargues's algorithm
\[H=H_1\stackrel{\phi_1}{\ra}H_2\st{\phi_2}{\ra}\cdots\stackrel{\phi_{i-1}}{\ra}H_i\stackrel{\phi_{i}}{\ra}\cdots,\]
with \[\Fm_H=ker\phi_1,\; \Fm_{H_2}=ker\phi_2,\cdots.\]If the algorithm stops after finite times, i.e. there exists some $r$ such that $H_r\neq 0$ and $H_i=0$ for all $i\geq r+1$, in which case $H_r$ is semi-stable, we can continue our procedure as above to find the $E,\; E'$ and $E''$. Thus once the algorithm stops after finite times, we can continue as above and get a modified algorithm for the case with additional structures.

\section{The analytic domain $\C$}

We return to the study of the $p$-adic analytic unitary group Rapoport-Zink space $\M$ introduced in section 2. Let $K|L=W(\ov{\F}_p)_\Q$ be a complete discrete valuation field, then for any $K$-valued point $x\in \M(K)$, by the algorithm of last section, we can associate to it an another point $x'\in \M(K')$, where $K'|K$ is a finite extension, such that if $(H_x,\iota,\lambda)$ (resp. $(H_{x'},\iota',\lambda')$) is the $p$-divisible group associated to $x$ (resp. $x'$), then we have an isogeny $\phi: H_x\ra H_{x'}$ of $p$-divisible groups over $O_{K'}$, satisfying the following commutative diagram:
\[ \xymatrix{ &H_x\ar[r]^{p^N\lambda}\ar[d]^{\phi}&H_x^D\\
   &H_{x'}\ar[r]^{\lambda'}&(H_{x'})^D\ar[u]_{\phi^D},}\]
   for some integer $N$, and there is a finite flat subgroup scheme $G\subset H_{x'}[p]$, such that $H_{x'}/G$ is semi-stable. Motivated by this, we introduce a subspace $\C\subset\M$ as follows.
\begin{definition}
We define a subspace $\C\subset\M$ as
\[\begin{split}\C=\{x\in\M|\,&\exists\,\textrm{finite extension}\,K'|\mathcal{H}(x),\,\textrm{and a finite flat subgroup}\, G\subset H_x[p]\,\textrm{over}\,O_{K'},\\ &\textrm{such that}\, H_x/G \, \textrm{is semi-stable over}\,O_{K'}\}.
\end{split}\]
\end{definition}
Let $\M^{ss}\subset\M$ be the semi-stable locus, that is
\[\M^{ss}=\{x\in\M|\,H_x \,\textrm{is semi-stable}\}.\] Then we have the inclusion \[\M^{ss}\subset \C.\]

\begin{proposition}
The subset $\M^{ss}$ and $\C$ are closed analytic domains of $\M$.
\end{proposition}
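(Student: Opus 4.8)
The plan is to deduce both assertions from the semi-continuity of the Harder--Narasimhan function, i.e.\ from Theorem~3.5, applied to suitable families of finite flat group schemes; the case of $\M^{ss}$ is immediate, and the case of $\C$ is obtained by also invoking an auxiliary finite \'etale covering of $\M$. For $\M^{ss}$: by the equivalences of Definition~5.1 a point $x$ lies in $\M^{ss}$ exactly when $H_x[p]$ is semi-stable, so $\M^{ss}=\{x\in\M\,|\,H_x[p]\ \textrm{semi-stable}\}$. Now $H[p]$ is a locally free finite group scheme of constant height $2n$ over $\widehat{\M}$, which is formally locally of finite type over $\mathrm{Spf}\,O_L$ with $O_L=W(\ov{\F}_p)$ of discrete valuation, and the degree function $x\mapsto\deg H_x[p]$ is constant equal to $n$ (recall $\deg H[p^k]=k\cdot\dim H$ for any $p$-divisible group, see \cite{F2},\cite{F3}, and here $\dim H=n$). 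Hence Theorem~3.5 applies to the family $H[p]$ and shows that its semi-stable locus, which is $\M^{ss}$, is a closed analytic domain of $\M$.

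For $\C$: observe first that $\C=\bigcup_{a=0}^{2n}\C_a$, where $\C_a$ is defined as in Definition~6.1 but requiring in addition that $G$ have order $p^a$ (any finite flat subgroup of $H_x[p]$ has order a power of $p$ bounded by $p^{2n}$, and $\C_0=\M^{ss}$); it suffices to show that each $\C_a$ is a closed analytic domain. Let $\mathrm{pr}_a\colon\M^{(a)}\to\M$ be the Berkovich analytic space classifying finite flat subgroups of order $p^a$ of $H[p]$: it admits a projective formal model over $\widehat{\M}$ (finite flat subgroups of fixed order of a finite flat group scheme being parametrized by a projective scheme over the base), hence formally locally of finite type over $\mathrm{Spf}\,O_L$, and, $H[p]$ being finite \'etale over the characteristic zero space $\M$, the morphism $\mathrm{pr}_a$ is finite \'etale. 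Over $\M^{(a)}$ sits the universal subgroup $\mathcal{G}\subset H[p]$, and $H/\mathcal{G}$ is a $p$-divisible group of constant dimension $n$ and height $2n$; let $Z_a\subset\M^{(a)}$ be the locus where $H/\mathcal{G}$ is semi-stable. Applying Theorem~3.5 to the family $(H/\mathcal{G})[p]$ over $\M^{(a)}$, whose degree is again the constant $n$, we obtain that $Z_a$ is a closed analytic domain of $\M^{(a)}$. Taking schematic closures identifies $\mathrm{pr}_a^{-1}(x)$ with the set of finite flat subgroups of order $p^a$ of $H_x[p]$ defined over finite extensions of $\mathcal{H}(x)$, and, semi-stability of a $p$-divisible group being insensitive to complete base field extension (\cite{F2}), one gets $\C_a=\mathrm{pr}_a(Z_a)$.

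Finally one checks that $\C_a=\mathrm{pr}_a(Z_a)$ is a closed analytic domain of $\M$: since $\mathrm{pr}_a$ is finite it is topologically proper, hence closed, so $\mathrm{pr}_a(Z_a)$ is closed; to see it is an analytic domain one passes to an \'etale covering of $\M$ trivializing $\mathrm{pr}_a$, over which $\M^{(a)}$ becomes a finite disjoint union of \'etale pieces of $\M$ and $\mathrm{pr}_a(Z_a)$ becomes a finite union of closed analytic domains, and then uses that the property of being an analytic domain descends along such coverings. Granting this, $\C=\bigcup_{a=0}^{2n}\C_a$ is a finite union of closed analytic domains, hence a closed analytic domain. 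I expect this last descent/pushforward step -- verifying that the image of the closed analytic domain $Z_a$ under the finite \'etale morphism $\mathrm{pr}_a$ is again a closed analytic domain -- to be the point requiring the most care; once the auxiliary coverings $\M^{(a)}$ are in place, the rest is a direct application of Theorem~3.5.
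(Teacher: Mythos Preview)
Your argument is correct, but the route differs from the paper's. For $\M^{ss}$ both proofs coincide. For $\C$, the paper does not build the auxiliary covers $\M^{(a)}$; instead it embeds $\M$ as a closed subspace of the basic Rapoport--Zink space $\mathcal{N}$ for $\mathrm{Res}_{\Q_{p^2}|\Q_p}GL_n$ (obtained by forgetting the polarization) and observes that
\[
\C=\Big(\bigcup_{\underline{a}}T_{\underline{a}}.\mathcal{N}^{ss}\Big)\cap\M,
\]
the union running over $\underline{a}=(a_1,\dots,a_n)$ with $a_i\in\{0,-1\}$ and $T_{\underline{a}}$ the corresponding $GL_n(\Z_{p^2})$--double coset. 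Since $\mathcal{N}^{ss}$ is a closed analytic domain and Hecke correspondences preserve this property (Remark~2.2(3)), the conclusion follows in one line. This formula is not merely cosmetic: it is reused later (Propositions~10.2 and~10.5) to control which Hecke translates of $\D$ can meet $\D$, so the paper's proof pays dividends down the line.

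Your approach---parametrizing subgroups, applying Theorem~3.5 upstairs, and pushing forward along a finite \'etale map---is exactly the strategy the paper itself adopts in the proof of Proposition~9.3 for the analogous locus $\mathcal{E}$ in the Shimura variety; there the paper passes to a level cover trivializing $H[p]$, takes schematic closures of the finitely many subgroups $M\subset(\Z/p\Z)^{2n}$ (after an admissible formal blow-up to get a formal model), and then pushes the resulting closed analytic domain forward along the finite \'etale projection. So your argument is in the spirit of the paper, just applied at a different spot. The one place to be careful is precisely the one you flagged: the existence of a formal model over which the universal subgroup $\mathcal{G}$ is finite flat (the paper handles this in Proposition~9.3 via Raynaud's admissible blow-up rather than asserting projectivity of the subgroup scheme directly), and the fact that the image of a closed analytic domain under a finite \'etale morphism is again one; both are used implicitly in the paper as well.
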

\begin{proof}
The fact that $\M^{ss}\subset\M$ is a closed analytic domain is an easy consequence of Theorem 3.5. So we concentrate here to prove that $\C\subset\M$ is a closed analytic domain.

Let $\mathcal{N}$ be the basic Rapoport-Zink analytic space for $Res_{\Q_{p^2}|\Q_p}GL_{n}$ obtained by forgetting the polarization from $\M$. Then there is a natural closed immersion $\M\subset\mathcal{N}$. We fix such an imbedding. We have the inclusions $G(\Q_p)\subset GL_{n}(\Q_{p^2})$, and $G(\Z_p)\setminus G(\Q_p)/G(\Z_p)\hookrightarrow GL_{n}(\Z_{p^2})\setminus GL_{n}(\Q_{p^2})/GL_{n}(\Z_{p^2})$. We have in fact a $G(\Q_p)$-equivariant imbedding of tower of analytic spaces $\M_{K\cap G(\Z_p)}\subset \mathcal{N}_K$ for open compact subgroups $K\subset GL_n(\Q_{p^2})$. We have $\M^{ss}=\N^{ss}\bigcap\M$. By definition, the subset $\C$ is exactly the intersection with $\M$ of some Hecke translations of the semi-stable locus $\mathcal{N}^{ss}\subset\mathcal{N}$:
\[\C=(\bigcup_{\underline{a}}T_{\underline{a}}.\mathcal{N}^{ss})\bigcap\M,\]where the index set is all $\underline{a}=(a_1,\dots,a_n)\in\{(a_1,\dots,a_n)|a_1\geq\cdots\geq a_n, \textrm{and}\,a_i\in\{0,-1\}\}$, and \[T_{\underline{a}}=GL_{n}(\Z_{p^2})\left(\begin{array}{ccc}
                     p^{a_1} & & \\
                     &\ddots& \\
                     & &p^{a_n}\\
                   \end{array}
                   \right) GL_{n}(\Z_{p^2})\in
GL_{n}(\Z_{p^2})\setminus GL_{n}(\Q_{p^2})/GL_{n}(\Z_{p^2}).\]
Now since $\mathcal{N}^{ss}\subset \mathcal{N}$ is a closed analytic domain, so is $\C\subset\M$ by the definition of Hecke correspondences.

\end{proof}

We observe the characterization of points in $\C$ as follows.
\begin{proposition}
A point $x\in\M$ is in $\C$ if and only if the algorithm for the $p$-divisible group $H_x$ associated to $x$ stops after finite times, and $N_x=1$ , where $N_x$ is the smallest integer such that $ker\phi_x\subset H_x[p^{N_x}]$, $\phi_x$ is the composition of the isogenies when applying the algorithm to $H_x$.
\end{proposition}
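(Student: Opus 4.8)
The plan is to prove the two implications separately: ``$\Leftarrow$'' is formal, and ``$\Rightarrow$'' rests on one slope estimate followed by a descent on heights.

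For ``$\Leftarrow$'': if the algorithm applied to $H_x$ terminates, say with $H_1,\dots,H_r$ not semi-stable and $H_{r+1}$ semi-stable, and if $N_x=1$, then by definition $G:=\ker\phi_x\subset H_x[p]$, where $\phi_x=\phi_r\circ\cdots\circ\phi_1$, and $H_x/G\simeq H_{r+1}$ is semi-stable over $O_{\mathcal H(x)}$; hence $x\in\C$, with no field extension needed. If $H_x$ is itself semi-stable then $x\in\M^{ss}\subset\C$ directly. For ``$\Rightarrow$'', let $x\in\C$ and fix a finite extension $K'|\mathcal H(x)$ and $G\subset H_x[p]$ over $O_{K'}$ with $H_x/G$ semi-stable. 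Since the Harder--Narasimhan filtration of finite flat group schemes, and hence Fargues's algorithm together with $\ker\phi_x$ and $N_x$, are insensitive to extension of the complete valued base field ($ht$, $deg$, and therefore semi-stability, are preserved), I may take $K'=\mathcal H(x)=:K$; I may also assume $H_x$ is not semi-stable.

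The heart of the matter is to show $\Fm_{H_x}\subset G$ and that $\Fm_{H_x}$ is killed by $p$, so that $\Fm_{H_x}=\Fm_{H_x}[p]=:G_1$ is the first step of the HN filtration of $H_x[p]$. For this, $G_1$ is semi-stable of slope $\mu(G_1)=\mu_{max}(H_x)>\mu_{H_x}=\tfrac12$, the inequality strict because $H_x$ is not semi-stable. Let $G_1\cap G$ denote the schematic closure in $H_x[p]$ of the intersection of generic fibres. In the exact sequence $0\to G_1\cap G\to G_1\to G_1/(G_1\cap G)\to 0$ one has $\mu(G_1\cap G)\le\mu(G_1)$ by semi-stability of $G_1$, while $G_1/(G_1\cap G)\to H_x/G$ is a closed immersion on generic fibres, so its schematic image $I$ satisfies $\mu(I)\le\mu_{max}(H_x/G)=\mu_{H_x/G}=\tfrac12$ and hence $\mu(G_1/(G_1\cap G))\le\mu(I)\le\tfrac12<\mu(G_1)$. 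As $\mu(G_1)$ is the weighted mean of $\mu(G_1\cap G)$ and $\mu(G_1/(G_1\cap G))$ (additivity of $ht$ and $deg$), this is impossible unless $G_1/(G_1\cap G)=0$, i.e. $G_1\subset G$ (the case $G_1\cap G=0$ being excluded at once, since then $\mu(G_1)\le\tfrac12$). Running the same computation with $G_2:=\Fm_{H_x}[p^2]$ in place of $G_1$ --- using $G_2\cap G\subset G_2\cap H_x[p]=G_2[p]=G_1\subset G_2$, so $G_2\cap G=G_1$; that $G_2$ is semi-stable of slope $\mu(G_1)$; and $\mu(G_2/G_1)=\mu(G_2/(G_2\cap G))\le\tfrac12$ --- forces $G_2=G_1$, i.e. $\Fm_{H_x}$ is killed by $p$.

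Granting this, I would finish by descent on heights. Set $H_2=H_x/\Fm_{H_x}$; since $\Fm_{H_x}\subset G$, the image $G^{(2)}:=G/\Fm_{H_x}\subset H_x[p]/\Fm_{H_x}\subset H_2[p]$ satisfies $H_2/G^{(2)}\simeq H_x/G$, so $H_2\in\C$ with witness $G^{(2)}$. Iterating: while $H_i$ is not semi-stable it is again a $p$-divisible group of dimension $n$ and height $2n$; the above gives $\Fm_{H_i}\subset G^{(i)}$ killed by $p$; the $i$-th step of the algorithm is $H_{i+1}=H_i/\Fm_{H_i}$; and $G^{(i+1)}:=G^{(i)}/\Fm_{H_i}\subset H_{i+1}[p]$ witnesses $H_{i+1}\in\C$, with $H_{i+1}/G^{(i+1)}\simeq H_x/G$. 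Since a non-semi-stable $H_i$ has $\Fm_{H_i}\ne 0$, the integers $ht(G^{(i)})$ strictly decrease; bounded between $0$ and $ht(G)\le 2n$, they reach $G^{(r+1)}=0$ after finitely many steps, i.e. $H_{r+1}\simeq H_x/G$ is semi-stable and the algorithm terminates. Unwinding the identifications, $\ker\phi_x=(\phi_r\circ\cdots\circ\phi_1)^{-1}(0)=G\subset H_x[p]$ with $G\ne 0$ (otherwise $H_x$ would be semi-stable), so $N_x=1$.

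The main obstacle is this termination in the forward direction: over a non-discrete $O_{\mathcal H(x)}$ the algorithm need not stop in general --- stopping is Fargues's theorem only when $(\dim H,ht H)=1$, which fails here as $\gcd(n,2n)=n$ --- so membership in $\C$ must be made to force it. The slope estimate above is precisely what makes this work, letting one carry the single fixed semi-stable quotient $H_x/G$ through every step of the algorithm while the successive kernels eat into the finite group $G$; the weighted-mean bookkeeping (together with dispatching the degenerate case $G_1\cap G=0$) is the only real computation.
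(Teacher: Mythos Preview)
Your proof is correct and follows the same approach as the paper. The paper packages the core slope estimate into a standalone lemma: for any finite flat $G\subset H$ with $H/G$ semi-stable, one has $\Fm_H\subset G$ (in fact $\Fm_H$ is the first cran of the Harder--Narasimhan filtration of $G$); your argument with $G_1$, $G_1\cap G$, and the weighted-mean contradiction is exactly the special case $G\subset H[p]$ of that lemma, proved directly. The iteration (passing to $H_{i+1}=H_i/\Fm_{H_i}$ with witness $G^{(i+1)}=G^{(i)}/\Fm_{H_i}$, heights strictly decreasing) is identical.

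One small slip: at the end you assert $\ker\phi_x=G$, but only $\ker\phi_x\subset G$ is guaranteed. The algorithm may terminate with $H_{r+1}$ semi-stable while $G^{(r+1)}\neq 0$ (nothing forbids a semi-stable $p$-divisible group from having a nonzero finite flat subgroup with semi-stable quotient). This does not affect the conclusion: $\ker\phi_x\subset G\subset H_x[p]$ still gives $N_x\le 1$, and $\ker\phi_x\supset\Fm_{H_x}\neq 0$ (since you assumed $H_x$ not semi-stable) gives $N_x\ge 1$.
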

\begin{proof}
If the algorithm for $H_x$ stops after finite times and $N_x=1$, then by definition $x\in\C$. To prove the other direction, we have the following general lemma.
\begin{lemma}
Let $H/O_K$ be a basic $p$-divisible group over a complete rank one valuation ring $O_K|\Z_p$, and $G\subset H$ be a finite flat subgroup scheme. If $H/G$ is semi-stable, then $\Fm_H\subset G$. In particular, if the sequence of isogenies of $p$-divisible groups coming from the algorithm above
 \[\xymatrix{&H=H_1\ar[r]^{\phi_1}\ar@/^2pc/[rrr]^{\phi}&H_2\ar[r]^{\phi_2} &\cdots\ar[r]^{\phi_{r}}& H_{r+1}}\] is such that $H_1,\dots,H_{r}$ are not semi-stable ($H_{r+1}$ may be semi-stable or may be not),
then $ker\phi\subset G$.
\end{lemma}
\begin{proof}
Let $0\neq G'\subset H, G'\nsubseteq G$ be a finite flat subgroup not contained in $G$. Consider the morphism $\varphi: G'\ra H[p^N]/G$ for $N>>0$. Then it is non zero. Let $G'''$ (resp. $G''$) be the flattening schematic image (resp. kernel) of $\varphi$, then we have the following sequence which is exact in generic fiber:
\[0\ra G''\ra G'\ra G'''\ra0.\]
Since $H/G$ is semi-stable, and $G'''\subset H/G$ is a finite flat subgroup, by definition \[\mu(G''')\leq \mu_{H/G}=\mu_H.\] On the other hand we have $G''\subset G$. If $G''=0$, then $\mu(G')\leq\mu(G''')\leq\mu_H$. If $G''\neq 0$, $\mu(G')\leq sup\{\mu(G''),\mu(G''')\}\leq sup\{\mu_{max}(G),\mu_H\}$. We have two cases:
\begin{enumerate}
\item if $\mu_{max}(G)\leq \mu_H$, then since $\mu(G')\leq \mu_H$ for any $0\neq G'\nsubseteq G$, we have $H$ is semi-stable. In particular $\Fm_H=0\subset G$.
\item if $\mu_{max}(G)>\mu_H$, then since $\mu(G')< \mu_{max}(G)$, we have $\mu_{max}(H)=\mu_{max}(G)$, and $\Fm_H\subset G$ is the first scran of the Harder-Narasimhan filtration of $G$.
\end{enumerate}
Thus the lemma holds.
\end{proof}

With the lemma above, we can now easily deduce the proposition. If $x\in \C$, then by the definition of $\C$, there exist a finite extension $K'|K=\mathcal{H}(x)$ and a finite flat subgroup $G\subset H[p]$ over $O_{K'}$, such that $H/G$ is semi-stable over $O_{K'}$. The algorithm for $H$ over $O_{K'}$ is just the base change of that over $O_K$. By the lemma, $ker\phi\subset G\subset H[p]$. Thus the algorithm stops after finite times and $N_x=1$.

\end{proof}

\begin{example}
For $n=1$ the Rapoport-Zink space $\M$ is trivial: each connected component $\M^i$ is just a point. Thus in this case $\C=\M$. For $n\geq 2$, it is unluckily difficult to describe the domain $\C$ explicitly. Here we calculate $\C$ for the case $n=2$. In this case each reduced special fiber $\M^i_{red}$ is just a point, while the analytic space $\M^i$ is of dimension 1. Let $\mathcal{N}$ be the basic Rapoport-Zink analytic space for $Res_{\Q_{p^2}|\Q_p}GL_{2}$ obtained by forgetting the polarization from $\M$. We have \[\C=(\bigcup_{\underline{a}}T_{\underline{a}}.\mathcal{N}^{ss})\bigcap\M.\]There are 3 possibilities for the index $
\underline{a}=(a_1,a_2)$: (0,0),(0,-1),(-1,-1). Let $(H,\iota)$ be the $p$-divisible group associated to a point $x \in \mathcal{N}^{ss}$. The Hecke correspondences $T_{(0,0)}$ is the identity, and $T_{(-1,-1)}.x$ is the quotient of $H$ by $H[p]$ with its additional structure, thus $T_{(-1,-1)}.x\subset\mathcal{N}^{ss}$. For the Hecke correspondence $T_{(0,-1)}$, a point $y\in T_{(0,-1)}.x$ corresponds to a height 2 finite flat subgroup $G\subset H[p]$, and the $p$-divisible group associated to $y$ is the quotient $(H/G,\iota')$. Since $H$ is semi-stable, we have $\mu(G)\leq \frac{1}{2}$, i.e. $degG\leq 1$. The arbitrary possibility of choices of $G$ will make $T_{(0,-1)}.\N^{ss}\nsubseteq \N^{ss}$. Thus the inclusion $\M^{ss}\subset \C$ is strict.
\end{example}

Similarly we have a characterization of the points in the Hecke orbit of $\C$.
\begin{proposition}
A point $x\in\M$ is in $\bigcup_{T\in G(\Z_p)\setminus G(\Q_p)/G(\Z_p)}T.\C$ if and only if the algorithm above for the $p$-divisible group $H_x$ associated to $x$ stops after finite times.

\end{proposition}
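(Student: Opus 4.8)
The plan is to reformulate both sides of the equivalence in terms of the condition ``$H_x$ is isogenous to a semi-stable $p$-divisible group'', and then to exploit that this condition is manifestly invariant within a quasi-isogeny class, hence stable under the Hecke action.

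The key preliminary step is to upgrade Lemma 6.6 into the following trichotomy: for a basic modular $p$-divisible group $H$ over $O_K|\Z_p$, the algorithm applied to $H$ stops after finitely many steps $\iff$ there is a finite flat subgroup $G\subset H$ with $H/G$ semi-stable $\iff$ $H$ is isogenous to a semi-stable $p$-divisible group. The last two conditions are trivially equivalent (take $G$ to be the kernel of an isogeny). The implication ``stops $\Rightarrow$ $\exists G$'' is immediate, taking $G=\ker\phi$ for $\phi$ the composition of all the isogenies in the algorithm. The reverse implication is where Lemma 6.6 does the work: if $H/G$ is semi-stable, the lemma gives $\ker(\phi_1\circ\cdots\circ\phi_r)\subseteq G$ as long as $H_1,\dots,H_r$ are still not semi-stable; since each $\Fm_{H_i}$ has height $\geq 1$ one gets $r\leq \mathrm{ht}(G)$, so the algorithm must reach a semi-stable group within $\mathrm{ht}(G)+1$ steps, i.e. it terminates.

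With this in hand, I would observe that ``$H$ isogenous to a semi-stable $p$-divisible group'' depends only on the quasi-isogeny class of $H$: composing a quasi-isogeny $H_x\to H_y$ with an isogeny $H_y\to H''$ onto a semi-stable $H''$, and clearing denominators by a power of $p$, yields an isogeny $H_x\to H''$. Combined with Proposition 2.3 — which identifies lying in the same Hecke orbit with being quasi-isogenous compatibly with the quasi-isogenies to $\mathbf H$ — this shows that $\{x\in\M\mid\text{the algorithm for }H_x\text{ terminates}\}$ is a union of Hecke orbits. The two directions are then formal. If $x\in T.\C$, then $x$ is Hecke-equivalent to some $y\in\C$; by Proposition 6.3 the algorithm for $H_y$ terminates, hence $H_y$ — and therefore $H_x$ — is isogenous to a semi-stable $p$-divisible group, so the algorithm for $H_x$ terminates. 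Conversely, if the algorithm for $H_x$ terminates, the modified algorithm of Section 5 produces, over a finite extension $K'|\mathcal{H}(x)$, a point $x'\in\C$ together with an isogeny $H_x\to H_{x'}$ compatible with all the additional structures and with the quasi-isogenies (this is exactly why the $E''$-step was inserted); by Proposition 2.3, $x'$ lies in the Hecke orbit of $x$, hence $x$ lies in the Hecke orbit of $x'\in\C$, so $x\in\bigcup_T T.\C$.

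I expect the only friction to be bookkeeping rather than a real obstacle: one must check that ``the algorithm terminates'' is insensitive to the finite extensions of $\mathcal{H}(x)$ appearing in Proposition 2.3 and in the modified algorithm (compatibility of the construction with finite base change, already invoked in Section 6), and that the $p$-divisible group output by the modified algorithm genuinely defines a point of $\M$ (the Kottwitz signature condition) — both of which are addressed in Sections 5 and 6 and do not require new input here.
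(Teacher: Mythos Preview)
Your proposal is correct and is essentially the argument the paper has in mind: the paper states this proposition without proof, prefacing it with ``Similarly we have a characterization of the points in the Hecke orbit of $\C$'', expecting the reader to combine Lemma~6.4 (your trichotomy), Proposition~2.3, and the modified algorithm of Section~5 exactly as you do. Your explicit formulation of the trichotomy ``algorithm terminates $\iff$ $H$ admits an isogeny to a semi-stable group'' and the observation that this is a quasi-isogeny invariant make the logic cleaner than the paper's implicit treatment, but the content is the same.
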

For a Hausdorff paracompact strictly Berkovich analytic space $X$ over a non-archmidean field, we denote by $X^{rig}$ the associated rigid analytic space in the sense of Tate. As a set, $X^{rig}\subset X$ is the subset of rigid analytic points. Then the Hecke orbit of $\C^{rig}$ cover $\M^{rig}$.
\begin{proposition}
We have the following equality of sets
\[\M^{rig}=\bigcup_{T\in G(\Z_p)\setminus G(\Q_p)/G(\Z_p)}T.\C^{rig}.\]
\end{proposition}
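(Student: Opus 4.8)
The plan is to reduce this to the characterization of $\bigcup_T T.\C$ already established in the preceding proposition, namely that $x \in \bigcup_{T\in G(\Z_p)\setminus G(\Q_p)/G(\Z_p)}T.\C$ if and only if Fargues's algorithm applied to $H_x$ stops after finitely many steps. Since $\M^{rig}$ is by definition the set of points $x\in\M$ whose complete residue field $\mathcal{H}(x)$ is a finite extension of $L$ — in particular a complete \emph{discrete} valuation field with perfect residue field — the statement will follow once I show: for every rigid point $x\in\M^{rig}$, the algorithm applied to $H_x$ terminates in finitely many steps. Conversely, if $x\in\bigcup_T T.\C^{rig}$ then $x$ is itself a rigid point (Hecke translates of rigid points are rigid, since the Hecke action is defined by isogenies of $p$-divisible groups over the same base, cf. the discussion after Proposition~2.3), so the inclusion $\supseteq$ is immediate.

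For the inclusion $\subseteq$, let $x\in\M^{rig}$, so $O_K := O_{\mathcal{H}(x)}$ is a complete discrete valuation ring over $\Z_p$ with perfect residue field, and $(H_x,\iota,\lambda)$ is basic (every point of $\M$ lies in the basic Newton stratum, as noted in Section~4) and modular. As recalled in Section~5, when the valuation on $K$ is discrete, Section~8 of \cite{F3} guarantees that Fargues's algorithm $H = H_1 \xrightarrow{\phi_1} H_2 \xrightarrow{\phi_2}\cdots$ stops after finitely many steps, i.e.\ there is $r$ with $H_{r+1}$ semi-stable (indeed $0$ or semi-stable of slope $\tfrac12$); and the modified algorithm for the PEL-unitary case described in Section~5 then also terminates, producing after finitely many further steps a semi-stable $p$-divisible group with additional structures reachable from $H_x$ by isogenies whose kernels are finite flat subgroups of suitable $H_i[p^{N_i}]$. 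Hence $x$ satisfies the algorithmic characterization of Proposition~6.7, so $x\in\bigcup_T T.\C$; and since $x$ is rigid and all the intermediate data live over the finite extension $K'|\mathcal{H}(x)$ produced by the algorithm, $x\in\bigcup_T T.\C^{rig}$.

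The one point that requires genuine care — and which I expect to be the main obstacle — is bookkeeping the \emph{level} of the Hecke translate. The algorithm produces an isogeny $\phi_x \colon H_x \to H_{x'}$ with $\ker\phi_x$ a scran of the Harder--Narasimhan filtration of some $H_x[p^N]$, but to conclude $x \in T.\C^{rig}$ for a single double coset $T = G(\Z_p)\operatorname{diag}(p^{a_1},\dots,p^{a_n})G(\Z_p)$ one must check that the quotient $H_{x'}$, equipped with its induced additional structures and the rigidification transported along $\eta' = V_p(\phi_x)\circ\eta$, genuinely defines a point of $\M$ (not merely of the larger space $\mathcal{N}$) — this uses the compatibility $\phi_x^D\circ\lambda'\circ\phi_x = p^N\lambda$ from Section~5, which forces $\lambda'$ to be a $\Q_p^\times$-multiple of a principal polarization — and that the resulting point lies in $\C^{rig}$, i.e.\ that $H_{x'}$ is (after one further step of the algorithm, killing a subgroup of $H_{x'}[p]$) semi-stable. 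Both are exactly what the modified algorithm of Section~5 delivers, so I would simply invoke that construction verbatim and cite Proposition~6.7 (and Proposition~6.5) to package the conclusion. The proof is then only a paragraph: termination over discrete valuation rings (known), plus the already-proved characterization of the Hecke orbit of $\C$, restricted to rigid points where termination is automatic.
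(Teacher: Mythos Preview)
Your proposal is correct and follows essentially the same approach as the paper: both arguments hinge on the fact that rigid points live over complete discrete valuation rings, so Fargues's algorithm terminates (Section~8 of \cite{F3}, as recalled in Section~5), and then the preceding characterization of the Hecke orbit of $\C$ finishes the job. The paper's own proof is two sentences---it first invokes the inequality $HN(H,\iota,\lambda)\leq Newt(H_k,\iota,\lambda)$ to observe that $\M=\M^{HN=\P_{ss}}$ and then appeals directly to the algorithm to conclude $\M^{HN=\P_{ss},rig}=\bigcup_T T.\C^{rig}$---so your elaboration on bookkeeping the Hecke level and on the polarization compatibility, while not wrong, is more than the paper itself supplies.
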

\begin{proof}
By the inequality \[HN(H,\iota,\lambda)\leq Newt(H_k,\iota,\lambda),\] there is just one Harder-Narasimhan strata, i.e. the whole space $\M=\M^{HN=\P_{ss}}$. By the algorithm, we have the equality $\M^{HN=\P_{ss},rig}=\bigcup_{T\in G(\Z_p)\setminus G(\Q_p)/G(\Z_p)}T.\C^{rig}.$
\end{proof}
We want some locally finite covering of $\M$. One may wonder whether the family of analytic domains $(T.\C)_{T\in G(\Z_p)\setminus G(\Q_p)/G(\Z_p)}$ is such a locally finite covering of $\M$.
Unfortunately, the analytic domain $\C$ is so big that the union $\bigcup_{T\in G(\Z_p)\setminus G(\Q_p)/G(\Z_p)}T.\C$  is far from locally finite. We have to refine this family. Nevertheless, this family is indeed a covering of $\M$, although it is not locally finite. In the next section, we will review some basic facts about the geometry of reduced special fiber $\M_{red}$ over $\ov{\F}_p$ obtained by Vollaard-Wedhorn in \cite{V} and \cite{VW}. Then we will define some smaller analytic domain $\D\subset \C$ such that under the Hecke action and the action of $J_b(\Q_p)$, we can get some locally finite covering of $\M^{rig}$. Finally by some gluing arguments, and the equivalence between the categories of Berkovich anayltic spaces and rigid analytic spaces satisfying certain conditions, we can get an equality for all analytic points
\[\M=\bigcup_{T\in G(\Z_p)\setminus G(\Q_p)/G(\Z_p)}T.\C.\]That is, for all $x\in\M$, the algorithm for $H_x$ stops after finite times.

\section{Bruhat-Tits stratification of $\M_{red}$}

We recall some of basic results of Vollaard-Wedhorn in \cite{V} and \cite{VW}. First we remark that the descent data on $\M_{red}$ is effective, i.e., there is a model $\M'_{red}$ over $\F_{p^2}$ of $\M_{red}$. The results of Vollaard-Wedhorn are rather about the scheme $\M'_{red}$, but we just state them for $\M_{red}$ here. Recall that the formal Rapoport-Zink space $\widehat{\M}$ has a decomposition according to the height of the universal quasi-isogeny:
\[\widehat{\M}=\coprod_{i\in \Z,in\, even}\widehat{\M}^i,\] where $\widehat{\M}^i$ is the open and closed formal subscheme of $\widehat{\M}$ such that for any scheme $S\in$ Nilp$O_L$, a $S$-valued point of $\widehat{\M}$
  $(H,\iota,\lambda,\rho)\in\widehat{\M}^i(S)$ if $ht\rho=in$, which is not empty if and only if $in$ is even. Let $\M^i=(\widehat{\M}^{i})^{an}$, and $\M_{red}^i$ be its reduced special fiber for such an $i$, we have decompositions
\[\M=\coprod_{i\in\Z, in \,even}\M^i,\,\M_{red}=\coprod_{i\in\Z, in \,even}\M_{red}^i.\]

By Theorem 4.2.(1) of \cite{VW}, $\M_{red}^i$ is connected of pure dimension $[\frac{n-1}{2}]$. Thus $\M^i$ is an connected analytic space of dimension $n-1$. For each $i\in\Z$ such that $in$ is even, there is a $g\in J_b(\Q_p)$ such that $g(\widehat{\M}^i)=\widehat{\M}^0$, in particular $\widehat{\M}^i$ is isomorphic to $\widehat{\M}^0$, and so are their analytic fibers $\M^i,\M^0$ and reduced special fibers $\M_{red}^i,\M_{red}^0$.
So we just need to consider the $\M^0$ and $\M_{red}^0$. It turns out the geometry of $\M_{red}^0$ over $\ov{\F}_p$ is controlled by the Bruhat-Tits building $\mathcal{B}(J_b^{der},\Q_p)$ of the derived subgroup $J_b^{der}$ of $J_b$ over $\Q_p$.

More precisely, for each $i\in \Z$ such that $in$ is even, let
\[\mathcal{L}_i:=\{\Lambda\subset \mathbf{N}_0 \,\Z_{p^2}\textrm{-lattice}\,|p^{i+1}\Lambda^{\vee}\subsetneq \Lambda\subset p^i\Lambda^\vee\},\]where $\Lambda^\vee=\{x\in\mathbf{N}_0|\,\{x,\Lambda\}\subset\Z_{p^2}\}$.
One can construct an abstract simplicial complex $\mathcal{B}_i$ from $\mathcal{L}_i$: an $m$-simplex of $\mathcal{B}_i$ is a subset $S\subset \mathcal{L}_i$ of $m+1$ elements which satisfies the following condition. There exists an ordering $\Lambda_0,\dots,\Lambda_m$ of the elements of $S$ such that
\[p^{i+1}\Lam_m\subsetneq\Lam_0\subsetneq\Lam_1\subsetneq\cdots\subsetneq\Lam_m.\]
There is an obvious action of $J_b^{der}(\Q_p)$ on $\mathcal{L}_i$. By Theorem 3.6 of \cite{V}, for each fixed $i$, we have a natural $J_b^{der}(\Q_p)$-equivariant isomorphism of $\mathcal{B}_i$ with the associated simplicial complex of the Bruhat-Tits building $\mathcal{B}(J^{der}_b,\Q_p)$. Thus we can identify $\mathcal{L}_i$ with the set of vertices of $\mathcal{B}(J^{der}_b,\Q_p)$. For $\Lam\in\mathcal{L}_i$ the index $t(\Lam):=[\Lam:p^{i+1}\Lam^\vee]$ of $p^{i+1}\Lam^\vee$ in $\Lam$ is always an odd number with $1\leq t(\Lam)\leq n$, and for any odd number $d$ with $1\leq d\leq n$ there exists a $\Lam\in\mathcal{L}_i$ such that $t(\Lam)=d$. Moreover two lattices $\Lam_1,\Lam_2\in\mathcal{L}_i$ are in the same $J_b^{der}(\Q_p)$-orbit if and only if $t(\Lam_1)=t(\Lam_2)$. And the neighborhood vertices of $\Lam\in\mathcal{L}_i$ in the building is exactly the set \[\{\Lam'\in\mathcal{L}_i|\Lam'\subset\Lam, \,\textrm{or}\, \Lam\subset\Lam'\}.\] If $n$ is even, we choose and fix a $g_1\in J_b(\Q_p)$ such that $g_1:\widehat{\M}^0\st{\sim}{\ra}\widehat{\M}^1$. We fix a bijection
\[\varphi_i:\mathcal{L}_0\ra\mathcal{L}_i\] once for all, such that
\[\varphi_i(\Lam)=\begin{cases} p^{\frac{i-1}{2}}g_1\Lam& i \, odd\\
p^{\frac{i}{2}}\Lam & i\,even. \end{cases}\]

Now for each $i\in\Z$ such that $in$ is even and each $\Lam\in\mathcal{L}_i$, we define a subscheme $\M_{\Lam}$ of $\M_{red}^i$. First we associate two $p$-divisible groups $H_{\Lam^-}$ and $H_{\Lam^+}$ over $\F_{p^2}$ with $\Z_{p^2}$-linear polarizations $\lambda_{\Lam^+}$ and $\lambda_{\Lam^-}$ respectively. To this end, set
\[\begin{split}&\Lam_0^+:=\Lam\\
&\Lam_1^+:=\mathbf{V}^{-1}(\Lam_0^+)\\
&\Lam^+:=\Lam_0^+\oplus\Lam_1^+\\
&\Lam^-:=p^i(\Lam^+)^\vee.
\end{split}\]
Since $\mathbf{F}=\mathbf{V}$ the $\Lam^{\pm}$ are Dieudonn\'{e} submodules of the isocrystal $\mathbf{N}$, and the pairing $p^{-i}\lan,\ran$ on $\mathbf{N}$ induces a perfect $\Z_{p^2}$-pairing on $\Lam^{\pm}$. Thus $\Lam^{\pm}$ define unitary $p$-divisible groups $H_{\Lam^{\pm}}$, with the $\Z_{p^2}$-linear polarizations $\lambda_{\Lam^{\pm}}$ and $p^{-i}\lan,\ran$ induces an isomorphism \[H_{\Lam^+}\stackrel{\sim}{\ra} H_{\Lam^-}^D.\] Moreover, we have $\Z_{p^2}$-linear quasi-isogenies
\[\rho_{\Lam^{\pm}}: H_{\Lam^{\pm}}\ra \mathbf{H}\] which are compatible with the polarizations on the two sides. We have the following commutative diagram:
\[\xymatrix{H_{\Lam^+}\ar[r]^\sim \ar[d]^{\rho_{\Lam^+}}& H_{\Lam^-}^D\\
\mathbf{H}\ar[r]^{\rho}&\mathbf{H}^D\ar[u]_{\rho_{\Lam^-}^D}.
}\]
By construction we have always $\Lam^-\subset\Lam^+$, which corresponds to the composition of quasi-isogenies
\[H_{\Lam^-}\stackrel{\rho_{\Lam^-}}{\longrightarrow}\mathbf{H}
\stackrel{\rho_{\Lam^+}^{-1}}{\longrightarrow}H_{\Lam^+}.\]

We a fixed vertex $\Lam\in\mathcal{L}_i$. For any $\ov{\F}_{p^2}$-scheme $S$ and a $S$-valued point $(H,\iota,\lambda,\rho)\in\M_{red}^i(S)$ we define quasi-isogenies
\[\begin{split} &\rho_{H,\Lam^+}: H\stackrel{\rho^{-1}}{\longrightarrow}\mathbf{H}_S\stackrel{(\rho_{\Lam^+})^{-1}_S}{\longrightarrow}(H_{\Lam^+})_S,\\
&\rho_{\Lam^-,H}: (H_{\Lam^-})_S\stackrel{(\rho_{\Lam^-})_S}{\longrightarrow}\mathbf{H}_S\stackrel{\rho}{\longrightarrow}H.
\end{split}\]
Then one has that
\[ht(\rho_{H,\Lam^+})=ht(\rho_{\Lam^-,H})=t(\Lam)\]
and that $\rho_{H,\Lam^+}$ is an isogeny if and only if $\rho_{\Lam^-,H}$ is an isogeny.

\begin{definition}
We define the subfunctor $\M_{\Lam}\subset\M_{red}^i$ as
\[\M_{\Lam}(S)=\{(H,\iota,\lambda,\rho)\in\M_{red}^i(S)|\rho_{\Lam^-,H} \,\textrm{is an isogeny}\}\]
for any $\ov{\F}_p$-scheme $S$.
\end{definition}

Then the main theorems of \cite{VW} tell us the following facts.
\begin{theorem}
\begin{enumerate}
\item $\M_\Lam$ is represented by a smooth projective closed subscheme of dimension $\frac{1}{2}(t(\Lam)-1)$  of $\M_{red}^i$, which we will also denote by $\M_\Lam$. It is in fact a generalized Deligne-Lusztig variety for the maximal reductive quotient $\ov{J}^{der,red}_{\Lam}$ over $\F_p$ of the special fiber of the Bruhat-Tits group scheme $J^{der}_\Lam$ attached to the vertex $\Lam\in \mathcal{B}(J^{der}_{b},\Q_p)$.
\item for two lattices $\Lam_1,\Lam_2\in \mathcal{L}_i$, $\M_{\Lam_1}\subset\M_{\Lam_2}$ if and only if $\Lam_1\subset\Lam_2$. In this case $t(\Lam_1)\leq t(\Lam_2)$, and the equality holds if and only if $\Lam_1=\Lam_2$.
\item for two lattices $\Lam_1,\Lam_2\in \mathcal{L}_i$, the following assertions are equivalent:
\begin{itemize}
\item $\Lam_1\cap\Lam_2\in\mathcal{L}_i$;
\item $\Lam_1\cap\Lam_3$ contains a lattice of $\mathcal{L}_i$;
\item $\M_{\Lam_1}\cap\M_{\Lam_2}\neq\emptyset$.
\end{itemize}
If these conditions are satisfied we have
\[\M_{\Lam_1}\cap\M_{\Lam_2}=\M_{\Lam_1\cap\Lam_2},\]where $\M_{\Lam_1}\cap\M_{\Lam_2}$ is the scheme-theoretic intersection in $\M_{red}^i$.
\item for two lattices $\Lam_1,\Lam_2\in \mathcal{L}_i$, the following assertions are equivalent:
\begin{itemize}
\item $\Lam_1+\Lam_2\in\mathcal{L}_i$;
\item $\Lam_1+\Lam_2$ is contained in a lattice of $\mathcal{L}_i$;
\item $\M_{\Lam_1}$ and $\M_{\Lam_2}$ are both contained in $\M_{\Lam}$ for some $\Lam\in\mathcal{L}_i$.
\end{itemize}
If these conditions are satisfied, $\M_{\Lam_1+\Lam_2}$ is the smallest subscheme of the form $\M_{\Lam}$ that contains $\M_{\Lam_1}$ and $\M_{\Lam_2}$.
\item let $t_{max}=n$ if $n$ is odd, and $t_{max}=n-1$ if $n$ is even, then the irreducible components of $\M_{red}^i$ are exactly the subschemes $\M_\Lam$ with $t(\Lam)=t_{max}$.
\item let \[\begin{split}&\mathcal{L}_\Lam:=\{\Lam'\in\mathcal{L}_i|\Lam'\subsetneq\Lam\},\\
&\M_\Lam^0:=\M_\Lam\setminus \bigcup_{\Lam'\in\mathcal{L}_\Lam}\M_{\Lam'},
\end{split}\] then $\M_{\Lam}^0$ is open and dense in $\M_\Lam$, and we have a stratification of $\M_{red}^i$:
\[\M_{red}^i=\coprod_{\Lam\in\mathcal{L}_i}\M_\Lam^0.\]
\end{enumerate}
\end{theorem}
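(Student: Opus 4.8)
The statement is Theorem 4.2 of \cite{VW} together with the results of \cite{V}, so the plan is to recall the structure of their argument. The idea is to reduce the whole statement to the combinatorics of lattices in the isocrystal $(\mathbf{N},\mathbf{F})$ via Dieudonn\'e theory, and then to identify the individual strata with Deligne--Lusztig varieties. Concretely, for a perfect field $k\supset\ov{\F}_p$, covariant Dieudonn\'e theory identifies $\M_{red}^i(k)$ with the set of $\Z_{p^2}$-stable lattices $M\subset\mathbf{N}$ that are self-dual for $p^{-i}\lan,\ran$, respect the grading $M=M_0\oplus M_1$, satisfy the Kottwitz signature condition $\dim_k(M_0/\mathbf{V}M_1)=1$, and sit in the correct relative position to $\mathbf{M}$. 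Since $\mathbf{F}=\mathbf{V}$ is homogeneous of degree one, such an $M$ is determined by the single lattice $M_0\subset\mathbf{N}_0$, and the remaining conditions translate into a relation between $M_0$ and its dual for $\{\,,\,\}$; the upshot (Theorem 3.6 of \cite{V}) is that the canonical ``vertex lattice'' attached to $M_0$ lies in $\mathcal{L}_i$, which is precisely why $\mathcal{L}_i$ is the vertex set of $\mathcal{B}(J_b^{der},\Q_p)$. Under this dictionary the conditions ``$\rho_{\Lam^-,H}$ is an isogeny'' and ``$\rho_{H,\Lam^+}$ is an isogeny'' become the two inclusions $\Lam^-\subset M_0\subset\Lam^+$ (up to the normalizing twist), so that $\M_\Lam(k)$ is the set of admissible lattices in the interval $[\Lam^-,\Lam^+]$; and closedness of $\M_\Lam$ in $\M_{red}^i$ is the standard fact that ``a quasi-isogeny of bounded non-negative height is an isogeny'' is a closed condition (\cite{RZ}).

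\textbf{Part (1).} Fixing $\Lam$, reduce modulo $p^{i+1}\Lam^\vee$: the quotient $\Lam/p^{i+1}\Lam^\vee$ is an $\F_{p^2}$-vector space of dimension $t(\Lam)$ carrying a non-degenerate hermitian form induced by $\{\,,\,\}$ and a Frobenius-semilinear operator induced by $\mathbf{V}$. Chasing the lattice description, an admissible $M_0\in[\Lam^-,\Lam^+]$ corresponds to an isotropic subspace of this space of the prescribed dimension in general position relative to its Frobenius twist; this is exactly the functor of points of a generalized Deligne--Lusztig variety for the unitary group $U(\Lam/p^{i+1}\Lam^\vee)$, whose associated parahoric has maximal reductive quotient $\ov{J}^{der,red}_\Lam$ over $\F_p$. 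Smoothness and projectivity are then general features of (generalized) Deligne--Lusztig varieties, and the formula $\dim=\ell(w)$ for the twisted Coxeter-type element occurring here evaluates to $\tfrac12(t(\Lam)-1)$. I expect this to be the main obstacle: one must make the isomorphism with a Deligne--Lusztig variety precise, describe $\ov{J}^{der,red}_\Lam$ and the relevant Weyl-group element, and invoke the structure theory of these varieties for unitary groups; the remaining parts are essentially lattice bookkeeping.

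\textbf{Parts (2)--(4).} Granting (1), these reduce to the combinatorics of the intervals $[\Lam^-,\Lam^+]$. For (2): $\Lam_1\subset\Lam_2$ gives $\Lam_1^+\subset\Lam_2^+$ and $\Lam_2^-\subset\Lam_1^-$, hence $[\Lam_1^-,\Lam_1^+]\subset[\Lam_2^-,\Lam_2^+]$ and $\M_{\Lam_1}\subset\M_{\Lam_2}$; conversely one tests against a point of $\M_{\Lam_1}$ whose attached vertex lattice is $\Lam_1$ itself to get $\Lam_1\subset\Lam_2$. The comparison $t(\Lam_1)\leq t(\Lam_2)$, with equality iff $\Lam_1=\Lam_2$, is a short length computation with $t(\Lam)=[\Lam:p^{i+1}\Lam^\vee]$, using that $t$ takes odd values in $[1,n]$. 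For (3) and (4) one first checks that $\Lam_1\cap\Lam_2$, resp.\ $\Lam_1+\Lam_2$, again lies in $\mathcal{L}_i$ under each of the stated equivalent conditions --- which is precisely the building-theoretic statement that two vertices of $\mathcal{B}(J_b^{der},\Q_p)$ admitting a common lower (resp.\ upper) neighbour admit a canonical one --- and then the scheme-theoretic identity $\M_{\Lam_1}\cap\M_{\Lam_2}=\M_{\Lam_1\cap\Lam_2}$ and the minimality in (4) follow from (2) applied to all three lattices, upgrading the equality of $k$-points to an equality of reduced closed subschemes.

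\textbf{Parts (5)--(6).} These are then formal. By (1) the $\M_\Lam$ with $t(\Lam)=t_{max}$ are irreducible of dimension $\tfrac12(t_{max}-1)=[\tfrac{n-1}{2}]$, which is $\dim\M_{red}^i$ by \cite{VW} Theorem 4.2.(1); by (2) none of them is contained in a larger $\M_{\Lam'}$; and every point lies in such an $\M_\Lam$, since given a point with lattice $M_0$ one enlarges $\Lam$ within the building, preserving the interval condition, until $t(\Lam)=t_{max}$. Hence these are exactly the irreducible components. For (6), $\M_\Lam^0=\M_\Lam\setminus\bigcup_{\Lam'\subsetneq\Lam}\M_{\Lam'}$ is open in $\M_\Lam$ by (2), nonempty and dense (it is the open Deligne--Lusztig stratum, the locus of maximal relative position), and $\M_{red}^i=\coprod_{\Lam\in\mathcal{L}_i}\M_\Lam^0$ is a disjoint covering because, by (3) and (4), every point of $\M_{red}^i$ is contained in a unique smallest $\M_\Lam$.
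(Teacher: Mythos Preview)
Your proposal is correct and matches the paper's approach: the paper does not give an independent proof but simply cites \cite{VW} (lemma 3.2, theorem 3.10, corollary 3.11, theorem 4.1, 4.2, proposition 4.3), and what you have written is an accurate outline of the Vollaard--Wedhorn argument itself --- the Dieudonn\'e-theoretic translation to lattices $M_0\subset\mathbf{N}_0$, the identification of $\M_\Lam$ with a generalized Deligne--Lusztig variety for the finite unitary group on $\Lam/p^{i+1}\Lam^\vee$, and the lattice combinatorics for the incidence relations.
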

\begin{proof}
These are the contents of lemma 3.2, theorem 3.10, corollary 3.11, theorem 4.1, 4.2, and proposition 4.3 of \cite{VW}.
\end{proof}

Note that the stratification $\M_{red}^i=\coprod_{\Lam\in\mathcal{L}_i}\M_\Lam^0$ is $J^{der}_b(\Q_p)$-equivariant in the sense that we have \[g\M^0_{\Lam}=\M_{g\Lam}^0,\]for any $g\in J^{der}_b(\Q_p)$ and $\Lam\in\mathcal{L}_i$. For an algebraically field $k|\F_{p^2}$ and a $k$-valued point $x\in\M_{red}^i(k)$, if we denote $M$ the corresponding unitary Dieudonn\'{e} module viewed as a lattice in $\mathbf{N}$ via the quasi-isogeny $\rho_x^{-1}: H_x\ra\mathbf{H}$, then we have the following equivalent assertions
\begin{itemize}
\item$x\in\M_\Lam(k)$ ;
\item $M\subset (\Lam^+)_k$;
\item $M_0\subset (\Lam)_k$;
\item $(\Lam^-)_k\subset M$.
\end{itemize}

\section{The analytic domain $\D$}

Recall that we have defined some closed analytic domains
$\M^{ss}\subset\C\subset\M,$
where for any complete valuation field extension $K|L=W(\ov{\F}_p)_\Q$,
\[\begin{split}
\M^{ss}(K)=&\{(H,\iota,\lambda,\rho)\in\M(K)|\,H\,\textrm{is semi-stable}\}\\
\C(K)=&
\{(H,\iota,\lambda,\rho)\in\M(K)|\,\exists \,\textrm{finite extension}\,K'|K, \textrm{and a finite flat subgroup}\\
&G\subset H[p]\,\textrm{over}\,O_{K'},\textrm{such that}\, H/G \,\textrm{is semi-stable}\}.
\end{split}\]
Now since we have the decomposition
\[\M=\coprod_{i\in\Z,in \,even}\M^i,\]
we set \[\C^i:=\C\cap\M^i,\] which is still a closed analytic domain in $\M^i$ and $\M$, and we have an induced decomposition of analytic spaces
\[\C=\coprod_{i\in\Z,in\, even}\C^i.\]

We choose an element $g_1\in J_b(\Q_p)$ such that the action by $g_1$ on $\M$ induces
isomorphisms:
\[g_1:\M^0\stackrel{\sim}{\ra}\begin{cases}\M^1& n\, even\\
\M^2&n\,odd.\end{cases}\] For example let \[g_1=diag(\underbrace{p^{-1},\cdots,p^{-1}}_{\frac{n}{2}},\underbrace{1,\cdots,1}_{\frac{n}{2}})\] if $n$ is even (by fixing a base one can view $J_b(\Q_p)\subset GL_n(\Q_{p^2})$) and $g_1=p^{-1}\in J_b(\Q_p)$ if $n$ is odd. We have then \[\C^i=\begin{cases}g_1^{i}\C^0&n\,even,\,i\,even\\
  p^{\frac{-i+1}{2}}g_1\C^0&n\,even,\,i\,odd\\
g_1^{\frac{i}{2}}\C^0&n\,odd.\end{cases}\]
Note the element $p^{-1}\in J_b(\Q_p)$ induces an isomorphism
$p^{-1}:\M^0\stackrel{\sim}{\ra}\M^2$. We denote by $g_2:=p^{-1}g_1^{-1}$ if $n$ is even, i.e. the above $g_1$ is such that
$p^{-1}=\begin{cases}g_1g_2& n\,even\\
g_1&n\, odd.\end{cases}$
Set $\C'=\begin{cases}\C^0\coprod g_1\C^0& n\,even\\
\C^0&n\,odd,\end{cases}$ then we have
\[\C=\coprod_{i\in\Z}p^{-i}\C',\,\textrm{and}\, \pi(\C)=\pi(\C'),\] where $\pi:\M\ra\Fm^a\subset\mathbf{P}^{n-1,an}$ is the $p$-adic period mapping over $L$. Thus the Hecke orbits of the two analytic domains are the same:
\[\bigcup_{T\in G(\Z_p)\setminus G(\Q_p)/G(\Z_p)}T.\C= \bigcup_{T\in G(\Z_p)\setminus G(\Q_p)/G(\Z_p)}T.\C'.\]

Using the geometric description of the reduced special fiber $\M_{red}$ in last section, we have the following covering of $\M^0$ by open subsets:
\[\M^0=\bigcup_{\Lam\in\mathcal{L}_0, t(\Lam)=t_{max}}sp^{-1}(\M_\Lam),\]where $sp:\M^0\ra\M^0_{red}$ is the specialization map. We also have the Bruhat-Tits stratification of $\M^0$ by locally closed analytic subspaces\[\M^0=\coprod_{\Lam\in\mathcal{L}_0}sp^{-1}(\M_{\Lam}^0).\]By definition, a point $x\in sp^{-1}(\M_\Lam)$ if and only if the composition \[(H_{\Lam^-})_{O_K/pO_K}\stackrel{\rho_{\Lam^-}}{\longrightarrow} \mathbf{H}_{O_K/pO_K}\stackrel{\rho}{\longrightarrow} H_{xO_K/pO_K}\] is an isogeny, and $x\in sp^{-1}(\M_\Lam^0)$ if and only if the above composition is an isogeny and it does not factor through $H_{\Lam'}$ for any $\Lam'\subsetneq\Lam$. Recall that there is a natural isogeny $\iota_{\Lam,\Lam'}: H_{\Lam^-}\ra H_{\Lam^{'-}}$ corresponding the inclusion $\Lam'\subset\Lam$, and we have the compatibility $\rho_{\Lam^-}=\rho_{\Lam^{'-}}\circ\iota_{\Lam,\Lam'}$.

We fix a choice $\Lam=\Lam_0\in\mathcal{L}_0$ with $t(\Lam)=t_{max}$.

\begin{definition}
We define an analytic domain in $\M^0$ \[\D:=\C\cap sp^{-1}(\M_{\Lam})=\C^0\cap sp^{-1}(\M_{\Lam}),\] which is locally closed.
\end{definition}

\section{Some unitary group Shimura varieties and the relatively compactness of $\D$}
In the following  we prove that the underlying topological space ${|\D|}$ of $\D$ is relatively compact, that is the topological closure $\ov{|\D|}$ in $|\M^0|$ (or $|\C^0|$) is compact. For this, we will use some unitary group Shimura varieties and the theory of $p$-adic uniformization. The PEL data $(B,\ast,\mathbb{V},\lan,\ran,h,O_B,\Lam)$ for defining these Shimura varieties are as following.
\begin{itemize}
\item $B$ is a simple $\Q$-algebra such that $B\otimes_{\Q}\R\simeq M_m(\mathbb{C})$ and $B\otimes_\Q\Q_p\simeq M_m(\Q_{p^2})$, for some integer $m$.
\item $\ast$ is a positive involution on $B$.
\item $\mathbb{V}$ is a non-zero finitely generated left $B$-module such that $n=dim_{\Q}(\mathbb{V})/2m$.
\item $\lan,\ran:\mathbb{V}\times\mathbb{V}\ra\Q$ is a non-degenerate skew-hermitian $\Q$-valued form.
Let $G:=GU_B(\mathbb{V},\lan,\ran)$ denote the reductive algebraic group over $\Q$ of $B$-linear symplectic similitudes of $(\mathbb{V},\lan,\ran)$.
\item $h: Res_{\mathbb{C}|\R}\mathbb{G}_m\ra G_\R$ is a homomorphism of real algebraic groups such that it defines a Hodge structure of type $\{(-1,0),(0,-1)\}$ on $\mathbb{V}$ and $\lan\cdot,h(\sqrt{-1})\cdot\ran: \mathbb{V}_\R\times \mathbb{V}_\R\ra\R$ is symmetric and positive definite.
\item $O_B$ is a $\ast$-invariant $\Z_{(p)}$-order of $B$ such that $O_B\otimes\Z_p$ is a maximal order of $B_{\Q_p}$. We can and we do fix an isomorphism $B_{\Q_p}\simeq M_m(\Q_{p^2})$ such that $O_B\otimes\Z_p$ is identified with $M_m(\Z_{p^2})$.
\item $\Lam$ is an $O_B$-invariant $\Z_p$-lattice of $\mathbb{V}_{\Q_p}$ such that the alternating form on $\Lam$ induced by $\lan,\ran$ is a perfect $\Z_p$-form.
\end{itemize}
The first condition implies that the center $\mathbb{K}$ of $B$ is a quadratic imaginary extension of $\Q$ and $p$ is inert in $\mathbb{K}$. The derived subgroup $G^{der}$ is an inner form of the quasi-split special unitary group $SU(n)$ for the extension $\mathbb{K}|\Q$. The assumption $B\otimes_\Q\R\simeq M_m(\mathbb{C})$ implies $G_\R$ is isomorphic to the group of unitary similitudes $GU(r,s)$ of a hermitian form of signature for some nonnegative integers $r$ and $s$ such that $r+s=n$. We will assume $r=1,s=n-1$. The reflex field $E$ will be $\mathbb{K}$ if $1\neq n-1$ i.e. $n\neq 2$ and $\Q$ if $n=2$. Up to Morita equivalence, the localization of the  above PEL data at $p$ then induces the local PEL data for defining the Rapoport-Zink space $\widehat{\M}$.

For a sufficient small open compact subgroup $K^p\subset G(\A^p_f)$, the associated Shimura variety $Sh_{K^p}$ over the integer ring $O_{E_p}$ of the local field $E_p$ ($p$ is inert in $E$) is the moduli space of abelian varieties with additional structures in the following sense. For any $O_{E_p}$-scheme $S$, $Sh_{K^p}(S)=\{(A,\iota,\lambda,\eta)\}/\simeq$ where
\begin{itemize}
\item $A$ is an abelian scheme over $S$ of relative dimension equal to dim$_{\mathbb{K}}\mathbb{V}$.
\item $\iota: O_B\otimes\Z_p\ra End(A)\otimes \Z_{(p)}$ is a nonzero homomorphism of $\Z_{(p)}$-algebras, such that the induced action of $O_B$ on the Lie algebra $Lie(A)$ satisfies $rank_{O_S}Lie(A)_1=m, rank_{O_S}Lie(A)_2=(n-1)m$, where $Lie(A)_1$ (resp. $Lie(A)_2$) is the subsheaf of $Lie(A)$ that $O_{\mathbb{K}_p}$ acts via the the natural inclusion $O_{\mathbb{K}_p}\subset O_B\otimes\Z_p$ (the composition of the nontrivial automorphism $\ast$ and the natural inclusion).
\item $\lambda: A\ra A^D$ is a principal $O_B\otimes\Z_p$-linear polarization, such that the involution $\ast$ on $B$ is compatible with the Rosati involution on $End(A)_\Q$ induced by $\lambda$, under the homomorphism $B\ra End(A)\otimes\Q$.
\item $\eta: \mathbb{V}\otimes\A^p_f\stackrel{\sim}{\ra}H_1(A,\A^p_f)$ mod $K^p$ is a $K^p$-level structure.
\item $(A_1,\iota_1,\lambda_1,\eta_1)\simeq (A_2,\iota_2,\lambda_2,\eta_2)$, if there exists an $O_B$-linear isogeny $\phi: A_1\ra A_2$ of degree prime to $p$ such that $\phi^\ast(\lambda_2)=a\lambda_1, \phi\circ\eta_1=\eta_2$ for some $a\in\Q^\times$.
\end{itemize}
Note $n$ is divisible by $m$ and in fact the rank of $\mathbb{V}$ as a $B$-module is $\frac{n}{m}$. In particular if $n=m$, the Shimura varieties $Sh_{K^p}$ for $K^p$ varies are all proper over $O_{E_p}$. In fact using Morita equivalence one sees in this case these are the simple Shimura varieties studied by Harris-Taylor in \cite{HT}.

Now let $S\in$Nilp$\Z_{p^2}$. To any $S$-valued point $(A,\iota,\lambda,\eta)\in Sh_{K^p}(S)$, we attach to it a unitary $p$-divisible group of signature $(1,n-1)$ as follows. Let $H'=A[p^\infty]$ be the $p$-divisible group of $A$. Then $O_B\otimes\Z_p=M_m(\Z_{p^2})$ acts on $H'$. By Morita equivalence, the functors $H'\mapsto \Z_{p^2}^m\otimes_{M_m(\Z_{p^2})}H'$ and $H\mapsto \Z_{p^2}^m\otimes_{\Z_{p^2}}H$ are mutually quasi-inverse between the category of $p$-divisible groups $H'$ over $S$ with a left $M_m(\Z_{p^2})$-action and the category of $p$-divisible groups with a $\Z_{p^2}$-action over $S$. We set $H:=\Z_{p^2}^m\otimes_{M_m(\Z_{p^2})}A[p^\infty]$, and denote its $\Z_{p^2}$-action still by $\iota$. The principal polarization $\lambda$ on $A$ then induces a $\Z_{p^2}$-linear principal polarization on $H$ which we still denote by $\lambda$.

To sate the link with the Rapoport-Zink space $\widehat{\M}$, we take the base change to $SpecO_L$ ($L=W(\ov{\F}_p)_\Q, O_L=W(\ov{\F}_p$)) of the Shimura variety, which we still denote by $Sh_{K^p}$ by abuse of notation. The special fiber $\ov{Sh}_{K^p}$ of $Sh_{K^p}$ over $\ov{\F}_p$ then admits the Newton polygon stratification.
\[\ov{Sh}_{K^p}=\coprod_{b\in B(G,\mu)}\ov{Sh}_{K^p}^b.\]
The Kottwitz set $B(G,\mu)$ of all Newton polygons can be written down explicitly as in \cite{BW} 3.1. In particular, one finds that every non-basic polygon has contacted points with the $\mu$-ordinary polygon. This key special phenomenon will at last lead to the relatively compactness of our analytic domain $\D$ in the $p$-adic analytic Rapoport-Zink space $\M$.
As shown in \cite{BW}, each strata $\ov{Sh}_{K^p}^b$ is non-empty, and any non-basic strata is in fact a leaf in the sense of \cite{M1}. The basic strata, which we denote by $\ov{Sh}^{b_0}_{K^p}$, was studied in \cite{V} and \cite{VW},  by studying the reduced special fiber $\M_{red}$ of the Rapoport-Zink space $\widehat{\M}$ and the uniformization of $\ov{Sh}^{b_0}_{K^p}$ by $\M_{red}$.

Let $\widehat{Sh}^{b_0}_{K^p}$ be the completion of $Sh_{K^p}$ along $\ov{Sh}^{b_0}_{K^p}$, then the chapter 6 of \cite{RZ} tell us there is an isomorphism of formal schemes over $SpfO_L$:
\[\widehat{Sh}^{b_0}_{K^p}\simeq I(\Q)\setminus\widehat{\M}\times G(\A^p_f)/K^p\simeq \coprod_{i\in I(\Q)\setminus G(\A^p_f)/K^p}\widehat{\M}/\Gamma_i.\]
Note the group $G$ satisfies Hasse principal: $ker^1(\Q,G)=1$ (cf. \cite{VW}). Here $I$ is an inner form of $G$ over $\Q$, which is anisotropic modulo center and such that $I_{\Q_p}=J_b, I_{\A^p_f}=G_{\A^p_f}$. Note the index set $I(\Q)\setminus G(\A^p_f)/K^p$ is finite, and if $g_1,\dots,g_k\in G(\A^p_f)$ is a set of representatives, then $\Gamma_i=I(\Q)\cap g_iK^pg_i^{-1}$ for $i=1,\dots,k$. The subgroups $\Gamma_i\subset J_b(\Q_p)$ are discrete and cocompact modulo center. Since $K^p$ is sufficiently small, $\Gamma_i$ is torsion free for all $i$.

We can describe the isomorphism \[I(\Q)\setminus\widehat{\M}\times G(\A^p_f)/K^p\ra\widehat{Sh}^{b_0}_{K^p}\] as follows. Let $(\mathbf{A},\iota,\lambda,\eta)$ be a $\ov{\F}_p$-valued point in $\ov{Sh}_{K^p}^{b_0}$, and the isomorphism we construct will depend on such a choice. Let $(\mathbf{H},\iota,\lambda)$ be the unitary $p$-divisible group associated to this abelian variety as above. We take $(\mathbf{H},\iota,\lambda)$ as the standard unitary $p$-divisible group for defining $\widehat{\M}$. For any $S\in$Nilp$O_L$, there is a map \[\widehat{\M}(S)\times G(\A^p_f)/K^p\ra Sh_{K^p}(S)\]
\[((H,\iota,\lambda,\rho),gK^p)\mapsto (A,\iota',\lambda',\eta g^{-1}K^p),\]such that there is an unique quasi-isogeny
$\mathbf{A}_{\ov{S}}\ra A_{\ov{S}}$ compatible with additional structures inducing $\rho$ when taking $p$-divisible groups. This map factors though the action of $I(\Q)$ and functorially for $S$. Let $\widehat{Sh}_{K^p}$ be the formal completion of $Sh_{K^p}$ along its special fiber $\ov{Sh}_{K^p}$. Then the above map induces a closed immersion of formal algebraic spaces
\[I(\Q)\setminus\widehat{\M}\times G(\A^p_f)/K^p\ra\widehat{Sh}_{K^p},\]
and one can prove that the left hand side is in fact a formal scheme and the image is $\widehat{Sh}^{b_0}_{K^p}$. Thus there is an isomorphism of formal schemes.

 We denote $\widehat{Sh}^{an,b}_{K^p}:=(\widehat{Sh}^{b}_{K^p})^{an}$ the Berkovich analytic space associated to the formal scheme $\widehat{Sh}^{b}_{K^p}$, the completion of $Sh_{K^p}$ along the strata $\ov{Sh}^b_{K^p}$, which is also the tube $sp^{-1}(\ov{Sh}^b_{K^p})$ of $\ov{Sh}^b_{K^p}$ in $\widehat{Sh}^{an}_{K^p}\subset Sh^{an}_{K^p}$. Here $sp: \widehat{Sh}^{an}_{K^p}\ra\ov{Sh}_{K^p}$ is the specialization map, and $Sh^{an}_{K^p}$ is the $p$-adic analytification of the generic fiber $Sh_{K^p}\times L=Sh_{G(\Z_p)\times K^p}$ over $L$, the last $\subset$ is a closed immersion. It is an isomorphism if and only if $Sh_{K^p}$ is proper over $SpecO_L$. We have the Newton polygon stratification of $\widehat{Sh}^{an}_{K^p}$ by locally closed subspaces:
 \[\widehat{Sh}^{an}_{K^p}=\coprod_{b\in B(G,\mu)}\widehat{Sh}^{an,b}_{K^p}.\]

  Analogously to the case of Rapoport-Zink spaces, there is a tower of analytic spaces $(\widehat{Sh}_{K_p\times K^p}^{an})_{K_p\subset G(\Z_p)}$ indexed by open compact subgroups $K_p\subset G(\Q_p)$, together with a family of closed immersions $(\widehat{Sh}_{K_p\times K^p}^{an})_{K_p}\subset (Sh^{an}_{K_p\times K^p})_{K_p}$, such that $\widehat{Sh}_{G(\Z_p)\times K^p}^{an}=\widehat{Sh}_{K^p}^{an}$. Then $G(\Q_p)$ acts on this tower and this gives the $p$-adic Hecke correspondence on each $\widehat{Sh}_{K_p\times K^p}^{an}$. The family of closed immersions $(\widehat{Sh}_{K_p\times K^p}^{an})_{K_p}\subset (Sh^{an}_{K_p\times K^p})_{K_p}$ is Hecke-equivariant, here the $G(\Q_p)$ action on the right hand side is the $p$-adic analytification of the Hecke action of $G(\Q_p)$ on $(Sh^{an}_{K_p\times K^p})_{K_p\subset G(\Z_p)}$. By taking the inverse images under the natural projection $\widehat{Sh}^{an}_{K_p\times K^p}\ra \widehat{Sh}^{an}_{K^p}$, we can define subspaces $\widehat{Sh}^{an,b}_{K_p\times K^p}\subset \widehat{Sh}^{an}_{K_p\times K^p}$, which are Hecke-invariant, thus we have $G(\Q_p)$-equivariant stratifications \[\widehat{Sh}^{an}_{K_p\times K^p}=\coprod_{b\in B(G,\mu)}\widehat{Sh}^{an,b}_{K_p\times K^p}.\]

 Now pass to the $p$-adic analytic side, we have a family of isomorphisms of analytic spaces
\[ I(\Q)\setminus\M_{K_p}\times G(\A^p_f)/K^p\simeq \coprod_{i\in I(\Q)\setminus G(\A^p_f)/K^p}\M_{K_p}/\Gamma_i\simeq \widehat{Sh}^{an,b_0}_{K_p\times K^p}.\]These isomorphisms are Hecke-equivariant for the action of $G(\Q_p)$ on the two sides. If we let $K^p$ varies, then they are Hecke-equivariant for the action of $G(\A_f)$ on the two sides.

We now look at the Harder-Narasimhan stratification of $\widehat{Sh}^{an}_{K_p\times K^p}$, see section 4.
For a unitary $p$-divisible group $(H,\iota,\lambda)$ over a complete valuation ring $O_K|\Z_p$ of rank one, recall we have the Harder-Narasimhan polygon \[HN(H,\iota,\lambda):=\frac{1}{2}HN(H)(2\cdot)=\lim_{k\ra\infty}\frac{1}{2k}HN(H[p^k])(2k\cdot)\] as a function $[0,n]\ra [0,n/2],$ which we also identify with its graph. For a point $x\in \widehat{Sh}^{an}_{K_p\times K^p}$, denote by $(A_x,\iota_x,\lambda_x,\eta_{px}\times\eta^p_x)$ the abelian scheme over $O_K:=O_{\mathcal{H}(x)}$ associated to $x$, let $(H_x=\Z_{p^2}^m\otimes_{M_m(\Z_{p^2})}A_x[p^\infty],\iota_x,\lambda_x)$ be as above the unitary $p$-divisible group obtained after Morita equivalence from $(A_x,\iota_x,\lambda_x,\eta_{px}\times\eta^p_x)$. Let $HN(x):=HN(H_x,\iota_x,\lambda_x)$. Thus we have defined a function \[HN: \widehat{Sh}^{an}_{K_p\times K^p}\ra \textrm{Poly},\]
here Poly denotes the set of concave polygons in $[0,n]\times[0,n/2]$ bounded by the $\mu$-ordinary Hodge polygon. We identify the set $B(G,\mu)$ with a finite subset of Poly by associating each $b\in B(G,\mu)$ its polygon. By proposition 4.5, this function $HN$ is semi-continuous.

\begin{definition}
For each $\P\in$ Poly, we define the subset\[\widehat{Sh}^{an,HN=\P}_{K_p\times K^p}:=HN^{-1}(\P)=\{x\in\widehat{Sh}^{an}_{K_p\times K^p}|\,HN(x)=\P\},\] which is then a locally closed subset.
\end{definition}
We thus obtain a Harder-Narasimhan stratification of the underlying topological space $|\widehat{Sh}^{an}_{K_p\times K^p}|$ by locally closed subset
\[|\widehat{Sh}^{an}_{K_p\times K^p}|=\coprod_{\P\in\textrm{Poly}}\widehat{Sh}^{an,HN=\P}_{K_p\times K^p}.\]Let $\P_{ss}$ be the basic element in Poly, then the strata $\widehat{Sh}^{an,HN=\P_{ss}}_{K_p\times K^p}$ is an open subset, thus there is an analytic structure on it so that the inclusion $\widehat{Sh}^{an,HN=\P_{ss}}_{K_p\times K^p}\subset \widehat{Sh}^{an}_{K_p\times K^p}$ is an open immersion. For general Harder-Narasimhan strata, there is in general no obvious analytic structure on it. But, fortunately, in our case we have the following strong conclusion.

\begin{proposition}
The Harder-Narasimhan stratification and the Newton polygon stratification for $\widehat{Sh}^{an}_{K_p\times K^p}$ coincide.
\end{proposition}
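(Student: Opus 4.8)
The plan is to reduce the statement to the pointwise equality of polygons $HN(x)=Newt(x)$ for every $x\in\widehat{Sh}^{an}_{K_p\times K^p}$, where I abbreviate $HN(x):=HN(H_x,\iota_x,\lambda_x)$ and $Newt(x):=Newt(H_{x,k(x)},\iota_x,\lambda_x)$. Granting this, the two partitions $\coprod_{\P}\widehat{Sh}^{an,HN=\P}_{K_p\times K^p}$ and $\coprod_{b\in B(G,\mu)}\widehat{Sh}^{an,b}_{K_p\times K^p}$ of the underlying topological space must coincide term by term; and since each Newton stratum $\widehat{Sh}^{an,b}_{K_p\times K^p}$ is a tube (the preimage of $sp^{-1}(\ov{Sh}^{b}_{K^p})$, a locally closed subset of the special fiber $\ov{Sh}_{K^p}$), the Harder-Narasimhan strata then inherit this analytic structure. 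Throughout I would use Proposition 4.4, namely $HN(x)\leq Newt(x)$, together with the fact that both are concave polygons from $(0,0)$ to $(n,n/2)$, symmetric under the involution induced by the polarization (section 3). The chord $\P_{ss}$ of slope $\tfrac12$ is the minimal element of $\textrm{Poly}$, because a concave polygon with these endpoints lies above its chord; hence whenever $Newt(x)=\P_{ss}$ one already gets $HN(x)=\P_{ss}$, which is the inclusion recorded after Proposition 4.4.

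For the non-basic strata I would first work at rigid points $x$, where $O_{\mathcal{H}(x)}$ is discrete with perfect residue field and $(H_x,\iota_x,\lambda_x,\rho_x)$ corresponds to a unitary filtered isocrystal. By the explicit description of $B(G,\mu)$ in \cite{BW}, 3.1, a non-basic Newton polygon for signature $(1,n-1)$ is, in the $p$-divisible group picture, the concatenation of a segment of slope $1$ of multiplicity $2a$, a segment of slope $\tfrac12$, and a segment of slope $0$ of multiplicity $2a$, for some integer $a\geq 1$; in particular it agrees with the $\mu$-ordinary Hodge polygon on two extremal intervals, the contact occurring at break points carrying the extreme slopes $0$ and $1$. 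Hence the Hodge-Newton decomposition applies at the level of the filtered isocrystal (equivalently, of $H_x$): compatibly with $\iota_x$ and with $\lambda_x$ (which pairs the slope-$1$ part with the slope-$0$ part and leaves the slope-$\tfrac12$ part self-dual), $H_x$ is isogenous to $H^{m}\times H^{ss}\times H^{e}$ with $H^{m}$ multiplicative, $H^{e}$ \'etale, and $H^{ss}$ a basic unitary $p$-divisible group. On each factor one has $HN=Newt$: trivially for $H^{m}$ and $H^{e}$ (semi-stable of slope $1$, resp.\ $0$), and for $H^{ss}$ by the minimality argument of the previous paragraph applied to the pure-slope-$\tfrac12$ situation. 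Since the Harder-Narasimhan polygon (resp.\ the Newton polygon) of a product is the polygon obtained from those of the factors by rearranging the slopes decreasingly, this yields $HN(x)=Newt(x)$.

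To pass from rigid points to arbitrary $x$, I would fix $\P$ and let $T_{\P}=\widehat{Sh}^{an,b}_{K_p\times K^p}$ be the corresponding Newton stratum, so every point of $T_{\P}$ has Newton polygon $\P$ and $T_{\P}$ is a tube, in which the rigid points are dense. By the rigid case these rigid points lie in the closed set $\{x\mid HN(x)\geq\P\}$ of Proposition 4.5; taking closures gives $T_{\P}\subset\{HN\geq\P\}$, and combined with $HN(x)\leq Newt(x)=\P$ on $T_{\P}$ we conclude $HN\equiv\P$ on $T_{\P}$, which finishes the proof. The step I expect to be the main obstacle is the Hodge-Newton decomposition together with its compatibility with the PEL data; this is exactly why I carry it out only at rigid points, where the classical filtered-isocrystal statements apply, and then spread the conclusion out by density and the semi-continuity of Proposition 4.5 — alternatively, one may simply invoke the Harder-Narasimhan versus Newton comparison at contact points with the Hodge polygon already established in \cite{Sh}.
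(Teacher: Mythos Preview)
Your argument is correct, but it takes a longer path than the paper's. The paper's proof is three lines: one has $HN(H,\iota,\lambda)\leq Newt(H_k,\iota,\lambda)\leq Hodge(H_k,\iota,\lambda)$, and by \cite{Sh1}, corollary 5.3, whenever the Newton and Hodge polygons touch at a break point the Harder--Narasimhan polygon passes through that same point; since for signature $(1,n-1)$ every non-basic Newton polygon has all of its break points on the Hodge polygon, this forces $HN=Newt$ everywhere. This is exactly the ``alternative'' you mention in your last sentence, and it applies directly to every point (including non-rigid ones), so no density or semi-continuity step is needed.

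Your main route---Hodge--Newton decomposition at rigid points, then density plus Proposition~4.5---is really a hand-made proof of that contact-point principle in this particular case. It works, with one caveat: the Hodge--Newton theorem produces a \emph{filtration} $H^m\subset H'\subset H_x$ with graded pieces multiplicative, basic, and \'etale, not a priori a product decomposition up to isogeny. This does not matter for your purpose, because Proposition~3.2 applied to each $H_x[p^k]$ (with $\mu_{\min}$ of the multiplicative piece equal to $1$ and $\mu_{\max}$ of the \'etale piece equal to $0$) already shows that $HN(H_x[p^k])$ is the concatenation of the three pieces, and then you pass to the limit; you might want to phrase it that way rather than invoking an isogeny splitting. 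With that adjustment your argument is complete and gives an independent verification of the contact-point lemma in the case at hand, at the cost of the extra rigid-point/density step that the paper avoids.
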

\begin{proof}
This comes from the fact that, for a unitary $p$-divisible group $(H,\iota,\lambda)$ over $O_K$, we have the inequalities
\[HN(H,\iota,\lambda)\leq Newt(H_{k},\iota,\lambda)\leq Hodge(H_k,\iota,\lambda),\]
and if there is a contact point $x$ of the Newton polygon $Newt(H_{k},\iota,\lambda)$ and the Hodge polygon $Hodge(H_k,\iota,\lambda)$, then the Harder-Narasimhan polygon $HN(H,\iota,\lambda)$ also passes at $x$, see \cite{Sh1}, corollary 5.3.
If one draw all the possible Newton polygons in our cases, then one finds immediately the proposition holds.
\end{proof}

The underlying topological space of $\widehat{Sh}^{an}_{K^p}$ is compact. We now consider the image $\mathcal{E}$ of the subspace $\C\subset \M$ under the $p$-adic uniformization morphism \[\coprod_{i=1}^k\M/\Gamma_i\simeq \widehat{Sh}^{an,b_0}_{K^p}=\widehat{Sh}^{an,HN=\P_{ss}}_{K^p}\subset\widehat{Sh}^{an}_{K^p}.\] Since $\C$ is $J_b(\Q_p)$-stable, we have $\mathcal{E}\simeq\coprod_{i=1}^k\C/\Gamma_i.$

\begin{proposition}
The subset $\mathcal{E}$ is a closed analytic domain in $\widehat{Sh}^{an}_{K^p}$, thus it is compact.
\end{proposition}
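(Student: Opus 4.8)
The plan is to recognize $\mathcal{E}$ as the whole ``semi-stable after one isogeny'' locus of the Shimura variety, to show that this locus is a closed analytic domain by the argument of Proposition 6.2, and then to use the contact-point phenomenon of this section to see that the locus is automatically contained in the basic tube, so that it coincides with $\mathcal{E}$.

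First I would introduce
\[\widehat{\mathcal{E}}:=\{y\in\widehat{Sh}^{an}_{K^p}\,|\,\exists\text{ finite extension }K'|\mathcal{H}(y)\text{ and a finite flat subgroup }G\subset H_y[p]\text{ over }O_{K'}\text{ with }H_y/G\text{ semi-stable}\},\]
where $H_y$ is the unitary $p$-divisible group attached to $y$ after Morita equivalence. Since $H_y[p]$ has constant height $2n$ and constant degree $n$ over the Shimura variety (the polarization gives $H_y[p]\simeq H_y[p]^D$, whence $\deg H_y[p]=\frac{1}{2}\,ht\,H_y[p]=n$), the semi-stable locus $\{y\,|\,H_y\text{ semi-stable}\}$ is a closed analytic domain by Theorem 3.5; and exactly as in the proof of Proposition 6.2, $\widehat{\mathcal{E}}$ is the intersection with $\widehat{Sh}^{an}_{K^p}$ of a finite union of Hecke translates of this semi-stable locus in the ambient tower obtained by forgetting the polarization. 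Hence $\widehat{\mathcal{E}}$ is a closed analytic domain in $\widehat{Sh}^{an}_{K^p}$.

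Next I would prove $\widehat{\mathcal{E}}\subset\widehat{Sh}^{an,b_0}_{K^p}$. Let $y\in\widehat{\mathcal{E}}$. Then $H_y$ is isogenous to the semi-stable $p$-divisible group $H_y/G$, so $Newt((H_y)_{k(y)})=Newt((H_y/G)_{k(y)})$, while $HN(H_y/G,\iota,\lambda)=\P_{ss}$ because $H_y/G$ is semi-stable of slope $\frac{1}{2}$. Suppose $y$ were not basic. Then $Newt((H_y)_{k(y)})\neq\P_{ss}$, and by the explicit description of $B(G,\mu)$ recalled above it touches the $\mu$-ordinary Hodge polygon at some interior point $x_0$; since a concave polygon with the same endpoints as the chord $\P_{ss}$ that agrees with $\P_{ss}$ at an interior point must equal $\P_{ss}$, we get $Hodge(x_0)=Newt((H_y/G)_{k(y)})(x_0)>\P_{ss}(x_0)$. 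But by the contact-point result used in the proof of the coincidence of the Harder-Narasimhan and Newton stratifications, $HN(H_y/G,\iota,\lambda)$ would then also have to pass through $(x_0,Hodge(x_0))$, contradicting $HN(H_y/G,\iota,\lambda)=\P_{ss}$. Hence $Newt((H_y)_{k(y)})=\P_{ss}$, i.e. $y\in sp^{-1}(\ov{Sh}^{b_0}_{K^p})=\widehat{Sh}^{an,b_0}_{K^p}$.

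Finally, over $\widehat{Sh}^{an,b_0}_{K^p}\simeq\coprod_i\M/\Gamma_i$ the defining condition of $\widehat{\mathcal{E}}$ involves only the $p$-divisible group $H_y$ and not the quasi-isogeny, and it is literally the defining condition of $\C$; since $\C$ is $\Gamma_i$-stable and $\mathcal{E}$ is its image under the $p$-adic uniformization, this gives $\widehat{\mathcal{E}}\cap\widehat{Sh}^{an,b_0}_{K^p}=\mathcal{E}$, and hence $\mathcal{E}=\widehat{\mathcal{E}}$ by the previous step. Thus $\mathcal{E}$ is a closed analytic domain in $\widehat{Sh}^{an}_{K^p}$; and since we may take the PEL datum with $m=n$, so that $Sh_{K^p}$ is proper over $O_{E_p}$ and $|\widehat{Sh}^{an}_{K^p}|$ is compact, $\mathcal{E}$ — being closed in it — is compact. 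The crux is the third step: it is precisely here that one must invoke the special feature of these unitary data, namely that every non-basic Newton polygon meets the Hodge polygon in the interior, without which $\widehat{\mathcal{E}}$ — like the basic tube $\widehat{Sh}^{an,b_0}_{K^p}$ itself — would fail to be closed in $\widehat{Sh}^{an}_{K^p}$ and the argument would collapse.
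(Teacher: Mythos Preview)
Your overall strategy is sound, and your steps 2--4 actually supply a justification the paper leaves implicit: the paper simply asserts that $\mathcal{E}$ coincides with the moduli-theoretic locus $\widehat{\mathcal{E}}$ on all of $\widehat{Sh}^{an}_{K^p}$, whereas you prove this via the contact-point phenomenon (essentially Proposition 9.2). That is a genuine clarification.

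However, your step 1 has a gap. You invoke the argument of Proposition 6.2 verbatim, but that argument relies on embedding $\M$ into the ambient Rapoport--Zink space $\mathcal{N}$ for $\mathrm{Res}_{\Q_{p^2}|\Q_p}GL_n$ and using the Hecke correspondences $T_{\underline{a}}$ acting on $\mathcal{N}$. On the Shimura variety side there is no such ambient space: the ``tower obtained by forgetting the polarization'' that you allude to does not carry a $GL_n(\Q_{p^2})$-Hecke action, because quotienting an abelian variety by an arbitrary (non-isotropic) subgroup of its $p$-torsion destroys the principal polarization and the result is not a point of any natural moduli space. The paper circumvents this by a different device: it passes to the finite \'etale cover $Y\to X=\widehat{Sh}^{an}_{K^p}$ trivializing $\mathcal{H}[p]$ (this cover does exist and carries a $GL_n(\Z_{p^2})$-action, but \emph{not} a $GL_n(\Q_{p^2})$-action), constructs for each subgroup $M\subset(\Z/p\Z)^{2n}$ a universal finite flat subgroup $\mathcal{G}_M$ over a formal model of $Y$, and takes the union over $M$ of the semi-stable loci of the quotients $f^\ast\mathcal{H}/\mathcal{G}_M$. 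This union is a closed analytic domain in $Y$, and its image in $X$ under the finite \'etale map is $\widehat{\mathcal{E}}$. You should replace your appeal to Proposition 6.2 with this argument.

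Two minor points. In step 2, the contact-point result should be applied to $H_y$ itself rather than to $H_y/G$: the latter need not carry a principal polarization, and its Hodge polygon may differ from that of $H_y$. Since $HN$ is an isogeny invariant, $HN(H_y,\iota,\lambda)=\P_{ss}$ directly, and then Proposition 9.2 (or the contact-point argument applied to $H_y$) gives $y\in\widehat{Sh}^{an,b_0}_{K^p}$ at once. Finally, the restriction to $m=n$ in your last paragraph is unnecessary: $\widehat{Sh}_{K^p}$ is the $p$-adic completion of a quasi-projective $O_L$-scheme, hence a quasi-compact $p$-adic formal scheme, and the Berkovich generic fiber of any such formal scheme is compact regardless of whether the algebraic scheme is proper.
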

\begin{proof}
Let $\mathcal{H}/\widehat{Sh}_{K^p}$ be the $p$-divisible group associated to the universal Abelian scheme $\mathcal{A}$ after applying the Morita equivalence. Then we can describe the locus $\mathcal{E}\subset\widehat{Sh}^{an}_{K^p}$ by
\[\begin{split}\mathcal{E}=\{x\in\widehat{Sh}^{an}_{K^p}|&\,\exists\,\textrm{finite extension}\, K'|\mathcal{H}(x), \textrm{and a finite flat group}\; G\subset \mathcal{H}_x[p]\\
&\textrm{over}\; O_{K'}, \textrm{such that}\; \mathcal{H}_x/G \textrm{ is semi-stable over}\,O_{K'}.\}
\end{split}\]
For simplifying  notation, we denote $\mathcal{X}=\widehat{Sh}_{K^p}$ and $X=\widehat{Sh}^{an}_{K^p}$. By forgetting the polarization, we can construct a tower of analytic spaces $(X_K)_{K\subset GL_{n}(\Z_{p^2})}$ with $X=X_{GL_{n}(\Z_{p^2})}$, and an action of $GL_{n}(\Z_{p^2})$ (not $GL_{n}(\Q_{p^2})$!) on this tower. Let $\pi: Y=X_{Id+pM_{n}(\Z_{p^2})}\ra X$ be the natural finite \'{e}tale morphism. Then $Y$ classifies the level structures $\eta: (\Z/p\Z)^{2n}\stackrel{\sim}{\ra}\pi^\ast\mathcal{H}^{an}[p]$. After a possible admissible formal blow-up we can find a $p$-adic admissible formal model $\mathcal{Y}$ of $Y$, and a morphism $f: \mathcal{Y}\ra \mathcal{X}$ such that $f^{an}=\pi$. Consider the finite flat formal group scheme $f^\ast\mathcal{H}[p]$ over $\mathcal{Y}$. Then for any subgroup $M\subset (\Z/p\Z)^{2n}$, there exists a finite flat formal subgroup scheme $\mathcal{G}_M\subset f^\ast\mathcal{H}[p]$ such that $\mathcal{G}_M^{an}=\eta(M)$. Let $\mathcal{H}_M:=f^\ast\mathcal{H}/\mathcal{G}_M$ be the $p$-divisible group over $\mathcal{Y}$, then the following finite union
\[F:=\bigcup_M\{y\in Y|\mathcal{H}_{M,y}/O_{\mathcal{H}(y)} \textrm{is semi-stable}\}\]
is a closed analytic domain of $Y$ since each one on the right hand side is. Now the subset $\mathcal{E}\subset X$ is exactly the image $\pi(F)$ of $F$ under the finite \'{e}tale morphism $\pi:Y\ra X$, so it is a closed analytic domain.
\end{proof}

\begin{corollary}
The underlying topological space $|\D|$ of $\D$ is relatively compact.
\end{corollary}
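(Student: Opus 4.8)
The plan is to deduce the relative compactness of $|\D|$ from the compactness of $\mathcal{E}$ just established, via the $p$-adic uniformization, using the finiteness of the Bruhat--Tits combinatorics around the fixed component $\M_\Lam$. Fix one of the discrete, cocompact-modulo-center, torsion-free subgroups $\Gamma=\Gamma_i\subset J_b(\Q_p)$ and set $\Gamma^{der}=\Gamma\cap J^{der}_b(\Q_p)$. Since elements of $J^{der}_b(\Q_p)$ have similitude factor of valuation zero, $\Gamma^{der}$ preserves $\M^0$, and it preserves $\C^0$ because $\C$ is $J_b(\Q_p)$-stable. The $\varkappa$-grading decomposes the compact space $\mathcal{E}\simeq\coprod_i\C/\Gamma_i$ into open-and-closed pieces, among which $\C^0/\Gamma^{der}$ occurs (up to the finite-index subtlety between $\Gamma^{der}$ and the full stabilizer of $\C^0$ in $\Gamma$); hence $\C^0/\Gamma^{der}$ is compact. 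Let $q\colon |\C^0|\to |\C^0/\Gamma^{der}|$ be the quotient map; it is an open quotient map for the free, properly discontinuous action of the discrete group $\Gamma^{der}$. The goal is to show that $q$ restricted to $\overline{|\D|}$ is proper, which together with compactness of $\C^0/\Gamma^{der}$ forces $\overline{|\D|}$ to be compact.

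The geometric input is that the whole configuration is local around $\M_\Lam$. Because the residue field $\F_{p^2}$ is finite, only finitely many strata $\M^0_{\Lam''}$ of the Bruhat--Tits stratification of $\M^0_{red}$ have closure $\M_{\Lam''}$ meeting $\M_\Lam$; let $N(\Lam)$ be this finite set of vertices and put $Y:=\bigcup_{\Lam''\in N(\Lam)}\M_{\Lam''}$, a closed subscheme of $\M^0_{red}$. Using the anticontinuity of $sp$ one checks $\overline{sp^{-1}(\M_\Lam)}\subseteq sp^{-1}(Y)$: if $x$ lies in the closure and $sp(x)$ lay in a stratum whose closure missed $\M_\Lam$, then $x$ would lie in the preimage of an open subscheme disjoint from $\M_\Lam$, contradicting $x\in\overline{sp^{-1}(\M_\Lam)}$. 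Hence $\overline{|\D|}\subseteq |\C^0|\cap sp^{-1}(Y)$. Since the stratification is $\Gamma^{der}$-equivariant ($g\M_{\Lam''}=\M_{g\Lam''}$) and $\C^0$ is $\Gamma^{der}$-stable, $g\overline{|\D|}\subseteq |\C^0|\cap sp^{-1}(gY)$; if $g\overline{|\D|}$ meets $\overline{|\D|}$ then $gY$ meets $Y$, i.e. $g$ carries some vertex of $N(\Lam)$ to a vertex whose component meets one from $N(\Lam)$. By the intersection criterion of Vollaard--Wedhorn together with the finiteness of $\F_{p^2}$ this happens for $g$ in only finitely many cosets of $Stab(\Lam)$, and intersecting with the discrete group $\Gamma^{der}$ gives: only finitely many $g\in\Gamma^{der}$ satisfy $g\overline{|\D|}\cap\overline{|\D|}\neq\emptyset$. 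Running the same argument with an open tube $sp^{-1}(\M_{\Lam'''})$ in place of $\overline{|\D|}$ on one side shows moreover that the family $\{g\overline{|\D|}\}_{g\in\Gamma^{der}}$ is locally finite in $|\C^0|$ (each point $x$ has the open neighbourhood $sp^{-1}(\M_{\Lam'''})$, with $\Lam'''$ the stratum-vertex of $sp(x)$, meeting only finitely many $g\overline{|\D|}$).

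With these facts the conclusion is formal point-set topology. Local finiteness of $\{g\overline{|\D|}\}_g$ makes $\Gamma^{der}\cdot\overline{|\D|}=q^{-1}(q(\overline{|\D|}))$ a closed subset of $|\C^0|$, so $q(\overline{|\D|})$ is closed in the compact space $\C^0/\Gamma^{der}$, hence compact; the same local finiteness applied to closed subsets of $\overline{|\D|}$ shows $q|_{\overline{|\D|}}$ is a closed map, and its fibres are finite since only finitely many translates of $\overline{|\D|}$ meet $\overline{|\D|}$, so $q|_{\overline{|\D|}}$ is proper. A proper map into a compact space has compact source, so $\overline{|\D|}$ is compact, i.e. $|\D|$ is relatively compact. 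The main obstacle is the Bruhat--Tits combinatorics of the second paragraph, namely that only finitely many strata meet $\M_\Lam$ and that consequently only finitely many group elements move $\overline{|\D|}$ into itself; once that is in hand everything rests on the compactness of $\mathcal{E}$ proved above.
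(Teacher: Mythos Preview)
Your overall strategy---deduce compactness of $\overline{|\D|}$ from compactness of a quotient of $\C$ by showing the quotient map is proper on $\overline{|\D|}$---is sound and does lead to the result, but it is considerably more elaborate than what the paper does, and one step in your argument is incorrect as written.

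The paper's proof is essentially a one-liner once $\mathcal{E}$ is known to be compact: since $\D\subset sp^{-1}(\M_\Lam)$ and $\M_\Lam$ is a single irreducible component, one may shrink the prime-to-$p$ level $K^p$ so that the resulting $\Gamma$ satisfies $\gamma\D\cap\D=\emptyset$ for every $\gamma\neq 1$ (only finitely many elements of $I(\Q)$ can send $\M_\Lam$ to a component meeting $\M_\Lam$, and each non-identity one can be excluded by a congruence condition away from $p$). Then the quotient map $\C\to\C/\Gamma$ restricts to a topological embedding of $\D$ into the compact space $\C/\Gamma$, and one concludes. You instead keep $\Gamma$ fixed and argue via local finiteness and properness; this avoids the auxiliary choice of $K^p$ but costs substantially more bookkeeping. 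Both approaches ultimately rest on the same Bruhat--Tits finiteness (only finitely many components meet $\M_\Lam$), but the paper exploits the freedom in $K^p$ to turn that finiteness directly into injectivity.

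The genuine gap is in your use of anticontinuity to prove $\overline{sp^{-1}(\M_\Lam)}\subseteq sp^{-1}(Y)$. You argue that if $sp(x)$ lies in a stratum whose closure misses $\M_\Lam$ then ``$x$ would lie in the preimage of an open subscheme disjoint from $\M_\Lam$, contradicting $x\in\overline{sp^{-1}(\M_\Lam)}$''. But anticontinuity says the preimage of a Zariski \emph{open} is \emph{closed} in the analytic space, so $x$ lies in a closed set disjoint from the open set $sp^{-1}(\M_\Lam)$; that does not prevent $x$ from lying in the closure of $sp^{-1}(\M_\Lam)$. (Think of the Gauss point of the closed unit disk: it lies in $sp^{-1}(\A^1\setminus\{0\})$, which is closed, yet it is in the closure of the open unit disk $sp^{-1}(\{0\})$.) The easy repair is to replace your $Y$ by any quasi-compact \emph{open} $U\supset\M_\Lam$ in $\M_{red}^0$; such a $U$ exists because $\M_\Lam$ is projective and $\M_{red}^0$ is locally of finite type. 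Then $sp^{-1}(U)$ is honestly closed, so $\overline{sp^{-1}(\M_\Lam)}\subseteq sp^{-1}(U)$, and since $U$ is quasi-compact it meets only finitely many irreducible components, so your combinatorial argument (finitely many $g\in\Gamma^{der}$ move $\overline{|\D|}$ into itself, local finiteness, properness) goes through with $U$ in place of $Y$. The ``finite-index subtlety'' you flag can also be sidestepped by working directly with the quotient map $\C\to\C/\Gamma$ rather than $\C^0\to\C^0/\Gamma^{der}$.
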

\begin{proof}
Since $\mathcal{E}$ is a finite disjoint union of the form $\C/\Gamma$, therefore each $\C/\Gamma$ is compact by the above proposition. Since $\M_\Lam$ is an irreducible component of $\M_{red}$, and $\D=\C\cap sp^{-1}(\M_\Lam)\subset sp^{-1}(\M_\Lam)$, we may chose $K^p$ sufficiently small such that one  associated $\Gamma$ satisfies that $\forall id\neq \gamma\in\Gamma, \gamma\D\cap\D=\emptyset$. So we have a topological imbedding
\[\D\hookrightarrow\C/\Gamma,\]since the right hand side is compact, we can conclude.
\end{proof}

\section{Locally finite cell decompositions}

We will construct a locally finite covering of the unitary Rapoport-Zink space $\M$ from the locally closed analytic domain $\D$. Recall we have fixed a choice $\Lam\in\mathcal{L}_0$ with $t(\Lam)=t_{max}$. Let $Stab(\Lam)\subset J^{der}_b(\Q_p)$ be the stabilizer subgroup of $\Lam$ in $J^{der}_b(\Q_p)$. Then by our definition, $\D$ is stable under the action of $Stab(\Lam)$. Recall in section 8 we introduced the analytic domains $\C',\C^0$. We have the following covering of $\C^0$
\[\C^0=\bigcup_{g\in J^{der}_b(\Q_p)/Stab(\Lam)}g\D,\] which is locally finite, i.e. for any $g\in J^{der}_b(\Q_p)/Stab(\Lam)$, there are finite $g'\in J^{der}_b(\Q_p)/Stab(\Lam)$ such that $g\D\bigcap g'\D\neq \emptyset$, since
\[\M_{red}^0=\bigcup_{g\in J^{der}_b(\Q_p)/Stab(\Lam)}g\M_\Lam\] is a locally finite union of its irreducible components.

Recall in section 8 we have the following equality
\[
\bigcup_{T\in G(\Z_p)\setminus G(\Q_p)/G(\Z_p)}T.\C= \bigcup_{T\in G(\Z_p)\setminus G(\Q_p)/G(\Z_p)}T.\C'\]
If $n$ is odd, this equals to
\[=\bigcup_{T\in G(\Z_p)\setminus G(\Q_p)/G(\Z_p)}T.\C^0=\bigcup_{\begin{subarray}{c}T\in G(\Z_p)\setminus G(\Q_p)/G(\Z_p)\\g\in J^{der}_b(\Q_p)/Stab(\Lam)\end{subarray}
}T.g\D;\]
if $n$ is even, the above equals to
\[=
\bigcup_{\begin{subarray}{c}T\in G(\Z_p)\setminus G(\Q_p)/G(\Z_p)\\j=0,1\end{subarray}
}T.g_1^j\C^0=\bigcup_{\begin{subarray}{c}T\in G(\Z_p)\setminus G(\Q_p)/G(\Z_p)\\j=0,1\\g\in J^{der}_b(\Q_p)/Stab(\Lam)\end{subarray}}T.g_1^jg\D.\]
We would like to prove the last unions in the above two cases are locally finite. For this, it suffices to prove the following union
\[\bigcup_{\begin{subarray}{c}T\in G^{der}(\Z_p)\setminus G^{der}(\Q_p)/G^{der}(\Z_p)\\
g\in J^{der}_b(\Q_p)/Stab(\Lam)\end{subarray}}T.g\D\] is locally finite. To this end, we just need to prove the following holds
\[\#\{(T,g)\in G^{der}(\Z_p)\setminus G^{der}(\Q_p)/G^{der}(\Z_p)\times J^{der}_b(\Q_p)/Stab(\Lam)|\,T.g\D\cap\D\neq\emptyset\}<\infty.\]
This comes from the following several propositions.

\begin{proposition}
The Bruhat-Tits stratification of the analytic space \[\M^0=\coprod_{\Lam\in\mathcal{L}_0}sp^{-1}(\M_\Lam^0)\] by locally closed spaces is invariant under the Hecke action of $G^{der}(\Q_p)$, i.e., for each tube $sp^{-1}(\M_\Lam^0)$, we have \[T.sp^{-1}(\M_\Lam^0)\subset sp^{-1}(\M_\Lam^0)\]for any $T\in G^{der}(\Z_p)\setminus G^{der}(\Q_p)/G^{der}(\Z_p)$.
\end{proposition}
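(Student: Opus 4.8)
The plan is to reduce to a pointwise statement, first prove the analogous inclusion for the \emph{closed} strata $sp^{-1}(\M_\Lam)$ for every $\Lam\in\mathcal{L}_0$, and then upgrade to the locally closed strata by playing $T$ off against $T^{-1}$.

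\emph{Step 1 (closed strata).} I would fix $x\in sp^{-1}(\M_\Lam)$ and $y\in T.x$ and show $y\in sp^{-1}(\M_\Lam)$. Since $G^{der}\subset\widetilde{G}$, the operator $T$ preserves $\M^0$, so the statement makes sense; by the Cartan decomposition $T=T_g$ with $g=\mathrm{diag}(p^{a_1},\dots,p^{a_n})$, and I split $g=g_+g_-$ where $g_+=\mathrm{diag}(p^{\max(a_i,0)})$ and $g_-=\mathrm{diag}(p^{\min(a_i,0)})$. By Remark 2.2 one has $T.x\subset T_{g_+}.(T_{g_-}.x)$, so there is $z\in T_{g_-}.x$ with $y\in T_{g_+}.z$; here $g_-$ and $g_+^{-1}$ both have non-positive diagonal exponents, so $T_{g_-}$ and $T_{g_+^{-1}}$ are the quotient-type Hecke operators described moduli-theoretically in section 2. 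The crux is to use the \emph{two} equivalent characterisations of $sp^{-1}(\M_\Lam)$ recalled in sections 7--8 on the two halves of the factorisation: a point $w$ lies in $sp^{-1}(\M_\Lam)$ iff $\rho_{\Lam^-,H_w}\colon(H_{\Lam^-})_{O_K/pO_K}\to(H_w)_{O_K/pO_K}$ is a genuine isogeny, and iff $\rho_{H_w,\Lam^+}\colon(H_w)_{O_K/pO_K}\to(H_{\Lam^+})_{O_K/pO_K}$ is a genuine isogeny. Since $z\in T_{g_-}.x$, I may write $H_z=H_x/G_z$ with $\rho_z=\bar\pi\circ\rho_x$ for a genuine isogeny $\pi\colon H_x\to H_z$, whence $\rho_{\Lam^-,H_z}=\bar\pi\circ\rho_{\Lam^-,H_x}$ is a composite of genuine isogenies; so $z\in sp^{-1}(\M_\Lam)$, and consequently $\rho_{H_z,\Lam^+}$ is genuine as well. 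Dually, $y\in T_{g_+}.z$ is equivalent to $z\in T_{g_+^{-1}}.y$, which lets me write $H_z=H_y/G'$ with $\rho_z=\bar\pi'\circ\rho_y$ for a genuine isogeny $\pi'\colon H_y\to H_z$; then $\rho_{H_y,\Lam^+}=\rho_{H_z,\Lam^+}\circ\bar\pi'$ is again a composite of genuine isogenies, so $y\in sp^{-1}(\M_\Lam)$. (The intermediate point $z$ lies in another connected component of $\M$; the symbols $H_{\Lam^\pm}$ and $\M_\Lam$ for it are to be read through the canonical identifications $\mathcal{L}_i\simeq\mathcal{L}_0$ of section 7, under which the Bruhat--Tits data transport compatibly.)

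\emph{Step 2 (locally closed strata).} Given $x\in sp^{-1}(\M_\Lam^0)$ and $y\in T.x$: by Step 1, $y\in sp^{-1}(\M_\Lam)$, hence $y\in sp^{-1}(\M_{\Lam_y}^0)$ for a unique $\Lam_y\subset\Lam$ in $\mathcal{L}_0$. Since $g^{-1}$ again represents a $G^{der}(\Q_p)$-Hecke operator and $y\in T_g.x\iff x\in T_{g^{-1}}.y$, Step 1 applied to $g^{-1}$ and $\Lam_y$ yields $x\in sp^{-1}(\M_{\Lam_y})$; but $x\in sp^{-1}(\M_\Lam^0)$ means $\Lam$ is the smallest member of $\mathcal{L}_0$ with $x\in sp^{-1}(\M_\Lam)$, so $\Lam\subset\Lam_y$. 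Hence $\Lam_y=\Lam$ and $y\in sp^{-1}(\M_\Lam^0)$. Finally, using the $J_b^{der}(\Q_p)$-equivariance $g_J\cdot sp^{-1}(\M_{\Lam'}^0)=sp^{-1}(\M_{g_J\Lam'}^0)$, it suffices to treat one $\Lam$ in each $J_b^{der}(\Q_p)$-orbit, so the statement follows for all $\Lam\in\mathcal{L}_0$.

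The delicate point, I expect, is exactly the use of the equivalence ``$\rho_{\Lam^-,H}$ genuine $\iff$ $\rho_{H,\Lam^+}$ genuine'' from section 7 as the bridge between the two halves of $g=g_+g_-$: it is what lets a genuine-isogeny condition be carried ``through'' the Hecke step $T_{g_+}$, which modifies the $p$-divisible group from the other side. Everything else --- the reduction to points, the factorisation $T.x\subset T_{g_+}.(T_{g_-}.x)$, the bookkeeping of connected components via $\mathcal{L}_i\simeq\mathcal{L}_0$, and the $J_b$-equivariance --- is routine.
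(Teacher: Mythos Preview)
Your Step 2 (deducing the locally closed case from the closed case by playing $T_g$ against $T_{g^{-1}}$) is fine and in fact more explicit than the paper, which simply says ``the case for $sp^{-1}(\M_\Lam^0)$ is similar''.

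The gap is in Step 1. Your factorisation $g=g_+g_-$ with $g_\pm=\mathrm{diag}(p^{\max(a_i,0)})$, $\mathrm{diag}(p^{\min(a_i,0)})$ does \emph{not} produce elements of $G(\Q_p)$. For $g\in G^{der}(\Q_p)$ one has $a_{n+1-i}=-a_i$, so $\max(a_i,0)+\max(a_{n+1-i},0)=|a_i|$, which is not constant in $i$ unless all $|a_i|$ coincide (take e.g.\ $n=4$, $(a_i)=(2,1,-1,-2)$). Hence $T_{g_\pm}$ are not Hecke correspondences on $\M$; they only make sense on the ambient $\N$ for $GL_n(\Q_{p^2})$. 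But then the intermediate point $z\in T_{g_-}.x$ lies in $\N$ and need not carry a compatible principal polarization, and the bridge you single out as ``the delicate point'' --- the equivalence $\rho_{\Lam^-,H_z}$ genuine $\Longleftrightarrow$ $\rho_{H_z,\Lam^+}$ genuine --- is precisely what fails: in Dieudonn\'e terms it is the implication $\Lam^-\subset M_z\Rightarrow M_z\subset\Lam^+$, which comes from $M_z^\vee=M_z$ and $(\Lam^-)^\vee=\Lam^+$, and self-duality of $M_z$ is exactly the polarization condition. Concretely, $M_z\supset M_x$ with $M_x\subset\Lam^+$ gives no control on $M_z\subset\Lam^+$.

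The paper sidesteps this by a different reduction: it multiplies by the \emph{central} element $p^{-a_1}\in G(\Q_p)$, so that $p^{-a_1}g=\mathrm{diag}(1,p^{a_2-a_1},\dots,p^{a_n-a_1})$ has all exponents $\le 0$ and still lies in $G(\Q_p)$. One is then reduced to checking $p^{-a_1}T.\,sp^{-1}(\M_\Lam)\subset sp^{-1}(\M_{p^{a_1}\Lam})$ inside $\M^{2a_1}$, which follows from the single $\Lam^-$-characterisation: $\rho_{p^{a_1}\Lam^-,H_y}=\bar\pi\circ\rho_{\Lam^-,H_x}\circ\phi$ is a composite of genuine isogenies ($\phi\colon H_{p^{a_1}\Lam^-}\to H_{\Lam^-}$ coming from $p^{a_1}\Lam^-\subset\Lam^-$, and $\pi\colon H_x\to H_y$ the quotient). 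No use of the $\Lam^+$-side, hence no appeal to self-duality at an intermediate point, is needed.
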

\begin{proof}
We just check that $T.sp^{-1}(\M_\Lam)\subset sp^{-1}(\M_\Lam)$, for any $T\in G^{der}(\Z_p)\setminus G^{der}(\Q_p)/G^{der}(\Z_p), \Lam\in\mathcal{L}_0$. The case for $sp^{-1}(\M_\Lam^0)$ is similar. Assume that \[T=G^{der}(\Z_p)
                   \left(\begin{array}{ccc}
                     p^{a_1} & & \\
                     &\ddots& \\
                     & &p^{a_n}\\
                   \end{array}
                   \right)
                 G^{der}(\Z_p)\in G^{der}(\Z_p)\setminus G^{der}(\Q_p)/G^{der}(\Z_p),\]\[ a_1\geq\cdots\geq a_n, a_1+a_n=a_2+a_{n-1}=\cdots=0,\]then $a_1\geq 0, a_n-a_1=-2a_1\leq 0$, and we consider the Hecke correspondence \[p^{-a_1}T=G^{der}(\Z_p)
                   \left(\begin{array}{cccc}
                     1 & & & \\
                     &p^{a_2-a_1}& & \\
                     & &\ddots& \\
                     & & &p^{a_n-a_1}\\
                   \end{array}
                   \right)
                 G^{der}(\Z_p).\]
We just need to check that \[p^{-a_1}T.sp^{-1}(\M_\Lam)\subset p^{-a_1}sp^{-1}(\M_\Lam),\]here the $p^{-a_1}$ on the  right hand side is considered as an element of $J_b(\Q_p)$: it induces an isomorphism
\[p^{-a_1}:\M^{0}\ra\M^{2a_1},\] under which the image $p^{-a_1}sp^{-1}(\M_\Lam)$ of $sp^{-1}(\M_\Lam)$ is $sp^{-1}(\M_{p^{a_1}\Lam})$. Assume $x\in sp^{-1}(\M_\Lam)$, the associated unitary $p$-divisible group $(H,\iota,
\lambda,\rho:\mathbf{H}_{O_K/pO_K}\ra H_{O_K/pO_K})$ is such that the composition
\[(H_{\Lam^-})_{O_K/pO_K}\stackrel{\rho_{\Lam^-}}{\longrightarrow} \mathbf{H}_{O_K/pO_K}\stackrel{\rho}{\longrightarrow} H_{O_K/pO_K}\] is an isogeny.
Let $y\in p^{-a_1}T.x$ be a any point such that the associated unitary $p$-divisible group is $(H/E,\iota,\lambda,\mathbf{H}_{O_K/pO_K}\stackrel{\rho}{\ra}H_{O_K/pO_K}\stackrel{\pi}{\ra}(H/E)_{O_K/pO_K}),$ where $E\subset H$ is a finite flat subgroup scheme such that its geometric generic fiber
\[E_{\ov{\eta}}\simeq \Z_{p^2}/p^{a_1-a_2}\Z_{p^2}\oplus\cdots\oplus\Z_{p^2}/p^{a_{1}-a_n}\Z_{p^2}.\]
Since \[(p^{a_1}\Lam)^+=p^{a_1}\Lam^+\]\[(p^{a_1}\Lam)^-=p^{2a_1}(p^{a_1}\Lam^+)^\vee=p^{a_1}\Lam^-,\]and the quasi-isogeny $\rho_{p^{a_1}\Lam^-}: H_{p^{a_1}\Lam^-}\ra \mathbf{H}$ is given by the composition $\rho_{\Lam^-}\circ\phi:$
\[H_{p^{a_1}\Lam^-}\stackrel{\phi}{\ra} H_{\Lam^-}\stackrel{\rho_{\Lam^-}}{\longrightarrow}\mathbf{H},\]where the first is the isogeny induced by the natural inclusion $p^{a_1}\Lam^-\subset \Lam^-$, thus its composition $\pi\circ\rho\circ\rho_{\Lam^-}\circ\phi$  with
\[\pi\circ\rho: \mathbf{H}_{O_K/pO_K}\stackrel{\rho}{\ra} H_{O_K/pO_K}\stackrel{\pi}{\ra} (H/E)_{O_K/pO_K}\] is an isogeny. That is $y\in sp^{-1}(\M_{p^{a_1}\Lam})\subset\M^{2a_1}$. So we have \[p^{-a_1}T.sp^{-1}(\M_\Lam)\subset p^{-a_1}sp^{-1}(\M_\Lam)=
sp^{-1}(\M_{p^{a_1}\Lam}).\]
\end{proof}

Recall in the proof of proposition 6.2 we have the description
\[\D=(\bigcup_{\underline{a}}T_{\underline{a}}.\mathcal{N}^{ss})\bigcap sp^{-1}(\M_\Lam),\]for our fixed $\Lam\in\mathcal{L}_0$ with $t(\Lam)=t_{max}$ and the closed immersion $\M\subset \mathcal{N}$. Here as before $\mathcal{N}$ is the basic Rapoport-Zink for $Res_{\Q_{p^2}|\Q_p}GL_n$ obtained from $\M$ by forgetting the polarization. We prove the following proposition, then it will be clear that the locally finiteness holds for $\M$.
\begin{proposition}Let $\widetilde{J_b}$ be the associated inner form of $Res_{\Q_{p^2}|\Q_p}GL_n$ for $\mathcal{N}$,
 and $U\subset \mathcal{N}_{red}$ be an open compact subset such that $\widetilde{J_b}(\Q_p)U=\mathcal{N}_{red}$, cf. \cite{F1} 2.4. Let $Z=\Q_{p^2}^\times$ be the center of $GL_n(\Q_{p^2})$ and $\widetilde{J_b}(\Q_p)$. Set
\[\D':=\mathcal{N}^{ss}\bigcap sp^{-1}(U),\] then we have
\[\#\{[T]\in (GL_{n}(\Z_{p^2})\setminus GL_{n}(\Q_{p^2})/GL_{n}(\Z_{p^2}))/Z| [T].\D'/Z\cap \D'/Z\neq \emptyset\}<\infty.\]
\end{proposition}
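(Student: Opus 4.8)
The plan is to reduce the statement to a boundedness estimate for the ``size'' of the Hecke correspondences that can occur, and then to extract this estimate from the relative compactness of $\mathcal{D}'$ together with the $p$-adic period morphism. First I would normalize double cosets modulo the center: every class in $(GL_n(\Z_{p^2})\backslash GL_n(\Q_{p^2})/GL_n(\Z_{p^2}))/Z$ has a unique representative $GL_n(\Z_{p^2})\,\mathrm{diag}(p^{b_1},\dots,p^{b_n})\,GL_n(\Z_{p^2})$ with $0=b_1\le b_2\le\cdots\le b_n$, and the classes with $b_n\le C$ form a finite set for each fixed $C$. Hence it suffices to produce a constant $C$ such that, whenever $[T].\mathcal{D}'/Z\cap\mathcal{D}'/Z\neq\emptyset$, the associated integer $m:=b_n$ satisfies $m\le C$. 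Unwinding the definitions of the Hecke action and of $\mathcal{D}'/Z$, the condition $[T].\mathcal{D}'/Z\cap\mathcal{D}'/Z\neq\emptyset$ means there are $x,x'\in\mathcal{D}'$ and a point $y$ of $T'.x$ (for some representative $T'$ of the centered class) lying in the $Z$-orbit of $x'$; translating to $p$-divisible groups, $H_x$ and $H_{x'}$ are connected, over a common complete extension, by an isogeny $\phi:H_x\to H_{x'}$ whose kernel is contained in $H_x[p^m]$ but not in $H_x[p^{m-1}]$, with $\ker\phi$ of the prescribed elementary divisor type on the generic fibre.

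The role of semistability is indirect but essential here: $x,x'$ lie in $\mathcal{N}^{ss}$, and it is exactly the intersection $\mathcal{N}^{ss}\cap sp^{-1}(U)=\mathcal{D}'$ that one can hope to be relatively compact in $\mathcal{N}$ — the tube $sp^{-1}(U)$ alone, over a compact $U$, is \emph{not} relatively compact. I would establish the relative compactness of $\mathcal{D}'$ by the same mechanism as in Section~9, uniformizing an EL-type unitary Shimura variety by $\mathcal{N}$ and using compactness of a piece of its basic locus; alternatively this can be read off from \cite{F1} 2.4 and the structure of $\mathcal{N}_{red}$. Granting this, fix a quasi-compact analytic domain $\mathcal{K}\subset\mathcal{N}$ with $\mathcal{D}'\subset\mathcal{K}$. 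The period morphism $\pi:\mathcal{N}\to\Fm^a$ is Hecke-invariant and \'etale (as for $\M$, by de Jong and Chen, cf.\ Section~2), so for $x\in\mathcal{D}'$ and $y\in T'.x$ one has $\pi(y)=\pi(x)$ and $x,y$ lie in the discrete fibre $\pi^{-1}(\pi(x))$. Since $\mathcal{K}$ is quasi-compact and $\pi$ is \'etale, $\pi^{-1}(\pi(x))\cap\overline{\mathcal{K}}$ is finite and $\pi|_{\overline{\mathcal{K}}}$ is a finite morphism onto the compact set $\pi(\overline{\mathcal{K}})$.

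The crux is then to show that the ``combinatorial spread'' of these fibres — the relative position in $GL_n(\Z_{p^2})\backslash GL_n(\Q_{p^2})/GL_n(\Z_{p^2})$ of any two points of one fibre lying in $\overline{\mathcal{K}}$ — is bounded uniformly over the compact base $\pi(\overline{\mathcal{K}})$, by a continuity/connectedness argument on the finite cover $\pi|_{\overline{\mathcal{K}}}$ together with the explicit Bruhat--Tits-type (affine Deligne--Lusztig) description of $\mathcal{N}_{red}$, as in Section~7 in the EL setting, and the compactness of $U$: concretely, $sp(x)$ and $sp(y)$ are forced into a bounded region of Dieudonn\'e lattices in $\mathbf{N}$ (up to homothety), which feeds back to control $m$. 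Since the relative position of $x$ and $y=T'.x$ inside the fibre is precisely the centered class of $T$, this yields $m\le C$, hence the claimed finiteness.

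The hard part will be exactly this uniform boundedness in the last step — that on the relatively compact $\mathcal{D}'$ the fibres of $\pi$ meet $\mathcal{D}'$ in sets of uniformly bounded relative position, equivalently that a Hecke correspondence of large size pushes $\mathcal{D}'$ off any fixed quasi-compact neighbourhood of itself. This is where one must genuinely use the \emph{analytic} (not merely special-fibre) relative compactness of $\mathcal{D}'$: the naive estimate comparing only the Dieudonn\'e lattices of $sp(x)$ and $sp(y)$ inside a bounded lattice box does not suffice to bound $m$ once $n\ge 2$, and one really needs the interplay between $\pi$, the Hecke action, and the geometry of $\mathcal{N}_{red}$. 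I would also have to handle the bookkeeping with the center $Z$ carefully, since $Z$ does not act continuously on a single connected component $\mathcal{N}^i$ — only the subgroup $\Z_{p^2}^\times$ does — so the reduction to a fixed bounded region has to be carried out after quotienting, mirroring the $/Z$ in the statement.
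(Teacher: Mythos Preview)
Your proposal takes a significantly different route from the paper, and in fact dismisses the very mechanism that makes the paper's proof work. You write that ``the naive estimate comparing only the Dieudonn\'e lattices of $sp(x)$ and $sp(y)$ inside a bounded lattice box does not suffice to bound $m$ once $n\ge 2$,'' and then propose an indirect argument via the period morphism and \'etaleness. But the paper's proof is exactly this ``naive'' Dieudonn\'e-lattice comparison, and it \emph{does} suffice --- the missing ingredient you are overlooking is a direct use of semistability, not an indirect one.

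Here is the point. If $x\in\mathcal{D}'$ and $y\in[T].x\cap\mathcal{D}'$, then $H_y\simeq H_x/G$ for a finite flat subgroup $G$ whose generic fibre has elementary divisors $(0,a_1-a_2,\dots,a_1-a_n)$ encoding the Cartan type of $T$. Since \emph{both} $H_x$ and $H_x/G$ are semi-stable (this is where $\mathcal{D}'\subset\mathcal{N}^{ss}$ enters, and it enters directly), the kernel $G$ is itself semi-stable, by a lemma of Fargues. The key lemma the paper then proves (Lemma~10.3) is that for a semi-stable finite flat group scheme, the elementary divisors of the Dieudonn\'e module of its special fibre coincide with those of its generic fibre. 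Hence the relative invariant $\mathrm{inv}(\mathbb{D}(H_{x,\bar k}),\mathbb{D}(H_{y,\bar k}))$ is exactly $(0,a_2-a_1,\dots,a_n-a_1)$, and since $sp(x),sp(y)\in U$ lie in a bounded lattice box by quasi-compactness of $U$, one gets $n(a_1-a_n)\le N$ for a fixed $N$. This bounds the centered class $[T]$.

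So the role of semistability is not to guarantee relative compactness of $\mathcal{D}'$ in the analytic space (the paper never uses that here), but to force the generic-fibre and special-fibre elementary divisors of the isogeny kernel to agree. Your period-morphism argument, by contrast, remains vague at its self-identified ``hard part'': you would need to show uniform boundedness of relative positions in fibres of $\pi$ over a compact base, and you do not indicate how to extract that from \'etaleness alone. The paper's approach avoids this entirely.
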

\begin{proof}
Recall that the criterion of quasi-compactness of an open subset $U\subset \mathcal{N}_{red}$ (\cite{F1} crit\`{e}re de quasicompacit\'{e} 2.4.14) : $U$ is quasi-compact if and only if there exist a Diedonn\'{e} lattice $M\in \mathcal{N}_{red}(\ov{k})$ and an integer $N$, such that
\[U(\ov{k})\subset\{M'\in \mathcal{N}_{red}(\ov{k})|p^NM\subset M'\subset p^{-N}M\},\]
or equivalently, there exists an integer $N$, such that the universal quasi-isogeny $\rho^{univ}$ satisfies that $p^N\rho^{univ}$ and $p^N(\rho^{univ})^{-1}$ are isogenies. The formal Rapoport-Zink space $\widehat{\mathcal{N}}$ decomposes as disjoint union according to the height of the universal quasi-isogeny:
\[\widehat{\mathcal{N}}=\coprod_{i\in \Z}\widehat{\mathcal{N}}^i,\]where the height of the universal quasi-isogeny is $2i$ over $\widehat{\mathcal{N}}^i$. Let $\widetilde{\widehat{\mathcal{N}}}:=\coprod_{i=0}^{n-1}\widehat{\mathcal{N}}^i$, which is in bijection with the quotient $\widehat{\mathcal{N}}/Z$. Let us denote by $\widetilde{\mathcal{N}}$ and $\widetilde{\mathcal{N}}_{red}$ the analytic generic fiber and reduced special fiber respectively of $\widetilde{\widehat{\mathcal{N}}}$. Then there is a metric function \[d: \widetilde{\mathcal{N}}_{red}(\ov{k})\times\widetilde{\mathcal{N}}_{red}(\ov{k}) \ra \mathbb{N}\]defined as \[d((H_1,\iota_1,\rho_1),(H_2,\iota_2,\rho_2))=q(\rho_1^{-1}\circ\rho_2)+q(\rho_2^{-1}\circ\rho_1),\]here $q(\rho)=htp^{n(\rho)}\rho$ and $n(\rho)$ is the smallest integer such that $p^{n(\rho)}\rho$ is an isogeny for a quasi-isogeny $\rho$. Then an open subset $U\subset\widetilde{\mathcal{N}}_{red}$ is quasi-compact if and only if there exist an integer $N$, such that $d(x,y)\leq N$ for all points $x,y\in U$.

To prove the proposition we may assume $U\subset\widetilde{\mathcal{N}}_{red}$. Let $(H_1,\iota_1,\rho_1),(H_2,\iota_2,\rho_2)$ be the $p$-divisible groups associated to two points $x_1,x_2\in U$. Let $M_1=\rho_{1\ast}^{-1}(\mathbb{D}(H_{1\ov{k}})),M_2=\rho_{2\ast}^{-1}(\mathbb{D}(H_{2\ov{k}}))$, and $inv(M_1,M_2)=(a_1,\dots,a_n)\in\Z^n_+$ be their relative invariant. Then one check that easily
\[d((H_1,\iota_1,\rho_1),(H_2,\iota_2,\rho_2))=n(a_1-a_n).\]So it is bounded by some fixed integer $N$ dependent only by $U$.

Now $\D'\subset \widetilde{\mathcal{N}}\subset\mathcal{N}$, and the Hecke correspondences on $\mathcal{N}$ induce an action of the set $ (GL_{n}(\Z_{p^2})\setminus GL_{n}(\Q_{p^2})/GL_{n}(\Z_{p^2}))/Z$ on $\widetilde{\mathcal{N}}$. Let $x\in\D',y\in\D'\cap [T].x$ for some $[T]\in (GL_{n}(\Z_{p^2})\setminus GL_{n}(\Q_{p^2})/GL_{n}(\Z_{p^2}))/Z, T.\D'\cap \D'\neq \emptyset$. Since by Cartan decomposition we have the bijection \[GL_{n}(\Z_{p^2})\setminus GL_{n}(\Q_{p^2})/GL_{n}(\Z_{p^2})\st{\sim}{\lra}\Z^n_+.\]If $T=GL_n(\Z_{p^2})\left( \begin{array}{ccc}
                     p^{a_1} & & \\
                     &\ddots& \\
                     & &p^{a_n}\\
                   \end{array}\right)GL_n(\Z_{p^2})$ , then for $z\in Z$
                   \[zT=GL_n(\Z_{p^2})\left( \begin{array}{ccc}
                     p^{a_1+v_p(z)} & & \\
                     &\ddots& \\
                     & &p^{a_n+v_p(z)}\\
                   \end{array}\right)GL_n(\Z_{p^2}).\] Thus $[T]$ corresponds to a class $[(a_1,\dots,a_n)]\in\Z^n_+/\Z$ where the action of $\Z$ on $\Z^n_+$ is the natural translation.
Let $(H/O_K,\iota,\rho)$ be the $p$-divisible group associated to $x$ over $O_{K=\mathcal{H}(x)}$. Then the $p$-divisible group associated to $y$ over $O_K$ is $(H/G,\iota',p^{-a_1+c}\pi\circ\rho)$, here $c\in\Z$ is some integer, $G\subset H$ is a finite flat group scheme such that $G_{\ov{K}}\simeq \Z/p^{a_1-a_2}\Z\oplus\cdots\oplus\Z/p^{a_{1}-a_n}\Z$. Moreover, since both $H$ and $H/G$ are semi-stable, $G$ is therefore semi-stable, \cite{F3} lemme 11. By the following lemma, the covariant Dieudonn\'{e} module \[\mathbb{D}(G_{\ov{k}})\simeq W/p^{a_1-a_2}W\oplus\cdots\oplus W/p^{a_{1}-a_n}W.\] The relative invariant of $\mathbb{D}(H_{\ov{k}})$ and $\mathbb{D}((H/G)_{\ov{k}})$ is then just $(0,a_2-a_1,\dots,a_{n}-a_1)$, and by the above $n(a_1-a_n)\leq N$. This plus the fact that $a_1\geq\cdots\geq a_n, a_i\in\Z,i=1,\dots,n$ with some easy combination argument imply that there are finite possibilities of the class $[(a_1,\dots,a_n)]\in\Z^n_+/\Z$, thus finite possibilities of $[T]\in (GL_{n}(\Z_{p^2})\setminus GL_{n}(\Q_{p^2})/GL_{n}(\Z_{p^2}))/Z$. This finishes the proof.

\end{proof}

\begin{lemma}
Let $G$ be a semi-stable finite flat group scheme over $O_K$. Suppose that $G_{\ov{K}}\simeq \Z/p^{a_1}\Z\oplus\cdots\oplus\Z/p^{a_n}\Z$, then we have \[\mathbb{D}(G_{\ov{k}})=W/p^{a_1}W\oplus\cdots\oplus W/p^{a_n}W,\] here $\mathbb{D}(G_{\ov{k}})$ is the covariant Dieudonn\'{e} module of $G_{\ov{k}}$.
\end{lemma}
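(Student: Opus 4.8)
The plan is to convert the statement into an equality of elementary divisors of $W$-modules and to prove it by comparing lengths of Dieudonn\'e modules, with semi-stability entering exactly once: to guarantee that the subgroups $G[p^m]$ are honestly flat over $O_K$.

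First I would record the input. By Corollaire~7 of \cite{F2} (the Proposition on semi-stable finite flat groups recalled in Section~3), for every $m\geq 1$ the scheme-theoretic kernel $G[p^m]=\ker\big([p^m]\colon G\to G\big)$ is again a finite flat group scheme over $O_K$. Being finite flat over the connected base $\textrm{Spec}\,O_K$, its height is constant on fibres, so $ht(G[p^m]_{\ov k})=ht(G[p^m]_{\ov K})$. Since $G_{\ov K}\simeq \Z/p^{a_1}\Z\oplus\cdots\oplus\Z/p^{a_n}\Z$ we have $G[p^m]_{\ov K}\simeq\bigoplus_{i=1}^{n}\Z/p^{\min(a_i,m)}\Z$, and therefore
\[ht(G[p^m]_{\ov k})=\sum_{i=1}^{n}\min(a_i,m)\qquad\text{for all }m\geq 0,\]
the case $m\gg 0$ giving in particular $ht(G_{\ov k})=\sum_i a_i$.

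Next I would pass to the Dieudonn\'e side over $\ov k$ (which we may assume perfect). Formation of $G[p^m]=G\times_{[p^m],G,e}\textrm{Spec}\,O_K$ commutes with base change, so $G[p^m]_{\ov k}=\ker\big([p^m]\colon G_{\ov k}\to G_{\ov k}\big)$; applying the covariant Dieudonn\'e functor, which is exact and additive (hence sends $[p^m]$ to $[p^m]$), yields $\mathbb{D}(G[p^m]_{\ov k})=\mathbb{D}(G_{\ov k})[p^m]$. Combining with $\textrm{length}_W\,\mathbb{D}(\Gamma)=ht(\Gamma)$ for finite group schemes $\Gamma$ over $\ov k$ and the previous paragraph, we get $\textrm{length}_W\,\mathbb{D}(G_{\ov k})[p^m]=\sum_i\min(a_i,m)$ for all $m\geq 0$. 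Now write $\mathbb{D}(G_{\ov k})\simeq\bigoplus_j W/p^{c_j}W$ as a $W$-module; then $\textrm{length}_W\big(\bigoplus_j W/p^{c_j}W\big)[p^m]=\sum_j\min(c_j,m)$, so $\sum_j\min(c_j,m)=\sum_i\min(a_i,m)$ for every $m\geq 0$. Taking successive differences recovers the multiset of the $a_i$ from this function, hence $\{c_j\}=\{a_i\}$ and $\mathbb{D}(G_{\ov k})\simeq\bigoplus_i W/p^{a_i}W$, as claimed.

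The only delicate point — and the sole place where semi-stability is really used — is the flatness of $G[p^m]$ over $O_K$: for an arbitrary finite flat model over $O_K$ the scheme-theoretic kernel of $[p^m]$ can fail to be flat, its special fibre being strictly fatter than its generic fibre, which would destroy the height count of the first step. Corollaire~7 of \cite{F2} is precisely what excludes this (equivalently, one works with the flattened kernels $\ov{G[p^m]}$ and uses that for semi-stable $G$ they coincide with $G[p^m]$). The remaining ingredients — exactness and additivity of $\mathbb{D}$, and $\textrm{length}_W\,\mathbb{D}(\Gamma)=ht(\Gamma)$ — are classical.
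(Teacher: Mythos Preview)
Your proof is correct. Both arguments rest on the same single use of semi-stability (flatness of the $p$-power torsion subgroups, Proposition~3.4 / \cite{F2} Corollaire~7), but you package the computation differently from the paper.

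The paper proceeds by induction on the exponent of $G$: it treats the case $p^2G=0$ by hand using the exact sequence $0\to G[p]\to G\to pG\to 0$ (whose exactness over $O_K$ is exactly where semi-stability enters), then passes to the special fibre and Dieudonn\'e modules and compares heights to pin down the single unknown integer $m'$. Your argument instead computes $\mathrm{length}_W\,\mathbb{D}(G_{\ov k})[p^m]$ for all $m$ simultaneously, matching it against $\sum_i\min(a_i,m)$ and recovering the full multiset of elementary divisors by taking successive differences. This avoids induction and is cleaner; the paper's approach is more explicit but essentially does your step $m\mapsto m+1$ one level at a time. Nothing is lost or gained mathematically---both routes are complete.
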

\begin{proof}
If $pG=0$ then the above is evident. We assume that $p^2G=0$ here, the general case follows by induction. Under this assumption, there exists an $1\leq m\leq n$ such that $a_1=\cdots=a_k=2, a_{k+1}=\cdots=a_n=1$, and $G_{\ov{K}}\simeq(\Z/p^2\Z)^m\oplus (\Z/p\Z)^{n-m}$. Since $G$ is semi-stable, the following sequence
\[0\longrightarrow G[p]\lra G\st{p}{\lra}pG\lra 0\]is exact. So we get an exact sequence
\[0\lra G[p]_{\ov{k}}\lra G_{\ov{k}}\lra (pG)_{\ov{k}}\lra 0,\]and passing to (covariant) Dieudonn\'{e} module we get an exact sequence
\[0\lra \mathbb{D}(G[p]_{\ov{k}})\lra \mathbb{D}(G_{\ov{k}})\lra \mathbb{D}((pG)_{\ov{k}})\lra 0.\]
We have $htG[p]=n, ht(pG)=m, htG=n+m$. Assume that $\mathbb{D}(G_{\ov{k}})\simeq (W/p^2W)^{m'}\oplus(W/pW)^{n-m'}$ for some $1\leq m' \leq n$. Then we have $dim_{\F_p}\mathbb{D}(G[p]_{\ov{k}})=n,dim_{\F_p}\mathbb{D}((pG)_{\ov{k}})=m'$. Since
\[htG[p]=dim_{\F_p}\mathbb{D}(G[p]_{\ov{k}}),ht(pG)=dim_{\F_p}\mathbb{D}((pG)_{\ov{k}}),\]
i.e. $m=m'$, the lemma follows.
\end{proof}

\begin{remark}
The above proposition and its proof hold generally for all EL Rapoport-Zink spaces.
\end{remark}

\begin{proposition}
The union \[\bigcup_{T\in G^{der}(\Z_p)\setminus G^{der}(\Q_p)/G^{der}(\Z_p)}T.\D\] is locally finite.
\end{proposition}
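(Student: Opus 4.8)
The plan is to transport the statement into the auxiliary Rapoport-Zink space $\mathcal{N}$ for $Res_{\Q_{p^2}|\Q_p}GL_n$, via the Hecke-equivariant closed immersion $\M\subset\mathcal{N}$ and the inclusion $G^{der}(\Q_p)\subset GL_n(\Q_{p^2})$, and to conclude there from the finiteness already proved. Two preliminary reductions are needed. First, by the proposition on Hecke-invariance of the Bruhat-Tits stratification, every translate $T.\D$ with $T\in G^{der}(\Z_p)\backslash G^{der}(\Q_p)/G^{der}(\Z_p)$ is again contained in $sp^{-1}(\M_\Lam)$. Second, from the description $\D=(\bigcup_{\underline a}T_{\underline a}.\mathcal{N}^{ss})\cap sp^{-1}(\M_\Lam)$ recalled above, with $\underline a$ ranging over the finite set of $(a_1,\dots,a_n)$ with $a_1\ge\cdots\ge a_n$ and $a_i\in\{0,-1\}$, together with the facts that $\M_\Lam$ is proper, hence quasi-compact in $\mathcal{N}_{red}$, and that the kernels of the isogenies attached to the $T_{\underline a}$ are killed by $p$, one obtains $\D\subset\bigcup_{\underline a}T_{\underline a}.\D'$, a \emph{finite} union, where $\D'=\mathcal{N}^{ss}\cap sp^{-1}(U)$ for a suitable quasi-compact open $U\subset\mathcal{N}_{red}$ containing $\M_\Lam$ (enlarged, if one wants the preceding proposition to apply verbatim, so that $\widetilde{J_b}(\Q_p)U=\mathcal{N}_{red}$): indeed a point of $\mathcal{N}^{ss}$ carrying an isogeny with $p$-torsion kernel onto a point specializing into $\M_\Lam$ itself specializes into a fixed quasi-compact enlargement of $\M_\Lam$ for the distance function on $\mathcal{N}_{red}$.

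Next I would read ``locally finite'' as the bounded-overlap condition used throughout the paper: it suffices to show that for each $T_0$ the set $\{T:\,T.\D\cap T_0.\D\neq\emptyset\}$ lies in a ball of bounded radius in the Cartan parameter space, hence is finite. So suppose $z\in T.\D\cap T_0.\D$. Using $\D\subset\bigcup_{\underline a}T_{\underline a}.\D'$ one produces semi-stable points $z',z''\in\D'$ and, over a common complete extension, isogenies $H_{z'}\lra H_z$ and $H_{z''}\lra H_z$ whose types lie respectively in the products $T\cdot T_{\underline a}$ and $T_0\cdot T_{\underline b}$ for suitable $\underline a,\underline b$. Composing gives a quasi-isogeny $\psi\colon H_{z'}\lra H_{z''}$ between two semi-stable $p$-divisible groups, compatible with the quasi-isogenies to $\mathbf{H}$, whose coweight is the relative position of the Dieudonn\'e lattices $M_{z'},M_{z''}$ in $\mathbf{N}$. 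Since $z'$ and $z''$ lie in $\D'$, their images in $\mathcal{N}/Z$ exhibit the class of that coweight modulo $Z$ as an element of the finite set of $[\widetilde T]$ with $[\widetilde T].\D'/Z\cap\D'/Z\neq\emptyset$ furnished by the preceding proposition; in particular the width of that relative position is bounded in terms of $U$. By the triangle inequality for the distance on $\mathcal{N}_{red}$, and because the fixed correspondences $T_{\underline a},T_{\underline b}$ change the Cartan parameter by a bounded amount, the width of the Cartan parameter of $T$ is bounded in terms of $T_0$ and $\M_\Lam$ alone. Finally the natural map $G^{der}(\Z_p)\backslash G^{der}(\Q_p)/G^{der}(\Z_p)\to GL_n(\Z_{p^2})\backslash GL_n(\Q_{p^2})/GL_n(\Z_{p^2})$ is injective (dominant coweights for $G^{der}$ being sent injectively to $\Z^n_+$), so only finitely many $T$ occur, uniformly in $T_0$.

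The main obstacle is precisely this bound on the Hecke parameter. A priori a single translate $T.\D$ could spread out over the tube $sp^{-1}(\M_\Lam)$ --- which, unlike the proper variety $\M_\Lam$ itself, is \emph{not} quasi-compact, owing to the deformation directions transverse to the special fibre --- so that infinitely many translates would pairwise meet. Semi-stability is what rules this out: after the finite replacement $\D\subset\bigcup_{\underline a}T_{\underline a}.\D'$ the $p$-divisible groups in play are semi-stable, and the lemma above on the Dieudonn\'e module of a semi-stable finite flat group scheme makes the reduction mod $p$ of an isogeny kernel have exactly the elementary divisors of its generic fibre; this is what the preceding proposition uses to turn the geometric bound (the distance on $\mathcal{N}_{red}$ being controlled by quasi-compactness of $U$) into a numerical bound on the Hecke double coset. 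Without it the argument fails, as already the case $n=2$, where $\C\supsetneq\M^{ss}$, makes plausible. Everything else --- the two reductions, the transport along $\M\subset\mathcal{N}$, the triangle inequality, and the Cartan counting --- is formal once the two propositions and the lemma above are granted.
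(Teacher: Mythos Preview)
Your proposal is correct and follows essentially the same route as the paper: embed $\D$ into a finite union $\D''=\bigcup_{\underline a}T_{\underline a}.\D'$ with $\D'=\mathcal{N}^{ss}\cap sp^{-1}(U)$ for a quasi-compact open $U\supset\M_\Lam$ in $\mathcal{N}_{red}$, and then reduce to the finiteness already established for $\D'$ in Proposition~10.2. The paper's write-up is terser --- it simply notes $\D\subset\D''$ and asserts that $T.\D''\cap\D''\neq\emptyset$ forces $T$ into a finite set --- whereas you unpack this last step explicitly via the composition of isogenies and a triangle-inequality bound on Cartan parameters; this is exactly the content hiding behind the paper's ``therefore''. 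Two minor remarks: your first preliminary reduction (that $T.\D\subset sp^{-1}(\M_\Lam)$ via Proposition~10.1) is true but not actually used in the rest of your argument, nor in the paper's; and the injectivity you invoke at the end should really be stated modulo the center $Z=\Q_{p^2}^\times$, matching Proposition~10.2, which is harmless since $G^{der}$ has finite center.
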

\
\begin{proof}
Since $\C=(\bigcup_{\underline{a}}T_{\underline{a}}.\mathcal{N}^{ss})\bigcap\M$ (where $\underline{a}=(a_1,\dots,a_n)\in\Z_+^n$ such that $a_i\in\{0,-1\}$ for all $1\leq i\leq n$) and $\D=\C\bigcap sp^{-1}(\M_\Lam)$, we can choose some open compact subset $U\supset \M_\Lam$ in $\N_{red}$ such that $\widetilde{J_b}(\Q_p)U=\N_{red}$.
Then $\D\subset \bigcup_{\underline{a}}T_{\underline{a}}.\D'$ for $\D':=\N^{ss}\bigcap sp^{-1}(U)$. Denote $\D''=\bigcup_{\underline{a}}T_{\underline{a}}.\D'$, which is a finite union of closed analytic domains. By the above proposition we know that there are only finite $T\in G^{der}(\Z_p)\setminus G^{der}(\Q_p)/G^{der}(\Z_p)$ such that $T.\D'\bigcap \D'\neq \emptyset$. Therefore, there are also only finite $T\in G^{der}(\Z_p)\setminus G^{der}(\Q_p)/G^{der}(\Z_p)$ such that $T.\D''\bigcap \D''\neq \emptyset$. This implies the number of $T\in G^{der}(\Z_p)\setminus G^{der}(\Q_p)/G^{der}(\Z_p)$ such that $T.\D\bigcap \D\neq \emptyset$ is finite.
\end{proof}

\begin{corollary}
The unions \[\bigcup_{\begin{subarray}{c}T\in G(\Z_p)\setminus G(\Q_p)/G(\Z_p)\\g\in J^{der}_b(\Q_p)/Stab(\Lam)\end{subarray}
}T.g\D\] for $n$ odd and \[\bigcup_{\begin{subarray}{c}T\in G(\Z_p)\setminus G(\Q_p)/G(\Z_p)\\j=0,1\\g\in J^{der}_b(\Q_p)/Stab(\Lam)\end{subarray}}T.g_1^jg\D\]for $n$ even are both locally finite.
\end{corollary}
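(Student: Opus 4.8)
As recalled above, the two coverings in the statement reduce to the single covering $\bigcup_{T\in G^{der}(\Z_p)\setminus G^{der}(\Q_p)/G^{der}(\Z_p),\,g\in J^{der}_b(\Q_p)/Stab(\Lam)}T.g\D$ (for $n$ even one further allows the index $j\in\{0,1\}$, which only doubles everything, and the passage from $G(\Q_p)$ to $G^{der}(\Q_p)$ is absorbed by the central component-shifts $\C=\coprod_i p^{-i}\C'$). So the plan is to prove
\[\#\{(T,g)\in G^{der}(\Z_p)\setminus G^{der}(\Q_p)/G^{der}(\Z_p)\times J^{der}_b(\Q_p)/Stab(\Lam)\ :\ T.g\D\cap\D\neq\emptyset\}<\infty,\]
bounding the two parameters $g$ and $T$ in turn.

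First I would bound $g$. By the proposition on the $G^{der}(\Q_p)$-invariance of the Bruhat--Tits stratification of $\M^0$, one has $T.sp^{-1}(\M_{\Lam'})\subset sp^{-1}(\M_{\Lam'})$ for every $\Lam'\in\mathcal{L}_0$; apply this to $\Lam'=g\Lam$, which is again a vertex of $\mathcal{L}_0$ with $t(g\Lam)=t_{max}$ since $g\in J^{der}_b(\Q_p)$. Using in addition the $J^{der}_b(\Q_p)$-equivariance $g\M^0_\Lam=\M^0_{g\Lam}$ of the stratification and the $J_b(\Q_p)$-stability of $\C$, which together give $g\D=\C^0\cap sp^{-1}(\M_{g\Lam})$, I obtain
\[T.g\D\ \subset\ sp^{-1}(\M_{g\Lam})\qquad\text{and}\qquad \D\ \subset\ sp^{-1}(\M_\Lam),\]
so $T.g\D\cap\D\neq\emptyset$ forces $\M_{g\Lam}\cap\M_\Lam\neq\emptyset$. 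Since $\M^0_{red}=\bigcup_g g\M_\Lam$ is, by the Vollaard--Wedhorn description recalled in section 7, a locally finite covering by its irreducible components, the set $S:=\{g\in J^{der}_b(\Q_p)/Stab(\Lam):\M_{g\Lam}\cap\M_\Lam\neq\emptyset\}$ is finite, and every contributing pair $(T,g)$ has $g\in S$.

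Next I would bound $T$, uniformly in $g\in S$, by rerunning the proof of the preceding proposition (local finiteness of $\bigcup_{T\in G^{der}}T.\D$) with the finite union $\D_S:=\bigcup_{g\in S}g\D$ in place of $\D$. As in that proof one has $\D_S=\big(\bigcup_{\underline a}T_{\underline a}.\N^{ss}\big)\cap sp^{-1}\big(\bigcup_{g\in S}\M_{g\Lam}\big)$ inside the Rapoport--Zink space $\N$ of $Res_{\Q_{p^2}|\Q_p}GL_n$; the locus $\bigcup_{g\in S}\M_{g\Lam}\subset\N_{red}$ is quasi-compact (a finite union of projective subschemes), hence contained in an open quasi-compact $U\subset\N_{red}$, which one enlarges so that $\widetilde{J_b}(\Q_p)U=\N_{red}$ and so that $U$ absorbs the bounded Bruhat--Tits displacement caused by the finitely many $T_{\underline a}$. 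Then $\D_S\subset\bigcup_{\underline a}T_{\underline a}.\D''$ with $\D''=\N^{ss}\cap sp^{-1}(U)$, and the finiteness proposition for $\N$ — together with the Cartan decomposition and the lemma computing Dieudonn\'e modules of semi-stable finite flat group schemes — shows that only finitely many classes in $(GL_n(\Z_{p^2})\setminus GL_n(\Q_{p^2})/GL_n(\Z_{p^2}))/Z$ move $\D''$ so as to meet $\D''$. As the natural map $G^{der}(\Z_p)\setminus G^{der}(\Q_p)/G^{der}(\Z_p)\to(GL_n(\Z_{p^2})\setminus GL_n(\Q_{p^2})/GL_n(\Z_{p^2}))/Z$ has finite fibers ($G^{der}$ being semisimple, with finite center), only finitely many $T$ satisfy $T.\D_S\cap\D_S\neq\emptyset$; and since $e\in S$ forces $\D\subset\D_S$, any contributing pair $(T,g)$ has $T.\D_S\cap\D_S\neq\emptyset$. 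Hence the displayed cardinality is at most $\#S$ times this finite number, which proves the assertion, and the corollary follows.

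The point I expect to need the most care is this last step: one must verify that the proof of the preceding proposition genuinely extends to $\D_S$ — i.e. that $\D_S$ retains the shape ``Hecke translates of $\N^{ss}$ intersected with the tube over a bounded locus of $\N_{red}$'' so that the boundedness of relative invariants applies — and that the descent from double cosets of $GL_n(\Q_{p^2})$ modulo its center to double cosets of $G^{der}(\Q_p)$ remains finite-to-one. Everything else is a formal assembly of the results of this section together with the local finiteness of the Bruhat--Tits covering.
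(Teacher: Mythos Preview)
Your proposal is correct and follows essentially the same route the paper intends: the paper states the corollary without proof, having already announced that the finiteness of $\#\{(T,g):T.g\D\cap\D\neq\emptyset\}$ ``comes from the following several propositions'' (namely the $G^{der}$-invariance of the Bruhat--Tits tubes, the finiteness proposition on $\N$, and the local finiteness of $\bigcup_T T.\D$). Your two-step argument --- first bounding $g$ via $T.g\D\subset sp^{-1}(\M_{g\Lam})$ and the local finiteness of irreducible components, then bounding $T$ by rerunning the proof of Proposition~10.5 with $\D$ replaced by the finite union $\D_S$ --- is exactly the assembly the paper has in mind, and your caveats about enlarging $U$ to absorb the finitely many $T_{\underline a}$ and about the finite-to-one descent to $G^{der}$ double cosets are the right points of care.
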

\begin{remark}
By the proofs of the above two propositions, we see that if \[T=G(\Z_p)\left( \begin{array}{ccc}
                     p^{a_1} & & \\
                     &\ddots& \\
                     & &p^{a_n}\\
                   \end{array}\right)
                G(\Z_p)\] corresponds to the point \[(a_1,\cdots,a_n)\in X_\ast(A)_+\subset \Z^n_+\] by the Cartan decomposition (see section 2), then the set \[\{T'\in G(\Z_p)\setminus G(\Q_p)/G(\Z_p)|T.\D\bigcap T.'\D\neq \emptyset\}\]corresponds to the set of points in some neighborhood of $(a_1,\cdots,a_n)\in X_\ast(A)_+\subset \Z^n_+$ (for the natural topology).
\end{remark}

With the notations above, we now can state the main theorem of this paper. The proof is based on some gluing arguments, and the following basic observation: let $Y\subset X$ be two Hausdorff paracompact strictly analytic spaces over a complete non-archimedean field $k$, such that the inclusion of $Y$ as a subspace of $X$ induces the identity of their associated rigid analytic spaces $Y^{rig}=X^{rig}$, then we have $Y=X$. Here we require that the analytic Grothendieck topologies are the same. Note if one just has the equality of the underlying sets $|Y^{rig}|=|X^{rig}|$, one can not deduce $Y=X$ and in fact there are many counter examples.

\begin{theorem}
We have a locally finite covering of the Berkovich analytic space $\M$:
\[\M=\bigcup_{\begin{subarray}{c}T\in G(\Z_p)\setminus G(\Q_p)/G(\Z_p)\\g\in J^{der}_b(\Q_p)/Stab(\Lam)\end{subarray}
}T.g\D\] if $n$ is odd, and
\[\M=\bigcup_{\begin{subarray}{c}T\in G(\Z_p)\setminus G(\Q_p)/G(\Z_p)\\j=0,1\\g\in J^{der}_b(\Q_p)/Stab(\Lam)\end{subarray}
}T.g_1^jg\D
\]if $n$ is even.
\end{theorem}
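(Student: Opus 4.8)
The plan is to reduce the theorem to two assertions: first, that the proposed families of analytic domains cover all \emph{rigid} points of $\M$, and second, that the families are locally finite. The first assertion is essentially already in hand: by Proposition~6.7 (the characterization of $\bigcup_T T.\C^{rig}$) together with the chain of equalities
\[
\bigcup_{T}T.\C^{rig}=\bigcup_{T}T.\C'^{rig}=\bigcup_{\begin{subarray}{c}T,\,g\end{subarray}}T.g\D^{rig}\quad(\text{resp. with the extra }j=0,1\text{ for }n\text{ even}),
\]
where the last step uses the locally finite covering $\C^0=\bigcup_{g\in J^{der}_b(\Q_p)/Stab(\Lam)}g\D$ coming from $\M^0_{red}=\bigcup_g g\M_\Lam$, and the fact (Proposition~5.5, the algorithm, and the inequality $HN(H,\iota,\lambda)\leq Newt(H_k,\iota,\lambda)$) that $\M^{HN=\P_{ss},rig}=\M^{rig}$ is covered by the Hecke orbit of $\C^{rig}$. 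So for every rigid point $x$, the algorithm applied to $H_x$ terminates and $x$ lies in one of the cells. The second assertion is exactly Corollary~10.7, which was proved via Propositions~10.1, 10.3 and 10.5 (using the metric on $\widetilde{\mathcal{N}}_{red}$ and the Cartan decomposition).

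Next I would assemble these two ingredients into the statement for \emph{all} analytic points. The point is that the right-hand side, call it $\D_{tot}\subseteq\M$, is a union of closed analytic domains (each $T.g\D$, resp.\ $T.g_1^jg\D$, is a closed analytic domain, being a Hecke translate of the closed analytic domain $\D=\C^0\cap sp^{-1}(\M_\Lam)$ intersected appropriately), and by local finiteness this union is itself an analytic domain: every point of $\M$ has a neighborhood meeting only finitely many of the cells, so $\D_{tot}$ is locally closed and the analytic-domain structures glue. Thus $\D_{tot}\subseteq\M$ is an inclusion of Hausdorff paracompact strictly analytic spaces over $L$ (with the same $G$-topology on $\D_{tot}$ as the one induced from $\M$, because local finiteness makes the covering admissible). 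By the first step, this inclusion induces the identity $\D_{tot}^{rig}=\M^{rig}$ on associated rigid analytic spaces: they have the same rigid points and the same admissible covers. Invoking the equivalence of categories between such Berkovich spaces and rigid analytic spaces (the basic observation recalled just before the theorem: if $Y\subseteq X$ induces an isomorphism $Y^{rig}=X^{rig}$ of rigid spaces with the same Grothendieck topology, then $Y=X$), we conclude $\D_{tot}=\M$.

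For $n$ even the only extra bookkeeping is the index $j\in\{0,1\}$: one uses $\C=\coprod_{i\in\Z}p^{-i}\C'$ with $\C'=\C^0\coprod g_1\C^0$, the relation $p^{-1}=g_1g_2$ in $J_b(\Q_p)$, and the fact that $p^{-1}$ is realized by a central Hecke translate (equivalently by an element of $J_b(\Q_p)$), so that the Hecke orbit of $\C$ equals the Hecke orbit of $\C'$, and then one replaces $\C^0$ by the locally finite union $\bigcup_g g\D$ as before. The verification that passing from the union over $GL_n$-Hecke data to the union over $G^{der}$-Hecke data loses nothing for local finiteness is exactly Proposition~10.5 together with Remark~10.6, so it only remains to note that enlarging from $G^{der}(\Q_p)$-Hecke data to $G(\Q_p)$-Hecke data changes the index set by translation along the central direction, which does not affect local finiteness — this is the content of Corollary~10.7.

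The main obstacle is the passage from the rigid-point covering to the covering of the full Berkovich space: one must be careful that the union $\D_{tot}$ genuinely inherits an analytic-domain structure from $\M$ (this is where local finiteness of the covering, i.e.\ Corollary~10.7, is indispensable — without it the union need not be an analytic domain and the rigid-to-Berkovich comparison fails, as the remark before the theorem warns) and that the $G$-topologies match, so that $\D_{tot}^{rig}=\M^{rig}$ holds as an equality of rigid analytic spaces and not merely of underlying point sets. Once that is secured, the conclusion $\D_{tot}=\M$ is formal. All the genuinely hard geometry — relative compactness of $\D$ (Corollary~9.6), the inequality between Harder-Narasimhan and Newton polygons, termination of the modified algorithm on rigid points, and the metric estimate bounding the number of overlapping cells — has already been carried out in the preceding sections, so this final argument is a gluing-and-comparison step rather than a new computation.
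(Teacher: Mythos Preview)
Your overall strategy matches the paper's: establish the rigid-point covering via Proposition~6.7, invoke Corollary~10.6 for local finiteness, then glue and apply the Berkovich/rigid comparison. However, there is a genuine gap in the gluing step. You assert that each cell $T.g\D$ is a \emph{closed} analytic domain, but $\D=\C^0\cap sp^{-1}(\M_\Lam)$ is only \emph{locally closed}: $\C^0$ is closed, while $sp^{-1}(\M_\Lam)$ is \emph{open} (the specialization map is anti-continuous and $\M_\Lam$ is closed in $\M_{red}^0$), so their intersection is locally closed and not, in general, closed. Berkovich's gluing result (the reference \cite{B} in the paper) requires closed analytic domains; a locally finite union of merely locally closed domains does not obviously yield a sub-analytic space, and relative compactness of $\D$ (Corollary~9.4) gives compactness of the closure, not of $\D$ itself.

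The paper repairs exactly this point: it replaces $\D$ by $\D':=\C\cap sp^{-1}(U)$ for a quasi-compact \emph{open} $U\subset\M_{red}^0$ with $U\cap\M_\Lam\neq\emptyset$ and $J_b(\Q_p).U=\M_{red}$. Since $U$ is open, $sp^{-1}(U)$ is closed, hence $\D'$ is closed (and compact, by the relative compactness of $\D$ and the inclusion $\D'\subset\bigcup_{i=1}^k g_i\D$ coming from the finitely many irreducible components meeting $U$). One then checks that the covering by translates of $\D'$ coincides with the covering by translates of $\D$, glues the closed domains $T.g\D'$ into $\M'\subset\M$, and only then runs the rigid/Berkovich comparison you describe. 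Once you insert this auxiliary $\D'$, your argument goes through and is essentially the paper's proof.
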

\begin{proof}
Take an open quasi-compact subset $U\subset \M^0_{red}$ such that $U\cap \M_\Lam\neq \emptyset$ and $J_b(\Q_p).U=\M_{red}$. Since $U$ is quasi-compact, it intersects with only finite number irreducible components $\M_{\Lam_i}$, $\Lam_i\in \mathcal{L}_0, t(\Lam_i)=t_{max}, i=1,\dots,k$ with $\Lam=\Lam_1$. Let $g_i\in J^{der}(\Q_p)/Stab(\Lam)$ be such that $g_i(\Lam)=\Lam_i$. Then we have the inclusion
\[\D':=\C\bigcap sp^{-1}(U)\subset \bigcup_{i=1}^kg_i\D.\] Note that $\D'$ is a closed analytic domain of $\M$. It is compact since $\D$ is locally compact.
So we have equalities of locally finite coverings
\[\bigcup_{\begin{subarray}{c}T\in G(\Z_p)\setminus G(\Q_p)/G(\Z_p)\\g\in J^{der}_b(\Q_p)/Stab(\Lam)\end{subarray}
}T.g\D=\bigcup_{\begin{subarray}{c}T\in G(\Z_p)\setminus G(\Q_p)/G(\Z_p)\\g\in J^{der}_b(\Q_p)/Stab(\Lam)\end{subarray}
}T.g\D'\] if $n$ is odd; and
\[\bigcup_{\begin{subarray}{c}T\in G(\Z_p)\setminus G(\Q_p)/G(\Z_p)\\j=0,1\\g\in J^{der}_b(\Q_p)/Stab(\Lam)\end{subarray}
}T.g_1^jg\D
=\bigcup_{\begin{subarray}{c}T\in G(\Z_p)\setminus G(\Q_p)/G(\Z_p)\\j=0,1\\g\in J^{der}_b(\Q_p)/Stab(\Lam)\end{subarray}
}T.g_1^jg\D'
\]if $n$ is even.
Since $\D'$ is closed, and the above analytic covering $(T.g\D')_{T,g}$ or $(T.g_1^jg\D')_{j,T,g}$ obtained by translations of $\D'$ is locally finite, by \cite{B}
we can glue them into a sub-analytic space $\M'\subset\M$, such that the underlying set of $\M'$ is given by the union as above. On the other hand, the rigid covering $(T.g\D^{'rig})_{T,g}$ or $(T.g_1^jg\D^{'rig})_{j,T,g}$  of $\M^{rig}$ by admissible open subsets can always be glued as a rigid space $\M'_0$, which is the associated rigid space of $\M'$: $\M^{'rig}=\M'_0$. Then these rigid coverings are admissible since the analytic coverings are locally finite, so from the equalities as sets \[\M^{rig}=\bigcup_{\begin{subarray}{c}T\in G(\Z_p)\setminus G(\Q_p)/G(\Z_p)\\g\in J^{der}_b(\Q_p)/Stab(\Lam)\end{subarray}}T.g\D^{'rig}\] if $n$ is odd; and
\[\M^{rig}=\bigcup_{\begin{subarray}{c}T\in G(\Z_p)\setminus G(\Q_p)/G(\Z_p)\\j=0,1\\g\in J^{der}_b(\Q_p)/Stab(\Lam)\end{subarray}}T.g_1^jg\D^{'rig}
\]if $n$ is even,
 we have the equality of $\M'_0=\M^{rig}$ as rigid spaces. By the equivalence of the category of Hausdorff paracompact strictly analytic Berkovich spaces and the category of quasi-separated quasi-paracompact rigid analytic spaces, we must have the equality $\M'=\M$.

\end{proof}
\begin{remark}
The above argument also works for theorem 27 in \cite{F3}.
\end{remark}
We have the following corollary when applying the theorem to the $p$-adic period domain $\Fm^a$.
\begin{corollary}
Let $\pi:\M\ra \Fm^a\subset \mathbf{P}^{n-1,an}$ be the $p$-adic period mapping, then we have a locally finite covering
\[\Fm^a=\bigcup_{g\in J^{der}_b(\Q_p)/Stab(\Lam)}g\pi(\D)\] if $n$ is odd, and
\[\Fm^a=\bigcup_{\begin{subarray}{c}j=0,1\\g\in J^{der}_b(\Q_p)/Stab(\Lam)\end{subarray}}g_1^jg\pi(\D)\]if $n$ is even.
\end{corollary}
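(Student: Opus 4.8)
The plan is to obtain this corollary as the image under the period map $\pi\colon\M\to\Fm^a$ of the cell decomposition of $\M$ furnished by the preceding theorem, using only three properties of $\pi$: it is surjective onto $\Fm^a$ (indeed $\pi(\M^i)=\Fm^a$ whenever $in$ is even), it is $J_b(\Q_p)$-equivariant, and its fibres are exactly the Hecke orbits (Proposition 2.3).

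\emph{The two coverings.} For any subset $A\subset\M$ and any Hecke correspondence $T\in G(\Z_p)\setminus G(\Q_p)/G(\Z_p)$, every point of $T.A$ lies in the Hecke orbit of some point of $A$, hence, by Proposition 2.3, in the same $\pi$-fibre; therefore $\pi(T.A)\subset\pi(A)$. Feeding this and the $J^{der}_b(\Q_p)$-equivariance $\pi(g\cdot z)=g\cdot\pi(z)$ into the covering of $\M$ from the theorem, for $n$ odd one gets
\[\Fm^a=\pi(\M)=\bigcup_{T,g}\pi(T.g\D)\subset\bigcup_{g}\pi(g\D)=\bigcup_{g\in J^{der}_b(\Q_p)/Stab(\Lam)}g\,\pi(\D)\subset\Fm^a,\]
so every containment is an equality; for $n$ even the same computation with the extra index $j\in\{0,1\}$ and the factor $g_1^j$ yields $\Fm^a=\bigcup_{j,g}g_1^jg\,\pi(\D)$. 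In both cases the index $T$ collapses precisely because $\pi$ is constant on Hecke orbits. It is also worth recording here that $\pi(\D)$ is relatively compact: $\overline{\D}$ is compact by the relative-compactness corollary of Section 9, $\pi$ is continuous, so $\pi(\overline{\D})$ is a compact (hence closed) subset of $\Fm^a$ containing $\pi(\D)$.

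\emph{Local finiteness.} Fix $g_0$ (and $j_0\in\{0,1\}$ if $n$ is even) and suppose $g\,\pi(\D)\cap g_0\,\pi(\D)\neq\emptyset$ (resp. $g_1^jg\,\pi(\D)\cap g_1^{j_0}g_0\,\pi(\D)\neq\emptyset$). Choose a point in the intersection and lift it along $\pi$: there are $x,x'\in\D$ with, by $J_b(\Q_p)$-equivariance, $\pi(gx)=\pi(g_0x')$ (resp. $\pi(g_1^jgx)=\pi(g_1^{j_0}g_0x')$). By Proposition 2.3 the two lifts $gx$ and $g_0x'$ (resp. $g_1^jgx$ and $g_1^{j_0}g_0x'$) lie in a single Hecke orbit, so there is a double coset $T\in G(\Z_p)\setminus G(\Q_p)/G(\Z_p)$ with
\[g_0x'\in T.(gx)\subset T.(g\D)\qquad\bigl(\text{resp. }g_1^{j_0}g_0x'\in T.(g_1^jgx)\subset T.(g_1^jg\D)\bigr).\]
Since $g_0x'\in g_0\D$ (resp. $g_1^{j_0}g_0x'\in g_1^{j_0}g_0\D$), the cell $T.g\D$ (resp. $T.g_1^jg\D$) of the decomposition of $\M$ meets the cell indexed by the trivial double coset $G(\Z_p)$ and $g_0$ (resp. $j_0,g_0$). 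By the local-finiteness corollary of Section 10 there are only finitely many such pairs $(T,g)$ (resp. triples $(T,j,g)$); a fortiori only finitely many $g$ (resp. pairs $(j,g)$), which is exactly the required local finiteness of the covering of $\Fm^a$.

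\emph{The main point.} The only non-formal step is the lift: passing from ``$gx$ and $g_0x'$ have the same image under $\pi$'' to ``$g_0x'$ lies in a Hecke translate of $gx$''. This is precisely Proposition 2.3 (the $\pi$-fibres are the Hecke orbits), and once it is in hand the corollary is a purely formal consequence of the cell decomposition of $\M$ together with the relative compactness of $\D$; I therefore expect no serious obstacle here.
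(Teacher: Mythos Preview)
Your proof is correct and is precisely the argument the paper has in mind: the corollary is stated without proof, introduced only by the phrase ``applying the theorem to the $p$-adic period domain $\Fm^a$'', and you have supplied the details. The covering follows by pushing the cell decomposition of $\M$ forward along $\pi$, using that $\pi$ collapses Hecke orbits (Proposition 2.3) and is $J_b(\Q_p)$-equivariant; your local-finiteness argument, lifting an intersection in $\Fm^a$ back to an intersection of cells in $\M$ and invoking Corollary 10.6, is the natural one.
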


We look at the cases with level structures. Let $K\subset G(\Z_p)$ be an open compact subgroup and $\pi_K: \M_K\ra\M$ be the natural projection, which is a $J_b(\Q_p)$-equivariant finite \'etale surjection and also compatible with the Hecke actions. Denote $\D_K=\pi^{-1}_K(\D)$, then $g\D_K=\pi^{-1}_K(g\D)$ for all $g\in J_b(\Q_p)$, and
\[Kh_1K.g\D_K=Kh_2K.g\D_K\] for $Kh_1K,Kh_2K\in K\setminus G(\Q_p)/K$ having the same image under the projection
\[K\setminus G(\Q_p)/K\ra G(\Z_p)\setminus G(\Q_p)/K.\]The last equality holds since any $h\in G(\Z_p)$ acts trivially on $\M$, therefore $Khh_1K.\pi^{-1}_K(g\D)=Kh_1K.\pi^{-1}_K(g\D)$.
\begin{corollary}
We have a locally finite covering of the analytic space $\M_K$
\[\M_K=\bigcup_{\begin{subarray}{c}T\in G(\Z_p)\setminus G(\Q_p)/K\\g\in J^{der}_b(\Q_p)/Stab(\Lam)\end{subarray}
}T.g\D_K\] if $n$ is odd, and
\[\M_K=\bigcup_{\begin{subarray}{c}T\in G(\Z_p)\setminus G(\Q_p)/K\\j=0,1\\g\in J^{der}_b(\Q_p)/Stab(\Lam)\end{subarray}
}T.g_1^jg\D_K
\]if $n$ is even.
\end{corollary}
We will consider some cohomological application of this corollary in the next section.

Finally we have a corollary for Shimura varieties.
\begin{corollary}
Let $Sh_{K^p}$ be as the Shimura variety introduced in section 9, $\widehat{Sh}^{an}_{K^p}$ be the generic analytic fiber of its $p$-adic completion $\widehat{Sh}_{K^p}$, and $\widehat{Sh}^{an,b_0}_{K^p}$ be the tube in $\widehat{Sh}^{an}_{K^p}$ over the basic strata $\ov{Sh}_{K^p}^{b_0}$, which is an open subspace. Let $\widehat{Sh}^{an}_{K_p\times K^p}\ra \widehat{Sh}^{an}_{K^p}$ be the covering in level $K_p\subset G(\Z_p)$ (an open compact subgroup), and $\widehat{Sh}^{an,b_0}_{K_p\times K^p}$ be the inverse image of $\widehat{Sh}^{an,b_0}_{K^p}$.  Denote $\C'_{K_p}$ the inverse image of $\C'$ in $\M_{K_p}$, $\mathcal{E}'_{K_p}$ the image of $\C'_{K_p}$ under the $p$-adic uniformization \[ I(\Q)\setminus\M_{K_p}\times G(\A^p_f)/K^p\simeq \coprod_{i\in I(\Q)\setminus G(\A^p_f)/K^p}\M_{K_p}/\Gamma_i\simeq \widehat{Sh}^{an,b_0}_{K_p\times K^p}.\]
\begin{enumerate}

\item Let $\Gamma=\Gamma_i$ be one of the above discrete, torsion free, cocompact modulo center subgroups of $J_b(\Q_p)$, and $\Gamma^{der}=\Gamma\cap J^{der}_b(\Q_p)$, $D_{K_p}=D_{iK_p}$ be the image of $\D_{K_p}$ under the morphism $\M_{K_p}\ra\M_{K_p}/\Gamma$, then we have a covering
    \[\M_{K_p}/\Gamma=\bigcup_{\begin{subarray}{c}T\in G(\Z_p)\setminus G(\Q_p)/K_p\\ g\in\Gamma^{der}\setminus J^{der}_b(\Q_p)/Stab(\Lam)\end{subarray}}T.gD_{K_p}\] if $n$ is odd, and
    \[\M_{K_p}/\Gamma=\bigcup_{\begin{subarray}{c}T\in G(\Z_p)\setminus G(\Q_p)/K_p\\j=0,1\\ g\in\Gamma^{der}\setminus J^{der}_b(\Q_p)/Stab(\Lam)\end{subarray}}T.g_1^jgD_{K_p}\] if $n$ is even.
\item Under the above notation, we have a covering \[\mathcal{E}'_{K_p}=\coprod_{i\in I(\Q)\setminus G(\A^p_f)/K^p}\bigcup_{g\in\Gamma^{der}\setminus J^{der}_b(\Q_p)/Stab(\Lam)}gD_{iK_p}\] if $n$ is odd, and \[\mathcal{E}'_{K_p}=\coprod_{i\in I(\Q)\setminus G(\A^p_f)/K^p}\bigcup_{\begin{subarray}{c}j=0,1\\g\in\Gamma^{der}\setminus J^{der}_b(\Q_p)/Stab(\Lam)\end{subarray}}g_1^jgD_{iK_p}\] if $n$ is even.  We have a covering
\[\widehat{Sh}^{an,b_0}_{K_p\times K^p}=\bigcup_{T\in G(\Z_p)\setminus G(\Q_p)/K_p}T.\mathcal{E}'_{K_p}.\]
\end{enumerate}

\end{corollary}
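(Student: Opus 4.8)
The plan is to deduce this corollary from the level-$K_p$ cell decomposition of $\M$ established above (the preceding corollary) together with the $p$-adic uniformization isomorphism recalled in section 9; the work is entirely one of transport and reindexing rather than new geometry.

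First I would treat part (1). The group $\Gamma=\Gamma_i$ is discrete, torsion free and cocompact modulo center in $J_b(\Q_p)$, and for $K^p$ small enough (as already arranged) it acts freely and properly discontinuously on $\M_{K_p}$; hence $\M_{K_p}\ra\M_{K_p}/\Gamma$ is a local isomorphism of Berkovich spaces and the images of the cells $T.g\D_{K_p}$ (resp. $T.g_1^jg\D_{K_p}$) form a covering of $\M_{K_p}/\Gamma$. Since $\Gamma\subset J_b(\Q_p)$ and the $J_b(\Q_p)$-action commutes with the Hecke action of $G(\Q_p)$, an element $\gamma\in\Gamma$ carries $T.g\D_{K_p}$ to $T.(\gamma g)\D_{K_p}$, so $\Gamma$ acts only on the $J_b$-part of the index while fixing $T\in G(\Z_p)\setminus G(\Q_p)/K_p$. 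Using that $\D$ is $Stab(\Lam)$-stable and, in the $n$ even case, that the central $p^{-1}\in J_b(\Q_p)$ acts the same as $p^{-1}\in G(\Q_p)$ (so that its effect is absorbed into the $T$'s, the remaining component ambiguity being handled by $g_1^j$), one checks that the distinct images are indexed exactly by $T\in G(\Z_p)\setminus G(\Q_p)/K_p$ and $g\in\Gamma^{der}\setminus J^{der}_b(\Q_p)/Stab(\Lam)$ (and $j=0,1$ for $n$ even), where $\Gamma^{der}=\Gamma\cap J^{der}_b(\Q_p)$. Local finiteness of the quotient covering is inherited from local finiteness on $\M_{K_p}$ since $\M_{K_p}\ra\M_{K_p}/\Gamma$ is a local isomorphism and $\Gamma$ preserves the covering.

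For part (2), I would feed this through the uniformization. The subspace $\C'\subset\M$ is $J_b(\Q_p)$-stable, hence $\Gamma_i$-stable, so under $\coprod_i\M_{K_p}/\Gamma_i\simeq\widehat{Sh}^{an,b_0}_{K_p\times K^p}$ the inverse image $\C'_{K_p}$ of $\C'$ maps onto $\mathcal{E}'_{K_p}$, identified with $\coprod_i\C'_{K_p}/\Gamma_i$. Now $\C'$ equals $\C^0$ (resp. $\C^0\coprod g_1\C^0$), and from the covering $\C^0=\bigcup_g g\D$ recalled in section 10, decomposing each $\C'_{K_p}/\Gamma_i$ via the images $gD_{iK_p}$ of the cells $g\D_{K_p}$ yields exactly the stated covering of $\mathcal{E}'_{K_p}$ indexed by $g\in\Gamma^{der}\setminus J^{der}_b(\Q_p)/Stab(\Lam)$ (with $j=0,1$ for $n$ even). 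Finally $\M=\bigcup_{T}T.\C'$, since $\M=\bigcup_{T,g}T.g\D=\bigcup_T T.\C^0$ (resp. $\M=\bigcup_{T,j,g}T.g_1^jg\D=\bigcup_T T.\C'$ for $n$ even) by the main theorem of section 10 combined with $\C^0=\bigcup_g g\D$; transporting through the $G(\Q_p)$-equivariant uniformization gives $\widehat{Sh}^{an,b_0}_{K_p\times K^p}=\bigcup_{T\in G(\Z_p)\setminus G(\Q_p)/K_p}T.\mathcal{E}'_{K_p}$, whose local finiteness follows from Corollary 10.8 since $\mathcal{E}'_{K_p}$ is compact.

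The main obstacle, I expect, is the bookkeeping in the reindexing step of part (1): one must verify precisely that passing to the $\Gamma$-quotient modifies only the $J^{der}_b$-factor of the index — replacing $J^{der}_b(\Q_p)/Stab(\Lam)$ by $\Gamma^{der}\setminus J^{der}_b(\Q_p)/Stab(\Lam)$ — while leaving the Hecke index $G(\Z_p)\setminus G(\Q_p)/K_p$ untouched, which rests on the compatibility of the central $\Q_p^\times$-actions on the two sides and on the rôle of the connected components (and of the element $g_1$) in the $n$ even case.
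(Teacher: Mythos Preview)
Your approach is correct and matches what the paper intends: the paper gives no explicit proof of this corollary, treating it as an immediate consequence of Corollary 10.10 (the level-$K_p$ cell decomposition of $\M_{K_p}$) together with the $p$-adic uniformization isomorphism from section 9, which is exactly the transport-and-reindex argument you outline.

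One small slip worth flagging: you write that $\C'\subset\M$ is $J_b(\Q_p)$-stable, but this is not so. It is $\C$ that is $J_b(\Q_p)$-stable (as used in section 9 for $\mathcal{E}$); the subset $\C'$ consists only of the components $\C^0$ (resp.\ $\C^0\coprod\C^1$), and elements of $J_b(\Q_p)$ outside $J_b^{der}(\Q_p)$ can move components. What you actually need, and what suffices, is that $\C^0$ is $J_b^{der}(\Q_p)$-stable (hence $\Gamma^{der}$-stable), so that the image of $\C'_{K_p}$ in $\M_{K_p}/\Gamma_i$ is covered by the images $gD_{iK_p}$ indexed by $\Gamma^{der}\setminus J_b^{der}(\Q_p)/Stab(\Lam)$; this is the reindexing you carry out anyway. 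The rest of your argument goes through unchanged.
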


\section{Cohomological application of the locally finite cell decomposition: a Lefschetz trace formula}
In this section we study some cohomological applications of the locally finite cell decomposition of the tower $(\M_K)_{K\subset G(\Z_p)}$, using the similar ideas in \cite{Sh1}.

We first review some basic facts. Fix a prime $l\neq p$. Let $\ov{\Q}_l$ (resp. $\ov{\Q}_p$) be a fixed algebraic closure of $\Q_l$ (resp. $\Q_p$), and $\Cm_p$ be the completion of $\ov{\Q}_p$ for its valuation which extends that of $\Q_p$. For any open compact subgroup $K\subset G(\Z_p)$ and integer $j\geq 0$, the $j$-th cohomology with compact support of $\M_K\times\Cm_p$ with coefficient in $\ov{\Q}_l$ is
\[H^j_c(\M_K\times \Cm_p, \ov{\Q}_l)=\varinjlim_{U}\varprojlim_{n}H^j_c(U\times\Cm_p,\Z/l^n\Z)\otimes\ov{\Q}_l,\]
where the injective limit is taken over all relatively compact open subsets $U\subset\M_K$, see \cite{F1} section 4. Recall in section 2 we introduced the group $\triangle=Hom(X^\ast_{\Q_p}(G),\Z)\simeq \Z$ and there is a mapping
\[\varkappa:\widehat{\M}\ra \Z,\]with the image $\triangle'$ is $\Z$ if $n$ is even and $2\Z$ if $n$ if odd.
This mapping satisfies that
  \[\varkappa(gx)=\omega_J(g)+\varkappa(x)\]for all $g\in J_b(\Q_p),x\in\widehat{\M}$. Here $\omega_J:J_b(\Q_p)\ra\triangle$ is defined by $<\omega_J(x),\chi>=v_p(i(\chi)(x))$ where $i: X^\ast_{\Q_p}(G)\ra X^\ast_{\Q_p}(J_b)$ is the natural morphism between the two groups of $\Q_p$-rational characters. We denote
  \[J_b^1=\bigcap_{\chi\in X^\ast_{\Q_p}(G)}ker|i(\chi)|,\]which we consider as a subgroup of $J_b(\Q_p)$. Here
  \[\begin{split}|i(\chi)|:&J_b\longrightarrow \Z\\&x\mapsto v_p(i(\chi)(x)).\end{split}\]We have a decomposition
  \[\M_K=\coprod_{i\in\Z,\;in\; even}\M_K^i\] as the case of $\M$ by considering the height of quasi-isogenies, and in fact $\M_K^i=\pi_K^{-1}(\M^i)$ for the projection $\pi_K:\M_K\ra\M$.
  The group $J_b(\Q_p)$ acts transitively on $\triangle'$ and $\M_K^0$ is stable under the group $J_b^1$ for the action of $J_b(\Q_p)$. We have the equalities for cohomology groups
  \[H_c^j(\M_K\times\Cm_p,\ov{\Q}_l)=\bigoplus_{i\in\Z,\; in\; even}H_c^j(\M_K^i\times\Cm_p,\ov{\Q}_l)=\textrm{c-Ind}^{J_b(\Q_p)}_{J_b^1}H_c^j(\M_K^0\times\Cm_p,\ov{\Q}_l),\]
  for the last equality see lemme 4.4.10 of \cite{F1}. In fact there are also actions of $G(\Q_p)$ and $W_E$ on $\triangle'$, where $E$ is the reflex local field. Let $(G(\Q_p)\times J_b(\Q_p)\times W_E)^1$ be the subgroup of $
G(\Q_p)\times J_b(\Q_p)\times W_E$ which acts trivially on $\triangle'$. Let $K$ vary as open compact subgroup of $G(\Z_p)$, we have equalities of $
G(\Q_p)\times J_b(\Q_p)\times W_E$-representations
\[\begin{split}\varinjlim_{K}H_c^j(\M_K\times\Cm_p,\ov{\Q}_l)&=\bigoplus_{i\in\Z,\; in \; even}\varinjlim_{K}H_c^j(\M_K^i\times\Cm_p,\ov{\Q}_l)\\&=\textrm{c-Ind}^{G(\Q_p)\times J_b(\Q_p)\times W_E}_{(G(\Q_p)\times J_b(\Q_p)\times W_E)^1}\varinjlim_{K}H_c^j(\M_K^0\times\Cm_p,\ov{\Q}_l),\end{split}\]see remarque 4.4.11 of loc. cit. In the following we will forget the action of $W_E$ and just consider the cohomology groups as $G(\Q_p)\times J_b(\Q_p)$-representations.

 The dimension of $H_c^j(\M_K^0\times\Cm_p,\ov{\Q}_l)$ as $\ov{\Q}_l$-vector space is infinite. However, as $J_b^1$-representation, it is of finite type, see loc. cit. proposition 4.4.13. As in section 10, we fix a $\Lam\in\mathcal{L}_0$ such that $t(\Lam)=t_{max}$. Recall the subscheme $\M_\Lam\subset\M^0_{red}$, which is an irreducible component of $\M^0_{red}$, and the set of all irreducible components of $\M^0_{red}$ is exactly $\{g\M_\Lam=\M_{g\Lam}|g\in J^{der}_b(\Q_p)/Stab(\Lam)\}$. We have a locally finite covering by open subsets
\[\M^0=\bigcup_{g\in J^{der}_b(\Q_p)/Stab(\Lam)}gsp^{-1}(\M_\Lam).\]
We have $J^{der}_b(\Q_p)\subset J^1_b$, and the action of $J^{der}_b(\Q_p)$ on $\mathcal{L}_0$ naturally extends to an action of $J^1_b$, and we still denote by $Stab(\Lam)$ the stabilizer of $\Lam$ in $J^1_b$. We set $U:=\pi_K^{-1}(sp^{-1}(\M_\Lam))\subset\M_K^0$, then by theorem 3.3 (ii) of \cite{Hu1}
\[dim_{\ov{\Q}_l}H_c^j(U\times\Cm_p,\ov{\Q}_l)<\infty.\]
We have a $J^1_b$-equivariant spectral sequence
\[E_1^{p,q}=\bigoplus_{\begin{subarray}{c}\alpha\subset J^1_b/Stab(\Lam)\\ \#\alpha=-p+1\end{subarray}}H^q_c(U(\alpha)\times\Cm_p,\ov{\Q}_l)\Rightarrow H_c^{p+q}(\M_K^0\times\Cm_p,\ov{\Q}_l),\]
where $p\leq 0, 0\leq q\leq n-1, U(\alpha)=\bigcap_{g\in\alpha}gU$. The $J^1_b$ action on $E^{p,q}_1$ is
\[\forall h\in J^1_b,\; h: H^q_c(U(\alpha)\times\Cm_p,\ov{\Q}_l)\st{\sim}{\ra}H^q_c(hU(\alpha)\times\Cm_p,\ov{\Q}_l).\]
Denote $K_\alpha=\bigcap_{g\in\alpha}gStab(\Lam)g^{-1}$, then $H^q_c(U(\alpha)\times\Cm_p,\ov{\Q}_l)$ is a smooth $\ov{\Q}_l$-representation of $K_\alpha$, and $E^{p,q}_1$ can be rewritten as
\[E^{p,q}_1=\bigoplus_{[\alpha]\in J^1_b\setminus(J^1_b/K)^{-p+1}}\textrm{c-Ind}_{K_\alpha}^{J^1_b}H^q_c(U(\alpha)\times\Cm_p,\ov{\Q}_l). \]
Since $(gU)_{g\in J^1_b/Stab(\Lam)}$ is a locally finite covering of $\M_K$,
\[\#\{[\alpha]\in J^1_b\setminus(J^1_b/Stab(\Lam))^{-p+1}|U(\alpha)\neq\emptyset\}<\infty,\]
i.e. the above direct sum has just finitely many non zero terms.

Let $\gamma=(h,g)\in G(\Q_p)\times J_b(\Q_p)$ be a fixed element with both $h$ and $g$ regular elliptic. Then there is a fundamental system of neighborhoods of 1 in $G(\Q_p)$ consisting of open compact subgroups $K\subset G(\Z_p)$ which are normalized by $h$. From now on let $K\subset G(\Z_p)$ be a sufficiently small open compact subgroup such that $hKh^{-1}=K$. Consider the locally finite cell decomposition of $\M_K^0$
\[\M_K^0=\bigcup_{\begin{subarray}{c}T\in G(\Z_p)\setminus G(\Q_p)/K\\g'\in J^{der}_b(\Q_p)/Stab(\Lam)\end{subarray}
}((T.g'\D_K)\bigcap\M_K^0)\] if $n$ is odd, and
\[\M_K^0=\bigcup_{\begin{subarray}{c}T\in G(\Z_p)\setminus G(\Q_p)/K\\j=0,1\\g'\in J^{der}_b(\Q_p)/Stab(\Lam)\end{subarray}
}((T.g_1^jg'\D_K)\bigcap\M_K^0)
\]if $n$ is even. Here by replacing $\D$ by $\D'$ in the proof of theorem 10.8 we can assume $\D$ is compact.
Thus the notation $\D$ in this section is a little different from that in previous sections. If $n$ is odd, for any $g'\in J_b^{der}(\Q_p)/Stab(\Lam), T\in G(\Z_p)\setminus G(\Q_p)/K$,
\[(T.g'\D_K)\bigcap\M_K^0\neq \emptyset\Leftrightarrow v_p(detT)=0,\]in which case we have
\[T.g'\D_K\subset \M_K^0.\]If $n$ is even, for any $g'\in J_b^{der}(\Q_p)/Stab(\Lam), j=0,1, T\in G(\Z_p)\setminus G(\Q_p)/K$,
\[(T.g_1^jg'\D_K)\bigcap\M_K^0\neq\emptyset\Leftrightarrow -\frac{2}{n}v_p(detT)+j=0,\]
in which case we have
\[T.g_1^jg'\D_K\subset\M_K^0.\]Thus we can rewrite the locally finite cell decomposition of $\M_K^0$ as
\[\M_K^0=\bigcup_{\begin{subarray}{c}T\in G(\Z_p)\setminus G(\Q_p)/K\\v_p(detT)=0\\g'\in J^{der}_b(\Q_p)/Stab(\Lam)\end{subarray}
}T.g'\D_K\] if $n$ is odd, and
\[\M_K^0=\bigcup_{\begin{subarray}{c}T\in G(\Z_p)\setminus G(\Q_p)/K\\j=0,1,-\frac{2}{n}v_p(detT)+j=0\\g'\in J^{der}_b(\Q_p)/Stab(\Lam)\end{subarray}
}T.g_1^jg'\D_K
\]if $n$ is even.
We will write the cells $T.g'\D_K$ and $T.g_1^jg'\D_K$ as $\D_{T,g',K}$ and $\D_{T,j,g',K}$ respectively.
If $n$ is odd, for any $T\in G(\Z_p)\setminus G(\Q_p)/K, j\in 2\Z, g'\in J^{der}_b(\Q_p)$ such that $-\frac{2}{n}v_p(detT)+j=0$, we denote also
\[\emptyset\neq \D_{T,j,g',K}:=T.p^{\frac{-j}{2}}g'\D_K\subset\M_K^0.\] If $n$ is even, for any $T\in G(\Z_p)\setminus G(\Q_p)/K, j\in\Z,g'\in J^{der}_b(\Q_p)/Stab(\Lam)$ such that $-\frac{2}{n}v_p(detT)+j=0$, we denote also
\[\emptyset\neq\D_{T,j,g',K}:=\begin{cases}T.p^{\frac{-j}{2}}g'\D_K & (j\;even)\\T.p^{\frac{1-j}{2}}g_1g'\D_K &(j\;odd),
\end{cases}\]
which is a compact analytic domain in $\M_K^0$.
Since $(z,z^{-1})\in G(\Q_p)\times J_b(\Q_p)$ acts trivially on $\M_K$ for any $z\in \Q_p^\times$,
with these notations we have
\[\D_{T,g',K}=\D_{Tz,z^{-1}g',K},\;\D_{T,j,g',K}=\D_{Tz,j+2v_p(z),g',K},\;\forall z\in \Q_p^\times\]
For the $\gamma=(h,g)$ above we suppose further $v_p(deth)+v_p(detg)=0$. Then $\gamma(\M_K^0)=\M_K^0$. To describe the action of $\gamma$ on the cells, we have to introduce some more natural parameter set of cells.

Consider the product $G\times J_b$ as reductive group over $\Q_p$, then $\mathbb{G}_m$ acts on it through the embedding $z\mapsto (z,z^{-1})$. Let $\B(G\times J_b,\Q_p)$ be the (extended) Bruhat-Tits building of $G(\Q_p)\times J_b(\Q_p)$ over $\Q_p$, and $\B=\B(
G\times J_b,\Q_p)/\Q_p^\times$ be its quotient by $\Q_p^\times$ through the embedding above. More precisely, if $\B(G^{ad},\Q_p)$ (resp. $\B(J^{ad}_b,\Q_p)$) is the Bruhat-Tits building of the adjoint group $G^{ad}$ (resp. $J^{ad}_b$) over $\Q_p$, which is isomorphic to the Bruhat-Tits building of the derived group $G^{der}$ (resp. $J^{der}_b$) over $\Q_p$, then the (extended) Bruhat-Tits building of $G$ (resp. $J_b$) over $\Q_p$ is $\B(G^{ad},\Q_p)\times\R$ (resp. $\B(J^{ad}_b,\Q_p)$) over $\Q_p$. By definition the quotient building $\B$ is
 \[\B\simeq (\B(G^{ad},\Q_p)\times\R\times\R\times\B(J^{ad}_b,\Q_p))/\sim,\]
 where \[(x,s,t,y)\sim (x',s',t',y')\Leftrightarrow\]\[ x=x',y=y',s-s'=t'-t=r(\;n\;odd)\;\textrm{or}\;2r(\;n\;even), \textrm{for some}\;r\in\Z.\]Any point of $\B$ can be written in the form $[x,s,t,y]=[x,s',t',y]$ where $s'\in\R, t'\in [0,1) (n\; odd)$ $\textrm{or}\;[0,2) (n\; even)$ are uniquely determined. The action of $G(\Q_p)\times J_b(\Q_p)$ on $\B$ is given by
 \[\forall (h,g)\times G(\Q_p)\times J_b(\Q_p), (h,g)[x,s,t,y]=[h^{-1}x,s+\frac{2}{n}v_p(deth),t+\frac{2}{n}v_p(detg),gy].\]
 If we consider the right action of $G(\Q_p)$ on $\B(G^{ad},\Q_p)$ by $xh:=h^{-1}x$, we can write $(h,g)[x,s,t,y]=[xh,s+\frac{2}{n}v_p(deth),t+\frac{2}{n}v_p(detg),gy]$.
 The sets of vertices of $\B(G^{ad},\Q_p)$ and $\B(J^{ad}_b,\Q_p)$ can be described as in section 7, as certain sets of lattices in $\Q_{p^2}^n$.
By fixing a choice $(\Z_{p^2}^n,\Lam)$ with $t(\Lam)=t_{max}$, we can identify the following set
\[(G(\Z_p)\setminus G(\Q_p)\times J_b(\Q_p)/Stab_{J^{der}_b(\Q_p)}(\Lam))/\Q_p^\times\]
with a subset of the set of vertices $\B^0$ (the quotient by $\Q_p^\times$ of vertices in $\B(G\times J_b,\Q_p)$), such that the projections to $\B(G^{ad},\Q_p)^0$ and $\B(J^{ad}_b,\Q_p)^0$ are vertices of types determined by $\Z_{p^2}^n, \Lam$ respectively. In the following we will simply denote $Stab_{J^{der}_b(\Q_p)}(\Lam)$ by $Stab(\Lam)$ as in section 10. For any open compact subgroup $K\subset G(\Z_p)$, we can identify
\[\I_K:=(G(\Z_p)\setminus G(\Q_p)/K\times J_b(\Q_p)/Stab(\Lam))/\Q_p^\times\]with a subset of $\B^0/K\subset\B/K$. We can write an element of $\I_K$ in the form $[T,g']=[x,\frac{2}{n}v_p(detT),\frac{2}{n}v_p(detg'),y]$ for some $x\in \B(G^{ad},\Q_p)^0,y\in \B(J^{ad}_b,\Q_p)^0$ uniquely determined by $T$ and $g'$. There is a map
\[\begin{split} G(\Z_p)\setminus G(\Q_p)/K&\times J_b(\Q_p)/Stab(\Lam)\longrightarrow\Z\\
&(T,g')\mapsto -\frac{2}{n}(v_p(detT)+v_p(detg')),\end{split}\]with image $\Z$ if $n$ is even and $2\Z$ if $n$ is odd.
Let $(G(\Z_p)\setminus G(\Q_p)/K\times J_b(\Q_p)/Stab(\Lam))^0$ be the inverse image of $0$, then $\Q_p^\times$ acts on this subset. We denote
\[\I_K^0:=(G(\Z_p)\setminus G(\Q_p)/K\times J_b(\Q_p)/Stab(\Lam))^0/\Q_p^\times.\] In fact there is a well defined map\[\begin{split}\varphi:\B&\longrightarrow\R\\ [x,s,t,y]&\mapsto -s-t,\end{split}\]with each fiber stable for the action of $K$. Then we have $\I_K^0=\I_K\bigcap \varphi^{-1}(0)^0/K$.

 Now for any
$[T,g']\in\I_K$,
the subset
\[\D_{[T,g'],K}:=T.g'\D_K\subset\M_K\] is well defined, which is a compact analytic domain. If $[T,g']\in\I_K^0$,
\[\D_{[T,g'],K}\subset\M_K^0.\]We can rewrite the cell decomposition of $\M_K$ and $\M_K^0$ as
\[\begin{split}&\M_K=\bigcup_{[T,g']\in\I_K}\D_{[T,g'],K},\\
&\M_K^0=\bigcup_{[T,g']\in\I_K^0}\D_{[T,g'],K}.\end{split}\]
For $\gamma=(h,g)\in G(\Q_p)\times J_b(\Q_p)$ such that $hKh^{-1}=K$, it induces an action on $\I_K$ by $[T,g']\mapsto[Th,gg']$.
 On the other hand the automorphism $\gamma:\M_K\ra\M_K$ induces an action of $\gamma$ on the cells compatible with its action on the parameter set above:
 \[\gamma(\D_{[T,g'],K})=\D_{[Th,gg'],K}.\]
 If we assume as above $v_p(deth)+v_p(detg)=0$ then $\gamma$ acts on $\I_K^0$ by $[T,g']\mapsto [Th,gg']$, and the automorphism $\gamma:\M_K^0\ra\M_K^0$ induces a compatible action on the cells as above.

Recall there is a metric $d(\cdot,\cdot)$ on the building $\B$. If we denote the metrics on $\B(G^{ad},\Q_p)$ and $\B(J^{ad}_b,\Q_p)$ by $d_1,d_2$ respectively, for $[x,s,t,y],[x',s',t',y']\in \B$ with $t,t'\in [0,1) (n\;odd)$ $\textrm{or}\;[0,2) (n\;even)$, we have \[d([x,s,t,y],[x',s',t',y'])=\sqrt{d_1(x,x')^2+d_2(y,y')^2+(s-s')^2+(t-t')^2}.\] It induces a metric $\ov{d}$ on the quotient space $\B/K$:
\[\ov{d}(xK,yK)=\inf_{k,k'\in K}d(xk,yk')=\inf_{k\in K}d(xk,y)=\inf_{k\in K}d(x,yk).\]
Since $K\subset G(\Z_p)$ is compact, one checks it easily this is indeed a metric. For any fixed $\rho>0$ and $o\in \B^0/K$, the closed ball $B(o,\rho)$ of $\B/K$ contains only finitely many points of the discrete subsets $\B^0/K,\I_K,\I_K^0$.
For the $\gamma=(h,g)\in G(\Q_p)\times J_b(\Q_p)$ with $hKh^{-1}=K$, one checks easily by definition the action of $\gamma$ on the parameter set $\I_K$ is isometric:
\[\ov{d}(\gamma x,\gamma x)=\ov{d}(x,x), \;\forall x\in \I_K.\]
Note for any $[T_1,g_1],[T_2,g_2]\in \I_K$, $\D_{[T_1,g_1],K}\bigcap\D_{[T_2,g_2],K}\neq\emptyset$ implies that
$v_p(detT_1)+v_p(detg_1)=v_p(detT_2)+v_p(detg_2)$.  If we write \[[T_1,g_1]=[x_1K,s_1,t_1,y_1], [T_2,g_2]=[x_2K,s_2,t_2,y_2]\] with \[x_1,x_2\in \B(G^{ad},\Q_p), y_1,y_2\in \B(J^{ad}_b,\Q_p), s_1,s_2\in\Z\subset\R, t_1,t_2\in\{0\} (n\; odd)\; \textrm{or}\{0,1\} (n\; even),\] (i.e. $\exists r_1,r_2\in\Z, s.t.\; \frac{2}{n}v_p(detT_i)=s_i+nr_i,\frac{2}{n}v_p(detg_i)=t_i+nr_i, i=1,2$,) then $s_1+t_1=s_2+t_2, s_1-s_2=t_2-t_1\in [-1,1]$, the distance \[\ov{d}([T_1,g_1],[T_2,g_2])=\inf_{k\in K}\sqrt{d_1(x_1,x_2k)^2+d_2(y_1,y_2)^2+2(s_1-s_2)^2}\]just depends on $\ov{d_1}(x_1K,x_2K)$ and $d_2(y_1,y_2)$, for the induced metric $\ov{d_1}$ on $\B(G^{ad},\Q_p)/K$ defined in the same way as $\ov{d}$, and the metric $d_2$ on $\B(J^{ad}_b,\Q_p)$.
\begin{proposition}
There exists a constant $c>0$, which depends only on the locally finite cell decomposition of $\M_K$, such that for any $[T_1,g_1],[T_2,g_2]\in \I_K$, if $\ov{d}([T_1,g_1],[T_2,g_2])>c$ then we have
\[\D_{[T_1,g_1],K}\bigcap\D_{[T_2,g_2],K}=\emptyset.\]
\end{proposition}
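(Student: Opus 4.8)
The plan is to reduce the statement to the local finiteness results already proved in Propositions 10.1–10.7 and Corollary 10.5, which assert precisely that the family $(\D_{[T,g'],K})_{[T,g']\in\I_K}$ is a locally finite covering of $\M_K$. The key point is to translate the combinatorial bound in those propositions — there are only finitely many $T\in G^{der}(\Z_p)\backslash G^{der}(\Q_p)/G^{der}(\Z_p)$ with $T.\D\cap\D\ne\emptyset$, and only finitely many $g'\in J^{der}_b(\Q_p)/Stab(\Lam)$ with $g'\D\cap\D\ne\emptyset$ — into a metric statement on the building $\B/K$. Concretely, I would first recall that if $\D_{[T_1,g_1],K}\cap\D_{[T_2,g_2],K}\ne\emptyset$ then, after translating by $[T_1,g_1]^{-1}$ (using that the Hecke/$J_b$-action moves cells to cells compatibly with the action on $\I_K$, and that $(z,z^{-1})$ acts trivially), we may assume $[T_1,g_1]$ is the base point $[o]$ corresponding to $(\Z_{p^2}^n,\Lam)$, so the condition becomes $\D_{[o],K}\cap\D_{[T,g'],K}\ne\emptyset$ for $[T,g']=[T_1,g_1]^{-1}\cdot[T_2,g_2]$.

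Next I would invoke the finiteness input directly. By Corollary 10.5 (via Propositions 10.4 and 10.7, plus the remark after Corollary 10.5 identifying the relevant $T$ with a bounded neighborhood of $(a_1,\dots,a_n)$ in $X_*(A)_+\subset\Z^n_+$), the set
\[
S:=\{[T,g']\in\I_K\mid \D_{[o],K}\cap\D_{[T,g'],K}\ne\emptyset\}
\]
is finite. Since $\B^0/K$ is a discrete subset of the metric space $(\B/K,\ov d)$ and $S\subset\B^0/K$ is finite, I may set $c:=\max_{[T,g']\in S}\ov d([o],[T,g'])$ (with $c:=0$, say, if $S=\{[o]\}$, but in any case $c>0$ can be arranged). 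This $c$ depends only on the fixed cell decomposition — i.e. on $\D$, $\Lam$, $\Z_{p^2}^n$ and $K$ — and not on the particular cells.

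Finally, for arbitrary $[T_1,g_1],[T_2,g_2]\in\I_K$ with $\D_{[T_1,g_1],K}\cap\D_{[T_2,g_2],K}\ne\emptyset$, translate to the base point as above: $[T_1,g_1]^{-1}\cdot[T_2,g_2]\in S$, hence $\ov d([o],[T_1,g_1]^{-1}\cdot[T_2,g_2])\le c$. Now use that $\I_K$ carries a transitive-enough isometric action: the action of any $\gamma\in G(\Q_p)\times J_b(\Q_p)$ normalizing $K$ is isometric for $\ov d$ (noted just before the proposition), and left translation by $G(\Q_p)$ together with the $J_b(\Q_p)$-action realizes $[T_1,g_1]^{-1}$ as such an isometry sending $[T_1,g_1]\mapsto[o]$ and $[T_2,g_2]\mapsto[T_1,g_1]^{-1}[T_2,g_2]$; therefore $\ov d([T_1,g_1],[T_2,g_2])=\ov d([o],[T_1,g_1]^{-1}[T_2,g_2])\le c$. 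Contrapositively, $\ov d([T_1,g_1],[T_2,g_2])>c$ forces $\D_{[T_1,g_1],K}\cap\D_{[T_2,g_2],K}=\emptyset$, as claimed.

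The main obstacle I anticipate is the isometry/translation-invariance step: one must check carefully that the identification of $\I_K$ with a subset of $\B^0/K$ is equivariant for the full $G(\Q_p)\times J_b(\Q_p)$-action (not merely the stabilizer of a chamber), so that the distance $\ov d([T_1,g_1],[T_2,g_2])$ genuinely depends only on the ``difference'' $[T_1,g_1]^{-1}[T_2,g_2]$ — this is where the normalization of the $s,t$-coordinates modulo $n\Z$ (resp. $2\Z$) and the triviality of the $(z,z^{-1})$-action must be used cleanly. A minor secondary point is bookkeeping: since the decomposition has two flavours ($n$ odd vs. $n$ even, with the extra $g_1^j$), one should phrase the base-point reduction uniformly via the parameter set $\I_K$, which is why working with $\I_K$ rather than the raw index sets is essential.
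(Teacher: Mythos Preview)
Your overall strategy—reduce to the finiteness results of section~10 and convert them into a metric bound—matches the paper's. The gap you correctly flag as the ``main obstacle'' is real and is exactly where your argument breaks: the symbol $[T_1,g_1]^{-1}$ is not a group element (the parameter set $\I_K$ is a quotient of double cosets, not a group), and even after choosing representatives $(h_1,g_1)\in G(\Q_p)\times J_b(\Q_p)$, the element $(h_1^{-1},g_1^{-1})$ need not normalize $K$, so it does \emph{not} act by an isometry of $\ov{d}$ on $\B/K$. Equally, on $\M_K$ the Hecke correspondence attached to $h_1^{-1}$ is a correspondence, not an automorphism, so it does not carry cells to cells; the compatibility $\gamma(\D_{[T,g'],K})=\D_{[Th,gg'],K}$ stated before the proposition is only asserted for $\gamma$ normalizing $K$.

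The paper's route around this is not to manufacture a transitive isometric action on $\B/K$, but to lift to $\B$ itself: for representatives $x,y$ in a fundamental domain $V_K\subset\B$ one has $\ov{d}(xK,yK)\le d(x,y)$, and on $\B$ the full $G(\Q_p)\times J_b(\Q_p)$-action \emph{is} isometric for $d$. One then bounds $d$ factor by factor. For the $J_b$-factor, $d_2(g_1\Lam,g_2\Lam)=d_2(\Lam,g_1^{-1}g_2\Lam)$, and nonempty intersection of the corresponding irreducible components forces $\Lam$ and $g_1^{-1}g_2\Lam$ to share a common neighbour in the Bruhat--Tits building (Vollaard--Wedhorn, section~7), giving a uniform bound. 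For the $G$-factor one does not use any automorphism of $\M_K$ but rather the content of Remark~10.7 (refining Proposition~10.2): if $T_1.\D\cap T_2.\D\ne\emptyset$ then the Cartan coordinates of $T_1$ and $T_2$ lie in a bounded neighbourhood of one another in $X_\ast(A)_+\subset\Z^n_+$, and this translates directly into a bound on $d_1(x_1,x_2)$. The case $K=G(\Z_p)$ is done first and the general $K$ follows. So your input from Corollary~10.5 is the right one, but the passage from ``finitely many $[T,g']$ meet the base cell'' to ``uniform bound on $\ov{d}$ between any intersecting pair'' should go through $\ov{d}\le d$ together with the isometry of $d$ on $\B$, not through a group action on the quotient that does not exist.
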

\begin{proof}
We need to prove that, there exists a constant $c>0$, such that for any $[T,g]\in\I_K$, and any $[T',g']\in\{[T',g']\in\I_K|\D_{[T',g'],K}\bigcap\D_{[T,g],K}\neq\emptyset\}$, we have $\ov{d}([T,g],[T',g'])\leq c$.
This is in fact implicitly contained in last section. We just indicate some key points here. First, if $V_K\subset\B$ is any fixed fundamental domain for the action of $K$, then $\forall x,y\in V_K$ by definition we have $\ov{d}(xK,yK)\leq d(x,y)$. Next, in the proof of proposition 10.2 (and also in remark 10.7), we see the $T\in G(\Z_p)\setminus G(\Q_p)/G(\Z_p)$ such that $\D\bigcap T.\D\neq \emptyset$ corresponds to elements $(a_1,\dots,a_n)\in X_\ast(A)_+\subset\Z^n_+$ such that $a_i\leq C$ for $i=1,\dots,n$ and $C$ is a constant independent of $T$. Also by results of Vollaard-Wedhorn which we reviewed in section 7, the vertices $\Lam'=g'\Lam\in\B(J^{der}_b,\Q_p)^0$ such that $\D\bigcap g'\D\neq \emptyset$ satisfy $\Lam'\bigcap\Lam\neq \emptyset$, i.e. they share some common neighborhood, therefore there exists some constant $C'$ independent of $\Lam,\Lam'$ such that $d_2(\Lam,\Lam')\leq C'$ for the metric $d_2$ on $\B(J^{der}_b,\Q_p)$. Then one can easily deduce the proposition for the case $K=G(\Z_p)$, and the general case will follow as soon as this case holds.
\end{proof}

From now on we assume $\gamma=(h,g)\in G(\Q_p)\times J_b(\Q_p)$ such that both $h$ and $g$ are regular elliptic semi-simple, $hKh^{-1}=K$ and $v_p(deth)+v_p(detg)=0$. Recall the $\gamma$-fixed vertices $(\B^0)^\gamma$ is non empty (cf. \cite{SS}), we fix a choice of $\gamma$-fixed vertex $\wh{o}$, and let $o$ be its image in $\B^0/K$. We can take a choice $\wh{o}\in\varphi^{-1}(0)^0$ so that $o\in\varphi^{-1}(0)^0/K$ (see the above $\varphi$). Then $\gamma(o)=o$ for the induced action $\gamma: \B^0/K\ra\B^0/K$.
For any $\rho>0$, we consider the subset in $\I_K^0$ defined by intersection of $\I_K^0$ with the closed balls of radius $\rho$ with center $o$ in $\B^0/K$:
\[A_\rho=\{x\in\B^0/K|\;\ov{d}(o,x)\leq\rho\}\bigcap\I_K^0,\]which is a finite set, and $\gamma(A_\rho)=A_\rho$ since $\gamma(o)=o$, $\ov{d}$ is $\gamma$-isometric, and $\gamma(\I_K^0)=\I_K^0$. For any finite set $A\subset \I_K^0$, we consider
\[\begin{split}&V_A=\bigcup_{[T,g']\in A}\D_{[T,g'],K},\\
&U_A=\M_K^0-\bigcup_{[T,g']\notin A}\D_{[T,g'],K}.\end{split}\]

\begin{proposition} With notations as above,
$U_{A}$ is an open subspace of $\M_K^0$, $V_{A}$ is a compact analytic domain of $\M_K^0$, and we have $U_{A}\subset V_{A}$.
\end{proposition}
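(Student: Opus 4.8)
The plan is to unwind the definitions of $U_A$ and $V_A$ and to exploit the locally finite cell decomposition of $\M_K^0$ together with the finiteness of $A$. First I would treat $U_A$: by definition $U_A = \M_K^0 \setminus \bigcup_{[T,g']\notin A}\D_{[T,g'],K}$, so I must show that $\bigcup_{[T,g']\notin A}\D_{[T,g'],K}$ is closed in $\M_K^0$. Each $\D_{[T,g'],K}$ is a (compact, hence closed) analytic domain, but an arbitrary union of closed sets need not be closed; the point is that the covering $\M_K^0 = \bigcup_{[T,g']\in\I_K^0}\D_{[T,g'],K}$ is locally finite (Corollary 10.12 / Theorem 10.8, with $\D$ replaced by the compact $\D'$), so any union of cells is a locally finite union of closed sets and therefore closed. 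Thus $U_A$ is open.

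Next I would treat $V_A = \bigcup_{[T,g']\in A}\D_{[T,g'],K}$. Since $A \subset \I_K^0$ is finite and each $\D_{[T,g'],K}$ is a compact analytic domain, $V_A$ is a finite union of compact analytic domains, hence itself a compact analytic domain of $\M_K^0$. (Here I would recall that a finite union of analytic domains is again an analytic domain, and a finite union of compact subsets is compact.)

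For the inclusion $U_A \subset V_A$, I would argue pointwise. Let $x \in U_A$. Because the cells cover $\M_K^0$, there is some $[T,g']\in\I_K^0$ with $x \in \D_{[T,g'],K}$. If $[T,g']\notin A$, then $x \in \bigcup_{[T,g']\notin A}\D_{[T,g'],K}$, contradicting $x\in U_A = \M_K^0\setminus\bigcup_{[T,g']\notin A}\D_{[T,g'],K}$. Hence $[T,g']\in A$, so $x\in V_A$. This gives $U_A\subset V_A$.

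The only genuinely delicate point is the closedness claim used for $U_A$: one needs that locally around each point of $\M_K^0$ only finitely many cells meet, which is exactly the content of the locally finite cell decomposition established in the previous section (and, via Proposition 11.1, controlled metrically by the building $\B$). Everything else is a formal manipulation of the definitions. I would therefore structure the write-up as: (1) invoke local finiteness to conclude $\bigcup_{[T,g']\notin A}\D_{[T,g'],K}$ is closed, hence $U_A$ open; (2) note $V_A$ is a finite union of compact analytic domains, hence a compact analytic domain; (3) the pointwise argument above for $U_A\subset V_A$.
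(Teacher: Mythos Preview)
Your proposal is correct and follows essentially the same approach as the paper: local finiteness makes the complement of $U_A$ closed, finiteness of $A$ makes $V_A$ a compact analytic domain, and the inclusion $U_A\subset V_A$ is deduced from the cell covering of $\M_K^0$ (the paper phrases this last step as $\M_K^0=(\M_K^0-U_A)\cup V_A$, which is exactly your pointwise argument).
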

\begin{proof}
By our assumption in this section, $\D_K$ is compact, thus it is clear that $V_{A}$ as a finite union of compact analytic domains, is still a compact analytic domain. Since the union of closed subsets $\bigcup_{[T,g']\notin A}\D_{[T,g],K}$ is locally finite, one can check easily it is closed in $\M_K^0$. Thus $U_{A}$ is open. Finally the inclusion $U_{A}\subset V_{A}$ holds since $\M_K^0=(\M_K^0-U_{A})\bigcup V_{A}$ by the cell decomposition of $\M_K^0$.
\end{proof}

For any $\rho>0$, denote \[U_\rho=U_{A_\rho},\; V_\rho=V_{A_\rho}.\] Then \[\gamma(U_\rho)=U_\rho,\gamma(V_\rho)=V_\rho\]since $\gamma(A_\rho)=A_\rho$. By the above proposition, each $U_\rho$ is a locally compact open subspace of $\M_K^0$. We have
\[H^j_c(\M_K^0\times \Cm_p, \ov{\Q}_l)=\varinjlim_{\rho}H^j_c(U_\rho\times\Cm_p,\ov{\Q}_l),\]with
\[dim_{\ov{\Q}_l}H_c^j(U_\rho\times\Cm_p,\ov{\Q}_l)<\infty.\] We have an induced action of $\gamma$ on cohomology of $U_\rho$
\[\gamma: H_c^j(U_\rho\times\Cm_p,\ov{\Q}_l)\ra H_c^j(U_\rho\times\Cm_p,\ov{\Q}_l).\]
Denote \[H_c^\ast(U_\rho\times\Cm_p,\ov{\Q}_l)=\sum_{j\geq0}(-1)^jH_c^j(U_\rho\times\Cm_p,\ov{\Q}_l),\]as an element in some suitable Grothendieck group. We consider the trace of $\gamma$ on this Euler-Poincar\'e characteristic
\[Tr(\gamma|H_c^\ast(U_\rho\times\Cm_p,\ov{\Q}_l))=\sum_{j\geq0}(-1)^jTr(\gamma|H_c^j(U_\rho\times\Cm_p,\ov{\Q}_l)).\]
We consider also
\[H_c^\ast(\M_K^0\times\Cm_p,\ov{\Q}_l)=\sum_{j\geq0}(-1)^jH_c^j(\M_K^0\times\Cm_p,\ov{\Q}_l),\]
as an element in the Grothendieck group of smooth representations of $(G(\Q_p)\times J_b(\Q_p))^1$.
Let $\textrm{Fix}(\gamma|\M_K^0\times\Cm_p)$ be the set of fixed points of $\gamma$ on $\M_K^0\times\Cm_p$, then each fixed point is simple since the $p$-adic period mapping $\M_K^0\ra\Fm^a$ is \'etale, and the fixed points of $g$ on $\Fm^a$ are all simple.

We will use our result of cell decomposition of $\M_K^0$, to verify that the action of $\gamma$ satisfies the conditions of Mieda's theorem 3.13 in \cite{Mi2}, thus deduce a Lefschetz trace formula in our case. In fact we will use a Berkovich version of loc. cit. Recall that, if $k$ is a complete non-archimedean field and $k^0$ is its ring of integers, then the category of Hausdorff strictly $k$-analytic spaces is equivalent to the category of adic spaces which are taut and locally of finite type over $spa(k,k^0)$, see \cite{Hu} chapter 8. If $X$ is a Hausdorff strictly $k$-analytic space, we denote by $X^{ad}$ the associated adic space, which is taut and locally of finite type over $spa(k,k^0)$.

\begin{theorem}
For the fixed $\gamma=(h,g)\in G(\Q_p)\times J_b(\Q_p)$ such that both $h$ and $g$ are regular elliptic, $v_p(deth)+v_p(detg)=0$, there exist a sufficient small open compact subgroup $K'\subset G(\Z_p)$ and a sufficient large number $\rho_0>>0$, such that for all open compact subgroup $K\subset K'$ which is normalized by $h$ and all $\rho\geq \rho_0$,
we have the Lefschetz trace formula
\[Tr(\gamma|H_c^\ast(U_\rho\times\Cm_p,\ov{\Q}_l))=\#\textrm{Fix}(\gamma|\M_K^0\times\Cm_p),\]which is well defined and finite. Since the right hand side is independent of $\rho$, we can define
\[Tr(\gamma|H_c^\ast(\M_K^0\times\Cm_p,\ov{\Q}_l)):=Tr(\gamma|H_c^\ast(U_\rho\times\Cm_p,\ov{\Q}_l))\]for $\rho>>0$, and thus \[Tr(\gamma|H_c^\ast(\M_K^0\times\Cm_p,\ov{\Q}_l))=\#\textrm{Fix}(\gamma|\M_K^0\times\Cm_p).\]
\end{theorem}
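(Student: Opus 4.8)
The plan is to verify the hypotheses of Mieda's theorem 3.13 in \cite{Mi2} (in its Berkovich incarnation, using the equivalence between taut adic spaces locally of finite type over $\mathrm{spa}(\Cm_p,\Cm_p^0)$ and Hausdorff strictly $\Cm_p$-analytic spaces recalled above) for the pair $(\M_K^0, \gamma)$, exploiting the locally finite cell decomposition $\M_K^0=\bigcup_{[T,g']\in\I_K^0}\D_{[T,g'],K}$ together with the compactness of each cell $\D_K$. The two things to supply are: (i) a partial compactification (or exhaustion by $\gamma$-stable locally compact opens $U_\rho$) on which $\gamma$ acts, such that $\mathrm{Fix}(\gamma)$ stays away from the "boundary", so that the local terms of the Lefschetz–Verdier formula are computed only at the genuine fixed points; and (ii) the fact that every fixed point of $\gamma$ on $\M_K^0\times\Cm_p$ is isolated and has local index $1$. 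Point (ii) follows because $g$ is regular elliptic semisimple, so $\mathrm{Fix}(g\mid\Fm^a\times\Cm_p)$ is finite and consists of simple fixed points (acting on the tangent space of the projective space $\mathbf{P}^{n-1,an}$ without eigenvalue $1$, by regular ellipticity), and $\pi:\M_K^0\to\Fm^a$ is étale and $G(\Q_p)$-invariant for the Hecke action; compatibility of $\gamma=(h,g)$ with $\pi$ plus $hKh^{-1}=K$ then shows each fixed point of $\gamma$ on $\M_K^0$ lies over one of finitely many fixed points of $g$ downstairs and is itself simple, whence $\#\mathrm{Fix}(\gamma\mid\M_K^0\times\Cm_p)$ is finite and each contributes $+1$.

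The heart of the argument is the "no fixed points near the boundary" input, and here the cell decomposition does the work. First I would fix a $\gamma$-stable vertex $\wh{o}\in\varphi^{-1}(0)^0$ (nonempty since $h,g$ are regular elliptic, cf. \cite{SS}), with image $o\in\B^0/K$, and form the $\gamma$-stable finite subsets $A_\rho=\{x\in\B^0/K\mid\ov{d}(o,x)\le\rho\}\cap\I_K^0$ and the corresponding $\gamma$-stable locally compact open subspaces $U_\rho=U_{A_\rho}\subset V_{A_\rho}$ as in Proposition 11.3. The key geometric fact is Proposition 11.1: there is a constant $c>0$, depending only on the cell decomposition, such that $\D_{[T_1,g_1],K}\cap\D_{[T_2,g_2],K}=\emptyset$once $\ov d([T_1,g_1],[T_2,g_2])>c$; hence a point of $\M_K^0$ lies in at most boundedly many cells, all of whose parameters lie within $\ov d$-distance $c$ of one another. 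Now if $x\in\mathrm{Fix}(\gamma\mid\M_K^0\times\Cm_p)$ and $x\in\D_{[T,g'],K}$, then $x=\gamma(x)\in\gamma(\D_{[T,g'],K})=\D_{[Th,gg'],K}$, so $\D_{[T,g'],K}\cap\D_{[Th,gg'],K}\ne\emptyset$, forcing $\ov d([T,g'],\gamma[T,g'])\le c$. Since $\gamma$ acts on $\I_K^0\subset\B^0/K$ by isometries fixing $o$, an elementary argument in the building (using that the displacement function of an isometry of a CAT(0) space attains its minimum on the fixed-point set, together with $\gamma$-invariance of distances to $o$) shows the set of $[T,g']$ with $\ov d([T,g'],\gamma[T,g'])\le c$ lies within a bounded $\ov d$-neighborhood of $(\B^0/K)^\gamma$, hence within $\ov d$-distance $\rho_0$ of $o$ for $\rho_0\gg0$; i.e. $\mathrm{Fix}(\gamma\mid\M_K^0)\subset V_{A_{\rho_0}}$, and (shrinking/adjusting) $\mathrm{Fix}(\gamma)\subset U_\rho$ with its closure in $U_\rho$ for all $\rho\ge\rho_0$. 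This is exactly the separatedness-of-fixed-locus-from-boundary condition needed to apply Mieda's theorem to $(U_\rho,\gamma)$.

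With these two inputs in place, Mieda's theorem 3.13 applied to the taut, locally-finite-type adic space $U_\rho^{ad}$ and the automorphism $\gamma$ (for $K$ small enough that $\gamma$ acts "without higher fixed-point contributions", which is where the sufficiently small $K'$ and the regular-elliptic hypothesis enter, guaranteeing the local terms are the naive ones) yields
\[
Tr\bigl(\gamma\mid H_c^\ast(U_\rho\times\Cm_p,\ov\Q_l)\bigr)=\#\mathrm{Fix}\bigl(\gamma\mid U_\rho\times\Cm_p\bigr)=\#\mathrm{Fix}\bigl(\gamma\mid\M_K^0\times\Cm_p\bigr),
\]
the last equality because all fixed points already lie in $U_\rho$ for $\rho\ge\rho_0$; finiteness of both sides is then automatic. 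Passing to the colimit $H_c^j(\M_K^0\times\Cm_p,\ov\Q_l)=\varinjlim_\rho H_c^j(U_\rho\times\Cm_p,\ov\Q_l)$ and noting the right-hand side is independent of $\rho$, one defines $Tr(\gamma\mid H_c^\ast(\M_K^0\times\Cm_p,\ov\Q_l))$ by this stable value and obtains the displayed formula. The main obstacle I anticipate is not the cohomological bookkeeping but the building-theoretic step: turning the metric bound "$\ov d([T,g'],\gamma[T,g'])\le c$" from Proposition 11.1 into a genuine bound "$\ov d(o,[T,g'])\le\rho_0$", i.e. controlling the displacement of $\gamma$ on $\I_K^0$ from below away from its fixed set — this requires the regular ellipticity of $h$ and $g$ (so that $\gamma$ has a bounded, in fact minimal-displacement-zero, fixed locus and positive displacement growth away from it) and a careful use of the product structure $\B\simeq(\B(G^{ad},\Q_p)\times\R\times\R\times\B(J^{ad}_b,\Q_p))/\!\sim$ to handle the two Euclidean factors correctly.
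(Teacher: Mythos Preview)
Your overall strategy matches the paper's: use the locally finite cell decomposition of $\M_K^0$, Proposition~11.1 (cells at $\ov d$-distance $>c$ are disjoint), and the growth of the displacement $\ov d([T,g'],\gamma[T,g'])$ as $[T,g']$ leaves a bounded region, to confine $\mathrm{Fix}(\gamma)$ and then invoke Mieda's theorem. You also correctly anticipate that the building-theoretic displacement bound is the crux. Two points, however, deserve attention.

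First, the paper does \emph{not} apply Mieda's theorem~3.13 to $U_\rho^{ad}$ directly. Mieda's hypotheses are phrased for a quasi-compact open and its closure as a proper pseudo-adic space; accordingly the paper applies the theorem to $V_\rho^{ad}$ and $\ov{V_\rho^{ad}}$, after checking that for $[T,g']\in A_\rho\setminus A_{\rho-c}$ one has $\gamma(W_{T,g'})\cap W_{T,g'}=\emptyset$ for the boundary pieces $W_{T,g'}=\ov{\D_{[T,g'],K}^{ad}}\setminus\D_{[T,g'],K}^{ad}$. This yields $Tr(\gamma\mid H_c^\ast(V_\rho^{ad}))=\#\mathrm{Fix}(\gamma\mid V_\rho)$. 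The passage to $U_\rho$ is a separate additivity step: writing $V_\rho^{ad}\setminus U_\rho^{ad}=\bigcup_{[T,g']\in A_\rho\setminus A_{\rho-c}}F'_{[T,g']}$ with the $F'_{[T,g']}$ permuted freely by $\gamma$, one gets $Tr(\gamma\mid H_c^\ast(V_\rho^{ad}\setminus U_\rho^{ad}))=0$ orbit by orbit, and then $Tr(\gamma\mid H_c^\ast(U_\rho^{ad}))=Tr(\gamma\mid H_c^\ast(V_\rho^{ad}))$. Your direct application of Mieda to $U_\rho^{ad}$ short-circuits this; unless you have a variant of Mieda's theorem tailored to partially proper (non-quasi-compact) spaces, this is a gap.

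Second, your CAT(0) displacement argument is stated in $\B/K$, but $\B/K$ is not a priori CAT(0). The paper instead lifts to $\B'=\B(G^{ad},\Q_p)\times\B(J_b^{ad},\Q_p)$, which is CAT(0), and invokes the explicit inequality $d'(x',\sigma x')\geq 2\,d'(x',(\B')^\sigma)\sin(\theta/2)$ from Rousseau (\cite{R}, Proposition~2.3) for $\sigma=(h',g)$ with $h'\in hK$; here one uses that for $K$ sufficiently small $(\B')^{(h',g)}=(\B')^{(h,g)}$ is compact and independent of $h'\in hK$, so the bound is uniform over the $K$-orbit and descends to $\ov d$ on $\B/K$. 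This is the precise mechanism behind your ``positive displacement growth away from the fixed set''; in general CAT(0) spaces an elliptic isometry need not have displacement tending to infinity, so the Rousseau-type angle estimate (or an equivalent input from the simplicial structure) is genuinely needed and is where ``$K$ sufficiently small'' enters.
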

\begin{proof}
Since $h$ is elliptic, for sufficiently small open compact subgroup $K\subset G(\Z_p)$ such that $hKh^{-1}=K$, we have the following claim
 \[\ov{d}(x, \gamma x)\ra \infty,\;\textrm{when}\;x\in\I_K^0, \ov{d}(o,x)\ra \infty.\]

In fact, since $o,x\in \varphi^{-1}(0)^0/K$, write $o=[o_1K,-s,s,o_2], x=[x_1K,-t,t,x_2]$ with $o_1,x_1\in \B(G^{ad},\Q_p), o_2,x_2\in \B(J^{ad}_b,\Q_p),s,t\in \{0\} (n\; odd)\; \textrm{or}\;\{0,1\} (n\;even)$, then \[\gamma(x)=[x_1hK,\frac{2}{n}v_p(deth)-t,\frac{2}{n}v_p(detg)+t,gx_2]=[x_1hK,-t',t',gx_2]\] with $t'\in \{0\} (n\; odd)\; \textrm{or}\;\{0,1\} (n\;even)$,
\[\begin{split}\ov{d}(x,\gamma x)&=\inf_{k\in K}\sqrt{d_1(x_1,x_1hk)^2+d_2(x_2,gx_2)^2+2(t-t')^2},\\
\ov{d}(o,x)&=\inf_{k\in K}\sqrt{d_1(x_1k,o_1)^2+d_2(x_2,o_2)^2+2(t-s)^2}
.\end{split}\]
To prove the above statement, we first work with $\B':=\B(G^{ad},\Q_p)\times \B(J^{ad}_b,\Q_p)$. Denote the metric on $\B'$ by $d'$. Since $h, g$ are elliptic, the fixed points set $(\B')^{\gamma}$ is nonempty and compact. Moreover, for $K$ sufficiently small, $(\B')^{(h,g)}=(\B')^{(h',g)}$ for any $h'\in hK$ (cf. the proof of lemma 12 in \cite{SS}). For $o'=(o_1,o_2)\in (\B')^{(h,g)}$ fixed, a simple triangle inequality shows that $d'(x',(\B')^{(h,g)})\ra\infty$ when $x'=(x_1,x_2)\in \B', d'(o',x')=\sqrt{d_1(x_1,o_1)^2+d_2(x_2,o_2)^2}\ra\infty$, since $(\B')^{(h,g)}$ is compact. On the other hand, for any automorphism $\sigma$ of $\B'$ with $(\B')^\sigma\neq\emptyset$, there exists a constant $0<\theta\leq \pi$ which just depends on $\B'$ and $\sigma$, such that \[d'(x',\sigma x')\geq 2d'(x',(\B')^\sigma)\sin(\frac{\theta}{2}),\]
see \cite{R} proposition 2.3. In particular, $d'(x', \gamma'x')\ra\infty$ when $d'(o',x')\ra \infty$ for any $h'\in hK, \gamma'=(h',g)$. As $K$ is compact this deduces the above statement.

We have
 \[\begin{split}\M_K^0-U_\rho&=\bigcup_{[T,g']\in \I_K^0-A_\rho}\D_{[T,g'],K}\\
 V_\rho-U_\rho&=\bigcup_{[T,g']\in A_\rho-A_{\rho-c}}F_{T,g'},\end{split}\]  where for any $[T,g']\in A_\rho$,
 \[F_{T,g'}=\D_{[T,g'],K}\bigcap(\M_K^0-U_\rho),\]which is nonempty only if $[T,g']\in A_\rho-A_{\rho-c}$ by proposition 11.1,
 in which case
 $F_{T,g'}$ is a compact analytic domain in $\D_{[T,g'],K}\subset V_\rho$. By the above claim, there exists a sufficiently large $\rho_0>>0$, such that for any $\rho\geq \rho_0,[T,g']\in \I_K^0-A_{\rho-c}$, we have $\ov{d}([T,g'],\gamma([T,g']))> c$, and thus by proposition 11.1
 \[\D_{[T,g'],K}\bigcap \gamma(\D_{[T,g'],K})=\emptyset,\;
 F_{T,g'}\bigcap \gamma(F_{T,g'})=\emptyset \;(\textrm{for}\;[T,g']\in A_\rho-A_{\rho-c}).\]

 To apply Mieda's theorem, we pass to adic spaces. We have the locally finite cell decomposition
 \[(\M_K^0)^{ad}=\bigcup_{[T,g']\in\I_K^0}\D_{[T,g'],K}^{ad},\]
 where each cell $\D_{[T,g'],K}^{ad}$ is a quasi-compact open subspace of $(\M_K^0)^{ad}$, $\D^{ad}_{[T_1,g_1],K}\bigcap\D^{ad}_{[T_2,g_2],K}\neq \emptyset \Leftrightarrow \D_{[T_1,g_1],K}\bigcap\D_{[T_2,g_2],K}\neq \emptyset$, and the action of $\gamma$ on $(\M_K^0)^{ad}$ induces the action of the cells in the same way as the case of Berkovich spaces. By \cite{Hu1} 8.2, $U_\rho^{ad}$ is an open subspace of $(\M_K^0)^{ad}$, which is separated, smooth, and partially proper. On the other hand $V_\rho^{ad}=\bigcup_{[T,g']\in A_\rho}\D^{ad}_{[T,g'],K}$ is a quasi-compact open subspace. Consider the closure $\ov{V_\rho^{ad}}=\bigcup_{[T,g']\in A_\rho}\ov{\D^{ad}_{[T,g'],K}}$ of $V_\rho^{ad}$ in $(\M_K^0)^{ad}$, which is a proper pseudo-adic space and contained in the quasi-compact space $V_\rho^{ad}$. We know that $\ov{V_\rho^{ad}}$ (resp. $\ov{\D^{ad}_{[T,g'],K}}$) is the set of all the specializations of the points in $V_\rho^{ad}$ (resp. $\D^{ad}_{[T,g'],K}$). Moreover, $\gamma$ induces automorphisms $\gamma: \ov{V_\rho^{ad}}\ra\ov{V_\rho^{ad}}, V_\rho^{ad}\ra V_\rho^{ad}$, $U_\rho^{ad}\ra U_\rho^{ad}$. Since $V_{\rho-c}^{ad}\subset U_\rho^{ad}\subset V_\rho^{ad}$, we have $\ov{V_\rho^{ad}}-V_\rho^{ad}=\bigcup_{[T,g']\in A_\rho-A_{\rho-c}}(\ov{\D^{ad}_{[T,g'],K}}-\D^{ad}_{[T,g'],K})$. Note \[ \D^{ad}_{[T_1,g_1],K}\bigcap \D^{ad}_{[T_2,g_2],K}\neq\emptyset
 \Leftrightarrow \ov{\D^{ad}_{[T_1,g_1],K}}\bigcap \ov{\D^{ad}_{[T_2,g_2],K}}\neq\emptyset.\](The direction $\Rightarrow$ is trivial; for the direction $\Leftarrow$, just note that the set of generalizations of a point is a totally ordered chain.)
 For $[T,g']\in A_\rho-A_{\rho-c}$, let $W_{T,g'}=\ov{\D^{ad}_{[T,g'],K}}-\D^{ad}_{[T,g'],K}$.
 By the paragraph above, for $\rho>>0$ we have $\gamma(W_{T,g'})\bigcap W_{T,g'}=\emptyset$. One sees the conditions of theorem 3.13 in \cite{Mi2} for $V_\rho^{ad}$ and its compactification $\ov{V_\rho^{ad}}$ hold. For the convenience of the audience, we recall these conditions in our setting: for all $x\in \ov{V_\rho^{ad}}-V_\rho^{ad}$, there exist closed constructible subsets $W_1, W_2$ of $\ov{V_\rho^{ad}}$ such that $x\in W_1, \gamma(x)\in W_2$ and $W_1\bigcap W_2=\emptyset$. Hence we get
  \[Tr(\gamma|H_c^\ast(V^{ad}_\rho\times\Cm_p,\ov{\Q}_l))=\#\textrm{Fix}(\gamma|V_\rho^{ad}\times\Cm_p)
  =\#\textrm{Fix}(\gamma|V_\rho\times\Cm_p).\]By \cite{Hu1} proposition 2.6 (i) and lemma 3.4, we have \[Tr(\gamma|H_c^\ast(V_\rho^{ad}\times\Cm_p,\ov{\Q}_l))=Tr(\gamma|H_c^\ast(U_\rho^{ad}\times\Cm_p,\ov{\Q}_l))+
  Tr(\gamma|H_c^\ast((V_\rho^{ad}-U_\rho^{ad})\times\Cm_p,\ov{\Q}_l)).\] Since $V^{ad}_\rho-U_\rho^{ad}=\bigcup_{[T,g']\in {A_\rho-A_{\rho-c}}}F_{[T,g']}'$, where $F_{[T,g']}'\subset F_{[T,g']}^{ad}$ is some locally closed subset, by the paragraph above, $F_{[T,g']}'\bigcap\gamma(F_{[T,g']}')=\emptyset$. As $A_\rho-A_{\rho-c}$ is a finite set, there are only finitely many orbits for the action of $\gamma$. For the union of the subspaces $F_{[T,g']}'$ over an orbit of $\gamma$, the trace of $\gamma$ on its cohomology is 0. Then one can repeat by the above argument to get that $Tr(\gamma|H_c^\ast((V_\rho^{ad}-U_\rho^{ad})\times\Cm_p,\ov{\Q}_l))=0$.
  By the comparison theorem on compactly support cohomology of Berkovich spaces and adic spaces (proposition 8.3.6 of \cite{Hu1}), we can conclude
 \[Tr(\gamma|H_c^\ast(U_\rho\times\Cm_p,\ov{\Q}_l))=Tr(\gamma|H_c^\ast(U^{ad}_\rho\times\Cm_p,\ov{\Q}_l))
 =Tr(\gamma|H_c^\ast(V^{ad}_\rho\times\Cm_p,\ov{\Q}_l))=
 \#\textrm{Fix}(\gamma|V_\rho\times\Cm_p).\]
As the reason above, for $\rho>>0$, $\gamma$ permutes the cells $\D_{[T,g'],K}$ for $[T,g']\notin A_\rho$, we have \[\textrm{Fix}(\gamma|V_\rho\times\Cm_p)=\textrm{Fix}(\gamma|\M_K^0\times\Cm_p).\]
 The theorem is thus proved.
 \end{proof}

\begin{remark}In fact we have
\[H_c^i(\M_K^0\times\Cm_p,\ov{\Q}_l)\simeq H_c^i((\M_K^0)^{ad}\times\Cm_p,\ov{\Q}_l)=\varinjlim_{\rho}H_c^i(V_\rho^{ad}\times\Cm_p,\ov{\Q}_l),\;\forall\,i\geq0,\]
here the second equality comes from proposition 2.1 (iv) in \cite{Hu1}.
We can work totally in the framework of adic spaces when considering cohomology. But here we have chosen to transfer back the result to Berkovich spaces, so we insist on working with the open subspaces $U_\rho$.
\end{remark}

We have a nice formula for the number of fixed points for the quotient space $\M_K/p^\Z$. Note if $g\in J_b(\Q_p)$ is a regular elliptic semi-simple element, for any $x\in \textrm{Fix}(g|\Fm^a(\Cm_p))$, there is an element $h_{g,x}\in G(\Q_p)$ which is conjugate to $g$ over $\ov{\Q}_p$ defined by the comparison isomorphism
\[V_p(H_y)\otimes_{\Q_p}B_{dR}\st{\sim}{\longrightarrow}V_L\otimes_LB_{dR},\]
where $y\in\pi^{-1}(x)$ is any point in the fiber of the $p$-adic period mapping $\pi:\M\ra\Fm^a$. $h_{g,x}$ is well defined, and it does not depend on the choice of $y\in\pi^{-1}(x)$. We remark that for different $x_1,x_2\in \textrm{Fix}(g|\Fm^a(\Cm_p))$, $h_{g,x_1}$ and $h_{g,x_2}$ may be in the same conjugacy class in $G(\Q_p)$.
 \begin{corollary}
Let the notations be as in the above theorem. If $n$ is even we assume that $\frac{2}{n}(v_p(deth)+v_p(detg))$ is even. Fix compatible Haar measures on $G(\Q_p)$ and the centralizer of $h_{g,x}$, $G_{h_{g,x}}:=\{h'\in G(\Q_p)|h'h_{g,x}h'^{-1}=h_{g,x}\}$. Denote the characteristic function of $h^{-1}K$ by $1_{h^{-1}K}$ and the volume of $K$ under the fixed Haar measure by $Vol(K)$. Then we have the following formula
\[Tr(\gamma|H_c^\ast((\M_K/p^{\Z})\times\Cm_p,\ov{\Q}_l))=\sum_{x\in \textrm{Fix}(g|\Fm^a(\Cm_p))}Vol(G_{h_{g,x}}/p^{\Z})O_{h_{g,x}}(\frac{1_{h^{-1}K}}{Vol(K)}),\]
where $Vol(G_{h_{g,x}}/p^\Z)$ is the volume of $G_{h_{g,x}}/p^\Z$ by the induced Haar measure on $G(\Q_p)/p^\Z$,
\[O_{h_{g,x}}(\frac{1_{h^{-1}K}}{Vol(K)})=\int_{G(\Q_p)/G_{h_{g,x}}}\frac{1_{h^{-1}K}}{Vol(K)}(z^{-1}h_{g,x}z)dz\] is the orbit integral of $\frac{1_{h^{-1}K}}{Vol(K)}$ over the conjugate class of $h_{g,x}$.
\end{corollary}
\begin{proof}
We just need count the number of the fixed geometric points set $\textrm{Fix}(\gamma|(\M_K/p^\Z)\times\Cm_p)$. This can be done by considering the map  \[\M_K/p^\Z\ra \Fm^a\subset\mathbf{P}^{n-1,an}\]induced the $p$-adic period mapping,
as in theorem 2.6.8 of \cite{St} and \cite{Mi4}. In particular we have
\[\#\textrm{Fix}(\gamma|(\M_K/p^\Z)\times\Cm_p)=\sum_{x\in \textrm{Fix}(g|\Fm^a(\Cm_p))}\#\{h'\in G(\Q_p)/p^\Z K|h^{'-1}h_{g,x}h'=h^{-1}\}.\]
One can then write this number easily in the form as in the corollary.
\end{proof}

For a supercuspidal representation $\pi$ of $G(\Q_p)$, we consider
\[H(\pi)=\sum_{j\geq 0}(-1)^j\textrm{Hom}_{G(\Q_p)}(\varinjlim_{K}H_c^j(\M_K\times\Cm_p,\ov{\Q}_l),\pi).\]
Assume that $\textrm{Hom}_{G(\Q_p)}(\varinjlim_{K}H_c^j(\M_K\times\Cm_p,\ov{\Q}_l),\pi)$ is of finite length for each $j\geq 0$, which should be always the case, then $H(\pi)$ is a well defined element in $\textrm{Groth}_{\ov{\Q}_l}(J_b(\Q_p))$.
\begin{corollary}
 Let $\pi$ be a supercuspidal representation of $G(\Q_p)$, $g\in J_b(\Q_p)$ be a regular elliptic semi-simple element. Assume that $\pi$ is of the form $\pi=c-\textrm{Ind}_{K_\pi}^{G(\Q_p)}\lambda$, for some open compact modulo center subgroup $K_{\pi}\subset G(\Q_p)$ and some finite dimensional representation $\lambda$ of $K_\pi$.
  Then we have
\[tr_{H(\pi)}(g)= \sum_{x\in \textrm{Fix}(g|\Fm^a(\Cm_p))} tr_{\pi}(h_{g,x}).\]
\end{corollary}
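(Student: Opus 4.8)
The plan is to deduce this last corollary from the previous two corollaries (Theorem 11.4 and its Corollary giving the fixed-point count for $\M_K/p^\Z$) by a standard Jacquet--Langlands--type trace computation, following the strategy of \cite{St} and \cite{Mi4}. First I would unwind the definition of $tr_{H(\pi)}(g)$. Since $\pi$ is supercuspidal, hence (up to twist) compact modulo center, one has $\textrm{Hom}_{G(\Q_p)}(\varinjlim_K H_c^j(\M_K\times\Cm_p,\ov{\Q}_l),\pi)$ computed by a Poincar\'e series / pseudo-coefficient argument: writing $\pi=c\textrm{-Ind}_{K_\pi}^{G(\Q_p)}\lambda$, one chooses a matrix coefficient $f_\pi$ of $\pi$ (supported on $K_\pi$, built from $\lambda$) which acts on the cohomology as a projector onto the $\pi$-isotypic part. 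The key identity is then
\[
tr_{H(\pi)}(g)=\lim_{K\to\{1\}}\frac{1}{\dim\lambda}\,\textrm{tr}\bigl((g,e_{f_\pi,K})\mid H_c^\ast(\M_K\times\Cm_p,\ov{\Q}_l)\bigr),
\]
where $e_{f_\pi,K}$ is the Hecke operator attached to $f_\pi$ at level $K$; using the induction description $H_c^\ast(\M_K)=\textrm{c-Ind}_{J_b^1}^{J_b(\Q_p)}H_c^\ast(\M_K^0)$ and restricting to $\M_K/p^\Z$, this reduces everything to traces of elements $\gamma=(h,g)$ with $h$ ranging over the support of $f_\pi$ and $v_p(\det h)+v_p(\det g)=0$, exactly the situation of Theorem 11.4 and the preceding corollary.

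The second step is to insert the fixed-point formula. For each relevant $h$ in the support of $f_\pi$ (which I may assume regular elliptic after a density/limit argument over small $K$, or handle by the Weyl integration formula on $K_\pi/p^\Z$ against $f_\pi$), the Corollary on $\M_K/p^\Z$ gives
\[
Tr\bigl((h,g)\mid H_c^\ast((\M_K/p^\Z)\times\Cm_p,\ov{\Q}_l)\bigr)=\sum_{x\in\textrm{Fix}(g\mid\Fm^a(\Cm_p))}Vol(G_{h_{g,x}}/p^\Z)\,O_{h_{g,x}}\Bigl(\tfrac{1_{h^{-1}K}}{Vol(K)}\Bigr).
\]
Now I would integrate this against the matrix coefficient $f_\pi(h)$ over $h\in G(\Q_p)/p^\Z$. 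On the left this yields $\dim(\lambda)\cdot tr_{H(\pi)}(g)$ by the projector property. On the right, for each fixed $x$ the sum over $h$ of $f_\pi(h)\,O_{h_{g,x}}(1_{h^{-1}K}/Vol(K))$ converges, as $K\to\{1\}$, to the orbit integral $O_{h_{g,x}}(f_\pi)$ of $f_\pi$ over the conjugacy class of $h_{g,x}$; this is the standard fact that the functions $1_{h^{-1}K}/Vol(K)$ form an approximate identity. Finally, the Weyl integration formula together with the defining property of a matrix coefficient of a supercuspidal representation gives $Vol(G_{h_{g,x}}/p^\Z)\,O_{h_{g,x}}(f_\pi)=tr_\pi(h_{g,x})$ (this is precisely the formula for the character of $c\textrm{-Ind}_{K_\pi}^{G(\Q_p)}\lambda$ at an elliptic regular element in terms of a single orbit integral, compatible with Harish-Chandra's integral formula / the Frobenius formula for induced characters). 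Summing over $x\in\textrm{Fix}(g\mid\Fm^a(\Cm_p))$ — a finite set since the fixed points of a regular elliptic $g$ on $\mathbf{P}^{n-1,an}$ are isolated — yields $tr_{H(\pi)}(g)=\sum_{x}tr_\pi(h_{g,x})$.

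The main obstacle I anticipate is controlling the interchange of the limit $K\to\{1\}$ with the (infinite, a priori) sums occurring in the cohomology: one must know that $\textrm{Hom}_{G(\Q_p)}(\varinjlim_K H_c^j(\M_K),\pi)$ is finite length (this is assumed in the statement) and that the Lefschetz numbers stabilize. Here Theorem 11.4 is exactly what is needed: it shows $Tr(\gamma\mid H_c^\ast(\M_K^0\times\Cm_p))$ is finite and computed by $\#\textrm{Fix}(\gamma\mid\M_K^0\times\Cm_p)$ for $K$ small and $\rho\gg0$, so the passage to the limit is legitimate and the limit is the orbit integral of $f_\pi$ rather than an ill-defined expression. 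A secondary technical point is the reduction to the case where $h$ (in the support of $f_\pi$) can be taken regular elliptic: since $g$ is regular elliptic, $h_{g,x}$ is regular elliptic, and only such $h$ contribute to the orbit integrals $O_{h_{g,x}}(f_\pi)$, so one may replace $f_\pi$ by its restriction to the regular elliptic set without changing the relevant traces; this mirrors the argument in \cite{St}, Section 2.6 and \cite{Mi4}. Everything else is the standard harmonic-analysis bookkeeping (choice of compatible Haar measures, Weyl integration, approximate identities) and should be routine once the geometric input from Theorem 11.4 and its corollary is in place.
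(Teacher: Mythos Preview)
Your proposal is correct and follows the same overall strategy as the paper (both invoke \cite{St} and \cite{Mi4} and feed in Theorem 11.3 and Corollary 11.5), but the execution differs in one meaningful way. You phrase the computation analytically: integrate the Lefschetz numbers against a matrix coefficient $f_\pi$, pass to the limit $K\to\{1\}$ via approximate identities, and recover $tr_\pi(h_{g,x})$ from orbital integrals of $f_\pi$. The paper instead works at a single fixed level: it chooses an open normal $K_0\vartriangleleft K_\pi$ with $\lambda$ trivial on $p^\Z K_0$, uses Frobenius reciprocity to rewrite $tr_{H(\pi)}(g)=tr(g\mid\textrm{Hom}_{K_\pi}(H_c^\ast(\M_{K_0}/p^\Z),\lambda))$, and then invokes Kazhdan's formula \cite{K} to express this as a \emph{finite} sum $\frac{1}{\#\Xi_\pi}\sum_{h\in\Xi_\pi^e}tr((h,g)\mid H_c^\ast)\,tr(h^{-1}\mid\lambda)$ over the finite set $\Xi_\pi=K_\pi/p^\Z K_0$, restricted to classes with an elliptic representative. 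This sidesteps entirely your limiting argument and your ``density/limit'' reduction to regular elliptic $h$: the sum is finite from the outset, and one simply discards the non-elliptic classes (which contribute zero by \cite{K}). Your approach is more in the spirit of harmonic analysis and would also work, but the paper's route via Kazhdan is cleaner precisely because it avoids the interchange-of-limits issue you flagged as the main obstacle.
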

\begin{remark}Conjecturally, all the supercuspidal representations of $G(\Q_p)$ are of the above form $c-\textrm{Ind}_{K_\pi}^{G(\Q_p)}\lambda$, see for example \cite{Ste}.
\end{remark}
\begin{proof}
One computes exactly as in \cite{Mi4} or the proof of theorem 4.1.3 in \cite{St}, using theorem 11.3 and corollary 11.5. Here we just indicate a point when using the method of \cite{Mi4}. As the notations there, let $T=p^\Z$ considered as a subgroup of $G(\Q_p)$, and $K_0\vartriangleleft K_\pi$ be an open compact normal subgroup such that $\lambda$ is trivial on $TK_0$. Let $\Xi_\pi=K_\pi/TK_0$ and $\Xi^e_\pi$ be the subset of classes which contain an elliptic representative. We have a natural bijection $H_c^\ast((\M_{K_0}/T)\times\Cm_p,\ov{\Q}_l)\simeq H_c^\ast(\M_{K_0}^0\times\Cm_p,\ov{\Q}_l)$ if $n$ is odd, and $H_c^\ast((\M_{K_0}/T)\times\Cm_p,\ov{\Q}_l)\simeq \oplus_{i=0,1} H_c^\ast(\M_{K_0}^i\times\Cm_p,\ov{\Q}_l)$ if $n$ is even. Then (cf. \cite{K} and \cite{St})
\[\begin{split}tr_{H(\pi)}(g)&=tr(g|\textrm{Hom}_{K_\pi}(H_c^\ast((\M_{K_0}/T)\times\Cm_p,\ov{\Q}_l),\lambda))\\
&=\frac{1}{\#\Xi_\pi}\sum_{h\in \Xi_\pi^e}tr((h,g)|H_c^\ast(U_{\rho_h}\times\Cm_p,\ov{\Q}_l))tr(h^{-1}|\lambda)
\end{split}\]if $n$ is odd, and
\[=\frac{1}{\#\Xi_\pi}\sum_{h\in \Xi_\pi^e}tr((h,g)|\oplus_{i=0,1}H_c^\ast(U^i_{\rho_h}\times\Cm_p,\ov{\Q}_l))tr(h^{-1}|\lambda)\] if $n$ is even. The remaining computations are just using theorem 11.3 and corollary 11.5.
\end{proof}

 As \cite{St} and \cite{Mi4}, we will hope to use the above corollary to prove the realization of the local Jacquet-Langlands correspondence for smooth representations of $G(\Q_p)$ and $J_b(\Q_p)$ (for $n$ even) in the cohomology of our Rapoport-Zink spaces. By the method of Strauch in \cite{St} (and \cite{Mi4}), we are just reduced to problems of classification of $L$-packets of $G(\Q_p)$ and $J_b(\Q_p)$, and characterization of the local Jacquet-Langlands correspondence between smooth representations of them. Considering the recent progress on classification of $L$-packets (global and local) for unitary groups, for example see \cite{Mo} for the local case which concerns us, it seems that these can be achieved soon.

\end{document}